\tikzstyle{dot}=[circle,fill,black,inner sep=1pt]
\tikzset{
  % style to apply some styles to each segment of a path
  on each segment/.style={
    decorate,
    decoration={
      show path construction,
      moveto code={},
      lineto code={
        \path [#1]
        (\tikzinputsegmentfirst) -- (\tikzinputsegmentlast);
      },
      curveto code={
        \path [#1] (\tikzinputsegmentfirst)
        .. controls
        (\tikzinputsegmentsupporta) and (\tikzinputsegmentsupportb)
        ..
        (\tikzinputsegmentlast);
      },
      closepath code={
        \path [#1]
        (\tikzinputsegmentfirst) -- (\tikzinputsegmentlast);
      },
    },
  },
  % style to add an arrow in the middle of a path
  mid arrow/.style={postaction={decorate,decoration={
        markings,
        mark=at position .5 with {\arrow[#1]{stealth}}
      }}},
	 % style to add an arrow in the middle of a path
  early arrow/.style={postaction={decorate,decoration={
        markings,
        mark=at position .2 with {\arrow[#1]{stealth}}
      }}},
}
\def\alternatecolorred{%
    \pgfkeysalso{red}%
    \global\let\alternatecolor\alternatecolorblue % next time make it blue
}
\def\alternatecolorblue{%
    \pgfkeysalso{blue}%
    \global\let\alternatecolor\alternatecolorred % next time make it red
}
\newcommand{\altred}{\let\alternatecolor\alternatecolorred % first coordinate is red
\tikzset{every edge/.append code = {%
    \global\let\currenttarget\tikztotarget % save \tikztotarget in a global variable
    \pgfkeysalso{append after command={(\currenttarget)}}% automatically repeat it
			\alternatecolor
}}
}
\newcommand{\altblue}{\let\alternatecolor\alternatecolorblue % first coordinate is red
\tikzset{every edge/.append code = {%
    \global\let\currenttarget\tikztotarget % save \tikztotarget in a global variable
    \pgfkeysalso{append after command={(\currenttarget)}}% automatically repeat it
			\alternatecolor
}}
}
\tikzstyle{vertexdot}=[circle, draw, fill=black, minimum size=3,inner sep=0pt]
\newtheorem{theorem}{Theorem}
\newtheorem{lemma}{Lemma}
\newtheorem{proposition}{Proposition}
\newtheorem{corollary}{Corollary}
\newtheorem{definition}{Definition}
\newtheorem{conjecture}{Conjecture}
\newtheorem{remark}{Remark}
\newcommand{\ok}{\bar{k}}
\DeclareMathAlphabet{\varmathbb}{U}{bbold}{m}{n}
\renewcommand{\hat}{\widehat}
\renewcommand{\tilde}{\widetilde}
\newcommand{\ER}{Erd\H{o}s-R\'{e}nyi\xspace}
\DeclarePairedDelimiter\ceil{\lceil}{\rceil}
\DeclarePairedDelimiter\floor{\lfloor}{\rfloor}
\theoremstyle{plain}
\newtheorem*{theorem*}{Theorem}
\newtheorem*{proposition*}{Proposition}
\author{
{\sf David Gamarnik}\thanks{MIT; e-mail: {\tt gamarnik@mit.edu}. }
\and
{\sf Ilias Zadik}\thanks{MIT; e-mail: {\tt izadik@mit.edu}, NYU; email: {\tt zadik@nyu.edu}}
}
\title{The Landscape of the Planted Clique Problem:\\ Dense subgraphs and the Overlap Gap Property}
\begin{document}

%\title{Information-theoretic Limits for High Dimensional Regression}
%\title{All or Nothing Information-theoretic Transitions in Gaussian Sparse Linear Regression\\

\maketitle

\begin{spacing}{1}

% Use \Name{Author Name} to specify the name.
% If the surname contains spaces, enclose the surname
% in braces, e.g. \Name{John {Smith Jones}} similarly
% if the name has a "von" part, e.g \Name{Jane {de Winter}}.
% If the first letter in the forenames is a diacritic
% enclose the diacritic in braces, e.g. \Name{{\'E}louise Smith}

% Two authors with the same address
% \coltauthor{\Name{Author Name1} \Email{abc@sample.com}\and
%  \Name{Author Name2} \Email{xyz@sample.com}\\
%  \addr Address}

% Three or more authors with the same address:
% \coltauthor{\Name{Author Name1} \Email{an1@sample.com}\\
%  \Name{Author Name2} \Email{an2@sample.com}\\
%  \Name{Author Name3} \Email{an3@sample.com}\\
%  \addr Address}

% Authors with different addresses:

\begin{abstract} 
We study the computational-statistical gap of the planted clique problem, where a clique of size $k$ is planted in an \ER graph $G(n,\frac{1}{2})$. The goal is to recover the planted clique vertices by observing the graph. It is known that the clique can be recovered as long as $k \geq \left(2+\epsilon\right)\log n $ for any $\epsilon>0$, but no polynomial-time algorithm is known for this task unless $k=\Omega\left(\sqrt{n} \right)$. Following a statistical-physics inspired point of view, as a way to understand the nature of this computational-statistical gap, we study the landscape of the ``sufficiently dense" subgraphs of $G$ and their overlap with the planted clique. 

Using the first moment method, we present evidence of a phase transition for the presence of the Overlap Gap Property (OGP) at $k=\Theta\left(\sqrt{n}\right)$. OGP is a concept originating in spin glass theory and known to suggest algorithmic hardness when it appears. We further prove the presence of the OGP when $k$ is a small positive power of $n$, and therefore for an exponential-in-$n$ part of the gap, by using a conditional second moment method. As our main technical tool, we establish the first, to the best of our knowledge, concentration results for the $K$-densest subgraph problem for the \ER model $G\left(n,\frac{1}{2}\right)$ when $K=n^{0.5-\epsilon}$ for arbitrary $\epsilon>0$. Our methodology throughout the paper, is based on a certain form of overparametrization, which is conceptually aligned with a large body of recent work in learning theory and optimization.
\end{abstract}
\tableofcontents

\section{Introduction}

We study the planted clique model, first introduced in \cite{JerrumClique}, in which  one observes an $n$-vertex undirected graph $G$ sampled in two stages; in the first stage, the graph is sampled according to an \ER graph $G\left(n,\frac{1}{2}\right)$ and in the second stage, $k$ out of the $n$ vertices are chosen uniformly at random and all the edges between these $k$ vertices are deterministically added (if they did not already exist due to the first stage sampling). 
The $k$-vertex subgraph chosen in the second stage 
is called  the \textit{planted clique} $\mathcal{PC} $. 
The inference task of interest is to recover $\mathcal{PC}$ from observing $G$.

It is known that as long as $k \geq (2+\epsilon) \log_2 n$, the graph $G$ will have only $\mathcal{PC}$ as a $k$-clique in $G$ w.h.p. (see e.g. \cite{BollBook}). In particular under this assumption, $\mathcal{PC}$ is recoverable w.h.p. by the brute-force algorithm which checks every $k$-vertex subset of whether they induce a $k$-clique or not. Note that the exhaustive algorithm requires $\binom{n}{k}$ time to terminate, making it not polynomial-time for the values of $k$ of interest. For any $k \geq (2+\epsilon) \log_2 n$, a relatively simple quasipolynomial-time algorithm with termination time $n^{O\left(\log n\right)}$  recovers $\mathcal{PC}$ w.h.p. as $n \rightarrow + \infty$ (see e.g. the discussion in \cite{VitalyClique} and references therein). Note that a quasipolynomial-time termination time outperforms the termination time of the exhaustive search for $k=\omega\left( \log n\right)$.

The first polynomial-time (greedy) recovery algorithm of $\mathcal{PC}$ came out of the observation in \cite{KuceraClique} according to which when $k \geq C \sqrt{n \log n}$ for some sufficiently large $C>0$, the $k$-highest degree nodes in $G$ are the vertices of $\mathcal{PC}$ w.h.p. A fundamental work \cite{alon1998finding} proved that a polynomial-time algorithm based on spectral methods recovers $\mathcal{PC}$ when $k \geq c \sqrt{n}$ for any fixed $c>0$ (see also   \cite{FeigeClique}, \cite{DeshpandeMontanari}, \cite{PeresClique}  and references therein.) Furthermore, in the regime $k/\sqrt{n} \rightarrow 0$, various computational barriers have been established for the success of certain classes of polynomial-time algorithms, such as the Sum of Squares Hierarchy \cite{SamClique}, the Metropolis Process \cite{JerrumClique} and statistical query algorithms \cite{VitalyClique}. Nevertheless, no general algorithmic barrier such as NP-hardness has been proven for recovering $\mathcal{PC}$ when $k/\sqrt{n} \rightarrow 0$.  The absence of polynomial-time algorithms together with the absence of a complexity theoretic explanation in the regime where $k \geq (2+\epsilon)\log n$ and $ k/\sqrt{n} \rightarrow 0$ gives rise to arguably one of the most celebrated and well-studied computational-statistical gaps in the literature, known as the \textit{planted clique problem.} 

\paragraph{Computational gaps.}
Computational gaps between what existential or brute-force methods promise and what computationally efficient algorithms achieve is an ubiquitous phenomenon in the analysis of algorithmic tasks in random structures. Such gaps arise for example in the study of several \textit{``non-planted"} models like the maximum-independent-set problem in sparse random graphs \cite{gamarnik2014limits}, \cite{coja2011independent}, the largest submatrix problem of a random Gaussian matrix \cite{gamarnik2016finding}, the $p$-spin models~\cite{montanari2018optimization}, \cite{subag2018following}, \cite{gamarnik2019overlap}, 
diluted $p$-spin-model~\cite{GamarnikSpin}, and the  random $k$-SAT model\cite{mezard2005clustering}, \cite{achlioptas2008algorithmic}. Recently, such computational gaps started appearing in \textit{``planted"} inference algorithmic tasks in statistics literature such as the high dimensional linear regression problem \cite{gamarnikzadik}, \cite{gamarnikzadik2}, the tensor principal component analysis (PCA) \cite{AukoshPCA}, \cite{berthet2013complexity} the stochastic block model (see \cite{Abbe17},  \cite{BreslerCom} and references therein) and, of course, the planted clique problem described above. Towards the fundamental study of such computational gaps roughly speaking the following two methods have been considered.

%
%
%
%
%The focus of this paper is on the subset of inference problems where the algorithmic task is to infer a hidden structure, like the high dimensional linear regression case \cite{gamarnikzadik}, and importantly not to optimize over some non-planted model, like the maximum independent set in a random graph \cite{gamarnik2014limits}.

\begin{itemize}
\item[(1)] \textbf{Computational gaps: Average-Case Complexity Theory and the central role of Planted Clique} \\
None of the above gaps have been proven to be an NP-hard algorithmic task. Nevertheless, in correspondence with the well-studied worst-case NP-Completeness complexity theory (see e.g. \cite{KarpNP}), some very promising attempts have been made towards building a similar theory for planted inference algorithmic tasks (see e.g.  \cite{berthet2013complexity}, \cite{CaiGaps}, \cite{BertherRIP}, \cite{BreslerCom} and references therein). The goal of this line of research is to show that for two conjecturally computationally hard statistical tasks the existence of a polynomial-time algorithm for one task implies a polynomial-time recovery algorithm for the other. In particular, (computational hardness of) the latter task reduces to (computational hardness of) the former. Notably, the \textit{planted clique problem} seem to play a central role in these developments, similar to the role the boolean-satisfiability problem played in the development of the worst-case NP-completeness theory. Specifically in the context of statistical reduction, multiple statistical tasks in their conjecturally hard regime such as Sparse-PCA \cite{berthet2013complexity}, submatrix localization \cite{CaiGaps}, RIP certification \cite{BertherRIP}, rank-1 Submatrix Detection, Biclustering \cite{BreslerCom} have been proven to reduce to the planted clique problem in the regime $k/\sqrt{n} \rightarrow 0$. 

\item[(2)] \textbf{Computational Gaps: A Spin Glass Perspective (Overlap Gap Property) }\\
For several of the above-mentioned computational gaps, an inspiring connection have been drawn between the geometry of their solution space, appropriately defined, and their algorithmic difficulty. Specifically it has been repeatedly observed that the appearance of a certain disconnectivity property in the solution space called \textit{Overlap Gap Property (OGP)}, originated in spin glass theory, coincides with the conjectured algorithmic hard phase for the problem. Furthermore, it has also been seen that at the absence of this property even simply local improvement based algorithms such as for example
greedy algorithms can exploit the smooth geometry and succeed.

The connection between algorithmic performance and OGP was initially suggested in the study of the celebrated  random $k$-SAT model, independently in \cite{mezard2005clustering}, \cite{AchlioptasCojaOghlanRicciTersenghi}. Then it has been also 
suggested for other both ``non-planted" models such as maximum independent set in random graphs \cite{gamarnik2014limits}, \cite{rahman2014local}, 
but also models with ``planted signal'', such as high dimensional linear regression \cite{gamarnikzadik}, \cite{gamarnikzadik2} and tensor PCA \cite{AukoshPCA}. Despite the fundamental nature of the planted clique problem in the development of average-case complexity theory, OGP has not been studied for the planted clique problem. The study of OGP in the context of the planted clique problem is the main focus of this work.

We next provide some intuition on what OGP is in the context of ``non-planted" problems. Motivated by the study of concentration of the associated Gibbs measures  \cite{TalagrandBook} for low enough temperature, the OGP concerns the geometry of the near (optimal) solutions. It has been observed that any two ``near-optimal" solutions for many such modes exhibit the disconnectivity property stating that that their overlap, measured as a rescaled Hamming distance, is either very large or very small, which we call \textit{the Overlap Gap Property (OGP)}~\cite{AchlioptasCojaOghlanRicciTersenghi}, \cite{achlioptas2008algorithmic}, \cite{montanari2011reconstruction}, \cite{coja2011independent}, \cite{gamarnik2014limits}, \cite{rahman2014local}, \cite{GamarnikSpin}
 \cite{gamarnik2014performance}. For example, the independent
sets achieving nearly maximal size in sparse random graph exhibit the OGP~\cite{gamarnik2014limits}. An interesting rigorous link also appears between OGP and the power of local algorithms. For example OGP has been used
in~\cite{gamarnik2014limits} to establish a fundamental barrier on the power of a class of local algorithms called i.i.d. factors
for finding nearly largest independent sets in sparse random graphs (see also \cite{rahman2014local} for a tighter later result). Similar negative results have been established in the context of the random NAE-K-SAT problem for the Survey propagation algorithm~\cite{gamarnik2014performance}, random NAE-K-SAT for the Walksat algorithm \cite{coja2016walksat},  the max-cut problem in random hypergraphs for the family of i.i.d. factors \cite{GamarnikSpin}, and for Approximate Message 
Passing algorithms in the context of $p$-spin models~\cite{gamarnik2019overlap}.
As mentioned  above, when OGP disappears, the situation appears to change and for many of these problems algorithms succeed~\cite{achlioptas2008algorithmic},\cite{achlioptas2002two},\cite{gamarnikzadik2}. 
Because of this connection it is conjectured that the onset of the phase transition point for the presence of OGP corresponds to the onset of algorithmic hardness.

It is worth mentioning that other properties such as the shattering property and the condensation, which have been extensively studied in the context of random constraint satisfaction problems, such as random K-SAT, are topological properties of the solution space which have been linked with algorithmic difficulty (see e.g. \cite{achlioptas2008algorithmic}, \cite{krzakala2007gibbs} for appropriate definitions).  We would like to importantly point out that neither of them is identical with OGP. OGP implies for trivial reasons the shattering property but the other implication does not hold.  For example, consider the model of random linear equations \cite{Coja17}, where recovery can be obtained efficiently via the Gaussian elimination when the system is satisfiable. In \cite{Coja17} it is established that OGP never appears as the overlaps concentrate on a single point but shattering property does hold in a part of the satisfiability regime. Furthermore, OGP is also not the same with condensation. For example, in the solution space of random $K$-SAT, OGP appears for multioverlaps when 
the ratio of clauses to variables is close to $2^K\log 2 /K$ (up to poly-$\log K$ factors)~\cite{gamarnik2014performance} which is far below condensation which appears around ratio $2^K \log 2$ \cite{krzakala2007gibbs}. It should be noted that in random k-SAT the onset of the apparent algorithmic hardness also occurs around $2^K \log 2/K$  \cite{gamarnik2014performance}, \cite{hetterich2016analysing}. The exact connection between each of these properties and algorithmic hardness is an ongoing and fascinating research direction. 

A more recent development is the study of solution space properties such as OGP and its connections to  algorithmic hardness, in the context of
problems with ``planted signal'', for example for the high dimensional linear regression problem \cite{gamarnikzadik}, \cite{gamarnikzadik2}. 
The strategy followed in those papers is comprised of two steps. First the statistical inference task is reduced  into an average-case optimization task associated with a natural empirical risk objective. Then, as a second step, a geometric analysis of the region of feasible solutions is performed and the OGP (or the lack of it) is established. Interestingly, in this line of work the ``overlaps" considered are between the ``near-optimal" solutions of the optimization task and the planted structure itself. In the present paper we follow a similar path to identify the OGP phase transition point for the planted clique problem.

\end{itemize}

\subsection*{Contribution and Discussion}

In this paper we analyze the presence of OGP for the planted clique problem. We first consider 
the  goal of inferring the planted clique as as a problem of optimizing a certain ``empirical risk" objective and then 
we perform the OGP analysis on the landscape of near-optimal solutions. The first natural choice for the empirical 
risk is the  log-likelihood of the recovery problem which assigns to any $k$-subset $C \subseteq V(G)$ 
the risk value $-\log  \mathbb{P}\left(\mathcal{PC}=C | G\right)$.  
A relatively straightforward analysis of this choice implies that when $k \geq \left(2+\epsilon \right) \log_2n$ 
the only $k$-subset obtaining a non-trivial log-likelihood is the planted clique itself, since there are no other 
cliques of size $k$ in the graph w.h.p. as $n \rightarrow + \infty$. In particular, this perspective of studying the near-optimal solutions and OGP fails to provide anything fruitful.

\paragraph{ The Dense Subgraphs Landscape and OGP.} We adopt instead 
the ``relaxed" \textit{$k$-Densest-Subgraph} objective, which given  a graph $G$  assigns to any $k$-subset $C \subseteq V(G)$ the empirical risk $-|\mathrm{E}[C]|$, that is we consider  $$\mathcal{D}(G):\max_{C \subseteq V(G), |C|=k} |E[C]| ,$$ where by $\mathrm{E}[C]$ we refer to the set of edges in the induced subgraph defined by $C$. Notice that $\mathcal{D}(G)$ is equivalent to maximizing the log-likelihood of a similar recovery problem, the planted $k$-dense subgraph problem where the edges of $\mathcal{PC}$ are only placed with some specific probability $1>p >1/2$ and the rest of the edges are still drawn with probability $\frac{1}{2}$ as before (see e.g.  \cite{BreslerCom} and references therein). Also, notice that, interestingly,  $ \mathcal{D}(G)$ does not depend on the value of $p$; that is it is universal for all values of $p \in (\frac{1}{2},1)$. Now the planted clique model we are interested in can be seen as the extreme case of the planted $k$-dense subgraph problem when $p \rightarrow 1^-$. Furthermore, in this work we analyze the \textit{overparametrized} version of $\mathcal{D}(G)$, $\ok$-densest-subgraph problem, where for some parameter $\ok \geq k$ the focus is on 
\begin{equation}
\label{opt2}\mathcal{D}_{\ok,k}(G): \max_{C \subseteq V(G), |C|=\ok} |E[C]|,
\end{equation}
 while importantly \textit{the planted clique in $G$ remains of size $k$}. In this work we study the following question:

\vspace{0.2cm}

\begin{center} \textit{How much do  near-optimal solution of $\mathcal{D}_{\ok,k}(G)$ intersect the planted clique $\mathcal{PC}$?}
\end{center}
\vspace{0.2cm}

The Overlap Gap Property \textit{($\ok$-OGP) }for the $\ok$-Densest subgraph problem would mean that near-optimal solution of $\mathcal{D}_{\ok,k}(G)$ (sufficiently dense $\ok$-subgraphs of $G$) have \textit{either a  large or small intersection with the planted clique} (see Definition \ref{OGP2} below for more details).

To study the presence of $\ok$-OGP we focus on the monotonicity of the overlap-restricted optimal values for $z=\floor{\frac{\bar{k}k}{n}},\floor{\frac{\ok k}{n}}+1,\ldots,k;$ $$d_{\bar{k},k}(G)(z)=\max_{C \subseteq V(G), |C|=\bar{k}, \mathrm{overlap}(C)=z} |E[C]|,$$ where $\mathrm{overlap}(C):=|C \cap \mathcal{PC}|.$  Note that we define the overlaps beginning from $\floor{\frac{\bar{k}k}{n}}$ as this level of overlap with $\mathcal{PC}$ is trivially obtained from a uniformly at random chosen $\bar{k}$-vertex subgraph.

\paragraph{Monotonicity and the OGP.} It is not hard to see that the monotonicity  of $d_{\ok,k}(G)(z)$ might be linked with the presence or absence of $\ok$-OGP. For example, assume that as an implication of non-monotonicity,
 for some realization of $G$ the curve $d_{\ok,k}$ satisfies that for some  $z^* \in (\floor{\frac{\bar{k}k}{n}},k)$, \begin{equation} \label{TypeM} d_{\ok,k}(G)(z^*) <\min \{d_{\ok,k}(G)(0),d_{\ok,k}(G)(k) \}. \end{equation}
 Then $\ok$-OGP holds. Indeed, choosing any $\mathcal{T}>0$ with $$d_{\ok,k}(G)(z^*) <\mathcal{T} <\min \{d_{\ok,k}(G)(0),d_{\ok,k}(G)(k) \}$$ we notice that (1) since $\mathcal{T} >  d_{\bar{k},k}(G)(z^*) $ any ``dense" $\bar{k}$-subgraph with at least $\mathcal{T}$ edges cannot overlap at exactly $z^*$ vertices with $\mathcal{PC}$ and (2) since $\mathcal{T} <\min \{d_{\ok,k}(G)(0),d_{\bar{k},k}(G)(k) \}$ there exist both zero and full overlap ``dense" $\bar{k}$-subgraphs with that many edges. On the other hand, when the curve is monotonic with respect to overlap $z$, $\bar{k}$-OGP does not hold for a similar reason. Furthermore, note, that when the curve is monotonically increasing the near-optimal solutions of $\mathcal{D}_{\bar{k},k}(G)$ are expected to nearly entirely contain $\mathcal{PC}$  and hence are expected to be  \textit{relevant} for recovery. At the same time,   when it is monotonically decreasing the near-optimal solutions of $\mathcal{D}_{\bar{k},k}(G)$ are expected to have nearly empty intersection with $\mathcal{PC}$, and hence are expected to be  \textit{irrelevant} for recovery.  

\paragraph{Monotonicity of the First Moment Curve.} Using an optimized union-bound argument (first moment method) we obtain a deterministic upper bound function (we call it \textit{first moment curve}) $\Gamma_{\bar{k},k}(z)$ such that for all overlap values $z$, \begin{equation} \label{Upper} d_{\bar{k},k}(G)(z) \leq \Gamma_{\bar{k},k}(z),\end{equation} which is also provably tight, up-to-lower order terms, at the end-point $z=0$ (Proposition \ref{unionkkk}). For this reason, with the hope that $\Gamma_{\bar{k},k}(z)$ provides a tight upper bound in (\ref{Upper}), we perform a monotonicity analysis of $\Gamma_{\bar{k},k}(z)$ instead.

We discover that when $k=o\left(\sqrt{n} \right)$, and when $\bar{k}$ is relatively small satisfying $k \leq \bar{k} \leq k^2$ (thus including $\bar{k}=k$) $\Gamma_{\bar{k},k}$ is \textit{non-monotonic} satisfying a relation similar to (\ref{TypeM}) for some $z^*$. Then we show that  for relatively large $\bar{k}$ satisfying $k^2 \leq \bar{k}$ this function  is in fact \textit{decreasing}. On the other hand, when $k=\omega\left(\sqrt{n}\right)$, for relatively small $\bar{k}$ satisfying $k \leq \bar{k} \leq n^2/k^2$,  $\Gamma_{\bar{k},k}$ is \textit{non-monotonic} satisfying a relation similar to (\ref{TypeM}) for some $z^*$, while for relatively large $\bar{k}$ satisfying $n^2/k^2 \leq \bar{k}$ it is \textit{increasing}. In particular, an interesting phase transition is taking place at the critical size $k=\sqrt{n}$ and high overparametrization $\bar{k}$. A summary is produced in Table 1. Theorem \ref{FM} and the discussion that follows provide exact details of the above statements. 
\begin{table}[t]
% title of Table
\centering % used for centering table
\begin{tabular}{|c | c | c| } % centered columns (4 columns)
\hline
 %inserts double horizontal lines
& Low Overparametrization  $\bar{k}$ & High Overparametrization $\bar{k}$ \\ [0.5ex] % inserts table
%heading
\hline % inserts single horizontal line
$k=o\left(\sqrt{n}\right)$  & $\Gamma_{\bar{k},k}$  non-monotonic & $\Gamma_{\bar{k},k}$ monotonically decreasing \\ 
\hline% inserting body of the table
$k=\omega\left(\sqrt{n}\right)$ &   $\Gamma_{\bar{k},k}$  non-monotonic  & $\Gamma_{\bar{k},k}$ monotonically increasing  \\ [1ex] % [1ex] adds vertical space
\hline %inserts single line
\end{tabular}
\label{table:nonlin} % is used to refer this table in the text
\caption{The monotonicity phase transitions of $\Gamma_{\bar{k},k}$ at $k=\sqrt{n}$ and varying $\bar{k}$.}
\end{table}

\begin{figure}[!htb]
     \begin{center}
        \subfigure{%
           \label{fig:first}
           \includegraphics[width=0.9\textwidth]{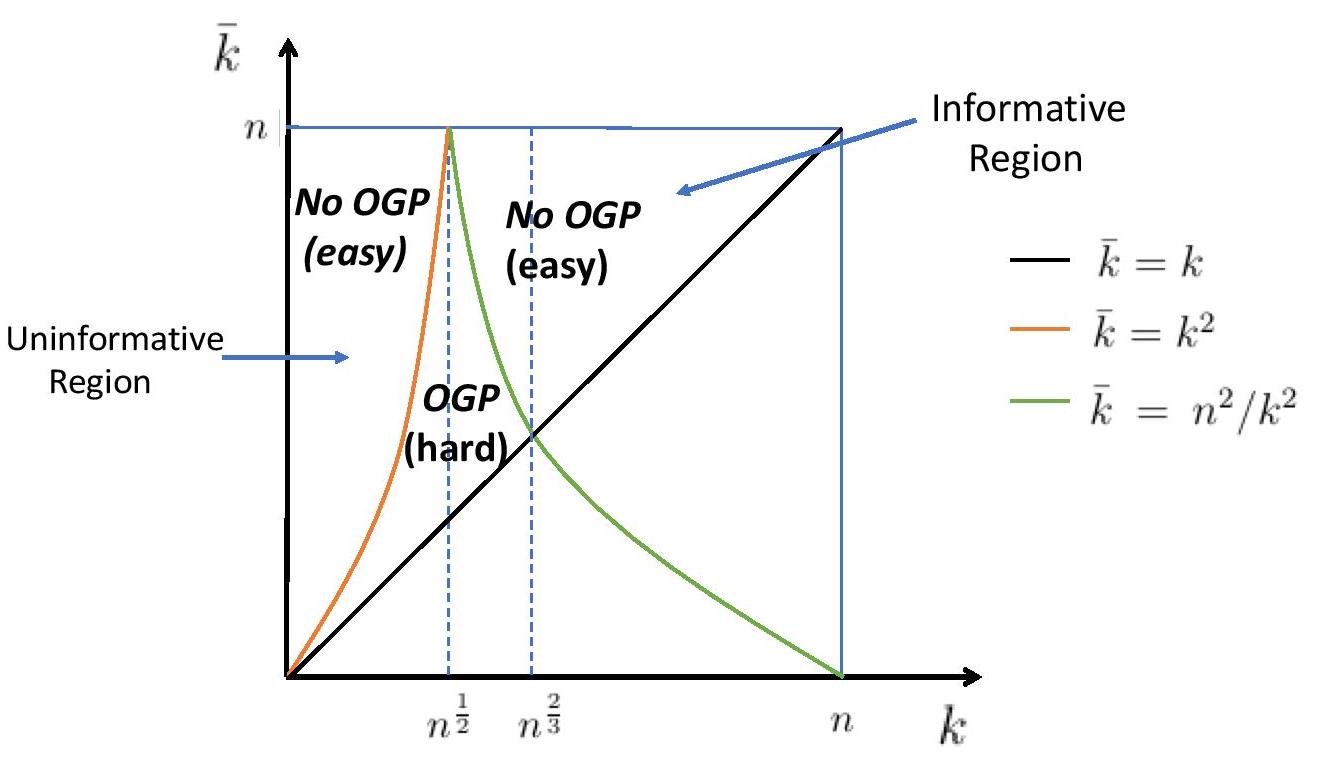}
}

    \end{center}
    \caption{ \small{This is a pictorial representation (phase diagram) of the conjectured appearance of the Overlap Gap Property (OGP) for the space of $\bar{k}$-dense subgraphs in an instance of the planted random graph model $G\left(n,k,\frac{1}{2}\right)$. }}
   \label{fig:Phase}
\end{figure}

Assuming the tightness of $\Gamma_{\bar{k},k}$ in (\ref{Upper}) we arrive at a \textit{conjecture} (Conjecture \ref{mainconj})  regarding the $\bar{k}$-OGP of the landscape which we pictorially describe in Figure \ref{fig:Phase}. In the (apparently) algorithmically hard regime $k=o\left(\sqrt{n} \right)$ the landscape is either exhibiting $\bar{k}$-OGP or is uniformative. On the other hand, in the algorithmically tractable regime $k=\omega\left(\sqrt{n} \right)$ for appropriately large $\bar{k}$ there is no  $\bar{k}$-OGP, the landscape is informative and the optimal solutions of $\mathcal{D}_{\bar{k},k}(G)$ have almost full overlap with $\mathcal{PC}$. Of course this is only a prediction for the monotonicity of $d_{\bar{k},k}(G)$, as the function $\Gamma_{\bar{k},k}$ corresponds only to an upper bound. For this reason we establish results proving parts of the picture suggested by the monotonicity of $\Gamma_{k,\bar{k}}$.

\paragraph{Phase Diagram: Figure \ref{fig:Phase}.}
In Figure \ref{fig:Phase} we see a pictorial representation (phase diagram) of the conjectured appearance of the Overlap Gap Property (OGP) for the space of $\bar{k}$-dense subgraphs in an instance of the planted random graph model $G\left(n,k,\frac{1}{2}\right)$ (Conjecture \ref{mainconj}). For reasons of clarity, the equations describing the curves appearing in the Figure do not take into account logarithmic-in-$n$ terms. We focus on what we call as the overparametrized regime where $\bar{k} \geq k$ (that is we focus on the region above the black line). In this regime, we observe three phases where the boundaries of the phases are defined by the $\bar{k}$-axis, the orange curve ($\bar{k}=k^2$), the green curve ($\bar{k}=n^2/k^2)$ and the black curve ($\bar{k}=k$).  In the upper left phase defined the $\bar{k}$-axis and the orange curve there is no OGP and the problem of recovering the $\bar{k}$-densest subgraph is expected to be algorithmically easy. Yet, this is the regime where the $\bar{k}$-densest subgraph is predicted to have empty intersection with the planted clique, granting it uninformative for recovery purposes of $\mathcal{PC}$. For this reason this region is called ``uniformative region". In the middle phase defined by the orange curve and the green curve there is OGP and the problem is expected to be algorithmically hard. Finally, in the upper right phase defined the black curve and the green curve there is no OGP and the problem of recovering the $\bar{k}$-densest subgraph is predicted to be algorithmically easy. Furthermore, in this phase the $\bar{k}$-densest subgraph is predicted to be fully overlapping with the planted clique, granting it informative for recovery purposes. For this reason we call this region an "informative region". Note the revealing phase transition at $k=\sqrt{n}$, below of which for the small values of $\bar{k}$ there is OGP and for higher values there is no OGP, yet the densest subgraph relaxation is uninformative for recovering the clique. On the other hand, for $k>\sqrt{n}$, for smaller values of $\bar{k}$ there is still OGP but for higher values there is no OGP and the densest subgraph relaxation is informative for recovering the clique. This phase transition is also described in Table \ref{table:nonlin}. Interestingly, assuming the focus was solely on the phase diagram defined by the curve $\bar{k}=k$ the OGP phase transition is predicted to take place at $k=n^{\frac{2}{3}}$ which is far above the predicted algorithmic threshold $k=\sqrt{n}$. For this reason, we consider the use of overparametrization of fundamental importance for studying the landscape of the dense subgraph of $G\left(n,k,\frac{1}{2}\right)$.

\paragraph{Overlap Gap Property for $k \leq n^{0.0917}$.} We establish that under the 
assumption $k \leq  \bar{k} = n^{C}$, for some $0<C<C^*=\frac{1}{2}-\frac{\sqrt{6}}{6} \sim 0.0917..$ (note $k=o \left(\sqrt{n} \right)$)
indeed $\bar{k}$-OGP holds for $\mathcal{D}_{\bar{k},k}\left(G\right)$. The result holds for all values of $C<C^*$ (up-to-$\log$ factors) where the curve $\Gamma_{k,\bar{k}}$ for $k \leq \bar{k}=n^C$  is proven non-monotonic (Theorem \ref{OGP}). Specifically, we establish that for some constants $0<D_1<D_2$ any $\bar{k}$-subgraph of $G$ which is \textit{``sufficiently dense"}  either intersects $\mathcal{PC}$ in \textit{at most $D_1 \sqrt{ \frac{\bar{k}}{\log \frac{n}{\bar{k}}}}$ nodes } or in \textit{at least $D_2 \sqrt{ \frac{\bar{k}}{\log \frac{n}{\bar{k}}}}$ nodes}.  Our proof is based on a delicate second moment method argument for dense subgraphs of \ER graphs. We believe that the second moment method argument can be further improved to extend the result to the case $C^*=0.5-\epsilon$ for arbitrary $\epsilon>0$. We leave this as an important  open question.

\paragraph{Overlap Gap Property implies failure of an MCMC family.} We prove that the conjectured existence of the Overlap Gap Property (as stated in Conjecture \ref{mainconj} and partially established in Theorem \ref{OGP}) provides a rigorous barrier for a family of natural MCMC methods for the densest subgraph problem. The family of MCMC methods consists of running reversible nearest-neighbor dynamics on the space of $\bar{k}$-vertex subgraphs where \begin{itemize} 
\item two $\bar{k}$-subgraphs are considered connected if their set of vertices have Hamming distance two,
\item the associated Gibbs measure to each $\bar{k}$-vertex subgraph $C$ is given by 
\begin{align*} \label{Gibbs} \pi_{\beta}\left(C\right) \propto \exp \left( \beta |\mathrm{E}\left[C\right]|\right),\end{align*} 
\item the  inverse temperature $\beta$ scales at least polylogarithmically with $n$.
\end{itemize} The result applies in the cases where either $k<\sqrt{n} $ and $k \leq \bar{k} \leq k^2$ or $\sqrt{n}<k<n^{\frac{2}{3}}$ and $k \leq \bar{k} \leq n^2/k^2$ (up to logarithmic-in-$n$ terms). 

To be precise, we show that under the above assumptions and appropriate initialization of the dynamics with a $\bar{k}$-subgraph of low overlap size with planted clique, the time required for the dynamics to reach non-trivial overlap with the planted clique is at least $\exp \left( \Omega \left( \beta \frac{\bar{k}}{\left( \log \frac{n}{\bar{k}}\right)} \right)\right)$ (Theorem \ref{thmslow}). We prove the failure of MCMC result by first establishing that under the above assumptions the landscape of $\bar{k}$-dense subgraphs contains ``free-energy wells" of exponential-in-$\bar{k}$ depth (Proposition \ref{Prop1few}), 
which is a provable barrier for local search methods for spin glass models and tensor PCA (see \cite{AukoshPCA} and references therein).

We would like to highlight the following  surprising corollary of this result in the special, but arguably natural choice $\bar{k}=k$.  In this 
case following  Conjecture~\ref{mainconj}, the OGP is expected to hold  all the way up to $k< n^{\frac{2}{3}}$. Our result, therefore, implies that 
under Conjecture~\ref{mainconj} the MCMC method  requires exponential in $n^{O(1)}$ time to recover the planted clique whenever $k \leq n^{\frac{2}{3}}$. Note that $k=n^{\frac{2}{3}}$ is significantly bigger then the conjectured computational threshold $k=O(\sqrt{n})$. 
This failure of MCMC methods for any $k<n^{\frac{2}{3}}$ should be compared with the pioneering work by Jerrum~\cite{JerrumClique} which started the literature on the planted clique problem. In this work, Jerrum proves the failure of Metropolis process towards recovering the planted clique when $k=o(\sqrt{n})$. 
As explained in the Introduction, this failure is treated as the first indication of the now widely-accepted algorithmic hardness of the planted clique problem 
when $k=o(\sqrt{n})$. Nevertheless, importantly, Jerrum does not prove the success of the Metropolis process when $k>\sqrt{n}$. Furthermore, in the Conclusion section of \cite{JerrumClique} he expresses his belief that the failure of recovering the planted clique if proven when $k>n^{\frac{1}{2}+\epsilon},$ for some $ \epsilon>0$, ``would represent a severe indictment of the Metropolis process as a heuristic search technique for large cliques in a random graph". Here we provide strong evidence that a similar family of MCMC methods (not including, though, the Metropolis process) does it indeed fail for $k>n^{\frac{1}{2}+\epsilon},$ for any $ \frac{1}{6}>\epsilon>0$.  Interestingly though, we believe that the performance of the MCMC methods can be salvaged 
all the way down to $k=\sqrt{n}$, by considering an appropriate overparametrization level $\bar{k}$ of the vertex size of the subgraphs. We leave this
as another interesting open question.

\paragraph{The use of Overparametrization.}  
Choosing $\bar{k}>k$ is paramount in  all of the results described here. If we have opted for the arguably more natural choice $\bar{k}=k$, and focused solely on $k$-vertex subgraphs, we would see the monotonicity of the curve $\Gamma_{\bar{k},k}$ exhibiting a phase transition at the peculiar threshold $k=n^{\frac{2}{3}}$ (see Remark \ref{remark} and Figure \ref{fig:Phase}). A significant inspiration for the  overparametrization idea is derived from it's recent success on ``smoothening" bad local behavior in landscapes arising  in the context of deep learning \cite{ShamirNN}, \cite{AfonsoNeural}, \cite{ TengyuNeural}  and beyond  (e.g. \cite{ArianEM} in the context of learning mixtures of Gaussians). We consider this to be a novel conceptual contribution to this line of research on computational-statistical gaps with 
various potential extensions.

\paragraph{$n^{0.5-\epsilon}$-Dense Subgraphs of $G\left(n,\frac{1}{2}\right)$.} Proposition \ref{unionkkk} and Theorem \ref{OGP} are based on a new result on the $K$-Densest subgraph of a vanilla \ER model $G_0 $ sampled from $G \left(n,\frac{1}{2} \right)$;
$$d_{\mathrm{ER},K}(G_0)=\max_{C \subseteq V(G_0), |C|=K} |E[C]|,$$ for any  $K<n^{\frac{1}{2}-\epsilon}$ where $\epsilon>0$. 
The study of $d_{\mathrm{ER},K}(G_0)$ is a natural question in random graph theory which, to the best of our knowledge, remains not well-understood even for moderately large values of $K=K_n$.  For small enough values of $K$, specifically $K<2 \log_2n$, it is well-known $d_{\mathrm{ER},K}(G_0)=\binom{K}{2}$ w.h.p. as $n \rightarrow + \infty$ (originally established in \cite{Grimmet75}). On the other hand when $K=n$, trivially $d_{\mathrm{ER},K}(G_0)$ follows $\mathrm{Binom}\left(\binom{K}{2},\frac{1}{2}\right)$ and hence for any $\alpha_K=\omega\left(1\right)$,  $d_{\mathrm{ER},K}(G_0)=\frac{1}{2}\binom{K}{2}+O\left( K \alpha_K\right)$ w.h.p. as $n \rightarrow + \infty$. If we choose for the sake of argument $\alpha_K=\log \log K$ the following natural question can be posed; \begin{center} \textit{How $d_{\mathrm{ER},K}(G_0)$ transitions from  $\binom{K}{2}$ for $K<2 \log_2 n$ to $\frac{1}{2}\binom{K}{2}+O\left( K \log \log K \right)$ for $K=n$?} \end{center}

A recent result in the literature studies the case $K=C \log n$ for $C>2$ \cite{Bollobas18} and establishes (it is an easy corollary of the main result of the aforementioned paper), \begin{equation} \label{Bela} d_{\mathrm{ER},K}(G_0)=h^{-1} \left( \log 2-\frac{2 \left(1+o\left(1\right)\right)}{C} \right) \binom{k}{2},\end{equation} w.h.p. as $n \rightarrow + \infty$. Here $\log $ is natural logarithm and $h^{-1}$ is the inverse of the (rescaled) binary entropy $h: [\frac{1}{2},0] \rightarrow [0,1]$ is defined by \begin{equation} \label{BinEnt} h(x)=-x \log x-(1-x)\log x. \end{equation} Notice that $\lim_{C \rightarrow + \infty }h^{-1} \left( \log 2-\frac{2 \left(1+o\left(1\right)\right)}{C} \right)  = \frac{1}{2}$ which means that  the result from \cite{Bollobas18} agrees with the first order behavior of $d_{\mathrm{ER},K}(G_0)$ at `` very large" $K$ such as $K=n$.  The proof from  \cite{Bollobas18} is based on a careful and elegant application of the second moment method, where special care is made to control the way ``sufficiently dense" subgraphs overlap.

We study the behavior of $d_{\mathrm{ER},K}(G_0)$ for any $K<n^{\frac{1}{2}-\epsilon}$, for $\epsilon>0$. Specifically, we build and improve on the second moment method technique from \cite{Bollobas18} and establish tight results for the first and second order behavior of $d_{\mathrm{ER},K}(G_0)$ when $K$ is a power of $n$ strictly less than $\sqrt{n}$. Specifically in Theorem \ref{denseeee} we show that for any $ K= n^{C}$ for $C \in (0,\frac{1}{2})$ there exists some positive constant $\beta=\beta(C) \in (0,\frac{3}{2})$ such that \begin{equation}\label{US} d_{\mathrm{ER},K}(G_0)=h^{-1} \left(\log 2-\frac{ \log  \binom{n}{K}  }{\binom{K}{2}} \right)\binom{K}{2} - O\left(K^{\beta}\sqrt{ \log n} \right)\end{equation} w.h.p. as $n \rightarrow + \infty$. 

Note that as our result is established when $K$ is a power $n$, it does not apply in the logarithmic regime. Nevertheless, it is in agreement with the result of of  \cite{Bollobas18} since for $K=C \log n$, $$ \frac{ \log  \binom{n}{K}  }{\binom{K}{2}}=(1+o\left(1\right)) \frac{K \log \left(\frac{n}{K}\right)}{\frac{K^2}{2}}=(1+o\left(1\right)) \frac{2}{C},$$ that is the argument in $h^{-1}$ of (\ref{US}) converges to the argument in $h^{-1}$ of  (\ref{Bela}) at this scaling.

By Taylor expanding $h^{-1}$ around $\log 2$ that is $h^{-1}\left(\log 2-t\right)=\frac{1}{2}+\frac{1}{\sqrt{2}}\sqrt{t}+o\left(\sqrt{t}\right)$ for $t=o\left(1\right)$ (Lemma \ref{EntTaylor}), using our result we can identify the second order  behavior of $d_{\mathrm{ER},K}(G_0)$
 \begin{align*}    d_{\mathrm{ER},K}(G_0) =\frac{K^2}{4}+\frac{K^{\frac{3}{2}}\sqrt{\log  \left(\frac{n}{K}\right) }}{2} + o\left(K^{\frac{3}{2}}\right),
\end{align*} w.h.p. as $n \rightarrow + \infty$. See  Corollary \ref{CorD} for the exact statement. Note that the second order behavior is of different order in $K$ that in the extreme case $K=n$. We leave the analysis of the behavior of $d_{\mathrm{ER},K}(G_0)$ in the regime for $K$ between $n^{\frac{1}{2}}$ and $n$ as an intruiguing open question.

\textit{Connection with $\bar{k}$-OGP.} While our result (\ref{US}) holds for any $K=\Theta\left(n^C\right)$, $0<C<\frac{1}{2}$, 
 we were  able to use this result to prove $\bar{k}$-OGP only when $C<0.0917$. This is  because in order 
 to establish $\bar{k}$-OGP using our non-monotonicity arguments  we needed  the error term in (\ref{US}) to be $o\left(K\right)$, which from our result can only be established if $C<0.0917$. The reason is that in order to transfer the non-monotonicity of the first moment curve $\Gamma_{\bar{k},k}(z)$ 
 to the non-monotonicity of the actual curve $d_{\bar{k},k}(G)$ we needed the error term in our approximation gap between $d_{\ok,k}\left(G\right)(z)$ and $\Gamma_{\ok,k}(z)$, to be sufficiently small so that the non-monotonicity behavior of $\Gamma_{\ok,k}(z)$ transfers to the non-monotonicity behavior of $d_{\ok,k}\left(G\right)(z)$  . We quantify the non-monotonicity behavior of a function via its ``depth", that is for the first moment curve via $$\min \{\Gamma_{\ok,k}\left(0\right),\Gamma_{\ok,k}\left(k\right)\}-\min_{z \in [0,k]} \Gamma_{\ok,k}(z).$$ The latter ``depth" quantity can be proven to grow with order similar to $\Omega\left(K\right)=\Omega\left(\bar{k}\right)$ leading to the necessary order for the error term to make the argument go through.

\paragraph{Notations.}
Throughout the paper we use standard order of magnitude notations. Specifically,
for any real-valued sequences $\{a_n\}_{n \in \mathbb{N}}$ and $\{b_n\}_{n \in \mathbb{N}}$, $a_n=\Theta\left(b_n\right)$
if there exists an absolute constant $c>0$ such that $\frac{1}{c}\le |\frac{a_n}{ b_n}| \le c$; $a_n =\Omega\left(b_n\right)$ or $b_n = O\left(a_n \right)$ if there exists  an absolute constant $c>0$ such that $|\frac{a_n}{ b_n}| \ge c$; $a_n =\omega \left(b_n \right)$ or $b_n = o\left(a_n\right)$ if $\lim_n| \frac{a_n}{ b_n}| =0$. 
For an undirected graph $G$ on $n$ vertices we denote by $V(G)$ the set of its vertices and $E[G]$ the set of its edges. 
Throughout the paper we denote by $h$ the (rescaled) binary entropy given by (\ref{BinEnt}) and for $\gamma \in (\frac{1}{2},1)$, we define \begin{equation} \label{alpha} r(\gamma,\frac{1}{2}):=\log 2-h(\gamma). \end{equation}

\section{Main Results}
\label{Main}
\subsection{The Planted Clique Model and Overlap Gap Property}
We start with formally defining the Planted Clique Model.
 
\paragraph{The Planted Clique Model.} Sample an $n$ vertex undirected graph $G_0$ according to the \ER $G(n,\frac{1}{2})$ distribution. Then choose $k$ out of $n$ vertices of $G_0$ uniformly at random and connect all pairs of these vertices with  undirected edges, creating a planted clique $\mathcal{PC}$ of size $k$. We denote the resulting $n$-vertex undirected graph by $G\left(n,k,\frac{1}{2}\right)$ or $G$ for simplicity.

\paragraph{The Recovery Goal.} Given  $G$ recover the vertices of the planted clique $\mathcal{PC}$.

\subsection{The $\bar{k}$-Densest Subgraph Problem for $\bar{k} \geq k=|\mathcal{PC}|$} We study the landscape of the sufficiently dense subgraphs in $G$.
Besides $n,k$ we introduce an additional parameter $\bar{k} \in \mathbb{N}$ with $k \leq \bar{k} \leq n$. The dense subgraphs we consider are of vertex size $\bar{k}$. We study overlaps between the sufficiently dense $\bar{k}$-dense subgraphs and the planted clique $\mathcal{PC}$. Specifically we focus on the $\bar{k}$-densest subgraph problem on $G$, $\mathcal{D}_{\bar{k},k}(G)$ defined in (\ref{opt2}).
We define the $\bar{k}$-Overlap Gap Property of $\mathcal{D}_{\bar{k},k}(G)$ as follows
   
\begin{definition}[$\bar{k}$-OGP]\label{OGP2}
$\mathcal{D}_{\bar{k}.k}\left(G\right)$ exhibits the $\bar{k}$-Overlap Gap Property ($\bar{k}$-OGP)  if there exists $\zeta_{1,n},\zeta_{2,n} \in [k]$ with $ \zeta_{1,n}<\zeta_{2,n}$ and $0<r_n<\binom{k}{2}$ such that;
\begin{itemize}
\item[(1)]  There exists $\bar{k}$-subsets $A,A' \subseteq V(G)$ with $|A \cap \mathcal{PC}| \leq \zeta_{1,n},$ \\$|A' \cap \mathcal{PC}| \geq \zeta_{2,n}$ and $\min \{ |\mathrm{E}\left[A\right] |, |\mathrm{E}\left[A'\right] | \} \geq r_n$. 
\item[(2)]    For any $\bar{k}$-subset $A \subset V(G)$ with $|\mathrm{E}\left[A\right] |\geq r_n$ it holds,\\ either $|A \cap \mathcal{PC}| \leq \zeta_{1,n}$ or $|A \cap \mathcal{PC}|\geq \zeta_{2,n}.$
\end{itemize}
\end{definition}Here, the first part of the definition ensures that there are sufficiently dense $\bar{k}$-subgraphs of $G$ with both ``low" and ``high" overlap with $\mathcal{PC}$. The second condition ensures that any sufficiently dense $\bar{k}$-subgraph of $G$ will have either ``low" overlap or ``high" overlap with $\mathcal{PC}$, implying gaps in the realizable overlap sizes.

To study $\bar{k}$-OGP we study the following curve. For every $z \in \{\floor{\frac{k\bar{k}}{n}},\floor{\frac{k\bar{k}}{n}}+1,\ldots,k\}$ let
\begin{equation}
\label{opt2}\mathcal{D}_{\bar{k},k}(G)(z): \max_{C \subseteq V(G),|C|=\bar k, |C \cap \mathcal{PC}|=z }|E[C]|.
\end{equation}with optimal value denoted by $d_{\bar{k},k}(G)(z)$. In words,  $d_{\bar{k},k}(G)(z)$ corresponds to the number of edges of the densest $\bar{k}$-vertex subgraph with vertex-intersection with the planted clique of cardinality $z$. Notice that, as explained in the previous section, we restrict ourselves to overlap at least $k\bar{k}/n$ since this level of intersection with $\mathcal{PC}$ is achieved simply by sampling uniformly at random a $\bar{k}$-vertex subgraph of $G$.

\subsection{Monotonicity of the First Moment Curve $\Gamma_{\bar{k},k}$}

The following deterministic curve will be of distinct importance in what follows.
\begin{definition}[First moment curve] \label{GammaDfn}
The first moment curve is the real-valued function $\Gamma_{\bar{k},k}: \{\floor{\frac{k\bar{k}}{n}},\floor{\frac{k\bar{k}}{n}}+1,\ldots,k\}\rightarrow \mathbb{R}_{>0}$, where for $z=\bar{k}=k$, $$\Gamma_{\bar{k},k}(k)=\binom{k}{2}$$ and otherwise $$\Gamma_{\bar{k},k}(z)=\binom{z}{2}+h^{-1} \left(\log 2-\frac{ \log \left(\binom{k}{z} \binom{n-k}{\bar{k}-z} \right) }{\binom{\bar{k}}{2}-\binom{z}{2}} \right)\left(\binom{\bar{k}}{2}-\binom{z}{2}\right),$$ for $z\in \{\floor{\frac{k\bar{k}}{n}},\floor{\frac{k\bar{k}}{n}}+1,\ldots,k\}$, 
\end{definition}Here the function $h^{-1}$ is the inverse function of $h$, which is defined in (\ref{BinEnt}). We establish the following proposition relating $d_{k,\bar{k}}(G)(z)$ and $\Gamma_{\bar{k},k}(G)(z)$.

\begin{proposition}\label{unionkkk}

Let $k,\bar{k},n \in \mathbb{N}$ with $k \leq \bar{k} \leq n$.
\begin{itemize}

\item[(1)] With high probability as $n \rightarrow + \infty$ for every $z\in \{\floor{\frac{k\bar{k}}{n}},\floor{\frac{k\bar{k}}{n}}+1,\ldots,k\}$
$$d_{\bar{k},k}(G)(z) \leq \Gamma_{\bar{k},k}(z).$$
\item[(2)] Suppose $\left( \log n \right)^5 \leq k \leq \bar{k} =\Theta\left(n^{C}\right)$ for $C \in (0,\frac{1}{2})$. For any $\beta \in (0,\frac{3}{2})$ with $\beta=\beta(C) >\frac{3}{2}-\left( \frac{5}{2}-\sqrt{6}\right)\frac{1-C}{C}$,
\begin{align}  \label{Corr}
\Gamma_{\bar{k},k}\left(0\right)-O\left(\left(\bar{k}\right)^{\beta} \sqrt{  \log n} \right) \leq  d_{\bar{k},k}(G)(0),
\end{align} 
with high probability as $n \rightarrow + \infty$.
\end{itemize}
\end{proposition}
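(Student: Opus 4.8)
For a fixed $\bar k$-subset $C\subseteq V(G)$ with $|C\cap\mathcal{PC}|=z$, the $\binom z2$ pairs inside $C\cap\mathcal{PC}$ are edges of $G$ with probability $1$ (they lie in the planted clique), while each of the remaining $N_z:=\binom{\bar k}{2}-\binom z2$ pairs — those with at least one endpoint outside $\mathcal{PC}$ — is present independently with probability $1/2$, the planting never touching them. Hence $|E[C]|\eqdistr\binom z2+\Binom(N_z,\tfrac12)$, and the Chernoff bound gives $\P\{|E[C]|\ge\binom z2+\gamma N_z\}\le\exp(-N_z\,r(\gamma,\tfrac12))$ for $\gamma\in(\tfrac12,1)$, with $r$ as in (\ref{alpha}). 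There are $\binom kz\binom{n-k}{\bar k-z}$ choices of such $C$, so by a union bound the expected number with more than $\Gamma_{\bar k,k}(z)$ edges is at most $\binom kz\binom{n-k}{\bar k-z}\exp(-N_z\,r(\gamma_z,\tfrac12))$, where $\gamma_z:=(\Gamma_{\bar k,k}(z)-\binom z2)/N_z$; by Definition \ref{GammaDfn} this $\gamma_z$ is exactly the value for which $N_z\,r(\gamma_z,\tfrac12)=\log\big(\binom kz\binom{n-k}{\bar k-z}\big)$, so the bound is $O(1)$ and a standard $\Theta(1/\sqrt{N_z})$ tightening of the Chernoff estimate (which changes $\Gamma_{\bar k,k}(z)$ only at lower order) drives it to $0$. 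Markov's inequality and a union bound over the at most $k+1$ values of $z$ then give (1); the case $z=\bar k=k$ is the trivial bound $|E[C]|\le\binom k2$.

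\paragraph{Part (2): reduction to the \ER densest subgraph.} Imposing overlap $0$ removes the planting altogether: a $\bar k$-subset $C$ with $C\cap\mathcal{PC}=\emptyset$ lies in $V(G)\setminus\mathcal{PC}$, and the subgraph of $G$ induced there coincides with that of $G_0$, hence is distributed as $G(n-k,\tfrac12)$ independently of which $k$ vertices form $\mathcal{PC}$. Therefore $d_{\bar k,k}(G)(0)\eqdistr d_{\mathrm{ER},\bar k}(G(n-k,\tfrac12))$ (and note $\lfloor k\bar k/n\rfloor=0$ under the present hypotheses, since $k\bar k\le\bar k^2=o(n)$, so $z=0$ is indeed the left endpoint of the range). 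The plan is then to invoke the \ER densest-subgraph estimate (\ref{US}) (Theorem \ref{denseeee}) with $n-k$ in place of $n$ and $K=\bar k$. Its hypotheses hold: $\bar k=\Theta(n^C)$ with $C<\tfrac12$ forces $k\le\bar k=o(n)$, hence $n-k=\Theta(n)$, $\bar k=\Theta((n-k)^C)$, and $(\log(n-k))^5\le(\log n)^5\le k\le\bar k$. This yields, w.h.p. and for every $\beta$ in the range established there — i.e. $\tfrac32-(\tfrac52-\sqrt6)\tfrac{1-C}{C}<\beta<\tfrac32$ — the bound $d_{\mathrm{ER},\bar k}(G(n-k,\tfrac12))\ge h^{-1}\!\big(\log2-\log\binom{n-k}{\bar k}/\binom{\bar k}{2}\big)\binom{\bar k}{2}-O(\bar k^{\beta}\sqrt{\log(n-k)})$. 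Evaluating Definition \ref{GammaDfn} at $z=0$ ($\binom z2=0$, $\binom k0=1$) identifies the main term above as exactly $\Gamma_{\bar k,k}(0)$, and $\sqrt{\log(n-k)}=O(\sqrt{\log n})$, which is (\ref{Corr}).

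\paragraph{Where the difficulty lies.} Part (1) is routine. Part (2) is, after the above reduction, a corollary of the \ER lower bound (\ref{US}), and that lower bound is where I expect the real obstacle: the first-moment threshold $\Gamma_{\bar k,k}(0)$ is \emph{not} attained by the plain second moment, because pairs of near-extremal dense $\bar k$-subgraphs of $G(n-k,\tfrac12)$ that share an atypically large number of vertices inflate the variance. Following and sharpening \cite{Bollobas18}, the plan for (\ref{US}) is a \emph{conditional} second moment: one counts only those dense $\bar k$-subgraphs (with threshold $\Gamma_{\bar k,k}(0)$ lowered by $\Theta(\bar k^{\beta}\sqrt{\log n})$, keeping the first moment divergent) that do not co-occur with another dense $\bar k$-subgraph overlapping them in more than a prescribed number of vertices — the probability of such a co-occurrence being handled by a first-moment union bound over overlapping pairs — and shows that for this truncated count the second moment is $(1+o(1))$ times the square of the first, so Paley--Zygmund yields a dense subgraph w.h.p. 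How tight this truncation must be, and hence both the admissible range of $\beta$ and the fact that the error is $\Theta(\bar k^{\beta}\sqrt{\log n})$ rather than $o(\bar k)$ for all $C<\tfrac12$, is dictated by the overlap profile of dense subgraphs as $C\to\tfrac12$.
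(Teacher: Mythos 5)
Your proposal follows essentially the same route as the paper on both parts: Part (1) is the paper's first-moment computation with the entropy-optimal threshold $\gamma_z$ followed by a union bound over the admissible overlaps $z$, and Part (2) is exactly the paper's one-line reduction $d_{\bar k,k}(G)(0)\eqdistr d_{\mathrm{ER},\bar k}\bigl(G(n-k,\tfrac12)\bigr)$ together with an appeal to Theorem \ref{denseeee}. The one place you are thinner than the paper is the union bound over $z$ in Part (1): a per-term correction of order $1/\bigl((\gamma_z-\tfrac12)\sqrt{N_z}\bigr)$ does make each individual expected count $o(1)$, but summed over the up to $k+1$ values of $z$ it gives roughly $k/\sqrt{\bar k\log(n/\bar k)}$, which need not vanish when $k$ is comparable to $\bar k$; the paper quantifies the decay of $\mathbb{E}[Z_{\gamma_z,z}]$ in terms of $\bar k-z$ (via its Lemma \ref{Bin}) and splits the sum over $z$ at $\bar k-z=(\log n)^2$, and some bookkeeping of that kind is needed to close your "union bound over $k+1$ values" step. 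Your sketch of the conditional second moment behind Theorem \ref{denseeee} (truncating on pairwise overlaps) also differs in detail from the paper's per-subgraph $(\gamma,\delta)$-flatness condition, but since Part (2) only cites that theorem this does not affect the proposition.
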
 The bounds stated in Proposition \ref{unionkkk} are based on the first and second moment methods. The proof of Proposition \ref{unionkkk} is in Section \ref{25}.

\begin{remark}
Under the assumptions of Part (2) of Proposition \ref{unionkkk} we have \begin{align*}\Gamma_{\bar{k},k}\left(0\right)&=\frac{1}{2}\binom{\bar{k}}{2}+\left(\frac{1}{\sqrt{2}}+o\left(1\right)\right) \sqrt{\binom{\bar{k}}{2} \log \left[ \binom{n-k}{\bar{k}}\right]}\\
&=\frac{\left(\bar{k}\right)^2}{4}+\frac{\left(\bar{k}\right)^{\frac{3}{2}}\sqrt{\log  \left(\frac{\left(n-k\right)e}{\bar{k}}\right) }}{2} + o\left(\left(\bar{k}\right)^{\frac{3}{2}}\sqrt{ \log n}\right).\end{align*} Here we have used Taylor expansion for $h^{-1}$ around $\log 2$: $h^{-1}\left(\log 2-t\right)=\frac{1}{2}+\left(\frac{1}{\sqrt{2}}+o\left(1\right)\right)\sqrt{t}$ (Lemma \ref{EntTaylor}) for $t=\frac{ \log \left(\binom{n-k}{\bar{k}} \right) }{\binom{\bar{k}}{2}}=O\left( \frac{\log n}{\bar{k}}\right)=o\left(1\right)$ and Stirling's approximation. The above calculation shows that the additive error term in (\ref{Corr}) can change the value of $\Gamma_{\bar{k},k}(0)$ only at the third  order term.
\end{remark}
We explain here how Part (1) of Proposition \ref{unionkkk} is established with a goal to provide intuition for the first moment curve definition. 
Fix some $z \in \{\floor{\frac{\bar{k}k}{n}},\floor{\frac{\bar{k}k}{n}}+1,\ldots,k\}$.
For $\gamma \in (0,1)$ we consider the counting random variable for the number of subgraphs with $\ok$ vertices, $z$ vertices common with the planted clique and at least $\binom{z}{2}+\gamma \left( \binom{\bar{k}}{2}-\binom{z}{2} \right)$ edges; $$Z_{\gamma,z}:=|\{A \subseteq V(G) : |A|=\bar{k}, |A \cap \mathcal{PC}|=z, |E[A]| \geq \binom{z}{2}+\gamma \left( \binom{\bar{k}}{2}-\binom{z}{2} \right) \}|.$$ Notice that first moment method, or simply Markov's inequality, yields $$\mathbb{P}\left[ Z_{\gamma,z} \geq 1 \right] \leq \mathbb{E}\left[Z_{\gamma,z} \right].$$ In particular, if for some $\gamma>0$ it holds $\mathbb{E}\left[Z_{\gamma,z} \right]=o(1)$ we conclude that $Z_{\gamma,z}=0$ whp and in particular all dense subgraphs have at most $\binom{z}{2}+\gamma \left( \binom{\bar{k}}{2}-\binom{z}{2} \right)$ edges, that is $$d_{\bar{k},k}(G)(z) \leq  \binom{z}{2}+\gamma \left( \binom{\bar{k}}{2}-\binom{z}{2} \right),$$ w.h.p. as $n \rightarrow + \infty$. Therefore the pursuit of finding the tightest upper bound using this technique, consists of finding the $ \min \gamma : \mathbb{E}\left[Z_{\gamma,z} \right]=o(1)$.

Note that for any subset $A \subset V(G)$ the number of its induced edges follows a shifted Binomial distribution $\binom{z}{2}+\mathrm{Bin}\left(\binom{\bar{k}}{2}-\binom{z}{2},\frac{1}{2} \right)$. In particular, we have
\begin{align*}
\mathbb{E}\left[Z_{\gamma,z} \right] = \binom{k}{z} \binom{n-k}{\bar{k}-z} \mathbb{P}\left[ \mathrm{Bin}\left(\binom{\bar{k}}{2}-\binom{z}{2},\frac{1}{2} \right) \geq  \gamma \left( \binom{\bar{k}}{2}-\binom{z}{2} \right) \right].
\end{align*} From this point on, standard identities connecting the tail of the Binomial distribution with the binary entropy function $h$ (see for example Lemma \ref{Bin} below) yield the optimal choice to be $$\gamma :=h^{-1} \left(\log 2-\frac{ \log \left(\binom{k}{z} \binom{n-k}{\bar{k}-z} \right) }{\binom{\bar{k}}{2}-\binom{z}{2}} \right),$$ which yields Part (1) if Proposition \ref{unionkkk}. More details are in Section \ref{25}. The part (2) follows from a much more elaborate second moment method, the discussion of which we defer to Subsection \ref{DenseER} and Section  \ref{17}.

 We now study the monotonicity property of the first moment curve. We establish the following proposition which proves that for appropriate choice of the overparametrization level of $\bar{k}$, the first moment curve $\Gamma_{\bar{k},k}(G)$ exhibits a monotonicity phase transitions at the predicted algorithmic threshold $k=\Theta \left(\sqrt{n}\right)$.

\begin{theorem}[Monotonicity Phase Transition at $k=\sqrt{n}$]\text{               } \label{FM} Let $k,\bar{k},n \in \mathbb{N}$ with $n \rightarrow + \infty$ and $\epsilon>0$ an arbitrarily small constant. Suppose $k \leq \bar{k} \leq n$ and  $\left(\log n\right)^5 \leq \bar{k}=o\left(n\right)$. There exist a sufficiently large constant $C_0=C_0\left(\epsilon\right)>0$ such that for the discretized interval $\mathcal{I}=\mathcal{I}_{C_0}=\mathbb{Z} \cap \left[ \floor{C_0\frac{\bar{k}k}{n}},\left(1-\epsilon \right)k\right]$ the following are true for all sufficiently large $n$,
\begin{itemize}
\item[(1)] if $ k=o\left(\sqrt{n}\right)$ then
\begin{itemize}

\item[(1i)] for any $\bar{k}=o\left(\frac{k^2}{ \log \left(\frac{n}{k^2}\right)} \right),$ the function $\Gamma_{\bar{k},k}(z),z \in \mathcal{I}_{C_0}$ is non-monotonic (Figure 1(a)).
\item[(1ii)] for any $\bar{k}=\omega\left(\frac{k^2}{ \log \left(\frac{n}{k^2}\right)} \right),$ the function $\Gamma_{\bar{k},k}(z), z \in \mathcal{I}_{C_0}$ is decreasing (Figure 1(b)).
\end{itemize}
\item[(2)] if $k=\omega\left(\sqrt{n}\right)$ then
 \begin{itemize}
\item[(2i)] for any $\bar{k}=o\left(\frac{n^2}{k^2\log \left(\frac{k^2}{n}\right)}\right),$ the function $\Gamma_{\bar{k},k}(z), z \in \mathcal{I}_{C_0}$ is non-monotonic (Figure 2(a)).
\item[(2ii)] for any $\bar{k}=\omega \left(\frac{n^2}{k^2\log \left(\frac{k^2}{n}\right) } \right),$  the function $\Gamma_{\bar{k},k}(z), z  \in \mathcal{I}_{C_0}$ is increasing (Figure 2(b)).
\end{itemize}
\end{itemize}

Furthermore, in the regime when the function is non-monotonic there are constants $0<D_1<D_2$ and $E>0$ such that for $u_1:=D_1  \ceil{\sqrt{\frac{\bar{k}}{\log \left(\frac{n}{\bar{k}}\right)}}}$ and $u_2:=D_2 \ceil{\sqrt{\frac{\bar{k}}{\log \left(\frac{n}{\bar{k}}\right)}}}$ and large enough $n$ the following are true.
\begin{itemize}
\item[(a)]  $\floor{ C_0\frac{\bar{k}k}{n}}<u_1<u_2<\left(1-\epsilon\right)k$ and 

\item[(b)]
\begin{align}\label{slack2}
  \max_{z \in \mathcal{I} \cap \left[u_1,u_2\right]} \Gamma_{\bar{k},k}(z) +\Omega\left( \frac{\bar{k}}{ \log \left(\frac{n}{\bar{k}}\right)} \right) \leq \Gamma_{\bar{k},k}(\floor{C_0 \frac{\bar{k}k}{n}}) \leq \Gamma_{\bar{k},k}\left(\left(1-\epsilon \right) k\right).
\end{align}
\item[(c)] $\Gamma_{\bar{k},k}\left(z\right)$ is an increasing function for $z \in \left[E \sqrt{\bar{k} \log \frac{n}{\bar{k}}},\left(1-\epsilon\right)k\right]$.
\end{itemize} 

\end{theorem}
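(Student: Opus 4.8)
\textbf{The plan} is to replace $\Gamma_{\bar{k},k}$ by an explicit leading-order curve, analyze that curve by one-variable calculus, and then transfer the conclusions back while controlling the approximation error. Write $M(z):=\binom{\bar{k}}{2}-\binom{z}{2}$ and $L(z):=\log\!\big(\binom{k}{z}\binom{n-k}{\bar{k}-z}\big)$, so that $\Gamma_{\bar{k},k}(z)=\binom{z}{2}+h^{-1}\!\big(\log 2-t(z)\big)M(z)$ with $t(z)=L(z)/M(z)$. Restricting to $z\le(1-\epsilon)k$ keeps $M(z)\ge c_\epsilon\binom{\bar{k}}{2}$ while $L(z)=O\!\big(\bar{k}\log\tfrac{n}{\bar{k}}\big)$, so $t(z)=o(1)$ uniformly on $\mathcal{I}$ (this uses $\bar{k}\ge(\log n)^{5}$). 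Inserting the expansion $h^{-1}(\log 2-t)=\tfrac12+\tfrac{1}{\sqrt 2}\sqrt{t}+O(t)$ of Lemma~\ref{EntTaylor}, using the identity $\binom{z}{2}+\tfrac12 M(z)=\tfrac12\binom{z}{2}+\tfrac12\binom{\bar{k}}{2}$, and applying Stirling (which yields $L(z)=L_0+z\log\tfrac{ek\bar{k}}{zn}+(\text{lower order})$ with $L_0:=\log\binom{n-k}{\bar{k}}=(1+o(1))\bar{k}\log\tfrac{n}{\bar{k}}$) together with a linearization of $\sqrt{L(z)M(z)}$ about $z=0$, one obtains, for $z=O\!\big(\sqrt{\bar{k}\log(n/\bar{k})}\big)$,
\[ \Gamma_{\bar{k},k}(z)=\mathrm{const}+\phi(z)+\mathrm{error}, \qquad \phi(z):=\frac{z^{2}}{4}+a\,z\log\frac{ek\bar{k}}{zn}, \quad a:=\tfrac14\sqrt{\bar{k}/\log(n/\bar{k})}, \]
with $\mathrm{error}=o\!\big(\bar{k}/\log(n/\bar{k})\big)$ on this range; keeping the error below the target depth is exactly what forces $\bar{k}\ge(\log n)^{5}$ (it controls the Taylor remainder $O(\bar{k}^{1/2}(\log\tfrac{n}{\bar{k}})^{3/2})$) and $\bar{k}=o(n)$ (which gives $\bar{k}\log\tfrac{n}{\bar{k}}=o(n)$).

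\textbf{Analyzing $\phi$.} On $\mathcal{I}$ one has $z\ge z_0:=\floor{C_0\bar{k}k/n}>k\bar{k}/n$ (as $C_0>1$), hence $\phi'(z)=\tfrac z2-a\log\tfrac{zn}{k\bar{k}}$ with $\log\tfrac{zn}{k\bar{k}}>0$, and $\phi''(z)=\tfrac12-a/z$; thus $\phi'$ is strictly convex with a unique minimum at $z=2a$, so its sign pattern on $\mathcal{I}$ is governed by the endpoint values $\phi'(z_0)=\tfrac{z_0}{2}-a\log C_0$ and $\phi'((1-\epsilon)k)$, which equals $\tfrac k2-a\log\tfrac{n}{\bar{k}}$ up to lower-order factors. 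Unwinding the resulting inequalities (comparing $k\sqrt{\bar{k}\log(n/\bar{k})}$ with $n$, and $\bar{k}\log\tfrac{n}{\bar{k}}$ with $k^{2}$) gives: $\phi'(z_0)<0$ whenever $k=o(\sqrt n)$; $\phi'(z_0)>0$ when $k=\omega(\sqrt n)$ and $\bar{k}=\omega(n^{2}/(k^{2}\log\tfrac{k^{2}}{n}))$; and $\phi'((1-\epsilon)k)$ changes sign precisely across $\bar{k}\log\tfrac{n}{\bar{k}}\asymp k^{2}$. These four cases are exactly (1i)--(2ii). They also give (c): for $z\ge E\sqrt{\bar{k}\log(n/\bar{k})}$ with $E$ a large enough absolute constant, $\tfrac z2$ beats $a\log\tfrac{zn}{k\bar{k}}\le a\log\tfrac{n}{\bar{k}}\asymp\sqrt{\bar{k}\log(n/\bar{k})}$, so $\phi'>0$ there. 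For (a)--(b): in the non-monotonic regimes $k\sqrt{\bar{k}\log(n/\bar{k})}=o(n)$, so the larger root $z_{+}$ of $\phi'$ is $\gg a$ while $z_0=o(a)$; choosing constants $D_1<1/2<D_2$ then yields $z_0<u_1<u_2<z_{+}$ (part (a)), and since $\phi'<0$ on $[z_0,z_{+}]\supseteq[u_1,u_2]$,
\[ \Gamma_{\bar{k},k}(z_0)-\max_{z\in[u_1,u_2]}\Gamma_{\bar{k},k}(z)\ \ge\ \int_{z_0}^{u_1}\!\big(-\phi'(z)\big)\,dz-o\!\Big(\tfrac{\bar{k}}{\log(n/\bar{k})}\Big)\ \gtrsim\ a\log C_0\cdot u_1\ =\ \Omega\!\Big(\tfrac{\bar{k}}{\log(n/\bar{k})}\Big), \]
where we use that $-\phi'$ is increasing on $[z_0,u_1]$ and that $u_1\gg z_0$; finally $\Gamma_{\bar{k},k}(z_0)\le\Gamma_{\bar{k},k}((1-\epsilon)k)$ because $\tfrac12\binom{(1-\epsilon)k}{2}=\Theta(k^{2})$ dwarfs every other contribution at $z=(1-\epsilon)k$ while $\Gamma_{\bar{k},k}(z_0)=o(k^{2})$.

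\textbf{Back to all of $\mathcal{I}$, and the main obstacle.} The reduction above is valid only for $z=O(\sqrt{\bar{k}\log(n/\bar{k})})$; for the monotonicity statements (1i)--(2ii) and (c) we also need $z$ comparable to $k$ --- which may be comparable to $\bar{k}$ when $\bar{k}\approx k$, where linearizing $\sqrt{L(z)}$ is invalid. There I would differentiate $\Gamma_{\bar{k},k}$ directly, using $\tfrac{d}{d\gamma}(\log 2-h(\gamma))=\log\tfrac{\gamma}{1-\gamma}$ and $L'(z)\approx\log\tfrac{(k-z)(\bar{k}-z)}{z(n-k-\bar{k}+z)}$, to get $\Gamma_{\bar{k},k}'(z)=(1+o(1))\big(\tfrac z2+a\log\tfrac{k\bar{k}}{zn}\big)+(\text{additive errors})$; since in each regime the bracket is separated from $0$ by a growing multiple of $a$ (or of $a\log\tfrac{n}{\bar{k}}$) at the relevant endpoint, convexity of $\phi'$ pins down the sign of $\Gamma_{\bar{k},k}'$ on the whole interval. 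The real work --- and the main obstacle --- is precisely this uniform error control: one must show simultaneously that the gap between $\Gamma_{\bar{k},k}$ and $\mathrm{const}+\phi$ stays $o(\bar{k}/\log(n/\bar{k}))$ on the dip window, so that the $\Omega(\bar{k}/\log(n/\bar{k}))$ depth is not washed out, and that the multiplicative and Stirling errors in $\Gamma_{\bar{k},k}'$ never overwhelm the (sometimes merely polynomially large) margin by which $\phi'$ is bounded away from zero, uniformly over $z\in[\floor{C_0\bar{k}k/n},(1-\epsilon)k]$ and over all admissible $(n,k,\bar{k})$ in a given regime. This is where the hypotheses $\bar{k}\ge(\log n)^{5}$ and $\bar{k}=o(n)$, the dichotomy $k=o(\sqrt n)$ versus $k=\omega(\sqrt n)$, the choice of $C_0$ sufficiently large, and the two cutoffs --- at $C_0\bar{k}k/n$ (to keep $\log\tfrac{zn}{k\bar{k}}$ away from $0$) and at $(1-\epsilon)k$ (to keep $t(z)=o(1)$) --- all get used.
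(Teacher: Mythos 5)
Your plan is, in essence, the paper's own proof recast in continuous language. The paper Taylor-expands $h^{-1}$ to replace $\Gamma_{\bar{k},k}$ by an explicit curve $\Phi_{\bar{k}}$ with $O(1)$ error (Lemma \ref{PhiLemma}), then computes the discrete increment $\Gamma_{\bar{k},k}(z+1)-\Gamma_{\bar{k},k}(z)=z(\tfrac12-o(1))-\Theta\bigl(\sqrt{\bar{k}/\log(n/\bar{k})}\bigr)\log\bigl(\tfrac{(z+1)n}{k\bar{k}}\bigr)+O(1)$ (Lemma \ref{DiscGap}) — which is precisely your $\phi'(z)$ — and classifies the four regimes by comparing the quantity $T_n$ of Lemma \ref{Tn} against $\bar{k}k/n$ and $k$, i.e.\ by your endpoint sign checks; the depth in (b) is obtained by telescoping the increment over $[\floor{C_0\bar{k}k/n},u_1]$, which is your integral. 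Your convexity observation for $\phi'$ is a mild streamlining of the paper's case analysis, and the ``uniform error control'' you flag as the main obstacle is exactly what Lemmas \ref{AFun}--\ref{DiscGap} and the calculus facts of Lemma \ref{Calc} carry out; so the route is the same and the acknowledged work is genuinely the bulk of the proof.

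One step as written is wrong, though fixable within your framework: the justification of $\Gamma_{\bar{k},k}(\floor{C_0\bar{k}k/n})\le\Gamma_{\bar{k},k}((1-\epsilon)k)$ via ``$\Gamma_{\bar{k},k}(z_0)=o(k^2)$'' is false, since $\Gamma_{\bar{k},k}(z_0)\ge\tfrac12\binom{\bar{k}}{2}=\Theta(\bar{k}^2)\ge\Theta(k^2)$. The leading term $\tfrac12\binom{\bar{k}}{2}$ is common to both endpoints and cancels; what remains is to show that the gain $\tfrac12\binom{(1-\epsilon)k}{2}=\Theta(k^2)$ beats the \emph{decrease} of the second-order term $\tfrac{1}{\sqrt2}\sqrt{A(z)M(z)}$ between $z=z_0$ and $z=(1-\epsilon)k$. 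That decrease is of order $\sqrt{\bar{k}}\,k\sqrt{\log n}$ (coming from $A((1-\epsilon)k)-A(z_0)=O(k\log n)$ and $M(z_0)-M((1-\epsilon)k)=O(k^2)$), so the comparison is not automatic: it requires $k^2=\omega\bigl(\sqrt{\bar{k}}\,k\sqrt{\log n}\bigr)$, i.e.\ $\sqrt{\bar{k}\log(n/\bar{k})}=o(k)$, which is exactly the condition characterizing the non-monotonic regime (your ``$\phi'((1-\epsilon)k)>0$'' condition). This is how the paper closes Part (3) of Lemma \ref{Tn}; you should invoke that regime condition here rather than an absolute bound on $\Gamma_{\bar{k},k}(z_0)$.
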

The proof of  Theorem~\ref{FM} can be found in Section \ref{4710}.

\begin{remark}\label{remark}
In the special case where $\bar{k}=k$, it is straightforward to check from Theorem \ref{FM} that $\Gamma_{\bar{k},k}$ exhibits a monotonicity phase transition at $k=\Theta\left(n^{\frac{2}{3}}\right)$ and does not exhibit the monotonicity phase transition at $k=\Theta\left(\sqrt{n}\right)$. 
\end{remark}

\begin{remark}
Note that the monotonicity analysis in Theorem \ref{FM} is performed in the slightly ``shrinked" interval  $\mathcal{I}_{C_0}=\mathbb{Z} \cap \left[ \floor{C_0\frac{\bar{k}k}{n}},\left(1-\epsilon \right)k\right]$ for arbitrarily small $\epsilon>0$ and some constant $C_0=C_0(\epsilon)>0$. The restriction is made purely for technical reasons as it allows for an easier analysis of the curve's monotonicity behavior. We leave the monotonicity analysis near the endpoints as an open question.
\end{remark}

\begin{figure}[t]
     \begin{center}
        \subfigure["Low" overparametrization $\bar{k}=k=700$.]{%
           \label{fig:first}
            \includegraphics[width=0.7\textwidth]{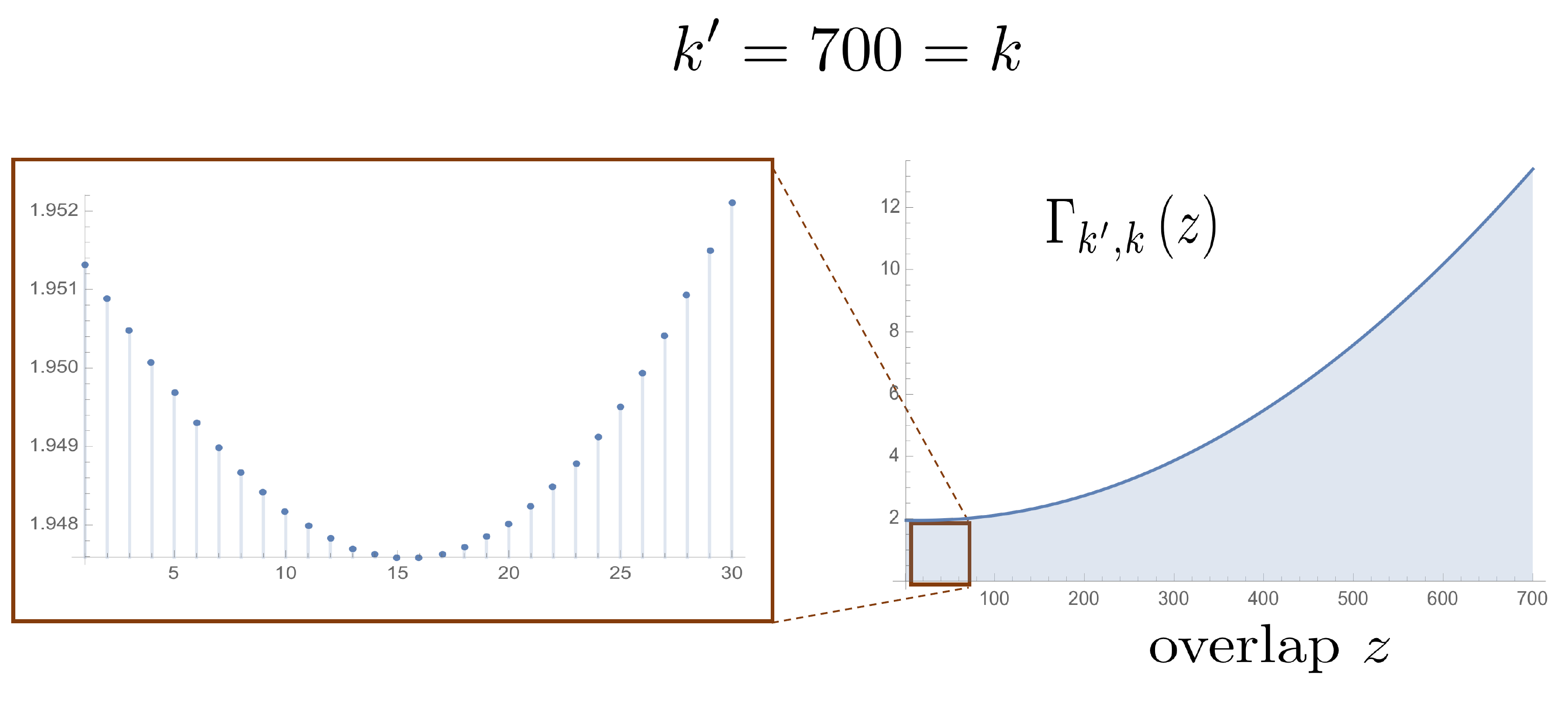}
        }%
\qquad
        \subfigure[``High" overparametrization $\bar{k}=2k^2=980000$.]{%
           \label{fig:second}
           \includegraphics[width=0.36\textwidth]{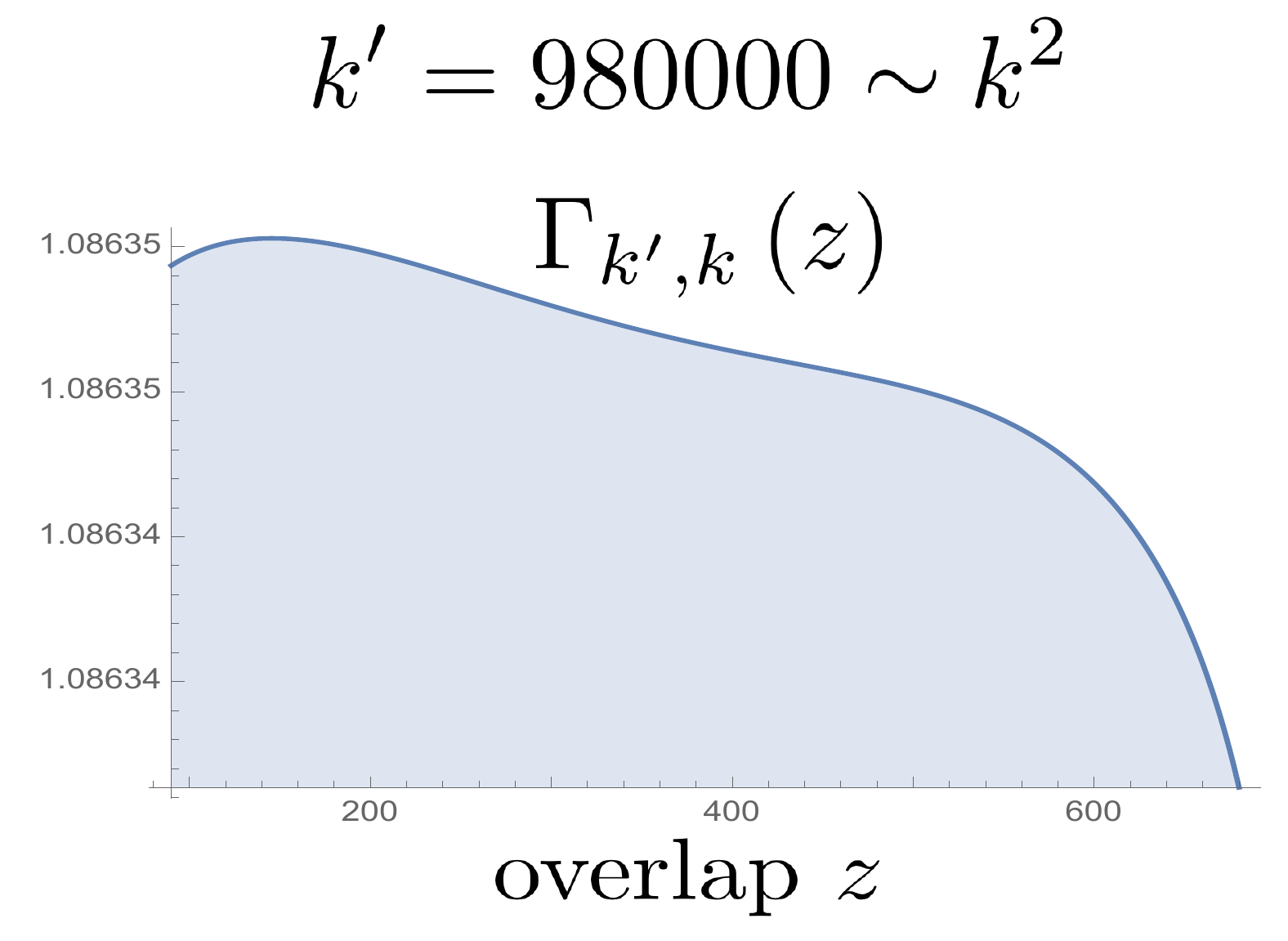}
        }\\ %  ------- End of the first row ----------------------%
    \end{center}
    \caption{ %
       The behavior $\Gamma_{\bar{k},k}$ for $n=10^7$ nodes, planted clique of size $k=700 \ll \floor{\sqrt{n}}=3162$ and ``high" and ``low" values of $\bar{k}$.  We approximate $\Gamma_{\bar{k},k}(z)$ using the Taylor expansion of $h^{-1}$  by $\tilde{\Gamma}_{\bar{k},k}(z)=\frac{1}{2}\left(\binom{k}{2}+\binom{z}{2}\right)+\frac{1}{\sqrt{2}}\sqrt{\left( \binom{k}{2}-\binom{z}{2}  \right) \log \left[ \binom{k}{z}\binom{n-k}{\bar{k}-z}\right]}$. To capture the monotonicity behavior, we renormalize and plot $\left(\bar{k}\right)^{-\frac{3}{2}}\left(\tilde{\Gamma}_{\bar{k},k}(z)-\frac{1}{2}\binom{\bar{k}}{2}\right)$ versus the overlap sizes $z \in [\floor{\frac{\bar{k}k}{n}},k]$.
     }%
   \label{fig:subfigures}
\end{figure}

Theorem \ref{FM} suggests that there are four regimes of interest for the pair $(k,\bar{k})$ and the monotonicity behavior 
of $\Gamma_{\bar{k},k}(z)$. We explain here the implication of Theorem \ref{FM} under the assumption 
that \textit{$\Gamma_{\bar{k},k}(z)$ is a tight approximation of $d_{\bar{k},k}(G)(z)$.}

 Let us focus first on the regime where the size of the planted clique is $k=o \left( \sqrt{n}\right)$. Assume first that the level of overparametrization is relatively small, namely $\bar{k}=o\left(\frac{k^2}{ \log \left(\frac{n}{k^2}\right)} \right)$, including the case $\bar{k}=k$. In that case the curve is non-monotonic and (\ref{slack2}) holds (the case of Figure 1(a)). Now this implies that $\bar{k}$-OGP appears for the model. The reason is that under the tightness assumption, (\ref{slack2}) translates to $$\max_{z \in \mathcal{I} \cap \left[u_1,u_2\right]} d_{\bar{k},k}(G)(z) +\Omega\left( \frac{\bar{k}}{ \log \left(\frac{n}{\bar{k}}\right)} \right) \leq d_{\bar{k},k}(G)(\floor{C_0 \frac{\bar{k}k}{n}}) \leq d_{\bar{k},k}(G)\left(\left(1-\epsilon\right)k\right).$$
We then conclude  that for sufficiently small constant $c>0$ any $\bar{k}$-vertex subgraph with number of edges at least $ d_{\bar{k},k}(G)(\floor{C_0 \frac{\bar{k}k}{n}})-c \frac{\bar{k}}{ \log \left(\frac{n}{\bar{k}}\right)} $ must have either at most $u_1$ intersection with $\mathcal{PC}$ or at least $u_2$ intersection with $\mathcal{PC}$ and there exist subgraphs with both at most $u_1$ and at least $u_2$ intersection with $\mathcal{PC}$ dense subgraphs with at least that many edges.

\begin{figure}[t]\label{fig:2}
     \begin{center}
        \subfigure["Low" overparametrization $\bar{k}=k=4000$.]{%
           \label{fig:first}
            \includegraphics[width=0.65\textwidth]{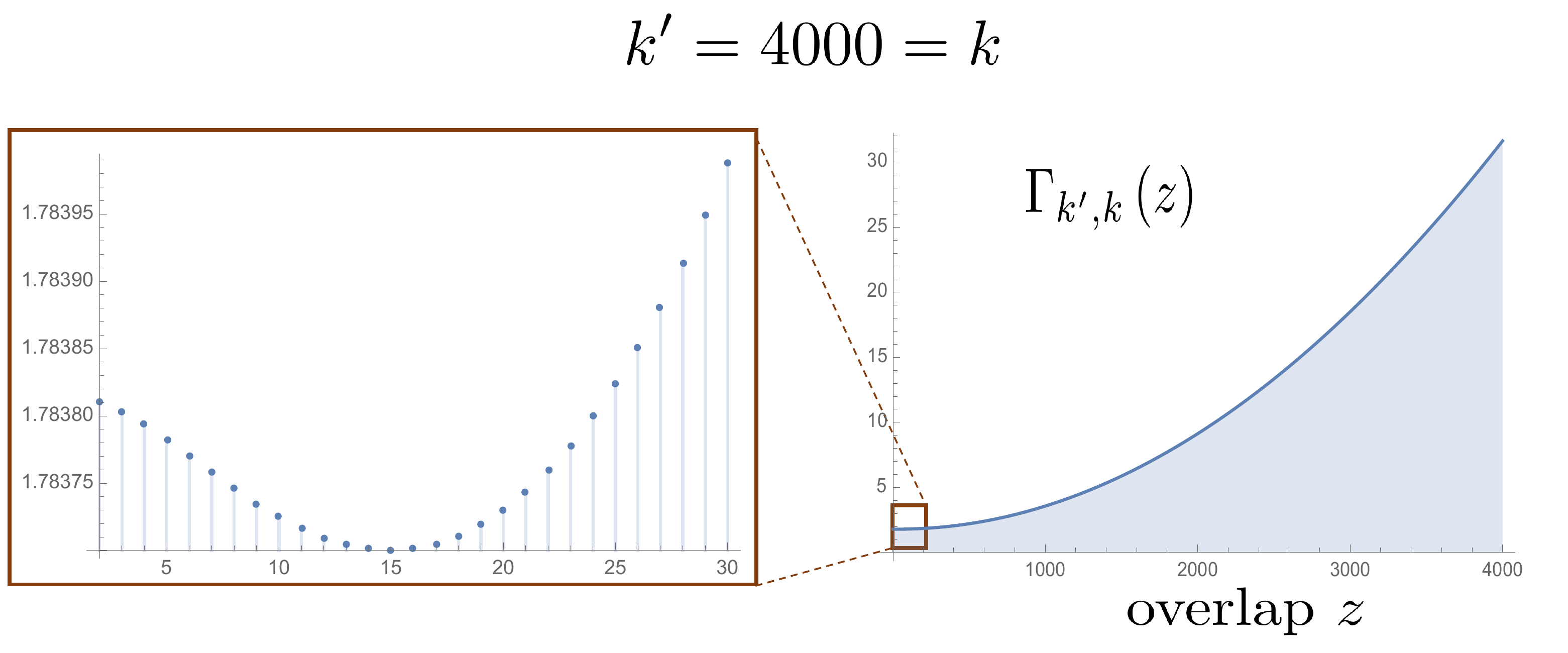}
        }%
\qquad
        \subfigure[``High" overparametrization $\bar{k}=n^2/k^2=6250000$.]{%
           \label{fig:second}
           \includegraphics[width=0.4\textwidth]{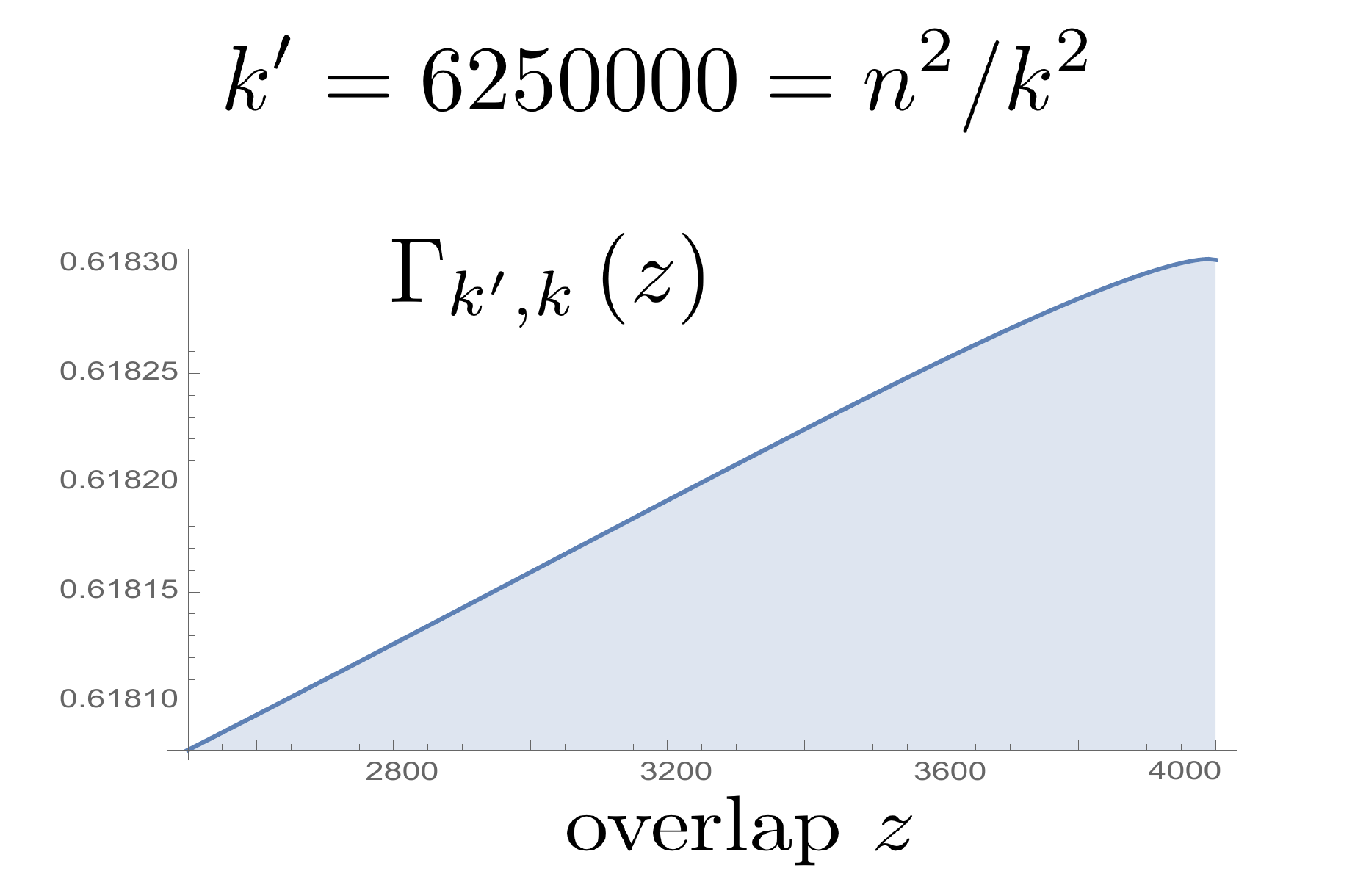}
        }\\ %  ------- End of the first row ----------------------%
    \end{center}
    \caption{
       The behavior $\Gamma_{\bar{k},k}$ for $n=10^7$ nodes, planted clique of size $k=4000 \gg \floor{\sqrt{n}}=3162$ and ``high" and ``low" values of $\bar{k}$.  The rest of the plotting details are identical with that of Figure 1.
     }%
   \label{fig:subfigures}
\end{figure}

Now assume that overparametrization is relatively large, that is $\bar{k}=\omega\left(\frac{k^2}{ \log \left(\frac{n}{k^2}\right)}\right)$. Then the function $\Gamma_{\bar{k},k}(z)$ is decreasing (the case of Figure 1(b)). This is a regime where $\bar{k}$-OGP disappears but the higher overlap $z$ with $\mathcal{PC}$ implies smaller value of $d_{\bar{k},k}(G)(z)$. In particular, in that case one can hope to perhaps 
efficiently find a sufficiently dense subgraphs but they have almost zero intersection with $\mathcal{PC}$. Thus in this case  
the landscape of the dense subgraphs is \textit{uninformative}.
In conclusion, when $k=o\left(\sqrt{n}\right)$ (and again under the tightness assumption)
 either the landscape of the dense subgraphs is either \textit{uninformative or it exhibits $\bar{k}$-OGP}.

Now suppose $k=\omega \left( \sqrt{n}\right)$. Assume first that the overparametrization is relatively small, that is $\bar{k}=o\left(\frac{n^2}{k^2\log \left(\frac{k^2}{n}\right)}\right)$.  In that regime the curve is non-monotonic (see Figure 2(a)). Then, as in the previous case, the 
$\bar{k}$-OGP holds  for the model.

Finally, assuming that the overparametrization is relatively large, that is $\bar{k}=\omega\left(\frac{n^2}{k^2\log \left(\frac{k^2}{n}\right)}\right)$, the function $\Gamma_{\bar{k},k}(z)$ is increasing (see Figure 2(b)). Under the tightness assumption, it then follows that $\bar{k}$-OGP disappears and higher overlap $z$ with $\mathcal{PC}$ implies higher $d_{\bar{k},k}(G)(z)$. This is an \textit{informative} 
case where one can hope to find  sufficiently dense subgraphs, using say a method of local improvements and then use it to find the hidden clique itself.
Notice that in this regime the densest subgraphs almost entirely contain $\mathcal{PC}$.

Summing this up we arrive at the following conjecture based on Theorem \ref{FM}.

\begin{conjecture}\label{mainconj}
Suppose $ \left( \log n \right)^5 \leq k=o(n)$ and  arbitrary $\epsilon \in (0,1)$.
\begin{itemize}
\item[(1)] If $ k=o\left(\sqrt{n}\right)$ then
\begin{itemize}

\item[(1i)] for any $\bar{k}=o\left(k^2 \log \left(\frac{n}{k^2}\right) \right)$ there is $\bar{k}$-Overlap Gap Property w.h.p. as $n \rightarrow + \infty$.
\item[(1ii)] for any $\bar{k}=\omega\left(k^2 \log \left(\frac{n}{k^2}\right) \right)$  there is no $\bar{k}$-Overlap Gap Property, but $d_{\bar{k},k}(G)(z)$ is decreasing as a function of $z$ w.h.p. as $n \rightarrow + \infty$. In particular, the near-optimal solutions of $\mathcal{D}_{\bar{k},k}(G)$ are uniformative about recovering $\mathcal{PC}$. 
\end{itemize}
\item[(2)] if $k=\omega\left(\sqrt{n}\right)$,
 \begin{itemize}
\item[(2i)] for any $\bar{k}=o\left(\frac{n^2}{k^2\log \left(\frac{k^2}{n}\right)}\right)$ there is $\bar{k}$-Overlap Gap Property w.h.p. as $n \rightarrow + \infty$.
\item[(2ii)] for any $\omega \left(\frac{n^2}{k^2\log \left(\frac{k^2}{n}\right) } \right)=\bar{k}=o\left(n \right),$   there is no $\bar{k}$-Overlap Gap Property and $d_{\bar{k},k}(G)(z)$ is increasing as a function of $z$ w.h.p. as $n \rightarrow + \infty$. In particular, the near-optimal solutions of $\mathcal{D}_{\bar{k},k}(G)$ are informative about recovering $\mathcal{PC}$.
\end{itemize}

Furthermore, in the regime that there is $\bar{k}$-Overlap Gap Property there are constants $0<D_1<D_2$ and $E>0$ such that for $u_1:=D_1  \ceil{\sqrt{\frac{\bar{k}}{\log \left(\frac{n}{\bar{k}}\right)}}}$ and $u_2:=D_2 \ceil{\sqrt{\frac{\bar{k}}{\log \left(\frac{n}{\bar{k}}\right)}}}$ the following are true.
\begin{itemize}
\item[(a)] For large enough $n$, $\floor{ C_0\frac{\bar{k}k}{n}}<u_1<u_2<\left(1-\epsilon\right)k$ and 

\item[(b)]
\begin{align}\label{slackConj2}
  \max_{z \in \mathcal{I} \cap \left[u_1,u_2\right]} d_{\bar{k},k}(G)(z) +\Omega\left( \frac{\bar{k}}{ \log \left(\frac{n}{\bar{k}}\right)} \right) \leq d_{\bar{k},k}(G)(\floor{C_0 \frac{\bar{k}k}{n}}) \leq d_{\bar{k},k}(G)\left(\left(1-\epsilon \right) k\right),
\end{align}w.h.p. as $n \rightarrow + \infty$.
\item[(c)] $d_{\bar{k},k}(G)\left(z\right)$ is an increasing function when $z \in [E \sqrt{\bar{k} \log \frac{n}{\bar{k}}},\left(1-\epsilon\right)k]$, w.h.p. as $n \rightarrow + \infty$.
\end{itemize} 

\end{itemize}

\end{conjecture}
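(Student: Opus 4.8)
The plan is to reduce the whole statement to an elementary analysis of a smooth surrogate of $\Gamma_{\bar{k},k}$ and then to read off each of the four regimes, and the quantitative claims (a)--(c), from the sign of a convex function at the two endpoints of $\mathcal{I}_{C_0}$. Write $L(z):=\log\!\big(\binom{k}{z}\binom{n-k}{\bar{k}-z}\big)$ and $N(z):=\binom{\bar{k}}{2}-\binom{z}{2}=\tfrac{(\bar{k}-z)(\bar{k}+z-1)}{2}$, so that $\Gamma_{\bar{k},k}(z)=\binom{z}{2}+h^{-1}\!\big(\log 2-L(z)/N(z)\big)\,N(z)$. On $\mathcal{I}_{C_0}$, $z\le(1-\epsilon)\bar{k}$ forces $N(z)=\Theta_\epsilon(\bar{k}^2)$ while $L(z)=O(\bar{k}\log n)$, so $t(z):=L(z)/N(z)=O(\log n/\bar{k})=o(1)$ uniformly. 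Using Lemma \ref{EntTaylor} in the second-order form $h^{-1}(\log 2-t)=\tfrac12+\sqrt{t/2}+O(t^{3/2})$, I would work instead with
\[
\tilde{\Gamma}_{\bar{k},k}(z):=\tfrac12\Big(\binom{\bar{k}}{2}+\binom{z}{2}\Big)+\tfrac1{\sqrt{2}}\sqrt{N(z)\,L(z)},
\]
recording (by Stirling) that $|\Gamma_{\bar{k},k}-\tilde{\Gamma}_{\bar{k},k}|=O(N(z)t(z)^{3/2})=O(\bar{k}^{1/2}(\log n)^{3/2})$ and $|(\Gamma_{\bar{k},k}-\tilde{\Gamma}_{\bar{k},k})'|=O(\sqrt{\log n/\bar{k}}\,\log n)$, both $o(\bar{k}/\log(n/\bar{k}))$ precisely because $\bar{k}\ge(\log n)^5$; this makes them negligible against every quantity appearing in the conclusions, so it suffices to analyse $\tilde{\Gamma}_{\bar{k},k}$.

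Next I would show $\tilde{\Gamma}_{\bar{k},k}$ is \emph{convex} on $\mathcal{I}_{C_0}$: since $\tilde{\Gamma}_{\bar{k},k}''=\tfrac12+\tfrac1{\sqrt2}(\sqrt{NL})''$ and $\sqrt{NL}$ varies slowly on scale $\bar{k}$, one gets $(\sqrt{NL})''=o(1)$, hence $\tilde{\Gamma}_{\bar{k},k}''>0$. So $\tilde{\Gamma}_{\bar{k},k}$ is ``valley shaped'' (decreasing, then increasing, with a single interior minimiser $z^\star$), and its behaviour on $\mathcal{I}_{C_0}$ is governed entirely by the signs of $\tilde{\Gamma}_{\bar{k},k}'=\tfrac12(z-\tfrac12)+\tfrac1{\sqrt2}(\sqrt{NL})'$ at $z_-:=\lfloor C_0\tfrac{\bar{k}k}{n}\rfloor$ and $z_+:=(1-\epsilon)k$: non-monotone iff $\tilde{\Gamma}_{\bar{k},k}'(z_-)<0<\tilde{\Gamma}_{\bar{k},k}'(z_+)$, decreasing iff $\tilde{\Gamma}_{\bar{k},k}'(z_+)\le0$, increasing iff $\tilde{\Gamma}_{\bar{k},k}'(z_-)\ge0$. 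Using $N'=-(2z-1)$ and $L'(z)=\log\tfrac{(k-z)(\bar{k}-z)}{z(n-k-\bar{k}+z)}+O(1)$, a direct evaluation gives: at $z_-$ (where $L'=\log(1/C_0)+O(1)$ is a constant) $\tilde{\Gamma}_{\bar{k},k}'(z_-)<0\iff\bar{k}\log(n/\bar{k})=O(n^2/k^2)$, and at $z_+$ $\tilde{\Gamma}_{\bar{k},k}'(z_+)>0\iff\bar{k}\log(n/\bar{k})=O(k^2)$ (implicit constants depending on $C_0$, resp.\ $\epsilon$). Since $n^2/k^2\lessgtr k^2$ according as $k\lessgtr\sqrt n$, the $z_-$-condition is automatic when $k=o(\sqrt n)$ (there $\bar{k}k^2\log(n/\bar{k})=o(n^2)$ always, as $\bar{k}\log(n/\bar{k})\le n$ and $k^2=o(n)$) and is the binding one when $k=\omega(\sqrt n)$, and symmetrically for $z_+$; this yields (1i)--(2ii), with the dichotomy being $\bar{k}\lessgtr k^2/\log(n/k^2)$ for $k=o(\sqrt n)$ and $\bar{k}\lessgtr n^2/(k^2\log(k^2/n))$ for $k=\omega(\sqrt n)$ --- i.e.\ the announced transition at $k=\sqrt n$, where the binding endpoint switches.

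For the quantitative part, in the non-monotone regime $z^\star$ solves $z^\star=\tfrac12\sqrt{\bar{k}/\log(n/\bar{k})}\,(\log\tfrac{z^\star n}{\mathrm{e}k\bar{k}}+O(1))$, so $\sqrt{\bar{k}/\log(n/\bar{k})}\lesssim z^\star\lesssim\sqrt{\bar{k}\log(n/\bar{k})}$ and $z^\star=o(\bar{k})$ (which retroactively validates the approximations $N(z)=\binom{\bar{k}}{2}(1+o(1))$, $L(z)=\log\binom{n-k}{\bar{k}}(1+o(1))$ near $z^\star$). Fixing the constants in the order $\epsilon\to C_0$ (large) $\to E$ (large) $\to D_2\to D_1$ (both small) gives $z_-<u_1<u_2<z^\star<z_+$, which is (a) and places $[u_1,u_2]$ in the strictly decreasing branch. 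For (b): $-\tilde{\Gamma}_{\bar{k},k}'(z)\ge c\sqrt{\bar{k}/\log(n/\bar{k})}$ on $[z_-,u_2]$ (using $\log\tfrac{zn}{\mathrm{e}k\bar{k}}\ge\log(C_0/\mathrm{e})$ there and $D_2$ small), so integrating over $[u_1,u_2]$, of length $\Theta(\sqrt{\bar{k}/\log(n/\bar{k})})$, yields $\Gamma_{\bar{k},k}(z_-)-\max_{[u_1,u_2]}\Gamma_{\bar{k},k}\ge\Omega(\bar{k}/\log(n/\bar{k}))$ after absorbing the Step-1 error; the inequality $\Gamma_{\bar{k},k}(z_-)\le\Gamma_{\bar{k},k}(z_+)$ follows from the direct expansion $\tilde{\Gamma}_{\bar{k},k}(z_+)-\tilde{\Gamma}_{\bar{k},k}(z_-)=\tfrac{(1-\epsilon)^2k^2}{4}-\tfrac{(1-\epsilon)k}{4}\sqrt{\bar{k}\log(n/\bar{k})}+\dots>0$, using $\bar{k}\log(n/\bar{k})=o(k^2)$ in the non-monotone range. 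For (c): beyond $z^\star$ one has $\tilde{\Gamma}_{\bar{k},k}'>0$, and for $z\ge E\sqrt{\bar{k}\log(n/\bar{k})}$ the term $\tfrac12(z-\tfrac12)$ already dominates $|(\sqrt{NL})'|=O(\sqrt{\bar{k}\log(n/\bar{k})})$ once $E$ is large, so $\Gamma_{\bar{k},k}'>0$ there as well.

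\textbf{Main obstacle.} The conceptual content is modest; the labour is asymptotic bookkeeping. I expect the genuinely delicate point to be the $z_+$-analysis when $\bar{k}\asymp k\asymp z_+$: there $N(z)$ is no longer $\binom{\bar{k}}{2}(1+o(1))$, so the convenient small-$z$ form of $\tilde{\Gamma}_{\bar{k},k}'$ fails and one must carry the exact factor $(\bar{k}-z)(\bar{k}+z-1)$ through the computation, which is precisely what produces the log factor $\log(n/k^2)$ (rather than something weaker) in the threshold of (1i)/(1ii). The second difficulty is the interlocking of $C_0,D_1,D_2,E$ and $\epsilon$: they must be fixed in the above order so that the chain of inequalities in (a), the depth bound and the $\Gamma_{\bar{k},k}(z_-)\le\Gamma_{\bar{k},k}(z_+)$ comparison in (b) (whose two sides are both of order $k^2$), and the derivative bound in (c) all hold for every large $n$ while the Step-1 errors stay below everything they meet; along the way one also needs the elementary fact that ``$\bar{k}=o(\text{threshold})$'' already forces $\bar{k}\log(n/\bar{k})=o(k^2)$ (resp.\ $o(n^2/k^2)$), via monotonicity of $x\mapsto x\log(n/x)$.
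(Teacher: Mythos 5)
There is a genuine and fundamental gap: the statement you are asked to prove is Conjecture~\ref{mainconj}, which concerns the \emph{random} curve $d_{\bar{k},k}(G)(z)$, but your entire argument analyses only the deterministic first moment curve $\Gamma_{\bar{k},k}(z)$ (via a smooth surrogate $\tilde{\Gamma}$). Proposition~\ref{unionkkk} gives $d_{\bar{k},k}(G)(z)\le\Gamma_{\bar{k},k}(z)$ as an \emph{upper bound only}, with a matching lower bound established solely at the endpoint $z=0$, and even there only for $\bar{k}=\Theta(n^C)$ with error term $O(\bar{k}^{\beta}\sqrt{\log n})$ that is $o(\bar{k}/\log(n/\bar{k}))$ only when $C<0.0917$. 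Nothing in your proposal transfers the non-monotonicity (or monotonicity) of $\Gamma_{\bar{k},k}$ to $d_{\bar{k},k}(G)$: for the OGP claims (1i)/(2i) one needs lower bounds on $d_{\bar{k},k}(G)$ at the two outer overlaps matching $\Gamma$ up to an error smaller than the ``depth'' $\Omega(\bar{k}/\log(n/\bar{k}))$, and for the no-OGP/monotonicity claims (1ii)/(2ii) and part (c) one needs two-sided control of $d_{\bar{k},k}(G)(z)$ at \emph{every} overlap $z$, i.e.\ a second moment (or comparable) argument for each overlap-restricted ensemble. That tightness is exactly what the paper identifies as open --- it is why the statement is a conjecture rather than a theorem, and why the paper can only establish the special case in Theorem~\ref{OGP} (requiring $k\le\bar{k}=n^C$, $C<1/2-\sqrt{6}/6$) by invoking the hard-won second moment result of Theorem~\ref{denseeee}. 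Your proposal silently assumes ``$\Gamma$ is tight for all $z$,'' which is the entire content being conjectured.

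Setting that aside, the part of your argument that genuinely concerns $\Gamma_{\bar{k},k}$ is essentially a re-derivation of Theorem~\ref{FM}, which the paper proves by discrete difference estimates (Lemmas~\ref{AFun}--\ref{Tn}) rather than your convexity-of-a-smooth-surrogate route; your endpoint-derivative criterion and the resulting thresholds agree with the paper's $T_n$ trichotomy, so that portion is a reasonable alternative (modulo verifying the claimed $(\sqrt{NL})''=o(1)$ uniformly, including near $z_+$ where $N(z)$ is no longer $\Theta(\bar{k}^2)$ when $\bar{k}\asymp k$). But re-proving the monotonicity of the first moment curve does not prove the conjecture.
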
In the following two subsections we establish rigorously a part  of Conjecture~\ref{mainconj}.

\subsection{$\bar{k}$-Overlap Gap Property for $k=n^{0.0917}$}\label{OGPPP}

We now turn to the regime $k=o(\sqrt{n})$. In this regime Theorem \ref{FM} and Conjecture \ref{mainconj} suggests the presence of $\bar{k}$-OGP when $\bar{k}=o\left(k^2 \log \left( \frac{n}{k^2}\right)\right)$, which includes $\bar{k}=k$. We establish here the result that $\bar{k}$-OGP indeed holds as long as both $k,\bar{k}$ are less than $n^C$ for $C \sim 0.0917..$.

 \begin{theorem}\label{OGP}[$\bar{k}$-Overlap Gap Property]\text{                            }\\
 Suppose $ \left(\log n \right)^5  \leq k \leq \bar{k} =\Theta\left(n^C\right)$ for some $C>0$ with $0<C  < \frac{1}{2}-\frac{\sqrt{6}}{6} \sim 0.0917..$ and furthermore $\bar{k}=o\left(k^2 \log \left( \frac{n}{k^2}\right)\right)$. There exist constants $C_0>0$ and $0<D_1<D_2$ such that for $u_1:=D_1  \ceil{\sqrt{\frac{\bar{k}}{\log \left(\frac{n}{\bar{k}}\right)}}}$ and $u_2:=D_2 \ceil{\sqrt{\frac{\bar{k}}{\log \left(\frac{n}{\bar{k}}\right)}}}$ and large enough $n$ the following holds.
\begin{itemize}
\item[(a)]  $\ceil{ C_0\frac{\bar{k}k}{n}}<u_1<u_2<\frac{k}{2}$ and 

\item[(b)] $d_{\bar{k},k}(G)(z)$ is non-monotonic with
\begin{align}\label{slack22}
  \min\{d_{\bar{k},k}(G)(0),d_{\bar{k},k}(G)\left(\frac{k}{2}\right)\} -\max_{z \in \mathcal{I} \cap \left[u_1,u_2\right]} d_{\bar{k},k}(G)(z) =\Omega \left( \frac{\bar{k}}{ \log \left(\frac{n}{\bar{k}}\right)} \right)
\end{align}with high probability as $n \rightarrow + \infty$.
\end{itemize} 
 In particular, $\bar{k}$-Overlap Gap Property holds for the choice $\zeta_1=u_1,\zeta_2=u_2$ and $r_n= \Gamma_{\bar{k},k}\left(0\right)-\Theta \left( \frac{\bar{k}}{ \log \left(\frac{n}{\bar{k}}\right)} \right),$ with high probability as $n \rightarrow + \infty$.

\end{theorem}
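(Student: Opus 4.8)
The plan is to sandwich the random overlap profile $z\mapsto d_{\bar k,k}(G)(z)$ between the deterministic first moment curve $\Gamma_{\bar k,k}$ from above (Proposition~\ref{unionkkk}(1)) and essentially matching second moment lower bounds at the two overlap values $z=0$ and $z=k/2$, and then to read off the $\bar k$-OGP from the non-monotonicity of $\Gamma_{\bar k,k}$ supplied by Theorem~\ref{FM} (used with $\epsilon=\tfrac12$, so its discretized interval is $\mathcal I=\mathbb Z\cap[\lfloor C_0\bar k k/n\rfloor,k/2]$ and the relevant right end-point is $z=k/2$). Observe first that $\bar k k/n\le\bar k^2/n=\Theta(n^{2C-1})\to0$ since $C<\tfrac12$, so $\lfloor C_0\bar k k/n\rfloor=0$ for large $n$ and the left end-point is $z=0$. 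Theorem~\ref{FM} then yields constants $c_1>0$ and $0<D_1<D_2$ such that, with $u_1,u_2$ as in the statement, $0<u_1<u_2<k/2$ and, deterministically for large $n$,
\[
\max_{z\in\mathbb Z\cap[u_1,u_2]}\Gamma_{\bar k,k}(z)+c_1\,\frac{\bar k}{\log(n/\bar k)}\ \le\ \Gamma_{\bar k,k}(0)\ \le\ \Gamma_{\bar k,k}(k/2),
\]
and I would fix $r_n:=\Gamma_{\bar k,k}(0)-c_2\,\bar k/\log(n/\bar k)$ for a constant $0<c_2<c_1$.

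The three random inputs are: (i) $d_{\bar k,k}(G)(z)\le\Gamma_{\bar k,k}(z)$ for all $z$ (Proposition~\ref{unionkkk}(1)); (ii) $d_{\bar k,k}(G)(0)\ge\Gamma_{\bar k,k}(0)-O(\bar k^{\beta}\sqrt{\log n})$ (Proposition~\ref{unionkkk}(2)); (iii) $d_{\bar k,k}(G)(k/2)\ge\Gamma_{\bar k,k}(k/2)-O(\bar k^{\beta}\sqrt{\log n})$, the analogue of (ii) at overlap $k/2$ --- all w.h.p. The hypothesis $C<C^{*}=\tfrac12-\tfrac{\sqrt6}{6}$ enters precisely here: it is equivalent to being able to meet the exponent condition $\beta>\tfrac32-(\tfrac52-\sqrt6)\tfrac{1-C}{C}$ of Proposition~\ref{unionkkk}(2) with some $\beta<1$ (indeed $\tfrac32-(\tfrac52-\sqrt6)\tfrac{1-C}{C}<1\iff C<\tfrac1{6+2\sqrt6}=\tfrac12-\tfrac{\sqrt6}{6}$), and $\beta<1$ together with $\bar k=\Theta(n^{C})$ being polynomial in $n$ forces $\bar k^{\beta}\sqrt{\log n}=o(\bar k/\log(n/\bar k))$. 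Hence, for large $n$, the errors in (ii),(iii) are below $c_2\,\bar k/\log(n/\bar k)$, so by (ii),(iii) and the last inequality of the display, $\min\{d_{\bar k,k}(G)(0),d_{\bar k,k}(G)(k/2)\}\ge\Gamma_{\bar k,k}(0)-o(\bar k/\log(n/\bar k))>r_n$; since these two dense subgraphs have overlaps $0\le u_1$ and $k/2\ge u_2$, part~(1) of Definition~\ref{OGP2} holds with $\zeta_1=u_1,\zeta_2=u_2$. For part~(2), any integer $z\in(u_1,u_2)$ lies in $\mathcal I\cap[u_1,u_2]$, so by (i) and the display $d_{\bar k,k}(G)(z)\le\Gamma_{\bar k,k}(z)\le\Gamma_{\bar k,k}(0)-c_1\,\bar k/\log(n/\bar k)<r_n$: no $\bar k$-subgraph with at least $r_n$ edges has overlap strictly between $u_1$ and $u_2$. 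Thus $d_{\bar k,k}(G)$ is non-monotonic, the gap in part~(b) is at least $(c_1-o(1))\bar k/\log(n/\bar k)=\Omega(\bar k/\log(n/\bar k))$, and the residual bookkeeping ($\lfloor C_0\bar k k/n\rfloor=0<u_1$, $r_n$ admissible) is routine.

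The one genuinely new ingredient is (iii), which I would establish by the conditional second moment method behind Proposition~\ref{unionkkk}(2) and Theorem~\ref{denseeee}. Conditioning on $\mathcal{PC}$, every edge of $G$ outside $\mathcal{PC}$ is an independent fair coin. Taking $\gamma$ slightly below $h^{-1}\big(\log 2-\log(\binom{k}{k/2}\binom{n-k}{\bar k-k/2})/(\binom{\bar k}{2}-\binom{k/2}{2})\big)$, I would count the pairs $(S,T)$ with $S\subseteq\mathcal{PC},|S|=k/2$ and $T\subseteq V(G)\setminus\mathcal{PC},|T|=\bar k-k/2$ for which the random part of the edge count, $|E[S\cup T]|-\binom{k/2}{2}\sim\Binom\big(\binom{\bar k}{2}-\binom{k/2}{2},\tfrac12\big)$, is at least $\gamma\big(\binom{\bar k}{2}-\binom{k/2}{2}\big)$; a Binomial-tail/entropy estimate makes the first moment $\binom{k}{k/2}\binom{n-k}{\bar k-k/2}\,\Prob[\,\cdot\,]$ diverge, and a second moment (Paley--Zygmund) argument showing this count is positive w.h.p.\ then gives $d_{\bar k,k}(G)(k/2)\ge\binom{k/2}{2}+\gamma\big(\binom{\bar k}{2}-\binom{k/2}{2}\big)=\Gamma_{\bar k,k}(k/2)-O(\bar k^{\beta}\sqrt{\log n})$. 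The hard part will be the pair analysis: two configurations $S\cup T$, $S'\cup T'$ now share $|S\cap S'|\,|T\cap T'|+\binom{|T\cap T'|}{2}$ \emph{random} edges (the $\binom{|S\cap S'|}{2}$ edges inside $\mathcal{PC}$ are deterministic and drop out of the covariance), so the delicate control of Theorem~\ref{denseeee} over how ``sufficiently dense'' subgraphs overlap must be redone for this modified overlap count --- here one uses $k/2\ll\bar k$ to treat the extra $|S\cap S'|\,|T\cap T'|$ term as a lower-order perturbation of the already-handled $\binom{|T\cap T'|}{2}$ term --- all while holding the back-off of $\gamma$ from its critical value to $O(\bar k^{\beta}\sqrt{\log n})/(\binom{\bar k}{2}-\binom{k/2}{2})$ so the additive error stays $o(\bar k/\log(n/\bar k))$, below the depth of the dip of $\Gamma_{\bar k,k}$. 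A final check, $\Gamma_{\bar k,k}(k/2)\ge r_n$, is exactly the last inequality of the display above (via $\lfloor C_0\bar k k/n\rfloor=0$ and $(1-\epsilon)k=k/2$), and alternatively follows from Taylor-expanding $h^{-1}$ around $\log2$.
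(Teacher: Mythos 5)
Your overall skeleton for deducing the OGP coincides with the paper's: apply Theorem~\ref{FM} with $\epsilon=\tfrac12$, note $\lfloor C_0\bar kk/n\rfloor=0$, use Proposition~\ref{unionkkk}(1) for the upper bound on the whole curve and Proposition~\ref{unionkkk}(2) for the lower bound at $z=0$ (with the same observation that $C<\tfrac12-\tfrac{\sqrt6}{6}$ is exactly what allows $\beta<1$ and hence an error $o(\bar k/\log(n/\bar k))$), and then read off Definition~\ref{OGP2}. Where you diverge is the lower bound at $z=k/2$, and this is where your proposal has a genuine gap. You propose to prove the tight bound $d_{\bar k,k}(G)(k/2)\ge\Gamma_{\bar k,k}(k/2)-O(\bar k^{\beta}\sqrt{\log n})$ by rerunning the conditional second moment method for configurations $S\cup T$ with $S\subseteq\mathcal{PC}$, $|S|=k/2$. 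This is left as a sketch, and its key simplification --- treating the shared cross-edge term $|S\cap S'|\,|T\cap T'|$ as ``a lower-order perturbation'' because ``$k/2\ll\bar k$'' --- is not justified by the hypotheses: the theorem only assumes $k\le\bar k$, so $k=\Theta(\bar k)$ (indeed $k=\bar k$) is allowed, in which case $|S\cap S'|\,|T\cap T'|$ can be of order $\bar k^2$, comparable to $\binom{\bar k}{2}$, and the entire flatness/overlap analysis of Theorem~\ref{denseeee} would have to be genuinely redone rather than perturbed. A second symptom that something is off: your argument never invokes the hypothesis $\bar k=o\bigl(k^2\log(n/k^2)\bigr)$, which in the paper is essential precisely at this step.

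The paper avoids the new second moment entirely. It does not prove $d_{\bar k,k}(G)(k/2)\gtrsim\Gamma_{\bar k,k}(k/2)$; it only proves the weaker (but sufficient) bound $d_{\bar k,k}(G)(k/2)\ge\Gamma_{\bar k,k}(0)$, by an explicit construction: fix any $\tfrac k2$ clique vertices $S_1$ and append the densest $(\bar k-\tfrac k2)$-subgraph of the Erd\H{o}s--R\'enyi remainder $G\setminus\mathcal{PC}$. This yields
\[
d_{\bar k,k}(G)\bigl(\tfrac k2\bigr)\ \ge\ \binom{k/2}{2}+d_{\mathrm{ER},\bar k-\frac k2}(G_0)+\frac{(\bar k-\frac k2)\frac k2}{2}-O\bigl(\sqrt{\bar kk\log n}\bigr),
\]
where the cross-edge term is handled by a simple Chernoff/independence argument (Lemma~\ref{lem:SumMax}) and $d_{\mathrm{ER},\bar k-\frac k2}(G_0)$ by the already-established Theorem~\ref{denseeee}. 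The surplus over $\Gamma_{\bar k,k}(0)$ is $\tfrac{k^2}{16}-O(\bar k)$ at leading order, and the hypothesis $\bar k=o(k^2\log(n/k^2))$ is exactly what guarantees this surplus dominates the loss $O(\sqrt{\bar k}\,k\sqrt{\log n}+\bar k\sqrt{\log n})$ in the second-order terms. If you want to keep your route, you must either restrict to $k=o(\bar k)$ or carry out the full pair analysis at overlap $k/2$; otherwise you should switch to the paper's reduction, which turns step (iii) into an application of results you already have.
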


The proof of the Theorem \ref{OGP} is in Section \ref{Sec:OGP}.

\subsection{Overlap Gap Property implies failure of an MCMC family}\label{sec:slowmix}

In this subsection we show that the presence of OGP as stated in Conjecture \ref{mainconj} and partially proven in Theorem \ref{OGP}, implies the provable failure of a family of Markov Chain Monte Carlo methods.

We let $G$ be sampled from the $G\left(n,\frac{1}{2},k \right)$ model and fix $\bar{k}$ with $k \leq \bar{k} \leq n$.

\textbf{The MCMC family} Let $\beta=\beta_n>0$ be a sequence of inverse temperatures, and the Gibbs distributions $\pi_{\beta}$ which is defined on the  $\bar{k}$-vertex subgraphs of $G$, with probability mass function, 
\begin{align} \label{Gibbs} 
\pi_{\beta}\left(S\right)=\frac{1}{Z} \exp \left( \beta |\mathrm{E}\left[S\right]|\right),
\end{align}
where the normalizing partition function constant is
\begin{align}\label{part}
Z:=\sum_{S}\exp \left( \beta |\mathrm{E}\left[S\right]|\right),
\end{align}
and the sum above is over all $\bar{k}$-subgraphs of $G$. Now we define the graph which supports
the Markov Chain underlying the MCMC methods. Let $\mathcal{G}_{\bar{k}}$ be the undirected graph on $\binom{n}{\bar{k}}$ vertices where 

\begin{itemize}
\item each vertex corresponds to a $\bar{k}$-subgraph of $G$ and
\item  each edge connects two nodes if and only if the Hamming distance of their corresponding $\bar{k}$-subgraphs is exactly equal to 2.
\end{itemize}

Let $X_0$ be a random $\bar{k}$-subgraph of $G$ we use for initialization. Then we define by $X_{t,\beta}$, or simply $X_t$ when $\beta$ is clear from the context, the nearest-neighbor Markov chain on $\mathcal{G}_{\bar{k}}$, which is reversible with respect to $\pi_{\beta}$ and initialized at $X_0$.

We describe the assumptions under which the results in this section hold.

\vspace{.1in}
\textbf{Assumptions 1:} Suppose $1 \leq k \leq \bar{k} \leq n$ satisfy,
\begin{itemize}
 \item either $k=o\left(\sqrt{n}\right)$ and $\bar{k}=o\left(k^2 \log \left(\frac{n}{k^2}\right)\right)$ 
\item or $k=\omega\left(\sqrt{n}\right)$ and $\bar{k}=o\left(\frac{n^2}{k^2 \log \left(\frac{k^2}{n}\right)}\right), \bar{k}=o\left(n\right).$ 
\end{itemize}
With respect to Conjecture \ref{mainconj} this corresponds to the cases (1i) and (2i) where OGP is conjectured to appear. Furthermore, note that when $\bar{k}=k$ the above parameter assumptions are satisfied as long as $k \leq n^{\frac{2}{3}}.$

\vspace{.1in}
\textbf{Assumption 2:}  Conjecture~\ref{mainconj} is valid. 
In particular, we assume that the OGP holds  (\ref{slackConj2}) holds.  
\vspace{.1in}

We use the exact same notation as in Conjecture \ref{mainconj}. Recall that case (1i) is partially established above as Theorem~\ref{OGP}.
We now start with our first result which 
shows that under the above assumptions a property called Free Energy Well (FEW)  holds in the landscape of the dense subgraphs. The onset 
of FEW was used to show slow mixing results in several prior models including~\cite{AukoshPCA} and~\cite{gamarnik2019overlap}.

Let us define three subsets of $\bar{k}$-subgraphs. 
 \begin{itemize}
\item $A_0$ is the set of all $\bar{k}$-subgraphs with overlap with the planted clique at most $ \floor*{D_1\sqrt{\frac{\bar{k}}{\log \left( \frac{n}{\bar{k}}\right)}}}$,
\item $A_1$ is the set of all $\bar{k}$-subgraphs with overlap with the planted in the interval 
$$
\left[ \ceil*{D_1 \sqrt{\frac{\bar{k}}{\log \left( \frac{n}{\bar{k}}\right)}}}, \ceil*{D_2 \sqrt{\frac{\bar{k}}{\log \left( \frac{n}{\bar{k}}\right)}}}
\right]
$$ and
\item $A_2$ is the set of all $\bar{k}$-subgraphs with overlap with the planted clique at least $\floor*{k/2}$.
\end{itemize}

We establish the FEW property. It basically says that the set $A_1$ is a ''well'' consuming an exponentially smaller amount of Gibbs mass than $A_0$ 
and $A_2$ and thus separating the two. The FEW property will be established as  a direct corollary of the OGP. 

\begin{proposition}\label{Prop1few}
Under Assumptions 1,2 the following holds.  Let $\beta=\omega\left( \left(\log \left(\frac{n}{\bar{k}}\right)\right)^{\frac{3}{2}}\right)$, then
 \begin{align}\label{few}
\min\{\pi_{\beta}\left(A_0\right),\pi_{\beta}\left(A_2\right)\} \geq \exp\left(\Omega\left(\beta \frac{\bar{k}}{\log \left(\frac{n}{\bar{k}}\right)}\right)\right) \pi_{\beta}\left(A_1\right)
\end{align}w.h.p. (with respect to the randomness of $G$), as $n \rightarrow + \infty$.
\end{proposition}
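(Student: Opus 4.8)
\textbf{Proof plan for Proposition~\ref{Prop1few}.}
The plan is to compare the three Gibbs masses by writing each as a restricted partition sum $\pi_\beta(A_i)=Z^{-1}\sum_{S\in A_i}e^{\beta|E[S]|}$, so that the (unknown) normalizer $Z$ cancels in every ratio; it then suffices to lower bound $\sum_{S\in A_0}e^{\beta|E[S]|}$ and $\sum_{S\in A_2}e^{\beta|E[S]|}$ and to upper bound $\sum_{S\in A_1}e^{\beta|E[S]|}$. For the two lower bounds I would exhibit a single high-energy witness in each set: by Conjecture~\ref{mainconj}(a), $\floor{C_0\bar{k}k/n}<u_1$ and $u_2<(1-\epsilon)k$ (taking $\epsilon\le1/2$ so that $(1-\epsilon)k\ge\floor{k/2}$), hence the densest $\bar{k}$-subgraph with overlap $z_0:=\floor{C_0\bar{k}k/n}$ lies in $A_0$ and the densest $\bar{k}$-subgraph with overlap $(1-\epsilon)k$ lies in $A_2$. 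Using $d_{\bar{k},k}(G)((1-\epsilon)k)\ge d_{\bar{k},k}(G)(z_0)$ from (\ref{slackConj2}) this gives $\min\{\pi_\beta(A_0),\pi_\beta(A_2)\}\ge Z^{-1}e^{\beta d_{\bar{k},k}(G)(z_0)}$.

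The crux is the upper bound on $\sum_{S\in A_1}e^{\beta|E[S]|}$. The naive estimate $|A_1|\cdot e^{\beta\max_{z\in[u_1,u_2]}d_{\bar{k},k}(G)(z)}$ is useless: almost all $\bar{k}$-subgraphs have tiny overlap with $\mathcal{PC}$, so $|A_1|=e^{\Theta(\bar{k}\log(n/\bar{k}))}$, which would force $\beta\gg(\log(n/\bar{k}))^{2}$. Instead I would exploit that dense subgraphs are rare. On the high-probability event of Proposition~\ref{unionkkk}(1), every $S$ with overlap $z$ satisfies $|E[S]|\le\Gamma_{\bar{k},k}(z)\le\Gamma^{*}:=\max_{z\in[u_1,u_2]}\Gamma_{\bar{k},k}(z)$. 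Writing $\ell_S:=\Gamma^{*}-|E[S]|\ge0$ and stratifying by $\ell_S$, a first-moment computation — the number of $\bar{k}$-subgraphs with overlap $z$ and at least $\binom{z}{2}+\gamma M_z$ edges equals $\binom{k}{z}\binom{n-k}{\bar{k}-z}\,\P[\Bin(M_z,\tfrac12)\ge\gamma M_z]$, with $M_z=\binom{\bar{k}}{2}-\binom{z}{2}$, which for $\gamma$ lowered by $\ell$ below the critical value $h^{-1}(\log2-\tfrac{\log N_z}{M_z})$ grows only like $\mathrm{poly}(n)\,e^{O(\ell\sqrt{\log(n/\bar{k})/\bar{k}})}$ — together with Markov's inequality (union over the $O(\sqrt{\bar{k}})$ overlaps and over $\ell$) yields, w.h.p.,
\[
\#\{S\in A_1:\ \ell_S\le\ell\}\ \le\ e^{O(\log n)+O\!\left(\ell\sqrt{\log(n/\bar{k})/\bar{k}}\right)}\qquad\text{for all }\ell\ge0 .
\]
Then $\sum_{S\in A_1}e^{\beta|E[S]|}=e^{\beta\Gamma^{*}}\sum_{S\in A_1}e^{-\beta\ell_S}$, and the geometric-type series in $\ell$ converges because $\beta\gg\sqrt{\log(n/\bar{k})/\bar{k}}$ (which $\beta=\omega((\log(n/\bar{k}))^{3/2})$ amply ensures, also absorbing the $e^{O(\log n)}$), giving $\sum_{S\in A_1}e^{\beta|E[S]|}\le e^{\beta\Gamma^{*}+O(\log n)}$ w.h.p.

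It remains to combine. The deterministic monotonicity estimate (\ref{slack2}) of Theorem~\ref{FM} gives $\Gamma^{*}+\Omega(\bar{k}/\log(n/\bar{k}))\le\Gamma_{\bar{k},k}(z_0)$, while a matching lower bound on $d_{\bar{k},k}(G)(z_0)$ — obtained from Proposition~\ref{unionkkk}(2) at overlap $0$ (together with the elementary observation that swapping the $z_0\ll u_1$ lowest-degree vertices of an optimal zero-overlap subgraph for $z_0$ clique vertices changes the edge count by $o(\bar{k}/\log(n/\bar{k}))$, valid since $\bar{k}\ge(\log n)^5$), or directly from the tightness of $\Gamma_{\bar{k},k}$ posited by Assumption~2 — yields $d_{\bar{k},k}(G)(z_0)\ge\Gamma_{\bar{k},k}(z_0)-o(\bar{k}/\log(n/\bar{k}))$. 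Hence, w.h.p.,
\[
\frac{\min\{\pi_\beta(A_0),\pi_\beta(A_2)\}}{\pi_\beta(A_1)}\ \ge\ e^{\beta\left(d_{\bar{k},k}(G)(z_0)-\Gamma^{*}\right)-O(\log n)}\ \ge\ e^{\beta\cdot\Omega\left(\bar{k}/\log(n/\bar{k})\right)-O(\log n)}\ =\ e^{\Omega\left(\beta\bar{k}/\log(n/\bar{k})\right)},
\]
the last equality because $\bar{k}\ge(\log n)^5$ makes the $O(\log n)$ term negligible against $\beta\bar{k}/\log(n/\bar{k})$. This is exactly (\ref{few}).

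I expect the main obstacle to be the upper bound on $\pi_\beta(A_1)$: one must avoid paying the full entropy $\log|A_1|\asymp\bar{k}\log(n/\bar{k})$ of the (very numerous) small-overlap subgraphs, and the only route I see is truncating edge counts at the first-moment curve $\Gamma_{\bar{k},k}$ and then counting near-optimal subgraphs at each overlap level via a sharpened first-moment estimate. A secondary but genuine technical point is securing the near-tightness $d_{\bar{k},k}(G)(z_0)\approx\Gamma_{\bar{k},k}(z_0)$ (and, more mildly, $\max_{z\in[u_1,u_2]}d_{\bar{k},k}(G)(z)\approx\Gamma^{*}$) up to $o(\bar{k}/\log(n/\bar{k}))$: Proposition~\ref{unionkkk}(2) supplies this only for $C<0.0917$, so over the full conjectured regime one must either invoke Assumption~2 or transfer the $z=0$ second-moment bound to $z_0=\floor{C_0\bar{k}k/n}$ by a perturbation argument, which also forces one to keep track of the implied constants so that the $\Omega(\bar{k}/\log(n/\bar{k}))$ gap from (\ref{slack2}) survives.
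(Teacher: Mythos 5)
Your overall architecture coincides with the paper's: cancel the partition function $Z$ in every ratio, lower bound $\pi_\beta(A_0)$ and $\pi_\beta(A_2)$ by a single dense witness at overlaps $\floor{C_0\bar{k}k/n}$ and $\floor{k/2}$, upper bound $\pi_\beta(A_1)$, and conclude from the OGP gap. Where you genuinely diverge is the key step. The paper uses the crude bound $\pi_\beta(A_1)\le |A_1|\,Z^{-1}\exp\bigl(\beta\max_{z\in\mathcal{I}\cap[u_1,u_2]}d_{\bar{k},k}(G)(z)\bigr)$ with $|A_1|\le\binom{n}{\bar{k}}$, and then invokes (\ref{slackConj2}) \emph{directly}: since Assumption 2 asserts the gap for $d_{\bar{k},k}(G)$ itself, no tightness of $\Gamma_{\bar{k},k}$ is ever needed, and the entropy term $\bar{k}\log(ne/\bar{k})$ is simply absorbed into $\Omega(\beta\bar{k}/\log(n/\bar{k}))$. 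Your observation that this absorption really requires $\beta=\omega\bigl((\log(n/\bar{k}))^{2}\bigr)$ rather than the stated $(\log(n/\bar{k}))^{3/2}$ is correct and identifies a real quantitative weakness of the crude route; your stratified first-moment count of near-$\Gamma$ subgraphs, with geometric decay in the edge deficit $\ell_S$ at rate $\beta-O(\sqrt{\log(n/\bar{k})/\bar{k}})$, is a genuinely sharper control of $\pi_\beta(A_1)$ and would push the result to much smaller $\beta$.

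However, your combination step has a gap relative to the stated hypotheses. You compare $d_{\bar{k},k}(G)(z_0)$ with $\Gamma^{*}=\max_{z\in[u_1,u_2]}\Gamma_{\bar{k},k}(z)$, which forces you to prove $d_{\bar{k},k}(G)(z_0)\ge\Gamma_{\bar{k},k}(z_0)-o(\bar{k}/\log(n/\bar{k}))$. Assumptions 1--2 do not supply this: Assumption 2 is a statement about $d_{\bar{k},k}(G)$, not about tightness of the first-moment curve, and the inequalities you do have, $d(z_0)\ge\max_{z}d(z)+\Omega(\cdot)$ and $\max_{z}d(z)\le\Gamma^{*}$, point the wrong way. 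Proposition \ref{unionkkk}(2) gives tightness only at overlap $0$ and only for $\bar{k}=\Theta(n^{C})$ with $C<0.0917$, while the proposition is claimed over all of Assumption 1 (e.g. $\bar{k}$ up to $n^{2/3}$ when $\bar{k}=k$), so ``invoke Assumption 2'' does not repair this. To keep your refined $A_1$ bound you would need an additional tightness input — either $d(z_0)\ge\Gamma(z_0)-o(\bar{k}/\log(n/\bar{k}))$ at the endpoint, or $\Gamma^{*}\le\max_{z\in[u_1,u_2]}d_{\bar{k},k}(G)(z)+o(\bar{k}/\log(n/\bar{k}))$ on the middle window, after which (\ref{slackConj2}) closes the argument — and this is an assumption strictly beyond what the proposition states.
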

The proof of Proposition \ref{Prop1few} is  in Section \ref{secMix}.
Intuitively, the result says that the Gibbs mass on both ``low overlap" $\bar{k}$-vertex subgraphs and ``high overlap" $\bar{k}$-vertex subgraphs is larger by an exponentially-in-$\bar{k}$ multiplicative factor than the Gibbs mass of ``medium overlap" $\bar{k}$-vertex subgraphs. This naturally created bottlenecks for the MCMC methods. We formalize this with the following failure of MCMC result.

 Assume that $X_{0}$ is drawn from $\pi_{\beta}\left(\cdot|A_0 \cup A_1\right)$, that is according to $\pi_{\beta}$ conditioned on belonging to $A_0 \cup A_1$. Note that $A_0 \cup A_1$ is simply the set of $\bar{k}$-subgraphs of $G$ with overlap at most $\ceil*{D_2 \sqrt{\frac{\bar{k}}{\log \left( \frac{n}{\bar{k}}\right)}}}$ with the planted clique.
Now consider the stopping time corresponding to the first time that 
$X_{t,\beta}$ reaches a $\bar{k}$-vertex subgraph with overlap with the planted clique strictly bigger than $\ceil*{D_2 \sqrt{\frac{\bar{k}}{\log \left( \frac{n}{\bar{k}}\right)}}}$, that is  \begin{align} \tau_{\beta,\bar{k}}=\inf\{ t \in \mathbb{Z}_{>0} : X_{t,\beta} \not \in A_0 \cup A_1 | X_{0,\beta} \sim \pi_{\beta}\left(\cdot | A_0 \cup A_1\right)\}.\end{align}

We establish the following result.

\begin{theorem}\label{thmslow}
 Under Assumptions 1,2 the following is true.   If $\beta=\omega\left( \left(\log \left(\frac{n}{\bar{k}}\right)\right)^{\frac{3}{2}}\right)$, then
\begin{align}
\tau_{\beta,\bar{k}} =\exp\left(\Omega\left( \beta \frac{\bar{k}}{\log \frac{n}{\bar{k}}}\right)\right),
\end{align}w.h.p. as $n \rightarrow + \infty$.
\end{theorem}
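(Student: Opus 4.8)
The plan is to obtain Theorem~\ref{thmslow} from the Free Energy Well estimate of Proposition~\ref{Prop1few} by a single-set bottleneck (conductance) argument, so that no spectral-gap or full mixing-time analysis is required. Write $B:=A_0\cup A_1$, which, as noted just before the theorem, is exactly the set of $\bar{k}$-subgraphs whose overlap with $\mathcal{PC}$ is at most $m:=\ceil{D_2\sqrt{\bar{k}/\log(n/\bar{k})}}$; thus $\tau_{\beta,\bar{k}}$ is precisely the exit time of $B$ for the chain $(X_{t,\beta})$ started from $\pi_\beta(\cdot\mid B)$. The first point to record is that a single step of $X_{t,\beta}$ moves along an edge of $\mathcal{G}_{\bar{k}}$, i.e.\ swaps one vertex of the current subgraph for another, hence changes the overlap $|X_{t,\beta}\cap\mathcal{PC}|$ by at most $1$. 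Consequently the chain can pass from $B$ to $B^c$ in one step only from a state of overlap exactly $m$, and every such state lies in $A_1$.

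Next I would invoke the standard escape-time bound for stationary (here reversible) chains, as used e.g.\ in \cite{AukoshPCA}. Writing $P$ for the transition kernel of $(X_{t,\beta})$ and $Q_\beta(B,B^c):=\sum_{x\in B,\,y\in B^c}\pi_\beta(x)P(x,y)$ for the ergodic flow out of $B$, the inclusion $\{X_{0,\beta}\in B,\ \tau_{\beta,\bar{k}}\le t\}\subseteq\bigcup_{s=1}^{t}\{X_{s-1,\beta}\in B,\ X_{s,\beta}\in B^c\}$ together with $\P_{\pi_\beta}[X_{s-1,\beta}\in B,\ X_{s,\beta}\in B^c]=Q_\beta(B,B^c)$ for every $s$ yields
\begin{align}
\prob{\tau_{\beta,\bar{k}}\le t}\ \le\ t\cdot\frac{Q_\beta(B,B^c)}{\pi_\beta(B)}\qquad\text{for every }t\ge1 .
\end{align}
By the previous paragraph only states of overlap $m$ (a subset of $A_1$) contribute to $Q_\beta(B,B^c)$, so $Q_\beta(B,B^c)\le\sum_{x\in A_1}\pi_\beta(x)P(x,B^c)\le\pi_\beta(A_1)$, while $\pi_\beta(B)\ge\pi_\beta(A_0)$. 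Hence the bottleneck ratio is at most $\pi_\beta(A_1)/\pi_\beta(A_0)$, and Proposition~\ref{Prop1few} — whose hypothesis $\beta=\omega((\log(n/\bar{k}))^{3/2})$ is exactly the one assumed here — gives, on an event of probability $1-o(1)$ over $G$,
\begin{align}
\frac{Q_\beta(B,B^c)}{\pi_\beta(B)}\ \le\ \frac{\pi_\beta(A_1)}{\pi_\beta(A_0)}\ \le\ \exp\!\left(-c_0\,\beta\,\frac{\bar{k}}{\log(n/\bar{k})}\right)
\end{align}
for some constant $c_0>0$ (the implied constant of the $\Omega(\cdot)$ in (\ref{few}); note that only the ``$A_0$-side'' of the FEW is needed).

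Finally, on this event I would set $t^\star:=\exp\!\big(\tfrac{c_0}{2}\,\beta\,\bar{k}/\log(n/\bar{k})\big)$; the displayed escape bound then gives $\prob{\tau_{\beta,\bar{k}}\le t^\star}\le\exp(-\tfrac{c_0}{2}\beta\bar{k}/\log(n/\bar{k}))=o(1)$, since under Assumptions~1 together with $\beta=\omega((\log(n/\bar{k}))^{3/2})$ one has $\beta\bar{k}/\log(n/\bar{k})\to\infty$. Therefore, w.h.p.\ over $G$, the chain satisfies $\tau_{\beta,\bar{k}}\ge t^\star=\exp(\Omega(\beta\bar{k}/\log(n/\bar{k})))$ with probability $1-o(1)$, which is the assertion of the theorem.

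I do not expect a genuine obstacle inside this argument: essentially all the difficulty — estimating $\pi_\beta(A_0)$ and $\pi_\beta(A_1)$, which in turn rests on the sharp overlap-restricted upper and lower bounds on $d_{\bar{k},k}(G)(z)$ and the $\Omega(\bar{k}/\log(n/\bar{k}))$ depth of the first-moment curve coming from Theorem~\ref{OGP}/Conjecture~\ref{mainconj} — is already packaged into Proposition~\ref{Prop1few}. The only items requiring a line of justification here are (i) that a Hamming-distance-$2$ move changes the overlap by at most $1$, so that the overlap slab $A_1$ genuinely separates $B$ from $B^c$, and (ii) the elementary escape-time inequality above; both are routine, and the only slightly delicate point, the conditional initialization $\pi_\beta(\cdot\mid B)$, is exactly what that inequality is built to handle.
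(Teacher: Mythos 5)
Your proposal is correct and is essentially the paper's own argument: the paper proves the intermediate Proposition~\ref{Prop2few} by introducing the chain reflected at the boundary slab $A=\partial(A_0\cup A_1)\subseteq A_1$ and using its stationarity under $\pi_\beta(\cdot\mid A_0\cup A_1)$ to get $\prob{\tau_{\beta,\bar k}\le T}\le T\,\pi_\beta(A\mid A_0\cup A_1)\le T\,\pi_\beta(A_1)/\pi_\beta(A_0)$, which is exactly the bottleneck-ratio bound you invoke, and then concludes via Proposition~\ref{Prop1few} just as you do. The only cosmetic difference is that you cite the standard ergodic-flow escape inequality where the paper derives it from scratch via the reflected chain; the key structural facts (single steps change the overlap by at most one, so exit must occur through a state in $A_1$, and only the $A_0$-side of the FEW is needed) are used identically in both.
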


The proof of Theorem \ref{thmslow} is given in Section \ref{secMix}.
Recall that when $\bar{k}=k$ Assumption 1 can be simplified to $k \leq n^{\frac{2}{3}}$. For this reason we obtain 
the following corollary of Theorem~\ref{thmslow}.

\begin{corollary}
 Under Assumptions 2 the following holds.  Let $k=o\left(n^{\frac{2}{3}}\right)$ and $\beta=\omega\left( \left(\log \left(\frac{n}{k}\right)\right)^{\frac{3}{2}}\right)$, then
\begin{align}
\tau_{\beta,k} =\exp\left(\Omega\left( \beta \frac{k}{\log \frac{n}{k}}\right)\right),
\end{align}w.h.p. as $n \rightarrow + \infty$.
\end{corollary}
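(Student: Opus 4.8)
The plan is to derive the Corollary as an immediate specialization of Theorem~\ref{thmslow} by checking that the hypothesis $k=o(n^{2/3})$ together with the choice $\bar{k}=k$ places us inside the parameter region covered by Assumptions~1. First I would observe that $\bar{k}=k$ trivially satisfies $1\le k\le\bar{k}\le n$, so the only thing to verify is the dichotomy in Assumptions~1. If $k=o(\sqrt{n})$, then with $\bar{k}=k$ the condition $\bar{k}=o\left(k^2\log(n/k^2)\right)$ reads $k=o\left(k^2\log(n/k^2)\right)$, i.e. $1=o\left(k\log(n/k^2)\right)$, which holds for all large $n$ since $k\ge(\log n)^5\to\infty$ (this lower bound on $k$ is inherited from Assumption~1, or can be added as a standing hypothesis). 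If instead $k=\omega(\sqrt{n})$, then since $k=o(n^{2/3})$ we have $k^2/n=o(n^{1/3})$ and hence $\log(k^2/n)=O(\log n)$, so $\frac{n^2}{k^2\log(k^2/n)}=\omega\!\left(\frac{n^2}{k^2\log n}\right)$; using $k^2=o(n^{4/3})$ this is $\omega\!\left(n^{2/3}/\log n\right)$, which is $\omega(k)$ because $k=o(n^{2/3})$. Also $\bar{k}=k=o(n^{2/3})=o(n)$. Either way, $\bar{k}=o\!\left(\frac{n^2}{k^2\log(k^2/n)}\right)$ and $\bar{k}=o(n)$, so Assumptions~1 hold.

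Having verified Assumptions~1, I would then invoke Theorem~\ref{thmslow} directly with $\bar{k}=k$: under Assumption~2 and the stated scaling $\beta=\omega\!\left((\log(n/k))^{3/2}\right)$ (which is exactly the specialization of $\beta=\omega\!\left((\log(n/\bar{k}))^{3/2}\right)$ to $\bar{k}=k$), the theorem yields
\[
\tau_{\beta,k}=\exp\!\left(\Omega\!\left(\beta\frac{k}{\log(n/k)}\right)\right)
\]
w.h.p. as $n\to\infty$, which is precisely the claimed conclusion. The stopping time $\tau_{\beta,k}$ in the Corollary is literally $\tau_{\beta,\bar{k}}$ with $\bar{k}=k$, so no further argument is needed; the content of the Corollary is entirely bookkeeping that the $\bar{k}=k$ slice of the hypotheses of Theorem~\ref{thmslow} reduces to the single inequality $k\le n^{2/3}$ (in the $o(\cdot)$ sense), a fact already flagged in the remark following Assumptions~1.

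The only mildly delicate point — and the one I would present carefully — is the arithmetic in the regime $\sqrt{n}\ll k=o(n^{2/3})$, where one must confirm that the logarithmic factor $\log(k^2/n)$ does not erode the gain and that $k=o\!\left(n^2/(k^2\log(k^2/n))\right)$ genuinely holds; this is where a careless estimate could go wrong, but since $k^2/n$ is only polynomially large, $\log(k^2/n)=\Theta(\log n)$ and the bound is comfortable. Everything else is a direct appeal to Theorem~\ref{thmslow}.
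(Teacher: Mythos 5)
Your proposal takes exactly the route the paper does: the paper's entire ``proof'' is the one-line remark that when $\bar{k}=k$ Assumptions~1 reduce to $k\le n^{2/3}$, after which Theorem~\ref{thmslow} is quoted verbatim with $\bar{k}=k$. Your write-up is simply a more explicit version of that bookkeeping, and the case $k=o(\sqrt{n})$ is handled correctly.

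One caution on the regime $\sqrt{n}\ll k=o(n^{2/3})$: your final step, namely that $\frac{n^2}{k^2\log(k^2/n)}=\omega\left(\frac{n^{2/3}}{\log n}\right)$ ``is $\omega(k)$ because $k=o(n^{2/3})$,'' is a non sequitur. The condition $k=o\!\left(\frac{n^2}{k^2\log(k^2/n)}\right)$ is equivalent to $k^3\log(k^2/n)=o(n^2)$, and since $\log(k^2/n)=\Theta(\log n)$ here, it actually requires $k=o\!\left(n^{2/3}(\log n)^{-1/3}\right)$; for $k$ within a sub-polynomial factor of $n^{2/3}$ (say $k=n^{2/3}/\log\log n$) the condition fails even though $k=o(n^{2/3})$. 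So the claim that ``the bound is comfortable'' is not right near the boundary. To be fair, this logarithmic imprecision is present in the paper's own statement of the corollary and in the remark following Assumptions~1 (the paper is explicitly cavalier about $\log n$ factors throughout), so your argument is faithful to the paper; but if you want the verification to be airtight you should either strengthen the hypothesis to $k=o\!\left(n^{2/3}(\log n)^{-1/3}\right)$ or note that the statement is to be read up to polylogarithmic factors.
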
In words, the family of MCMC methods on the $k$-vertex subgraphs for any $k=o\left(n^{\frac{2}{3}}\right)$ require exponential-in-$k$ time to reach a $k$-subgraph with overlap with the planted clique at least equal to $D_2 \sqrt{\frac{k}{\log \left( \frac{n}{k}\right)}}$. Note that the negative result holds for much larger values of $k$ than the conjectured algorithmic threshold $k=O(\sqrt{n})$ and thus refutes the slow mixing of MCMC method as an ''evidence'' of the
hardness of the Hidden Clique Problem, as was originally proposed by Jerrum~\cite{JerrumClique}.

\subsection{$K$-Densest Subgraph Problem for $G \left(n,\frac{1}{2} \right)$}
\label{DenseER}
Of instrumental importance towards Theorem \ref{unionkkk} and Theorem \ref{OGP} is a new result on the value of the densest $K$-subgraph of a vanilla \ER graph $G_0\left(n,\frac{1}{2} \right)$. In this section we present this result. To the best of our knowledge it is the first such result for super-logarithmic-in-$n$ values of $K$ (see \cite{Bollobas18} and the Introduction of the present paper for details).

Let $1 \leq K \leq n$. We study the maximum number of edges of a subgraph of $G_0 \sim G(n,\frac{1}{2})$ with $K$ vertices, that is
\begin{equation}\label{ERdense}
d_{\mathrm{ER},K}(G_0):=\max_{A \subseteq V(G), |A|=K} |\mathrm{E}[A]|.  \end{equation}

We establish the following result.
\begin{theorem}\label{denseeee}Suppose $K = \Theta\left(n^{C}\right)$ for any constant $C \in (0,\frac{1}{2})$. For any fixed $\beta \in (0,\frac{3}{2})$ with $$\beta=\beta(C)> \max\{\frac{3}{2}-\left( \frac{5}{2}-\sqrt{6}\right)\frac{1-C}{C},0\}$$it holds,
\begin{align} \label{DenseSub} &h^{-1} \left(\log 2-\frac{ \log  \binom{n}{K}  }{\binom{K}{2}} \right)\binom{K}{2} - O\left(K^{\beta}\sqrt{ \log n} \right)  \leq  d_{\mathrm{ER},K}(G_0) \leq h^{-1} \left(\log 2-\frac{ \log  \binom{n}{K}  }{\binom{K}{2}} \right)\binom{K}{2},
\end{align} 
with high probability as $n \rightarrow + \infty$.

\end{theorem}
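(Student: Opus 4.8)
The plan is to establish the upper bound by a first-moment (union) argument and the lower bound by a delicate second-moment argument, refining the method of \cite{Bollobas18} to the polynomial-in-$n$ regime $K = \Theta(n^C)$. For the upper bound: let $\gamma^\star := h^{-1}\!\left(\log 2 - \frac{\log\binom{n}{K}}{\binom{K}{2}}\right)$, and for $\gamma > \gamma^\star$ count the number $N_\gamma$ of $K$-vertex subsets $A$ with $|\mathrm{E}[A]| \geq \gamma\binom{K}{2}$. Since $|\mathrm{E}[A]| \sim \Bin(\binom{K}{2},\tfrac12)$, we have $\Exp[N_\gamma] = \binom{n}{K}\,\P\{\Bin(\binom{K}{2},\tfrac12) \geq \gamma\binom{K}{2}\}$, and by the standard entropy bound on the binomial tail (Lemma \ref{Bin}) this is $\exp\!\big(\binom{K}{2}(\log 2 - h(\gamma)) + \log\binom{n}{K} + o(\cdot)\big)$, which is $o(1)$ precisely because $\log 2 - h(\gamma) < -\log\binom{n}{K}/\binom{K}{2}$ for $\gamma > \gamma^\star$ (using that $h$ is decreasing on $[\tfrac12,1]$ and $h^{-1}$ is its inverse on the relevant branch). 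Markov's inequality then gives $d_{\mathrm{ER},K}(G_0) \leq \gamma^\star\binom{K}{2}$ w.h.p.; one has to be slightly careful to push $\gamma \downarrow \gamma^\star$ within the w.h.p. statement, but since we only claim the clean bound $h^{-1}(\cdots)\binom{K}{2}$ on the right (with no error term), taking $\gamma = \gamma^\star$ directly and checking $\Exp[N_{\gamma^\star}]$ is at most polynomially large, then inflating $\gamma$ by a $1/\mathrm{poly}$ amount, suffices.

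For the lower bound we must show that with the target density lowered by $O(K^\beta\sqrt{\log n})$ edges — i.e. at density $\gamma := \gamma^\star - O(K^{\beta-2}\sqrt{\log n}/\binom{K}{2}\cdot\binom{K}{2})$, equivalently $\gamma\binom{K}{2} = \gamma^\star\binom{K}{2} - O(K^\beta\sqrt{\log n})$ — a dense $K$-subgraph exists w.h.p. Let $Z$ be the number of $K$-subsets with at least $\gamma\binom{K}{2}$ edges; then $\Exp[Z] \to \infty$ (by the same tail estimate, now the exponent is a growing power of $K$ once $\beta$ is chosen appropriately), so by the second-moment method it suffices to show $\Exp[Z^2] \leq (1+o(1))(\Exp[Z])^2$. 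Expanding, $\Exp[Z^2] = \sum_{A,A'} \P\{A,A' \text{ both dense}\}$, and the key is to control the contribution of pairs $(A,A')$ sharing $\ell$ vertices. For such a pair the shared clique $A\cap A'$ has $\binom{\ell}{2}$ common edges, and conditionally the edge counts in $A$ and $A'$ decouple except through these shared edges; the correlation is governed by how a $\Bin(\binom{\ell}{2},\tfrac12)$ shared-edge count affects both tail events. The plan is to split the sum by $\ell$: for $\ell$ up to some threshold the pair is essentially independent and contributes $(1+o(1))(\Exp[Z])^2$; for larger $\ell$ one bounds the joint probability by conditioning on the number $j$ of shared edges, using the tail bound $\P\{\Bin(\binom{K}{2}-\binom{\ell}{2},\tfrac12) \geq \gamma\binom{K}{2} - j\}$ for each of $A,A'$, summing over $j$ against $\P\{\Bin(\binom{\ell}{2},\tfrac12)=j\}$, and finally summing over $\ell$ against the combinatorial factor $\binom{n}{K}\binom{K}{\ell}\binom{n-K}{K-\ell}$.

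The main obstacle — and the place where the restriction $C < \tfrac12$ and the precise exponent $\beta > \tfrac32 - (\tfrac52-\sqrt6)\tfrac{1-C}{C}$ enter — is controlling these "overlapping pair" terms, especially for intermediate $\ell$ comparable to $K^{1/2}$ or a small power of $K$. One needs the slack $O(K^\beta\sqrt{\log n})$ in the density to be large enough that the per-set tail probabilities have enough room to absorb the combinatorial entropy $\log\binom{K}{\ell}\binom{n-K}{K-\ell}$ gained by choosing the overlap, yet small enough that $\gamma\binom{K}{2}$ is still close to $\gamma^\star\binom{K}{2}$. The Taylor expansion $h^{-1}(\log 2 - t) = \tfrac12 + \tfrac{1}{\sqrt2}\sqrt{t} + o(\sqrt t)$ (Lemma \ref{EntTaylor}) is what converts the $O(K^\beta\sqrt{\log n})$ additive edge slack into the relevant change in $\gamma$, and a careful saddle-point / worst-$\ell$ optimization over the overlap sum produces the cubic-root-of-$6$ constant. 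I expect the bulk of the work, and essentially all of the genuine difficulty, to be in this overlap analysis; the upper bound and the $\Exp[Z]\to\infty$ check are routine by comparison, and the passage from Theorem \ref{denseeee} to Proposition \ref{unionkkk}(2) and Theorem \ref{OGP} is then a matter of plugging in and tracking error terms.
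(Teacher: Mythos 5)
Your upper bound is the paper's: a union bound over $K$-sets combined with the entropy form of the binomial tail. (One small simplification: Lemma \ref{Bin} carries a polynomial correction $-\Omega(\log((\gamma-\tfrac12)\sqrt{N}))$ in the exponent, so at $\gamma=\gamma^\star$ one already gets $\Exp[N_{\gamma^\star}]=\exp(-\Omega(\sqrt{K\log(n/K)}))=o(1)$; no inflation of $\gamma$ is needed.) Your identification of where $C<\tfrac12$, the exponent $\beta(C)$, and the Taylor expansion of $h^{-1}$ enter is also accurate.

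The gap is in the lower bound. You propose to run the second moment method directly on the unrestricted count $Z$ of $K$-sets with at least $\gamma\binom{K}{2}$ edges, handling an overlapping pair $(A,A')$ with $|A\cap A'|=\ell$ by summing over the number $j$ of shared edges against $\P\{\Bin(\binom{\ell}{2},\tfrac12)=j\}$. That unconditioned sum over $j$ is dominated by atypically dense intersections: writing $j=\lceil\gamma\binom{\ell}{2}\rceil+r$, the pair ratio $g_\ell(j)/\P(E'_A)^2$ grows like $\lambda^{r}\exp(\binom{\ell}{2}r(\gamma,\tfrac12))$ with $\lambda=\exp(\Theta(\gamma-\tfrac12))$, and optimizing over $r$ up to order $\binom{\ell}{2}$ roughly doubles the exponent to $2\binom{\ell}{2}r(\gamma,\tfrac12)$. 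For $\ell$ close to $K$ this is $\approx 2\log\binom{n}{K}$, which overwhelms both the combinatorial factor $\binom{K}{\ell}\binom{n-K}{K-\ell}\binom{n}{K}^{-1}$ and the only slack available, $\Exp[Z]=\exp(\Theta(K^{\alpha}\log n))$ with $\alpha<1$; the vanilla ratio $\Exp[Z^2]/\Exp[Z]^2$ does not tend to $1$. The paper's proof (following \cite{Bollobas18}) circumvents this by applying the second moment method not to $Z_{K,\gamma}$ but to the count $\hat Z_{K,\gamma,\delta}$ of $(\gamma,\delta)$-\emph{flat} subgraphs: $K$-sets with exactly $\lceil\gamma\binom{K}{2}\rceil$ edges, every $\ell$-subset of which spans at most $\lceil\gamma\binom{\ell}{2}\rceil+D_K(\ell,\delta)$ edges, where $D_K(\ell,\delta)=O(\sqrt{\min(\binom{\ell}{2},\binom{K}{2}-\binom{\ell}{2})\,\log(K^2\binom{K}{\ell})})\ll\binom{\ell}{2}$. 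Flatness holds w.h.p. for a conditioned \ER graph (Lemma \ref{flat}), so the first moment is essentially unchanged, and $Z_{K,\gamma}\ge\hat Z_{K,\gamma,\delta}$; but flatness caps the shared-edge excess at $r\le D_K(\ell,\delta)$, reducing the pair ratio to $\lambda^{D_K(\ell,\delta)}\exp(\binom{\ell}{2}r(\gamma,\tfrac12)+O(1))$ (Lemma \ref{lam}). That cap is what allows the three-regime analysis over $\ell$ (small, moderate, large) to close and is precisely where the constants $\sqrt{6}$ and the admissible exponent $\beta(C)$ come from. Without this conditioning device — which your proposal omits — the overlap analysis you sketch fails in the near-complete-overlap regime.
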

The proof of the theorem is given in Section  \ref{17}.

\begin{remark}
Let $C_{\rm crit}:=5/8-\sqrt{6}/4$ be the unique positive solution to $\frac{3}{2}-\left( \frac{5}{2}-\sqrt{6}\right)\frac{1-C}{C}=0$. Notice that Theorem \ref{denseeee} provides a qualitative different concentration result in the regime where $C \leq C_{\rm crit}$ and when $C > C_{\rm crit}.$ In the former case it implies that for any arbitrarily small constant $\beta>0$ (\ref{DenseSub}) holds, while in the latter case for (\ref{DenseSub}) to hold the exponent $\beta$ needs to be  larger than $\frac{3}{2}-\left( \frac{5}{2}-\sqrt{6}\right)\frac{1-C}{C}>0.$
\end{remark}
For any value of $C \in (0,1/2)$ we can choose some $0<\beta=\beta\left(C\right)<\frac{3}{2}$ so that Theorem \ref{denseeee}, and in particular (\ref{DenseSub}), holds for this value of $\beta$. Combining (\ref{DenseSub}) with a direct applications of the Taylor expansion of $h^{-1}$ (Lemma \ref{EntTaylor}) and the Stirling's approximation for $\binom{n}{K}$ we obtain the following asymptotic behavior of $d_{\mathrm{ER},K}(G)$, for any $K=\Theta\left(n^C\right), C \in \left(0,\frac{1}{2}\right).$
\begin{corollary}\label{CorD}Suppose $K = \Theta\left(n^{C}\right)$ for any fixed $C \in (0,\frac{1}{2})$. Then,
 \begin{align} \label{DenseSub2}   d_{\mathrm{ER},K}(G_0) =\frac{K^2}{4}+\frac{K^{\frac{3}{2}}\sqrt{\log  \left(\frac{n}{K}\right) }}{2} + o\left(K^{\frac{3}{2}}\right),
\end{align} 
with high probability as $n \rightarrow + \infty$. 
\end{corollary}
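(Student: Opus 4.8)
The plan is to read Corollary~\ref{CorD} off the two-sided estimate of Theorem~\ref{denseeee} by feeding in the Stirling asymptotics of $\binom nK$ and the expansion of $h^{-1}$ near $\log 2$ (Lemma~\ref{EntTaylor}), and then checking that every error term generated along the way is $o(K^{3/2})$. Fix $C\in(0,\tfrac12)$ and $K=\Theta(n^C)$; the only property of the growth of $K$ I would use repeatedly is that $K$ grows polynomially in $n$, so $\log n=o(K^\delta)$ for every fixed $\delta>0$. First I would fix, for this $C$, a constant $\beta=\beta(C)\in(0,\tfrac32)$ with $\beta>\max\{\tfrac32-(\tfrac52-\sqrt6)\tfrac{1-C}{C},\,0\}$; such a $\beta$ exists because $\tfrac52-\sqrt6>0$ and $\tfrac{1-C}{C}>0$ force $\tfrac32-(\tfrac52-\sqrt6)\tfrac{1-C}{C}<\tfrac32$, so the relevant open interval is nonempty. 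For this $\beta$, Theorem~\ref{denseeee} applies verbatim, and since $\tfrac32-\beta>0$ we get $K^\beta\sqrt{\log n}=o(K^{3/2})$; hence the additive slack in (\ref{DenseSub}) between the upper and lower bounds is already $o(K^{3/2})$, and it suffices to expand the common main term $h^{-1}\bigl(\log 2-t\bigr)\binom K2$ with $t:=\log\binom nK/\binom K2$.

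Next I would compute $t$. By Stirling, $\log\binom nK=K\log(n/K)+K+O(\log n)+O(K^2/n)$, and here $C<\tfrac12$ makes $K^2/n=\Theta(n^{2C-1})=o(1)$, while $\log(n/K)=(1-C)\log n+O(1)=\Theta(\log n)$. Dividing by $\binom K2=\tfrac{K^2}{2}\bigl(1+O(1/K)\bigr)$ gives $t=\tfrac{2\log(n/K)}{K}\bigl(1+O(1/\log n)\bigr)\to0$; the point worth recording is that the relative error in $t$ is $O(1/\log n)$, not merely $o(1)$. Then I would invoke the Taylor expansion of $h^{-1}$ about $\log 2$ (Lemma~\ref{EntTaylor}) in the sharp form $h^{-1}(\log 2-t)=\tfrac12+\tfrac1{\sqrt2}\sqrt t+O(t^{3/2})$ for $t\to0$: the remainder is $O(t^{3/2})$, not merely $o(\sqrt t)$, because $h$ is analytic at $\tfrac12$ and symmetric about it, so the cubic term in its expansion vanishes ($h'''(\tfrac12)=0$). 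Multiplying through by $\binom K2$ and substituting the estimates for $t$ and $\binom K2$, the term $\tfrac12\binom K2$ becomes $\tfrac{K^2}{4}-\tfrac K4=\tfrac{K^2}{4}+o(K^{3/2})$; the remainder contributes $O(t^{3/2}\binom K2)=O\bigl(K^{1/2}(\log n)^{3/2}\bigr)=o(K^{3/2})$; and the term $\tfrac1{\sqrt2}\sqrt t\,\binom K2$ equals $\tfrac{K^{3/2}\sqrt{\log(n/K)}}{2}$ up to an error $O\bigl(K^{3/2}/\sqrt{\log n}\bigr)+O\bigl(K^{1/2}\sqrt{\log n}\bigr)=o(K^{3/2})$, coming respectively from the $O(1/\log n)$ relative error in $t$ and the $O(1/K)$ relative error in $\binom K2$. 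Collecting terms, the common main term is $\tfrac{K^2}{4}+\tfrac{K^{3/2}\sqrt{\log(n/K)}}{2}+o(K^{3/2})$, and combined with the first paragraph this pins $d_{\mathrm{ER},K}(G_0)$ to this value w.h.p., which is (\ref{DenseSub2}).

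There is no substantive obstacle here; the whole argument is bookkeeping on top of Theorem~\ref{denseeee}. The one place that genuinely demands care, and the reason the corollary can claim $o(K^{3/2})$ rather than the cruder $o\bigl(K^{3/2}\sqrt{\log n}\bigr)$, is controlling the total error: one must note that the relative error in $t$ is only $O(1/\log n)$, use the $O(t^{3/2})$ (rather than $o(\sqrt t)$) remainder in the expansion of $h^{-1}$, and exploit the freedom to take $\beta<\tfrac32$ in Theorem~\ref{denseeee}. Each of these carries a genuine power of $\log n$ that, multiplied against the $\Theta(K^{3/2}\sqrt{\log n})$ second-order term, lands comfortably inside $o(K^{3/2})$.
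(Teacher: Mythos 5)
Your proposal is correct and follows the same route as the paper: the paper derives Corollary~\ref{CorD} by applying Theorem~\ref{denseeee} with an admissible $\beta<\tfrac32$, then combining the Stirling approximation of $\binom{n}{K}$ with the Taylor expansion of $h^{-1}$ from Lemma~\ref{EntTaylor}. Your write-up actually makes explicit the error bookkeeping (in particular the $O(1/\log n)$ relative error in $t$ and the $O(t^{3/2})$ remainder needed to land inside $o(K^{3/2})$) that the paper leaves implicit.
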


\section{Proof of Theorem \ref{denseeee}}
In this section we establish Theorem \ref{denseeee}. We first provide a proof techniques section and then establish in separate subsections the lower and upper bounds of (\ref{DenseSub}). Finally an intermediate subsection is devoted to certain key lemmas for the proof.
\label{17}

\subsection{Roadmap}
\label{road}
For $\gamma \in (\frac{1}{2},1)$ let $Z_{K,\gamma}$ the random variable that counts the number of $K$-vertex subgraphs of $G \sim G(n,\frac{1}{2})$ with edge density at least $\gamma $ (equivalently with number of edges at least $\gamma  \binom{K}{2}$), that is
\begin{equation} \label{dfn:z} 
Z_{K,\gamma}:=\sum_{A \subset V(G): |A|=K} 1\left( |E[A]| \geq \gamma \binom{K}{2}\right).
\end{equation}

Markov's inequality (on the left) and Paley-Zygmund inequality (on the right) give \begin{equation} \label{FSMM} \mathbb{E}\left[Z_{K,\gamma}\right] \geq \mathbb{P}\left[Z_{K,\gamma} \geq 1\right] \geq \frac{ \mathbb{E}\left[Z_{K,\gamma}\right]^2}{\mathbb{E}\left[Z^2_{K,\gamma}\right]}.\end{equation} (\ref{FSMM}) has two important implications.

First if for some $\gamma>0$, $$\lim_n \mathbb{E}\left[Z_{K,\gamma}\right]=0$$ then  (\ref{FSMM}) gives $Z_{K,\gamma}=0$ w.h.p. as $n \rightarrow + \infty$ and therefore the densest $K$-subgraph has at most $\gamma  \binom{K}{2}$ edges w.h.p. as $n \rightarrow + \infty$. This is called \textit{the first moment method} for the random variable $Z_{k,\gamma}.$

Second  if for some $\gamma>0$, $$\lim_n \frac{ \mathbb{E}\left[Z_{k,\gamma}\right]^2}{\mathbb{E}\left[Z^2_{K,\gamma}\right]}= 1$$ then $Z_{K,\gamma} \geq 1$ w.h.p. as $n \rightarrow + \infty$ and therefore the densest-$K$ subgraph has at least $\gamma  \binom{K}{2}$ edges w.h.p. as $n \rightarrow + \infty$. This is called \textit{the second moment method} for the random variable $Z_{k,\gamma}.$

Combining the two observations and a Taylor Expansion result described in Lemma \ref{lem:EntDense}, to establish Theorem  \ref{denseeee} it suffices to establish for some $\alpha \leq \beta(C)-\frac{1}{2}$ and $$\gamma=h^{-1} \left(\log 2-\frac{ \log  \binom{n}{K} -O \left(K^{\alpha}  \log n\right) }{\binom{K}{2}} \right),$$ that it holds $$\lim_n \mathbb{E}\left[Z_{K,\gamma}\right]=0, \lim_n \frac{ \mathbb{E}\left[Z_{K,\gamma}\right]^2}{\mathbb{E}\left[Z^2_{K,\gamma}\right]}= 1.$$

We establish the upper bound provided in Theorem \ref{denseeee} exactly in this way, by showing that for $\alpha=0$ and $\gamma=h^{-1} \left(\log 2-\frac{ \log  \binom{n}{K}  }{\binom{K}{2}} \right)$ it holds $\lim_n \mathbb{E}\left[Z_{K,\gamma}\right]=0$. We present this argument in Subsection \ref{upp}.

The lower bound appears much more challenging to obtain. A crucial difficulty is that by writing $Z_{K,\gamma}$ as a sum of indicators  as in (\ref{dfn:z}) and expanding $\mathbb{E}\left[Z^2_{K,\gamma}\right]$ we need to control various complicated ways that two dense $K$-subgraphs overlap. This is not an uncommon difficulty in the literature of second moment method applications where certain conditioning is usually necessary for the second moment method to provide tight results (see e.g.~\cite{Jiaming18}, \cite{gamarnikzadik}, \cite{Wu18}, \cite{BandeiraNotes}, \cite{zadik19} and references therein).

To control the ways dense subgraphs overlap we follow a similar, but not identical, path to \cite{Bollobas18} which analyzed the $K$-densest subgraph problem for $K=\Theta\left( \log n\right)$ and also used a conditioning technique. We do not analyze directly the second moment of $Z_{K,\gamma}$ but instead we focus on the second moment for another counting random variable that counts sufficiently dense subgraphs satisfying also an additional \textit{flatness condition.} The condition is established to hold with high probability under the \ER structure (Lemma \ref{flat}) and under this condition the dense subgraphs overlap in more ``regular" ways leading to an easier control of the second moment. More details and the analysis of the second moment method under the flatness condition are in Subsections \ref{DenseLem} and \ref{low}.

\subsection{Proof of the Upper Bound}

\label{upp}

Using (\ref{FSMM}) it suffices to show that for $\gamma :=h^{-1} \left(\log 2-\frac{ \log  \binom{n}{K} }{\binom{K}{2}} \right)$, $\mathbb{E}[Z_{K,\gamma}]=o(1)$.

We have by linearity of expectation and (\ref{dfn:z})
\begin{align}
\mathbb{E}\left[Z_{K,\gamma} \right] &= \binom{n}{K} \mathbb{P}\left[ |\mathrm{E}[A]| \geq  \gamma  \binom{K}{2}  \right],\text{for some } A \subseteq V(G), |A|=K \notag\\
&= \binom{n}{K} \mathbb{P}\left[ \mathrm{Bin}\left(\binom{K}{2},\frac{1}{2} \right) \geq  \gamma \binom{K}{2} \right] \label{Step1FMM}
\end{align}

Using the elementary inequality $\binom{n}{K} \leq n^{K}$ we have \begin{equation}  \label{simpleee} \frac{ \log \binom{n}{K} }{\binom{K}{2}}=O\left(\frac{\log n}{K}\right)=o(1)\end{equation} since by our assumption $\omega( \log n)=K$.

By Lemma \ref{EntTaylor} and (\ref{simpleee}) we have, $$\gamma=\frac{1}{2}+\Omega \left( \sqrt{ \frac{ \log \binom{n}{K} }{\binom{K}{2}} }\right)= \frac{1}{2}+o(1).$$ Therefore $\lim_n \gamma = \frac{1}{2}$ and by Stirling's approximation, $$ \left(\gamma-\frac{1}{2}\right) \sqrt{\binom{K}{2}}=\Omega\left( \sqrt{\log \binom{n}{K} } \right)=\Omega \left(\sqrt{K \log \frac{n}{K}} \right)=\omega(1). $$ Hence both assumptions of Lemma \ref{Bin} are satisfied and hence (\ref{Step1FMM}) implies
\begin{align}
\mathbb{E}\left[Z_{K,\gamma} \right] \leq \binom{n}{K} O\left(\exp\left(-\binom{K}{2} r(\gamma,\frac{1}{2}) -\Omega \left(\sqrt{K\log \frac{n}{K}} \right)\right)\right), \label{FMpf11}
\end{align}where recall that $r(\gamma,\frac{1}{2})$ is defined in (\ref{alpha}).
Now notice that for our choice of $\gamma$, $$r(\gamma,\frac{1}{2})=\log 2-h(\gamma)=\frac{ \log  \binom{n}{K} }{\binom{K}{2}}.$$ In particular using (\ref{FMpf11}) we conclude that 
\begin{align}
\mathbb{E}\left[Z_{K,\gamma} \right] = \exp\left(- \Omega \left(\sqrt{K \log \frac{n}{K}} \right)\right)=o(1).
\end{align} The completes the proof of the upper bound.

\subsection{$(\gamma,\delta)$-flatness and auxiliary lemmas} \label{DenseLem}

We start with appropriately defining the flatness condition mention in Subsection \ref{road}. Specifically, for $K \in \mathbb{N}$ we introduce a notion of a $(\gamma,\delta)$-flat $K$-vertex graph $G$, where $\gamma,\delta \in (0,1)$. This generalizes the corresponding definition from \cite[Section 3]{Bollobas18}.

 For $ 0 \leq \ell \leq K$ let
\begin{align} \label{Dk}
D_K(\ell,\delta):= \left\{
\begin{array}{ll} 
      \sqrt{2\gamma(2+\delta) \min \left(\binom{K}{2}-\binom{\ell}{2},\binom{\ell}{2}\right) \left( \log \binom{K}{\ell}+2\log K \right)}& 0 \leq \ell<\frac{2K}{3} \\
     \sqrt{2\gamma (1+\delta) \min \left(\binom{K}{2}-\binom{\ell}{2},\binom{\ell}{2} \right) \left( \log \binom{K}{\ell}+2\log K \right)} &  \frac{2K}{3} \leq \ell \leq K\\
\end{array} 
\right. 
\end{align}

\begin{definition}[$(\gamma,\delta)$-flat graph] \label{dfn:flat}
Call a $K$-vertex graph $G$, $(\gamma,\delta)$-flat if \begin{itemize}
\item $|E[G]|=\ceil*{\gamma \binom{K}{2}}$ and
\item  for all $A \subset V(G)$ with $\ell=|A| \in \{2,3,\ldots,K-1\}$ we have $|E[A]| \leq \ceil*{\gamma \binom{\ell}{2}}+D_K(\ell,\delta)$.
\end{itemize}
\end{definition} Notice that a $(\gamma,\delta)$-flat subgraph of $G \sim G(n,\frac{1}{2})$ has edge density approximately $\gamma$ and is constrained to do not have arbitrarily dense subgraphs. In particular, two $(\gamma,\delta)$-flat subgraphs of $G$ cannot overlap in ``extremely" dense subgraphs. This property leads to an easier control of the second moment of the random variable which counts the number of $(\gamma,\delta)$-flat subgraphs compared to the second moment of $Z_{K,\gamma}$ defined in Definition \ref{dfn:z}. Using the second moment method we establish the existence of an appropriate ($\gamma,\delta)$-flat subgraph leading to the desired lower bound stated in Theorem \ref{denseeee}. Even under the flatness restriction, the control of the second moment remains far from trivial and requires a lot of careful and technical computations. For this reason we devote the rest of this subsection on stating and proving four auxiliary lemmas. In the following subsection we provide the proof of the lower bound.

\begin{lemma}\label{gamma}
Let $\alpha \in (0,1)$. Suppose $K=\Theta(n^C)$ for $C \in (0,1)$. \\For any $\gamma$ satisfying $\gamma=h^{-1} \left(\log 2-\frac{ \log  \binom{n}{K} -O\left(K^{\alpha} \log n \right) }{\binom{K}{2}} \right)$ it holds $$\gamma=\frac{1}{2}+\left(1+o(1)\right)\sqrt{ \frac{ \log \frac{n}{K}}{K}}=\frac{1}{2}+ \Theta \left(\sqrt{ \frac{ \log n}{K}} \right).$$Furthermore,
$$r \left(\gamma,\frac{1}{2} \right)=\log 2- h\left(\gamma\right)=\left(1+o\left(1 \right)\right)\frac{ 2\log \frac{n}{K}}{K}=\Theta\left(\frac{ \log n}{K}\right).$$

\end{lemma}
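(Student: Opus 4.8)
\textbf{Proof plan for Lemma \ref{gamma}.}
The statement is a two-part asymptotic computation: first pin down $\gamma$, then convert it into the claimed estimate for $r(\gamma,\tfrac12)=\log 2-h(\gamma)$. The plan is to work directly from the defining equation
$$
r\Bigl(\gamma,\tfrac12\Bigr)=\log 2-h(\gamma)=\frac{\log\binom{n}{K}-O\bigl(K^{\alpha}\log n\bigr)}{\binom{K}{2}},
$$
treat the right-hand side as a small quantity $t=t_n$, and invoke the Taylor expansion of $h^{-1}$ near $\log 2$ stated in Lemma \ref{EntTaylor}, namely $h^{-1}(\log 2-t)=\tfrac12+\tfrac{1}{\sqrt2}\sqrt{t}+o(\sqrt t)$ as $t\to0$.

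First I would estimate $t$. By Stirling's approximation, $\log\binom{n}{K}=K\log\frac{n}{K}+K+O(\log(nK))=(1+o(1))K\log\frac nK$, using that $K=\Theta(n^C)$ with $C\in(0,1)$ forces $\log\frac nK=\Theta(\log n)=\omega(1)$ and $K=\omega(1)$. Since $\alpha\in(0,1)$, the correction term obeys $K^{\alpha}\log n=o\bigl(K\log\frac nK\bigr)$, so it is absorbed: $\log\binom nK-O(K^{\alpha}\log n)=(1+o(1))K\log\frac nK$. Dividing by $\binom K2=\tfrac{K(K-1)}{2}=(1+o(1))\tfrac{K^2}{2}$ gives
$$
t=\frac{\log\binom nK-O(K^{\alpha}\log n)}{\binom K2}=(1+o(1))\,\frac{2\log\frac nK}{K}=\Theta\!\left(\frac{\log n}{K}\right),
$$
which in particular is $o(1)$. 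This is already the second displayed identity of the lemma. For the first, I would apply $h^{-1}$ to both sides: $\gamma=h^{-1}(\log 2-t)=\tfrac12+(1+o(1))\sqrt{t}=\tfrac12+(1+o(1))\sqrt{\tfrac{2\log(n/K)}{K}\cdot\tfrac12}$—wait, one must be careful with the constant. From Lemma \ref{EntTaylor}, $\gamma-\tfrac12=(1+o(1))\tfrac{1}{\sqrt2}\sqrt t$, and with $t=(1+o(1))\tfrac{2\log(n/K)}{K}$ this yields $\gamma-\tfrac12=(1+o(1))\tfrac{1}{\sqrt2}\sqrt{\tfrac{2\log(n/K)}{K}}=(1+o(1))\sqrt{\tfrac{\log(n/K)}{K}}$, matching the claimed $\gamma=\tfrac12+(1+o(1))\sqrt{\tfrac{\log(n/K)}{K}}=\tfrac12+\Theta\bigl(\sqrt{\tfrac{\log n}{K}}\bigr)$, where the last step again uses $\log\frac nK=\Theta(\log n)$.

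The only genuinely delicate point — and the step I would double-check most carefully — is the bookkeeping of constants and $(1+o(1))$ factors through the two Stirling estimates and the Taylor expansion, together with verifying that the $O(K^{\alpha}\log n)$ term is truly negligible relative to $K\log\frac nK$; this is where the hypothesis $\alpha<1$ (and $C\in(0,1)$, which keeps $\log\frac nK$ of order $\log n$ rather than, say, $\log n/\log\log n$) is used in an essential way. There is no serious obstacle beyond this careful asymptotic accounting: the lemma is a direct corollary of Lemma \ref{EntTaylor} and Stirling's formula.
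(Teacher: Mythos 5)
Your proposal is correct and follows essentially the same route as the paper's proof: Stirling's approximation to get $\log\binom{n}{K}=(1+o(1))K\log\frac{n}{K}$, absorption of the $O(K^{\alpha}\log n)$ term using $\alpha<1$ and $C<1$, and then the Taylor expansion of $h^{-1}$ from Lemma \ref{EntTaylor}, with the constant $\tfrac{1}{\sqrt2}\sqrt{2\log(n/K)/K}=\sqrt{\log(n/K)/K}$ worked out correctly.
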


\begin{proof}
We first observe that since $K=\Theta(n^C)$ for $C \in (0,1)$ by Stirling approximation we have $\log \binom{n}{K}=\left(1+o(1)\right)K \log \frac{n}{K}.$ Therefore, since $C<1$ and $\alpha<1$,  it also holds 
\begin{align*}
\frac{ \log  \binom{n}{K} -O\left(K^{\alpha} \log n \right) }{\binom{K}{2}} =\left(1+o\left(1\right)\right)\frac{K \log \frac{n}{K}}{\frac{K^2}{2}}=\left(1+o\left(1\right)\right)\frac{2   \log  \frac{n}{K}}{K}.
\end{align*}
Hence $\gamma$ satisfies
 \begin{equation} \label{Gammaaa} \gamma=h^{-1} \left(\log 2-\left(1+o\left(1\right)\right)\frac{2   \log  \frac{n}{K}}{K} \right).\end{equation} By Lemma \ref{EntTaylor} we have $h^{-1}\left( \log 2-\epsilon\right)=\frac{1}{2}+\left(\frac{1}{\sqrt{2}}+o\left(1\right)\right)\sqrt{\epsilon}$. Since $\frac{2   \log  \frac{n}{K}}{K}=o\left(1\right)$ we have that $$\gamma=\frac{1}{2}+\left(1+o(1)\right)\sqrt{ \frac{ \log \frac{n}{K}}{K}}=\frac{1}{2}+ \Theta \left(\sqrt{ \frac{ \log n}{K}} \right).$$
Furthermore by (\ref{Gammaaa}) we directly have $$r \left(\gamma,\frac{1}{2} \right)=\log 2- h\left(\gamma\right)=\left(1+o\left(1 \right)\right)\frac{ 2\log \frac{n}{K}}{K}=\Theta\left(\frac{ \log n}{K}\right).$$

\end{proof}

\begin{lemma}\label{lem:EntDense} Suppose $\omega( \log n)=K=o(\sqrt{n})$. Then for any fixed $\alpha \in (0,1)$, $$h^{-1} \left(\log 2-\frac{ \log  \binom{n}{K} -O \left(K^{\alpha} \log n\right) }{\binom{K}{2}} \right)\binom{K}{2}=h^{-1} \left(\log 2-\frac{ \log  \binom{n}{K}  }{\binom{K}{2}} \right)\binom{K}{2} - O\left(K^{\alpha+\frac{1}{2}} \sqrt{ \log n} \right).$$

\end{lemma}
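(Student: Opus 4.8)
The plan is to view the claim as a first-order perturbation estimate for $h^{-1}$ \emph{at the point} $\log 2-t$, where $t:=\frac{\log\binom{n}{K}}{\binom{K}{2}}$, rather than at $\log 2$. Write the argument of the left-hand $h^{-1}$ as $\log 2-t+s$, where $|s|\le \frac{cK^{\alpha}\log n}{\binom{K}{2}}$ with $c$ the constant hidden in the $O(\cdot)$. First I would record the relevant orders of magnitude: since $\omega(\log n)=K=o(\sqrt n)$, Stirling's approximation gives $\log\binom{n}{K}=\Theta(K\log n)$, so $t=\Theta(\log n/K)=o(1)$ and $|s|/t=O(K^{\alpha-1})=o(1)$. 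Consequently $\log 2-t+s$ lies in $(0,\log 2)$ for large $n$, stays at distance $\Theta(t)$ from the endpoint $\log 2$, and the whole segment joining $\log 2-t$ to $\log 2-t+s$ enjoys this property uniformly.

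Next, since $h$ restricted to $(\tfrac12,1)$ is a smooth strictly decreasing bijection onto $(0,\log 2)$ with non-vanishing derivative $h'(x)=\log\frac{1-x}{x}$, the map $h^{-1}$ is $C^1$ on $(0,\log 2)$, and the mean value theorem yields a point $\xi$ on the above segment with
\[
h^{-1}(\log 2-t+s)-h^{-1}(\log 2-t)=(h^{-1})'(\xi)\,s .
\]
To control $(h^{-1})'(\xi)=1/h'(h^{-1}(\xi))$ I would combine $|h'(\tfrac12+u)|=\Theta(u)$ as $u\to 0^+$ with Lemma \ref{EntTaylor}, which gives $h^{-1}(\log 2-\epsilon)=\tfrac12+(\tfrac1{\sqrt2}+o(1))\sqrt\epsilon$. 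Since $\log 2-\xi=\Theta(t)$, the value $h^{-1}(\xi)$ equals $\tfrac12+\Theta(\sqrt t)$, hence $|h'(h^{-1}(\xi))|=\Theta(\sqrt t)$ and $|(h^{-1})'(\xi)|=O(t^{-1/2})$. Therefore $\bigl|h^{-1}(\log 2-t+s)-h^{-1}(\log 2-t)\bigr|=O\bigl(|s|\,t^{-1/2}\bigr)$.

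It then remains to substitute magnitudes: multiplying by $\binom{K}{2}$ and using $|s|\binom{K}{2}=O(K^{\alpha}\log n)$ together with $t^{-1/2}=\Theta\bigl(\sqrt{K/\log n}\bigr)$ gives
\[
\left| h^{-1}(\log 2-t+s)\binom{K}{2}-h^{-1}(\log 2-t)\binom{K}{2}\right|=O\bigl(K^{\alpha}\log n\cdot\sqrt{K/\log n}\bigr)=O\bigl(K^{\alpha+\frac12}\sqrt{\log n}\bigr),
\]
which is exactly the asserted identity (recall both occurrences of $h^{-1}$ in the lemma are multiplied by $\binom{K}{2}$).

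The step to be careful with — and the only genuine subtlety — is precisely why one may \emph{not} simply Taylor-expand the two occurrences of $h^{-1}$ separately around $\log 2$ and subtract: the $o(\sqrt t)$ error in each such expansion becomes $o(K\log n)$ after multiplication by $\binom{K}{2}$, which dwarfs the target error $K^{\alpha+\frac12}\sqrt{\log n}$ when $\alpha<\tfrac12$, and these two errors do not cancel. Working instead with $(h^{-1})'$ on the short interval $[\log 2-t,\log 2-t+s]$, which is bounded away from the singular point $\log 2$ of $(h^{-1})'$, is what makes the cancellation of the $\tfrac12$-terms and of the leading $\sqrt{\cdot}$-terms rigorous and delivers the sharp bound.
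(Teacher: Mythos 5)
Your proof is correct, but it takes a slightly different route from the paper's. The paper subtracts two Taylor expansions of $h^{-1}$ around $\log 2$: it invokes Lemma \ref{EntTaylor} for both arguments $\epsilon_1=\frac{\log\binom{n}{K}}{\binom{K}{2}}$ and $\epsilon_2=\epsilon_1-O(K^{\alpha-2}\log n)$, lets the constant terms cancel, and evaluates the difference of the leading square-root terms exactly via the conjugate identity $\sqrt{a}-\sqrt{b}=(a-b)/(\sqrt{a}+\sqrt{b})$, yielding $O\bigl(K^{\alpha-2}\log n/\sqrt{\epsilon_1}\bigr)=O(K^{\alpha-\frac32}\sqrt{\log n})$ before multiplying by $\binom{K}{2}$. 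You instead apply the mean value theorem to $h^{-1}$ on the short interval $[\log 2-t,\log 2-t+s]$ and bound $(h^{-1})'(\xi)=1/h'(h^{-1}(\xi))=O(t^{-1/2})$; algebraically this produces the identical factor $t^{-1/2}$, so the two arguments are the same first-order perturbation estimate in different clothing. Your closing caveat is worth a comment: you are right that the crude form $h^{-1}(\log 2-\epsilon)=\frac12+(\frac1{\sqrt2}+o(1))\sqrt{\epsilon}$ would be fatal here, but the paper does not rely on cancellation of two uncontrolled $o(\sqrt{\epsilon})$ errors — Lemma \ref{EntTaylor} supplies an expansion with explicit remainder $O(\epsilon^{5/2})$, and each such remainder contributes only $O(\binom{K}{2}\epsilon^{5/2})=O(K^{-1/2}(\log n)^{5/2})$, which is negligible against the target $K^{\alpha+\frac12}\sqrt{\log n}$ in the regimes where the lemma is used. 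Your MVT variant buys robustness (no need to track the higher-order terms of the expansion at all); the paper's variant buys an explicit asymptotic for the difference rather than just an upper bound. Either is acceptable.
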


\begin{proof}

Equivalently we need to show that 

$$h^{-1} \left(\log 2-\frac{ \log  \binom{n}{K} -O \left(K^{\alpha} \log n\right) }{\binom{K}{2}} \right)=h^{-1} \left(\log 2-\frac{ \log  \binom{n}{K}  }{\binom{K}{2}} \right) - O\left(K^{\alpha-\frac{3}{2}} \sqrt{\log n} \right).$$
Now from Lemma \ref{EntTaylor} we know that for $\epsilon=o\left(1\right)$, $h^{-1}\left(\log 2-\epsilon \right)=\frac{1}{2}+\Theta \left( \sqrt{\epsilon} \right).$ By Stirling approximation since $K=o\left(\sqrt{n}\right)$ we have $\binom{n}{K} =\Theta \left( \left(\frac{ne}{K}\right)^K\right)$. Using $\alpha \in (0,1)$, $$ \log  \binom{n}{K} =\Theta \left(K \log \left(\frac{ne}{K}\right)\right)=\omega \left(K^{\alpha} \log n \right).$$ Hence, $$\big{|}\frac{ \log  \binom{n}{K} -O \left(K^{\alpha} \log n\right) }{\binom{K}{2}} \big{|} = O \left( \frac{\log n}{K} \right)=o\left(1 \right).$$ Therefore by Lemma \ref{EntTaylor}
\begin{align*}
&h^{-1} \left(\log 2-\frac{ \log  \binom{n}{K} }{\binom{K}{2}} \right)-h^{-1} \left(\log 2-\frac{ \log  \binom{n}{K} -O\left(K^{\alpha-2} \log n \right) }{\binom{K}{2}} \right)\\
&=\Theta\left(\sqrt{\frac{ \log  \binom{n}{K} }{\binom{K}{2}} }-\sqrt{\frac{ \log  \binom{n}{K} -O(K^{\alpha}\log n)}{\binom{K}{2}} }\right)\\
&=O\left( \frac{K^{\alpha-2} \log n}{\sqrt{ \frac{ \log  \binom{n}{K} }{\binom{K}{2}}}}\right), \text{ using } \sqrt{a}-\sqrt{b}=\left(a-b\right)/\left(\sqrt{a}+\sqrt{b}\right)\\
&=O\left( K^{\alpha-\frac{3}{2}} \sqrt{\log n} \right).
\end{align*} The proof of the Lemma is complete.

\end{proof}

The lemma below generalizes Lemma 4 from \cite{Bollobas18}.
\begin{lemma}\label{flat}
Let $\gamma,\delta \in (0,1)$. Suppose $G'$ is an \ER $G\left(K,\frac{1}{2}\right)$ conditioned on having $\ceil*{\gamma \binom{K}{2}}$ edges. Then $G'$ is $(\gamma,\delta)$-flat (defined in Definition \ref{dfn:flat}) w.h.p. as $K \rightarrow + \infty$.
\end{lemma}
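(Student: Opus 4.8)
The plan is to verify the two requirements in Definition~\ref{dfn:flat} in turn. The first, $|E[G']|=\ceil*{\gamma\binom{K}{2}}$, holds by the very definition of $G'$. For the second it suffices to show that, with probability $1-o(1)$ as $K\to\infty$, one has $|E[A]|\le\ceil*{\gamma\binom{\ell}{2}}+D_K(\ell,\delta)$ for every $A\subseteq V(G')$ with $\ell:=|A|\in\{2,\dots,K-1\}$, and I would bound the failure probability by a union bound over $\ell$ and over the $\binom{K}{\ell}$ subsets of each size. The one structural input is that, conditionally, the $m:=\ceil*{\gamma\binom{K}{2}}$ edges of $G'$ form a uniformly random $m$-subset of the $N:=\binom{K}{2}$ pairs, so for a fixed $A$ with $|A|=\ell$ the count $|E[A]|$ is hypergeometric, $|E[A]|\sim\Hyp\!\left(N,\binom{\ell}{2},m\right)$, with mean $m\binom{\ell}{2}/N$ within $1$ of $\gamma\binom{\ell}{2}$ (as $m\le\gamma N+1$ and $\binom{\ell}{2}\le N$) and variance at most $\frac{m}{N}\min\!\left(\binom{\ell}{2},N-\binom{\ell}{2}\right)$; recall that in the regime where the lemma is applied $\gamma=\frac12+o(1)$ by Lemma~\ref{gamma}.

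First I would dispose of the small sizes deterministically: for $\ell<2K/3$ the minimum in (\ref{Dk}) equals $\binom{\ell}{2}$, so whenever $\binom{\ell}{2}=O\!\left(\log\binom{K}{\ell}+\log K\right)$ — that is, $\ell=O(\log K)$ — the trivial bound $|E[A]|\le\binom{\ell}{2}$ already lies below $\ceil*{\gamma\binom{\ell}{2}}+D_K(\ell,\delta)$ and nothing probabilistic is needed. For the remaining $\ell$ I would use a Chernoff-type tail bound (in the spirit of Lemma~\ref{Bin}) for the hypergeometric, via its domination in the convex order by a binomial of the same mean: when $\ell<2K/3$ this is applied to $|E[A]|$ itself, an upper-tail deviation of a variable of mean $\approx\gamma\binom{\ell}{2}$; when $\ell$ is close to $K$ it is more efficient to apply it to the complementary count of edges meeting $V(G')\setminus A$, which is $\Hyp\!\left(N,N-\binom{\ell}{2},m\right)$ of small mean $\approx\gamma(N-\binom{\ell}{2})$, so that the flatness violation becomes a lower-tail deviation of that small-mean count. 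Taking the deviation to be $D_K(\ell,\delta)-O(1)$ and reading off the variance proxy $\gamma\min\!\left(\binom{\ell}{2},N-\binom{\ell}{2}\right)$ built into (\ref{Dk}), the aim is a bound $\exp\!\left(-(1+\delta')\big(\log\binom{K}{\ell}+2\log K\big)\right)$ for some $\delta'=\delta'(\delta)>0$, which then gives $\sum_{\ell=2}^{K-1}\binom{K}{\ell}\exp\!\left(-(1+\delta')\big(\log\binom{K}{\ell}+2\log K\big)\right)\le K\cdot K^{-2(1+\delta')}=o(1)$.

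The part that needs care is making this work uniformly in $\ell$ and matching the piecewise form of $D_K(\ell,\delta)$. For $\ell$ near $K$ the constant $1+\delta$ comes out exactly because the relevant lower tail of a sum of indicators obeys the clean bound $\exp(-t^2/2\mu')$ with $\mu'$ its mean, whereas for $\ell<2K/3$ one must control an upper tail of $|E[A]|$, which is costlier (the Chernoff exponent carries a worse constant, and in a window just above the deterministic range $D_K(\ell,\delta)$ is comparable to the mean, so one needs the exact binomial/relative-entropy tail rather than a sub-Gaussian one), and it is this loss that the larger constant $2+\delta$ absorbs. One then checks that the $2K/3$ cutoff and the piecewise proxy $\min\!\left(\binom{\ell}{2},N-\binom{\ell}{2}\right)$ are chosen so that, together with the deterministic range $\ell=O(\log K)$, the three regimes cover all $\ell$ while each returns an exponent exceeding $\log\binom{K}{\ell}+2\log K$. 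This is exactly the computation of \cite[Lemma~4]{Bollobas18}, carried out there for $K=\Theta(\log n)$; the one new feature is that for $K$ a power of $n$ the Stirling estimates $\log\binom{n}{K}=(1+o(1))K\log(n/K)$ and $\log\binom{K}{\ell}=(1+o(1))\ell\log(K/\ell)$ are available, which streamlines the comparisons of $D_K(\ell,\delta)$ with the relevant means and variances, and following that argument is the route I would take.
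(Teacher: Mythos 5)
Your proposal is correct in outline and follows the same architecture as the paper's proof: a per-subset tail bound for the induced edge count under the fixed-edge-count conditioning, with variance proxy $\gamma\min\bigl(\binom{\ell}{2},\binom{K}{2}-\binom{\ell}{2}\bigr)$, followed by a union bound over subsets and sizes. The differences are in the inputs and the bookkeeping. Where you re-derive the concentration inequality from scratch (hypergeometric law, Hoeffding's convex-order domination by a binomial, the complement trick for $\ell$ near $K$, Pinsker/Bernstein in the intermediate window, and a deterministic bound for $\ell=O(\log K)$), the paper simply imports the ready-made tail bound $\Prob\bigl(|E[A]|\ge\gamma\binom{\ell}{2}+r\bigr)\le\exp\bigl(-r(r-1)/(2\gamma\min(\binom{K}{2}-\binom{\ell}{2},\binom{\ell}{2}))\bigr)$ from equation (4) of \cite{Bollobas18}; your route is more self-contained but must verify exactly the uniformity issues you flag. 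Your union bound is also cleaner: by keeping the $2\log K$ term you get a uniform per-size bound $K^{-2(1+\delta')}$ and the sum over $\ell$ is immediate, whereas the paper's per-size bounds are only $\binom{K}{\ell}^{-1-\delta}$ (for $\ell<2K/3$) and $\binom{K}{\ell}^{-\delta}$ (for $\ell\ge 2K/3$), forcing it to split the range $\ell\ge 2K/3$ further and use lower bounds such as $\binom{K}{\ell}\ge 3^{K^{\delta/2}}$ for $\ell\le K-K^{\delta/2}$. One point to correct, though it does not affect validity: the $(2+\delta)$ versus $(1+\delta)$ asymmetry in $D_K(\ell,\delta)$ is \emph{not} explained by upper-tail versus lower-tail Chernoff costs — the cited tail inequality has the identical form for all $\ell$. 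The real reason is a trade-off external to this lemma: for $\ell\ge 2K/3$ one wants $D_K$ as small as possible because it enters the exponent of the second-moment computation in Proposition \ref{second}, so the paper accepts the weaker probability exponent $\binom{K}{\ell}^{-\delta}$ there and closes the union bound by exploiting that $\binom{K}{\ell}$ is superpolynomially large over most of that range.
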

\begin{proof}  For any $C \subset V(G)$, let $e\left(C\right):=|E[C]|/\binom{|C|}{2}.$

Consider any $2 \leq \ell \leq K-1$ and any $C \subset V(G)$ with $|C|=\ell$. By identical reasoning we have from equation (4), page 6 in \cite{Bollobas18} that for any $r>0$, $$\mathbb{P}\left(|E\left[C\right]| \geq \gamma \binom{\ell}{2}+r \right) \leq \exp \left(-\frac{r(r-1)}{2\gamma \min(\binom{K}{2}-\binom{\ell}{2},\binom{\ell}{2})}\right).$$Therefore by union bound, \begin{align*} &\binom{K}{\ell}\mathbb{P}\left(|E\left[C\right]| \geq \gamma \binom{\ell}{2}+D_K\left(\ell,\delta\right)\right) \\
&\leq \binom{K}{\ell} \sum_{r=D_K\left(\ell,\delta\right)}^{\binom{\ell}{2}} \exp \left(-\frac{r(r-1)}{2\gamma \min(\binom{K}{2}-\binom{\ell}{2},\binom{\ell}{2})}\right) \\
& \leq \exp\left( \log \binom{K}{\ell}+ \log \binom{\ell}{2}  - \frac{(D_K\left(\ell,\delta\right)-1)^2}{2\gamma \min(\binom{K}{2}-\binom{\ell}{2},\binom{\ell}{2})}\right)\\
& \leq \exp\left( \log \binom{K}{\ell}+ 2\log K  - \frac{(D_K\left(\ell,\delta\right)-1)^2}{2\gamma \min(\binom{K}{2}-\binom{\ell}{2},\binom{\ell}{2})}\right).
\end{align*}Therefore plugging in the value for $D_K\left(\ell,\delta\right)$ we conclude that for $\ell<\frac{2K}{3}$, $$\binom{K}{\ell}\mathbb{P}\left(|E\left[C\right]| \geq \gamma \binom{\ell}{2}+D_K\left(\ell,\delta\right)\right) \leq \exp(-(1+\delta) \log \binom{K}{\ell} )$$ and for $\ell \geq \frac{2K}{3}$, $$\binom{K}{\ell}\mathbb{P}\left(|E\left[C\right]| \geq \gamma \binom{\ell}{2}+D_K\left(\ell,\delta\right) \right) \leq \exp(-\delta \log \binom{K}{\ell} ).$$Using union bound and the above two inequalities we have that  $G'$ is  not $(\gamma,\delta)$-flat with probability at most
\begin{align} & \sum_{\ell=2}^{K-1} \binom{K}{\ell}\mathbb{P}\left(|E\left[C\right]| \geq \gamma \binom{\ell}{2}+D_K\left(\ell,\delta\right)\right)  \notag \\
& \leq \sum_{\ell =1}^{\floor{\frac{2K}{3}}} \binom{K}{\ell}^{-1-\delta}+\sum_{ \ell=\ceil{\frac{2K}{3}}}^{K-1} \binom{K}{\ell}^{-\delta} \label{Bound} \end{align}Using now that for $\ell$ satisfying $\frac{2k}{3} \leq \ell \leq K-K^{\frac{\delta}{2}}$ we have $$\binom{K}{\ell}=\binom{K}{K-\ell} \geq \left( \frac{K}{K-\ell}\right)^{K-\ell} \geq 3^{K-\ell} \geq 3^{K^{\frac{\delta}{2}}}$$and otherwise if $\ell  \leq \frac{2K}{3}$, $\binom{K}{\ell} \geq \binom{K}{1}=K$ the right hand side of (\ref{Bound}) is at most
\begin{align*} 
&K K^{-1-\delta}+K3^{- K^{\frac{\delta}{2}}}+K^{\frac{\delta}{2}}K^{-\delta} \leq K^{-\delta}+K3^{- K^{\frac{\delta}{2}}}+K^{-\frac{\delta}{2}}
\end{align*}which is $o\left(1\right)$. The proof of the Lemma is complete.

\end{proof}

Assume $G \sim G\left(n,\frac{1}{2}\right)$ and $K \leq n$. For $2\leq \ell \leq K-1, 0 \leq L \leq \binom{\ell}{2}$ and $A,B \subset V(G)$ with $|A|=K,|B|=K$ and $|A \cap B|=\ell$ let \begin{equation}\label{dfnG} g_{\ell}(L):=\mathbb{P} \left( |\mathrm{E}[A]|=|\mathrm{E}[B]|=\ceil{\gamma \binom{K}{2}},|\mathrm{E}[|A \cap B|]|=L\right).\end{equation} 

\begin{lemma}\label{lam} 
For  $2\leq \ell \leq K-1$ and $\gamma \in (\frac{1}{2},1)$ let $\lambda:=\exp \left( \frac{2\gamma-1}{1-\gamma}+\frac{1}{\gamma \left[ \binom{K}{2}-\binom{\ell}{2}\right]} \right)$. Then
\begin{itemize}
\item[(1)] for any $r \geq 0$, $$\frac{ g_{\ell} ( \ceil*{\gamma \binom{\ell}{2}}+r)}{\mathbb{P} \left(|\mathrm{E}[A]|=\ceil*{\gamma \binom{K}{2}}\right)^2} \leq \lambda^r \exp \left( \binom{\ell}{2} r(\gamma,\frac{1}{2})+O(1) \right).$$
\item[(2)] for any $r \leq  0$, $$\frac{ g_{\ell} ( \ceil*{\gamma \binom{\ell}{2}}+r)}{\mathbb{P} \left(|\mathrm{E}[A]|=\ceil*{\gamma \binom{K}{2}}\right)^2} \leq \exp \left( \binom{\ell}{2} r(\gamma,\frac{1}{2})+O(1) \right).$$
\end{itemize}
\end{lemma}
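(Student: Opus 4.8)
The plan is to exploit independence of three disjoint groups of vertex‑pairs to write $g_{\ell}(L)$ as a product of binomial point probabilities, and then to compare $g_{\ell}(\lceil\gamma\binom{\ell}{2}\rceil+r)$ to its value at $r=0$ by telescoping ratios of binomial coefficients. Write $M=\binom{K}{2}$, $m=\binom{\ell}{2}$, $N=M-m$, $a=\lceil\gamma M\rceil$, $b=\lceil\gamma m\rceil$, and $p:=\mathbb{P}(|\mathrm{E}[A]|=\lceil\gamma\binom{K}{2}\rceil)$. The $m$ pairs inside $A\cap B$, the $N$ pairs inside $A$ with an endpoint outside $A\cap B$, and the $N$ pairs inside $B$ with an endpoint outside $A\cap B$ form three disjoint sets, so the corresponding edge counts $X\sim\mathrm{Bin}(m,\tfrac12)$ and $Y_{A},Y_{B}\sim\mathrm{Bin}(N,\tfrac12)$ are mutually independent, with $|\mathrm{E}[A\cap B]|=X$, $|\mathrm{E}[A]|=X+Y_{A}$, $|\mathrm{E}[B]|=X+Y_{B}$, and $X+Y_{A}\sim\mathrm{Bin}(M,\tfrac12)$. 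Hence $g_{\ell}(b+r)=\mathbb{P}(X=b+r)\,\mathbb{P}(Y_{A}=a-b-r)^{2}$ and $p=\binom{M}{a}2^{-M}$, so the ratio to be bounded equals exactly
\[ \frac{g_{\ell}(b+r)}{p^{2}}=2^{m}\binom{m}{b+r}\left(\frac{\binom{N}{a-b-r}}{\binom{M}{a}}\right)^{2}. \]

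First I would settle the base case $r=0$. Applying Stirling's formula to the four binomial coefficients in $2^{m}\binom{m}{b}\big(\binom{N}{a-b}/\binom{M}{a}\big)^{2}$ and using $b/m=\gamma+O(1/m)$, $(a-b)/N=\gamma+O(1/N)$, $a/M=\gamma+O(1/M)$ together with the fact that $\log 2-h$ and its derivative are $O(1)$ near $\gamma$, the entropy exponents combine into $(2M-m-2N)\,r(\gamma,\tfrac12)+O(1)$, which equals $m\,r(\gamma,\tfrac12)+O(1)$ since $2M-m-2N=m$, while the $\sqrt{\cdot}$ prefactors combine into a single factor of order $M/(N\sqrt{m})$. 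A short case split, $m\le M/2$ (so $N\ge M/2$) versus $m>M/2$ (so $N\ge K-1\ge\sqrt{2M}-1$ and $\sqrt{m}\ge\sqrt{M/2}$), shows $M/(N\sqrt{m})=O(1)$, whence $g_{\ell}(b)/p^{2}\le\exp\big(\binom{\ell}{2}r(\gamma,\tfrac12)+O(1)\big)$.

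Next I would reduce general $r$ to this base case by telescoping. From the decomposition, $g_{\ell}(b+r)/g_{\ell}(b)=\big(\binom{m}{b+r}/\binom{m}{b}\big)\big(\binom{N}{a-b-r}/\binom{N}{a-b}\big)^{2}$, and each ratio of binomial coefficients is a product of consecutive ratios $\binom{n}{j}/\binom{n}{j-1}=(n-j+1)/j$, which are monotone in $j$. For $r\ge0$ this gives $g_{\ell}(b+r)/g_{\ell}(b)\le\big(\tfrac{m-b}{b+1}\big)^{r}\big(\tfrac{a-b}{N-a+b+1}\big)^{2r}$; bounding $m-b\le(1-\gamma)m$, $b+1>\gamma m$, $a-b\le\gamma N+1$, $N-a+b+1\ge(1-\gamma)N$ and using $1+x\le e^{x}$ collapses the base to $\lambda=\exp\big(\tfrac{2\gamma-1}{1-\gamma}+\tfrac{\Theta(1)}{\gamma[\binom{K}{2}-\binom{\ell}{2}]}\big)$, which with the base case yields part~(1). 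For $r\le0$, set $s=-r\ge0$; since $g_{\ell}(b-s)=0$ unless $0\le b-s$ and $0\le a-b+s\le N$, we may assume $s\le b\le\gamma m+1$ and $s\le N-(a-b)\le(1-\gamma)N+1$. The same telescoping gives $g_{\ell}(b-s)/g_{\ell}(b)\le\big(\tfrac{b}{m-b+1}\big)^{s}\big(\tfrac{N-a+b}{a-b+1}\big)^{2s}$; bounding the base by $\tfrac{1-\gamma}{\gamma}\big(1+O(1/m)\big)\big(1+O(1/N)\big)^{2}$ and raising to the $s$‑th power, the factor $\big(\tfrac{1-\gamma}{\gamma}\big)^{s}$ is at most $1$ and the remaining correction $e^{O(s/m)+O(s/N)}$ is $O(1)$ because $s/m=O(1)$ and $s/N=O(1)$; hence $g_{\ell}(b-s)/g_{\ell}(b)=O(1)$ and part~(2) follows from the base case.

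The step I expect to be the main obstacle is making the $O(1)$ terms, and the $1/N$‑correction inside $\lambda$, genuinely uniform over the whole range $2\le\ell\le K-1$: when $\ell$ is close to $K$ the edge‑group of size $N=\binom{K}{2}-\binom{\ell}{2}$ degenerates (to order $K$), and when $\ell$ is small the group of size $m=\binom{\ell}{2}$ degenerates, so one must check that neither the prefactor $M/(N\sqrt{m})$ in the base case nor the telescoped bases blow up in these regimes---this is exactly where the identity $2M-m-2N=m$ and the feasibility bounds $s\le b$, $s\le(1-\gamma)N+O(1)$ are used. Propagating the $\lceil\,\cdot\,\rceil$'s through everything so that the telescoped base in the $r\ge0$ case is at most the stated $\lambda$ (rather than $\lambda$ with a somewhat worse constant in the $1/N$ term) is the chief bookkeeping burden, and mirrors the corresponding estimate in \cite{Bollobas18}.
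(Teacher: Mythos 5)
Your proof is correct and takes essentially the same route as the paper, which does not write the argument out but simply invokes the proof of \cite[Lemma 6]{Bollobas18} for $p=\tfrac12$ with the modified $\lambda$: the three-way independence decomposition into $\mathrm{Bin}(\binom{\ell}{2},\tfrac12)$ and two independent $\mathrm{Bin}(\binom{K}{2}-\binom{\ell}{2},\tfrac12)$ counts, the Stirling base case at $r=0$ using $2\binom{K}{2}-\binom{\ell}{2}-2\bigl(\binom{K}{2}-\binom{\ell}{2}\bigr)=\binom{\ell}{2}$, and the telescoped ratios of consecutive binomial coefficients are exactly the ingredients of that cited argument. The one issue you flag as the chief bookkeeping burden --- that the telescoped base naturally comes out as $\tfrac{\gamma}{1-\gamma}\bigl(1+\tfrac{1}{\gamma N}\bigr)^{2}$ with $N=\binom{K}{2}-\binom{\ell}{2}$, i.e.\ $\lambda$ with $2/(\gamma N)$ in place of $1/(\gamma N)$ --- is genuinely harmless: whenever $g_{\ell}(\lceil\gamma\binom{\ell}{2}\rceil+r)\neq 0$ one has $r\le \gamma N+1$, so the surplus factor $e^{r/(\gamma N)}$ is absorbed into the $e^{O(1)}$ and the stated form of the lemma follows.
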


\begin{proof}This follows from the proof of \cite[Lemma 6]{Bollobas18} for $p=\frac{1}{2}$ and minor adjustment in the choice of $\lambda$. The minor adjustment is justified by the second displayed equation on Page 9 in the aforementioned paper. In that equation if we apply the elementary inequality $1+x \leq e^x$ once for $x=\frac{2 \gamma -1}{1-\gamma}$ and once for $x=\frac{1}{\gamma  \left[ \binom{K}{2}-\binom{\ell}{2}\right]}$ we obtain the new choice of $\lambda$. With this modification, following the proof of \cite[Lemma 6]{Bollobas18}, \textit{mutatis mutandis}, gives the Lemma.
\end{proof}

%\begin{theorem}\label{denseee}Suppose $\omega( \log n)=k= n^{c}$ for any $c \in (0,\frac{1}{2})$.
%
%If $\alpha \in (0,1)$ satisfies $\alpha>(\frac{ 5}{2}-\sqrt{6})\sqrt{ \frac{1-c}{c}}$ then 
%\begin{align} \label{DenseSub} &h^{-1} \left(\log 2-\frac{ \log  \binom{n}{k}  }{\binom{k}{2}} \right)\binom{k}{2} - O\left(k^{\alpha+\frac{1}{2}}\sqrt{ \log n} \right)  \leq  \psi_k(G) \leq h^{-1} \left(\log 2-\frac{ \log  \binom{n}{k}  }{\binom{k}{2}} \right)\binom{k}{2},
%\end{align} 
%with high probability as $n \rightarrow + \infty$.
%\end{theorem}

\subsection{Proof of the Lower Bound}
\label{low}
We turn now to the lower bound of (\ref{DenseSub}).

For $\gamma \in (\frac{1}{2},1)$ we again define $Z_{K,\gamma}$ as in (\ref{dfn:z}). Furthermore for any $\delta>0$,  let $\hat{Z}_{K,\gamma,\delta}$ the random variable that counts the number of $(\gamma,\delta)$-flat $K$-vertex subgraphs of $G$;
\begin{equation} \label{dfn:zpr} 
\hat{Z}_{K,\gamma,\delta}:=\sum_{A \subset V(G): |A|=K} 1\left( A \text{ is } (\gamma,\delta)\text{-flat}\right).
\end{equation} Notice that clearly by definition of $(\gamma,\delta)$-flatness we have that for any choice of $K, \gamma $ and any $\delta>0$ almost surely \begin{equation} \label{Domin} Z_{K,\gamma} \geq \hat{Z}_{K,\gamma,\delta}.\end{equation}

We establish the following proposition.
\begin{proposition}\label{second}

Suppose that $K=\Theta(n^{C})$ for some constant $C \in (0,\frac{1}{2})$. Let any $\alpha \in (0,1)$ satisfying \begin{equation} \label{alpha2} \alpha > 1-\left( \frac{5}{2}-\sqrt{6}\right)\frac{1-C}{C} \end{equation}  and set $$\gamma=h^{-1} \left(\log 2-\frac{ \log  \binom{n}{K} -K^{\alpha}  \log n }{\binom{K}{2}} \right).$$Then there exists $\delta>0$ small enough such that \begin{equation} \label{Ultimate} \frac{\mathbb{E}\left[\left(\hat{Z}_{K,\gamma,\delta}\right) ^2\right]}{\mathbb{E}\left[\hat{Z}_{K,\gamma,\delta}\right]^2}=1+o\left(1\right).\end{equation} In particular, $Z_{K,\gamma} \geq \hat{Z}_{K,\gamma,\delta} \geq 1$ with high probability as $n \rightarrow + \infty$.
\end{proposition}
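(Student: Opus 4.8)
The plan is to run the second moment method on $\hat{Z}_{K,\gamma,\delta}$, using the slack term $K^{\alpha}\log n$ that is built into the definition of $\gamma$ (the amount by which $\gamma$ sits below the ``critical'' density) as the resource that pays for the correlations between overlapping dense subgraphs, and using the $(\gamma,\delta)$-flatness restriction of Definition~\ref{dfn:flat} to keep those correlations small. First I would pin down the first moment: by Lemma~\ref{flat} an \ER $G(K,\tfrac12)$ conditioned on having $\ceil*{\gamma\binom K2}$ edges is $(\gamma,\delta)$-flat with probability $1-o(1)$, so $\mathbb{E}[\hat{Z}_{K,\gamma,\delta}]=(1-o(1))\binom nK\,\mathbb{P}\big(\mathrm{Bin}(\tbinom K2,\tfrac12)=\ceil*{\gamma\tbinom K2}\big)$; a Stirling/local central limit estimate, the defining identity $\binom K2\,r(\gamma,\tfrac12)=\log\binom nK-K^{\alpha}\log n$, and Lemma~\ref{gamma} then give $\mathbb{E}[\hat{Z}_{K,\gamma,\delta}]=\Theta(1/K)\exp(K^{\alpha}\log n)\to\infty$ — this divergence (which is exactly why $\gamma$ is perturbed) is what the second moment computation will consume.

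Next I would group the ordered pairs $(A,B)$ of $K$-vertex subsets by their overlap $\ell=|A\cap B|$, so that $\mathbb{E}[\hat{Z}_{K,\gamma,\delta}^2]=\sum_{\ell=0}^{K}N_\ell\,\mathbb{P}\big(A,B\text{ both }(\gamma,\delta)\text{-flat}\mid |A\cap B|=\ell\big)$ with $N_\ell=\binom nK\binom K\ell\binom{n-K}{K-\ell}$. For $\ell\le 1$ the edge sets of $A$ and of $B$ are disjoint, hence independent, the conditional probability equals $\mathbb{P}(A\text{ flat})^2$, and since $K=\Theta(n^{C})$ with $C<\tfrac12$ a uniform pair of $K$-sets has $|A\cap B|\le1$ with probability $1-O(K^4/n^2)$; these terms contribute $(1-o(1))\mathbb{E}[\hat{Z}_{K,\gamma,\delta}]^2$. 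For $\ell\ge2$ I would use flatness only through the constraint it puts on the shared subgraph $C=A\cap B$, namely $|\mathrm{E}[C]|\le\ceil*{\gamma\binom\ell2}+D_K(\ell,\delta)$, and then bound $\mathbb{P}(A,B\text{ both flat}\mid\ell)\le\sum_{L\le\ceil*{\gamma\binom\ell2}+D_K(\ell,\delta)}g_\ell(L)$ with $g_\ell$ as in~\eqref{dfnG}; by Lemma~\ref{lam} (split at $L=\ceil*{\gamma\binom\ell2}$ and sum the geometric-type tail, using $\lambda-1=\Theta(\sqrt{\log n/K})$ for $\lambda$ as in Lemma~\ref{lam}) this yields
\[
\frac{\mathbb{P}(A,B\text{ both flat}\mid\ell)}{\mathbb{P}(|\mathrm{E}[A]|=\ceil*{\gamma\binom K2})^{2}}\ \le\ e^{O(1)}\,e^{\binom\ell2\,r(\gamma,\frac12)}\Big(O(\ell^{2})+O(\sqrt{K/\log n})\,\lambda^{D_K(\ell,\delta)}\Big).
\]
Dividing by $\mathbb{E}[\hat{Z}_{K,\gamma,\delta}]^2=(1-o(1))\binom nK^{2}\,\mathbb{P}(|\mathrm{E}[A]|=\ceil*{\gamma\binom K2})^{2}$ and inserting Stirling for $N_\ell/\binom nK^{2}=\binom K\ell\binom{n-K}{K-\ell}/\binom nK$, the whole problem reduces to proving $\sum_{\ell\ge2}f(\ell)=o(1)$ where, up to polynomial-in-$n$ factors, $\log f(\ell)\lesssim\log\big(\binom K\ell\binom{n-K}{K-\ell}/\binom nK\big)+\binom\ell2\,r(\gamma,\tfrac12)+D_K(\ell,\delta)\log\lambda-K^{\alpha}\log n$.

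The hard part will be controlling this sum near $\ell=K$ — this is the only place the hypothesis on $\alpha$ is needed. I would split $[2,K]$ into three ranges. For $2\le\ell\le\epsilon_0 K$ the count factor alone is $n^{-\Omega(\ell)}$ (here one uses $C<\tfrac12$) and dominates everything, so these terms are summable with room to spare; for $\epsilon_0 K\le\ell\le(1-\epsilon_1)K$ the count and entropy terms are jointly $-\Theta(K\log n)$, which beats $D_K(\ell,\delta)\log\lambda=O(K\sqrt{\log n})$; in neither of these ranges is any constraint on $\alpha$ needed. The delicate range is $\ell=K-s$ with $1\le s\le\epsilon_1 K$: parametrizing $s=K^{\theta}$, $\theta\in(0,1)$, a Stirling computation (in which the terms of order $K\log n$ cancel) shows that the count and entropy terms combine to $K^{\theta}(3C-1-2C\theta)\log n$, while $D_K(\ell,\delta)\log\lambda=\big(4\sqrt{(1+\delta)(1-\theta)C(1-C)}+o(1)\big)K^{\theta}\log n$, so that $\log f(\ell)\lesssim\big(K^{\theta}\psi_\delta(\theta)-K^{\alpha}\big)\log n$ with $\psi_\delta(\theta):=3C-1-2C\theta+4\sqrt{(1+\delta)(1-\theta)C(1-C)}$, a function strictly decreasing in $\theta$. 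Hence it suffices that $\alpha>\theta$ whenever $\psi_\delta(\theta)>0$, i.e.\ $\alpha\ge\theta^{\ast}_\delta$, the unique root of $\psi_\delta$. Solving $\psi_0(\theta)=0$ (square and simplify) produces the quadratic $4C^{2}\theta^{2}+4C(5-7C)\theta+(25C^{2}-22C+1)=0$, and the identity $(5-7C)^{2}-(25C^{2}-22C+1)=24(1-C)^{2}$ — the origin of the $\sqrt6=\tfrac12\sqrt{24}$ — gives the relevant root $\theta^{\ast}_0=1-(\tfrac52-\sqrt6)\tfrac{1-C}{C}$. Given the hypothesis $\alpha>\theta^{\ast}_0$, continuity of $\theta^{\ast}_\delta$ in $\delta$ lets me fix $\delta>0$ so small that $\theta^{\ast}_\delta<\alpha$; then every $\theta\in(0,1)$ satisfies $\psi_\delta(\theta)\le0$ or $\theta<\alpha$, and in either case $K^{\theta}\psi_\delta(\theta)-K^{\alpha}\le-\tfrac12 K^{\alpha}$, so $f(\ell)\le\exp(-\Omega(K^{\alpha}\log n))$ and $\sum_{\ell\ge2}f(\ell)=o(1)$. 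This yields $\mathbb{E}[\hat{Z}_{K,\gamma,\delta}^{2}]/\mathbb{E}[\hat{Z}_{K,\gamma,\delta}]^{2}=1+o(1)$, i.e.\ \eqref{Ultimate}; and then $Z_{K,\gamma}\ge\hat{Z}_{K,\gamma,\delta}\ge1$ w.h.p.\ follows from \eqref{FSMM} (Paley--Zygmund, which holds verbatim for $\hat{Z}_{K,\gamma,\delta}$) and \eqref{Domin}.
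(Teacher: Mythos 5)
Your proposal is correct and follows essentially the same route as the paper: a second moment computation for the flat-subgraph count, Lemma \ref{lam} to control the pair probabilities via the overlap's edge count, and a three-range split of the overlap $\ell$ in which only the top range $\ell=K-s$ with $s$ a small power of $K$ requires the hypothesis on $\alpha$ — your quadratic in $\theta$ with discriminant $24(1-C)^2$ is exactly the paper's condition (\ref{property0})/(\ref{property}) evaluated at $\theta=\alpha$ via monotonicity rather than by solving for the root. The only (trivial) omission is the diagonal term $\ell=K$, which contributes $\mathbb{E}[\hat{Z}_{K,\gamma,\delta}]/\mathbb{E}[\hat{Z}_{K,\gamma,\delta}]^2=o(1)$ since the first moment diverges.
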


Using this proposition for $\alpha:=\beta(C)+\frac{1}{2}$ and the Taylor expansion argument from Lemma \ref{lem:EntDense} we conclude the desired lower bound of Theorem \ref{denseeee}.

\begin{proof}[Proof of Proposition \ref{second}]
Notice that $\hat{Z}_{K,\gamma,\delta} \geq 1$ with high probability as $n \rightarrow + \infty$ follows by (\ref{Ultimate}) using Paley-Zigmund inequality. Thus we focus on establishing (\ref{Ultimate}).

We begin by choosing $\delta>0$ to satisfy \begin{align} \label{property0} 1-C(2\alpha-1)+4(\sqrt{(1-\alpha)}+\delta)\sqrt{C(1-C)} -2(1-C)<0. \end{align} To establish the existence of such $\delta$ notice that  (\ref{alpha2}) by elementary algebra is equivalent with \begin{align*}
C(1-\alpha) <(\sqrt{\frac{3}{2}}-1)^2 (1-C)
\end{align*}
or \begin{align*}
\sqrt{C(1-\alpha)}+\sqrt{1-C} <\sqrt{\frac{3}{2} (1-C)}
\end{align*}which by squaring both sides yields   \begin{align*}
C(1-\alpha)+1-C+2\sqrt{(1-\alpha)}\sqrt{C(1-C)} <\frac{3}{2} (1-C)
\end{align*}
or equivalently by multiplying both sides by $2$ and rearranging
\begin{align*}  1-C(2\alpha-1)+4\sqrt{(1-\alpha)}\sqrt{C(1-C)} -2(1-C) < 0. \end{align*} Now, since $C \in (0,1)$, the last inequality implies the existence of some sufficiently small $\delta>0$ such that (\ref{property0}) holds.
 
For an arbitrary $K$-vertex subset $A \subseteq V(G)$ and linearity of expectation, (\ref{dfn:zpr}) gives \begin{align} 
\mathbb{E}[\hat{Z}_{K,\gamma,\delta}]& =\binom{n}{K} \mathbb{P}\left( A \text{ is } (\gamma,\delta)\text{-flat}\right) \notag \\
& = \left(1-o\left(1\right) \right) \binom{n}{K}\mathbb{P}\left( |\mathrm{E}[A]| = \ceil*{\gamma \binom{K}{2}}  \right) \text{, using Lemma \ref{flat}} \notag \\
& =\binom{n}{K}\exp\left(-\binom{K}{2} r(\gamma,\frac{1}{2})-\frac{1}{2} \log \binom{K}{2}+O\left(1\right)\right) , \text{ using Lemma \ref{Bin} }\notag\\
& = \exp \left(\log \binom{n}{K} -\binom{K}{2} r(\gamma,\frac{1}{2})-\frac{1}{2} \log K+O(1)\right). \label{FMC}
\end{align}Using that for our choice of $\gamma$, $$r(\gamma,\frac{1}{2}) =\frac{ \log  \binom{n}{K} -K^{\alpha}  \log n }{\binom{K}{2}}$$ we conclude that, \begin{align} \label{FMB} \mathbb{E}[\hat{Z}_{K,\gamma,\delta}] = \exp \left( K^{\alpha} \log n-\frac{1}{2} \log K +O(1) \right)=\exp \left(\Omega \left(K^{\alpha} \log n \right) \right),\end{align} since $K^{\alpha} =\Theta(n^{C \alpha})=\omega(1)$.

We now proceed to the second moment calculation. 
For $A \subset V(G)$ with $|A|=K$ define the events $E_A:=\{A \text{ is } (\gamma,\delta)\text{-flat}\}$ and $E'_A:=\{|E[A]|=\ceil*{\gamma \binom{K}{2}}\}$. Note $$\hat{Z}_{K,\gamma,\delta}=\sum_{A \subset V(G),|A|=K} 1(E_A).$$
For $\ell=|A\cap B|$ we have via standard expansion,
\begin{align*}
&\frac{\mathbb{E}[(\hat{Z}_{K,\gamma,\delta})^2]}{\mathbb{E}[\hat{Z}_{K,\gamma,\delta}]^2}-1\\
&=\frac{\mathbb{E}[(\hat{Z}_{K,\gamma,\delta})^2]-\mathbb{E}[\hat{Z}_{K,\gamma,\delta}]^2}{\mathbb{E}[\hat{Z}_{K,\gamma,\delta}]^2}\\&=\sum_{\ell=2}^{K} \binom{K}{\ell}\binom{n-K}{K-\ell} \binom{n}{K}^{-1} \frac{ \mathbb{P}\left(E_A \cap E_B\right)-\mathbb{P}\left(E_A\right)^2}{\mathbb{P}\left(E_A\right)^2}\\
&\sum_{\ell=2}^{K-1} \binom{K}{\ell}\binom{n-K}{K-\ell} \binom{n}{K}^{-1} \frac{ \mathbb{P}\left(E_A \cap E_B\right)-\mathbb{P}\left(E_A\right)^2}{\mathbb{P}\left(E_A\right)^2}+\frac{1-\mathbb{P}\left(E_A\right)}{\mathbb{E}[\hat{Z}_{K,\gamma,\delta}]},\\
\end{align*}which since $\mathbb{E}[\hat{Z}_{k,\gamma,\delta}]=\omega(1)$, using (\ref{FMB}),  it equals to
\begin{align*}
&\sum_{\ell=2}^{K-1} \binom{K}{\ell}\binom{n-K}{K-\ell} \binom{n}{K}^{-1} \frac{ \mathbb{P}\left(E_A \cap E_B\right)-\mathbb{P}\left(E_A\right)^2}{\mathbb{P}\left(E_A\right)^2}+o(1)\\
&\leq \sum_{\ell=2}^{K-1} \binom{K}{\ell}\binom{n-K}{K-\ell} \binom{n}{K}^{-1} \frac{ \mathbb{P}\left(E_A \cap E_B\right)}{\mathbb{P}\left(E_A\right)^2}+o(1)\\
&\leq \left(1+o\left(1\right) \right) \sum_{\ell=2}^{K-1} \binom{K}{\ell}\binom{n-K}{K-\ell} \binom{n}{K}^{-1} \frac{ \mathbb{P}\left(E_A \cap E_B\right)}{\mathbb{P}\left(E'_A\right)^2}+o(1),\text{ from Lemma \ref{flat}}.\\
\end{align*}Now for fixed $\ell \in \{2,3,\ldots,K-1\}$ and $(\gamma,\delta)$-flat $K$-subgraphs $A,B$ with $\ell=|A\cap B|$ we have from the definition of $(\gamma,\delta)$-flatness that the graph induced by $A \cap B$ contains at most $\ceil*{\gamma \binom{k}{2}}+D_K(\ell,\delta)$ edges. In particular,
\begin{align*}
\frac{\mathbb{P}\left(E_A \cap E_B\right)}{\mathbb{P}\left(E'_A\right)^2}&=\sum_{L=0}^{\ceil*{\gamma \binom{\ell}{2}}+D_K(\ell,\delta)}\frac{\mathbb{P}\left(E_A \cap E_B, E[A\cap B]=L\right)}{\mathbb{P}\left(E'_A\right)^2}\\
&\leq \sum_{L=0}^{\ceil*{\gamma \binom{\ell}{2}}+D_K(\ell,\delta)}\frac{\mathbb{P}\left(E'_A \cap E'_B, E[A\cap B]=L\right)}{\mathbb{P}\left(E'_A\right)^2},\text{ using that  } E_A \subseteq E'_A, E_B \subseteq E'_B\\
&=\sum_{L=0}^{\ceil*{\gamma \binom{\ell}{2}}+D_K(\ell,\delta)} \frac{g_{\ell}(L)}{\mathbb{P}\left(E'_A\right)^2},\text{ using notation (\ref{dfnG}}) \\
& \leq \sum_{L=0}^{\ceil*{\gamma \binom{\ell}{2}}+D_K(\ell,\delta)} \lambda^{D_K(\ell,\delta)} \exp \left(\binom{\ell}{2} r(\gamma,\frac{1}{2})+O(1) \right), \text{ using Lemma \ref{lam} and }\lambda \geq 1\\
& \leq \binom{\ell}{2} \lambda^{D_K(\ell,\delta)} \exp \left(\binom{\ell}{2} r(\gamma,\frac{1}{2})+O(1) \right)  \\
& = \exp \left(D_K(\ell,\delta) \log \lambda+\binom{\ell}{2} r(\gamma,\frac{1}{2})+O(\log \ell) \right).
\end{align*}
Therefore  we conclude \begin{align*} \label{main} \frac{\mathbb{E}[(\hat{Z}_{K,\gamma,\delta})^2]}{\mathbb{E}[\hat{Z}_{K,\gamma,\delta}]^2} \leq 1&+\sum_{\ell=2}^{K-1} \binom{K}{\ell}\binom{n-K}{K-\ell} \binom{n}{K}^{-1} \exp \left( D_K(\ell,\delta) \log \lambda+\binom{\ell}{2} r(\gamma,\frac{1}{2})+O(\log \ell)\right)\\
&+o(1).\end{align*} 

We proceed from now on in two steps to complete the proof. First we show that for some sufficiently small constant $\delta_1>0$,  \begin{equation} \label{main1} \sum_{\ell=2}^{\floor*{\delta_1 K}} \binom{K}{\ell}\binom{n-K}{K-\ell} \binom{n}{K}^{-1} \exp \left( D_K(\ell,\delta) \log \lambda+\binom{\ell}{2} r(\gamma,\frac{1}{2})+O(\log \ell)\right)=o(1).\end{equation}  In the second step we show that for the constant $\delta_1>0$ chosen in the first step, \begin{equation} \label{main2} \sum_{\ell=\ceil*{\delta_1 K}}^{K-1} \binom{K}{\ell}\binom{n-K}{K-\ell} \binom{n}{K}^{-1} \exp \left( D_K(\ell,\delta) \log \lambda+\binom{\ell}{2} r(\gamma,\frac{1}{2})+O\left(\log n\right)\right)=o(1).\end{equation}Note here that for these values of $\ell$ for the second step  we have replaced $O(\log \ell)$ with the equivalent bound $O\left(\log K \right)=O\left(\log n \right)$ since $K=\Theta(n^C)$ for $C>0$. 

\paragraph{First Step, proof of (\ref{main1}):}

For the combinatorial term we use a simple inequality derived from Stirling's approximation (see e.g. page 11 in \cite{Bollobas18}), \begin{align} \label{simpleBol} \binom{K}{\ell}\binom{n-K}{K-\ell} \binom{n}{K}^{-1} \leq \left(1+o(1)\right) \left( \frac{K^2}{n} \right)^{\ell}.\end{align}We now bound the terms in the exponent.  Plugging in the value of $\lambda$ from Lemma \ref{lam} we have $$ D_K(\ell,\delta) \log \lambda= \frac{D_K(\ell,\delta)}{\gamma \left[\binom{K}{2}-\binom{\ell}{2}\right]}+\frac{2 \gamma -1}{1-\gamma}D_K(\ell,\delta).$$By the definition of $D_K(\ell,\delta)$ (\ref{Dk}) we have \begin{align*}
\frac{D_K(\ell,\delta)}{\gamma \left[\binom{K}{2}-\binom{\ell}{2}\right]}=O\left(\sqrt{\frac{ \log \binom{K}{\ell}+ \log K}{\binom{K}{2}-\binom{\ell}{2}}}\right) \leq O\left(\sqrt{ \frac{K}{\binom{K}{2}-\binom{K-1}{2}}}\right)=O\left(1 \right),
\end{align*}since $\ell \leq \delta_1K \leq K-1$, assuming $\delta_1<1$. From Lemma \ref{gamma} we have $\gamma=\frac{1}{2}+\Theta \left(\sqrt{ \frac{\log n}{K}} \right)$. Furthermore, by  (\ref{Dk}), $K=\Theta\left(n^C\right)$ and $\binom{K}{\ell} \leq K^{\ell}$ we have $$D_K(\ell,\delta)=O\left(\sqrt{\ell^2 \left(\log \binom{K}{\ell}+ \log K \right)}\right)=O\left(\sqrt{\ell^3 \log n }\right).$$ Combining the two last equalities we conclude \begin{align} \label{St2} \frac{2 \gamma -1}{1-\gamma}D_K(\ell,\delta)=O \left( \frac{ \ell^{3/2}\log n}{\sqrt{K}} \right).\end{align}Finally, again by Lemma \ref{gamma}, $r(\gamma,\frac{1}{2})=\Theta( \frac{ \log n}{K} ) $ and therefore
\begin{align} \label{St3} \binom{\ell}{2} r(\gamma,\frac{1}{2})=O \left(   \frac{\ell^2 \log n}{K} \right).\end{align}
Combining  (\ref{simpleBol}), (\ref{St2}) and (\ref{St3}) we conclude that for any $\delta_1>0$, supposing $\ell<\delta_1 K$ we get

\begin{align}
&\binom{K}{\ell}\binom{n-K}{K-\ell} \binom{n}{K}^{-1} \exp \left( D_K(\ell,\delta) \log \lambda+\binom{\ell}{2} r(\gamma,\frac{1}{2})+O\left(\log \ell \right)\right) \label{firstbound1:OGP} \\
&=\exp \left[ -\ell\log \left(\frac{n}{K^2} \right) +O\left(\frac{ \ell^{3/2} \log n}{\sqrt{K}} \right)+O\left(  \frac{\ell^2 \log n}{K} \right)+O\left(\log \ell \right)         \right] \notag\\
&=\exp \left[ -\ell \log n \left( 1-2 C-O\left(\sqrt{\frac{ \ell}{K}}\right)-O\left(\frac{\ell}{K}\right) -O\left(\frac{\log \ell }{\ell \log n}\right) \right) \right],\text{ using }K=\Theta(n^{C}) \notag \\
& \leq \exp \left[ -\ell \log n \left( 1-2C-O\left(\sqrt{\delta_1}\right)-O\left(\delta_1\right) -O\left(\frac{\log \ell }{\ell \log n}\right) \right) \right], \text{ using } \ell \leq \delta_1 K \notag\\
& \leq \exp \left[ -\ell \log n \left( 1-2C-O\left(\sqrt{\delta_1}\right)-O\left(\delta_1\right) -O\left(\frac{1 }{ \log n}\right) \right) \right], \label{Bound1}
\end{align} where we have used $\log \ell \leq \ell$ for all $\ell \geq 1$. Since $C<\frac{1}{2}$ we choose $\delta_1>0$ small enough but constant such that for some $\delta_2>0$ and large enough $n$, \begin{align} \label{Bound2} 1-2 C-O\left(\sqrt{\delta_1} \right)-O\left(\delta_1 \right) -O\left(\frac{1}{ \log n} \right) >\delta_2.\end{align} Hence for this choice of constants $\delta_1,\delta_2>0$ if $\ell \leq \delta_1 K$ using (\ref{Bound1}) and (\ref{Bound2}) we conclude that the expression (\ref{firstbound1:OGP}) is at upper bounded by
\begin{align*}
 \exp \left( -\delta_2 \ell \log n  \right)=n^{-\delta_2\ell}. \end{align*}Therefore we have,
 \begin{align*} \sum_{\ell=2}^{\floor*{\delta_1 K}} \binom{K}{\ell}\binom{n-K}{K-\ell} \binom{n}{K}^{-1} \exp \left( D_K(\ell,\delta) \log \lambda+\binom{\ell}{2} r(\gamma,\frac{1}{2})+O\left(\log \ell \right)\right)& \leq \sum_{ \ell \geq 1}n^{-\delta_2\ell}=O\left(n^{-\delta_2} \right).\end{align*}This completes the proof of (\ref{main1}).

\paragraph{Second step, proof of (\ref{main2}):} For the second step we start by multiplying both numerator and denominator of the left hand side of (\ref{main2}) with the two sides of (\ref{FMC}); $\mathbb{E}\left[\hat{Z}_{K,\gamma,\delta}\right]=\binom{n}{K}\exp \left(-\binom{K}{2} r(\gamma,\frac{1}{2})+O\left(\log n \right)\right),$ to get that it suffices to show $$\frac{1}{\mathbb{E}[\hat{Z}_{K,\gamma,\delta}] }\sum_{\ell=\ceil*{\delta_1 K}}^{K-1} \binom{K}{\ell}\binom{n-K}{K-\ell} \exp \left( D_K(\ell,\delta) \log \lambda-\left( \binom{K}{2}-\binom{\ell}{2}\right) r(\gamma,\frac{1}{2})+O\left(\log n \right)\right)=o(1).$$Since by equation (\ref{FMB}) we have $\mathbb{E}[\hat{Z}_{k,\gamma,\delta}] \geq \exp\left(D_0K^{\alpha}\log n\right)$ for some universal constant $D_0>0$ and $K=\omega(1)$ it suffices that $$\sum_{\ell=\ceil*{\delta_1 K}}^{K-1} \binom{K}{\ell}\binom{n-K}{K-\ell}  \exp \left( D_K(\ell,\delta) \log \lambda-\left( \binom{K}{2}-\binom{\ell}{2}\right) r(\gamma,\frac{1}{2})-D_0K^{\alpha} \log n \right)=o(1).$$ Plugging in the value of $\lambda$ we have
\begin{align*}
&\sum_{\ell = \ceil*{\delta_1 K}}^{K-1} \binom{K}{\ell} \binom{n-K}{K-\ell}  \exp \left( D_K(\ell,\delta) \log \lambda-\left( \binom{K}{2}-\binom{\ell}{2}\right) r(\gamma,\frac{1}{2})- D_0 K^{\alpha} \log n \right) 
\end{align*} which is of the order
\begin{align*}
\sum_{\ell =\ceil*{\delta_1 K}}^{K-1} \binom{K}{\ell} \binom{n-K}{K-\ell}  \exp\left[ \frac{D_K(\ell,\delta)}{\gamma \left[ \binom{K}{2}-\binom{\ell}{2} \right]}+\frac{2\gamma -1}{1-\gamma}D_K(\ell,\delta)-\left(\binom{K}{2}-\binom{\ell}{2}\right) r(\gamma,\frac{1}{2})-D_0K^{\alpha}\sqrt{ \log n}\right]
\end{align*}By (\ref{Dk}) we have \begin{align}\label{TRIV1}
\frac{D_K(\ell,\delta)}{\gamma\left[\binom{K}{2}-\binom{\ell}{2}\right] }=O\left(\sqrt{\frac{ \log \binom{K}{\ell}+ \log K}{\binom{K}{2}-\binom{\ell}{2}}} \right) \leq O\left(\sqrt{ \frac{K}{\binom{K}{2}-\binom{K-1}{2}}}\right)=O\left(1 \right),
\end{align}since $\ell \leq K-1$. Furthermore by Lemma \ref{gamma},
\begin{align}
\left(\binom{K}{2}-\binom{\ell}{2}\right)r(\gamma, \frac{1}{2})& \geq  \left(\binom{K}{2}-\binom{\ell}{2}\right)\left(  1-o(1)\right)\frac{2 \log (\frac{n}{K})}{K} \notag\\
&=\left(\binom{K}{2}-\binom{\ell}{2}\right) \frac{ 2 \log (\frac{n}{K})}{K}-o\left((K-\ell) \log n\right)  \label{TRIV2}
\end{align}Hence, combining (\ref{TRIV1}) and (\ref{TRIV2}) we conclude that it suffices to show\begin{align}\label{ULTIMATE}
 \sum_{\ell =\ceil*{\delta_1 K}}^{K-1} \exp\left[ F(\ell) \right]=o(1)
\end{align}where $F(\ell)$ equals
 \begin{align}\label{Fell}
& \log ( \binom{K}{\ell}\binom{n-K}{K-\ell})+\frac{2\gamma -1}{1-\gamma}D_K(\ell,\delta)-\left(\binom{K}{2}-\binom{\ell}{2}\right) \frac{ 2 \log (\frac{n}{K})}{K}-D_0 K^{\alpha} \log n+o\left((K-\ell)\log n\right).
\end{align}
Now we separate three cases to study $F(\ell)$.

\paragraph{Case 1 (large values of $\ell$): } We assume $K-1 \geq \ell \geq K-c_1 K^{\alpha} \log n,$ where $c_1>0$ is a universal constant defined below.

 In this case we bound the combinatorial term using $\binom{K}{\ell}\leq K^{K-\ell} $ and $\binom{n-K}{K-\ell} \leq n^{K-\ell} $ to conclude \begin{align} \label{COMBI1} \binom{K}{\ell}\binom{n-K}{K-\ell} \leq K^{K-\ell} n^{K-\ell} =\exp\left( O\left[ \left(K-\ell \right) \log n \right]\right) .\end{align}
Furthermore,
\begin{align}
\frac{2\gamma-1}{1-\gamma} D_K(\ell,\delta) &= O\left( (2 \gamma -1) \sqrt{ \left(\binom{K}{2}-\binom{\ell}{2}\right) \left(\log \binom{K}{\ell}+ \log K\right)}\right), \text{using (\ref{Dk}) } \notag \\
&= O\left( (2 \gamma -1) \sqrt{ \left(K-\ell\right)\left(K+\ell-1\right)  \left(\log \binom{K}{\ell}+ \log K\right)} \right) \notag \\
& \leq O \left(\sqrt{\frac{ \log n}{K}} \sqrt{ (K-\ell)K \cdot (K-\ell)  \log K} \right), \text{ using Lemma \ref{gamma} and } \binom{K}{\ell} \leq K^{K-\ell} \notag \\
&\leq O\left[ \left(K-\ell\right) \log n\right], \label{BO2}
\end{align}
Therefore using (\ref{COMBI1}) and (\ref{BO2}) for $\ell$ with $K-1 \geq \ell \geq K-c_1 K^{\alpha}\log n$ we have 
\begin{align*}
F(\ell) & \leq O\left(\left(K-\ell \right)\log n\right)-D_0 K^{\alpha}\log n\\
&  \leq C\left(K-\ell \right)\log n-D_0 K^{\alpha}\log n,\text{ for some universal constant } C>0\\
&\leq (Cc_1-D_0 )K^{\alpha}\log n \text{,       by the assumption on } \ell\\
& \leq -\frac{D_0}{2}K^{\alpha}\log n, \text{ by choosing }c_1:=D_0/2C,
\end{align*}which gives

\begin{equation}\label{F1} \sum_{\ell=\ceil*{K-c_1 K^{\alpha}\log n}}^{K-1} \exp\left[F(\ell)\right] =O\left(\exp\left( \log K-\frac{D_0}{2}K^{\alpha}\log n \right)\right) =o\left(1\right)\end{equation}where the last equality is because $K=\omega(1)$.

\paragraph{Case 2 (moderate values of $\ell$) :} $ (1-\delta' ) K \leq \ell \leq K-c_1 K^{\alpha} \log n$, where $c_1>0$ is defined in Case 1 and $\frac{1}{3}>\delta'>0$ is a sufficiently small but constant positive number such that \begin{align} \label{property} 1-C(2\alpha-1)+4(\sqrt{(1-\alpha)}+\delta)\sqrt{C(1-C)} -2(1-\delta')(1-C)<-\delta'. \end{align} Note that such a $\frac{1}{3}>\delta'>0$ exists because of our choice of $\delta$ satisfying (\ref{property0}) and because $C<1$.

We start with the standard $\binom{K}{\ell} \leq \left(\frac{Ke}{K-\ell} \right)^{K-\ell}$ and $\binom{n-K}{K-\ell} \leq \left(\frac{(n-K)e}{K-\ell} \right)^{K-\ell}$ to conclude
\begin{align}
\log \left( \binom{K}{\ell}\binom{n-K}{K-\ell} \right) &\leq \left(K-\ell \right) \log \left(\frac{nKe^2}{(K-\ell)^2}\right) \label{COMBI3}\\
& \leq \left(1-C(2\alpha-1)+o(1)\right)\left(K-\ell\right) \log n, \label{COMBI2}
\end{align}where for the last step we used $K-\ell \geq \Omega\left(K^{\alpha}\right)$ and that $K=\Theta(n^{C})$.
Furthermore for this values of $\ell$ we have $\ell >\frac{2K}{3}$. Therefore from  (\ref{Dk}),
\begin{align}
D_K(\ell,\delta) &\leq (1+\delta)\sqrt{2\gamma \left( \binom{K}{2}-\binom{\ell}{2}\right) \log \left(2K\binom{k}{\ell}\right)} \notag\\
&\leq (1+\delta+o(1))\sqrt{ \left( \binom{K}{2}-\binom{\ell}{2}\right) \log \left(2K\binom{K}{\ell}\right)}, \text{ using Lemma \ref{gamma}} \notag\\
& \leq \left(1+\delta+o(1)\right)) \sqrt{\left( \binom{K}{2}-\binom{\ell}{2}\right)\left((K-\ell) \log (\frac{Ke}{K-\ell})+2\log K \right)}  \notag \\
& \leq \left(1+\delta+o(1) \right) \left(K-\ell \right) \sqrt{K \log \left(O\left(K^{1-\alpha}\right)\right)} , \text{since }   \binom{K}{2}-\binom{\ell}{2} \leq K(K-\ell), K-\ell \geq \Omega(K^{\alpha}) \notag \\
& \leq \left(\sqrt{1-\alpha}+\delta+o(1)\right) \left(K-\ell \right) \sqrt{K\log K}  \label{BO22}
\end{align}From Lemma \ref{gamma} we have
\begin{align*}
\frac{2 \gamma-1}{1-\gamma} =\left(4+o(1)\right) \sqrt{\frac{ \log \frac{n}{K}}{K}}.
\end{align*}Hence combining it with (\ref{BO22}),\begin{align}
\frac{2\gamma -1}{1-\gamma}D_K(\ell,\delta) & \leq \left(\sqrt{1-\alpha}+\delta+o(1) \right)4(K-\ell) \sqrt{\frac{ \log \frac{n}{K}}{K}}\sqrt{K \log K} \notag \\
&=4\left(\sqrt{(1-\alpha)}+\delta+o(1) \right)  (K-\ell)\sqrt{ \log (\frac{n}{K}) \log K} \label{BO3}
\end{align}Now by dropping the term $-D_0K^{\alpha} \log n<0$, $F(\ell)$ is at most
\begin{align*}\log ( \binom{K}{\ell}\binom{n-K}{K-\ell})+\frac{2\gamma -1}{1-\gamma}D_K(\ell,\delta)-\left(\binom{K}{2}-\binom{\ell}{2}\right) \frac{ 2 \log (\frac{n}{K})}{K}+o\left((K-\ell)\log n\right).
\end{align*}which using (\ref{COMBI2}), (\ref{BO3}) is at most $1+o\left(1\right)$ times
\begin{align*}
& \left(K-\ell\right)\left[ \left(1-C(2\alpha-1)\right)\log n+4\left(\sqrt{(1-\alpha)}+\delta \right)  \sqrt{ \log (\frac{n}{K}) \log K}   - \frac{2\left(\binom{K}{2}-\binom{\ell}{2}\right)\log \frac{n}{K} }{K(K-\ell)} +o( \log n)\right]\\
&\leq \left(K-\ell\right)\left[ \left(1-C(2\alpha-1)\right)\log n+4\left(\sqrt{(1-\alpha)}+\delta \right)  \sqrt{ \log (\frac{n}{K}) \log K}   -2 \left(1-\delta'\right)\log \frac{n}{K} +o( \log n)\right]\\
&=\left(K-\ell\right)\log n\left[ 1-C(2\alpha-1)+4\left(\sqrt{(1-\alpha)}+\delta \right) \frac{ \sqrt{ \log (\frac{n}{K}) \log K}}{\log n}   -2 \left(1-\delta'\right)\frac{\log \frac{n}{K}}{\log n} +o(1)\right], \\
\end{align*}where for the last inequality we used that for $\ell \geq (1-\delta')k$, $\binom{k}{2}-\binom{\ell}{2} \geq (1-\delta'-o\left(1\right))k(k-\ell)$. Using that $K=\Theta\left(n^C\right)$ we conclude,
\begin{align*}
F(\ell)\leq \left[ \left(1-C(2\alpha-1)\right)+4\left(\sqrt{(1-\alpha)}+\delta \right)\sqrt{C(1-C)}   -2 \left(1-\delta'\right)\left(1-C\right)  +o(1)\right]\left(K-\ell\right)\log n.
 \end{align*}From (\ref{property}) we know that for large $n$ $$\left(1-C(2\alpha-1)\right)+4\left(\sqrt{(1-\alpha)}+\delta \right)\sqrt{C(1-C)}   -2 \left(1-\delta'\right)\left(1-C \right)+o(1)<-\delta'.$$Therefore we conclude for all $\ell$ with $ (1-\delta' ) K \leq \ell \leq K-c_1 K^{\alpha} \log n$

$$F(\ell) \leq -\delta'  \left(K-\ell \right) \log n \leq -\Omega \left(K^{\alpha} \left( \log n \right)^2 \right).$$Hence,
\begin{equation}\label{F2}\sum_{\ell=\ceil*{(1-\delta')K}}^{\floor*{K-c_1 K^{\alpha}\log n}} \exp\left[F(\ell)\right] =O\left(K\exp\left(-\Omega(K^{\alpha} (\log n)^2\right)\right) =O\left(\exp\left( \log n-\Omega(K^{\alpha} (\log n)^2\right)\right)=o(1),\end{equation} where the last equality is because $K=\Theta(n^C)$ for $C>0$.
%& \leq (k-\ell)\log n \left(  1+(1-2a)\frac{\log k }{\log n} +2\sqrt{1-a} \sqrt{\frac{\log k }{\log n}}-(2-\delta')  \right) \\
%& \leq   (k-\ell)\log n \left(  1+(1-2a)\frac{1}{2} +\sqrt{2}\sqrt{1-a} -(2-\delta')  \right)

\paragraph{Case 3 (small values of $\ell$) :} $\delta_1 K \leq \ell \leq (1-\delta')K$ where $\delta'$ is defined in Case 2 and $\delta_1$ in Part 1.

Similar to (\ref{COMBI3}) we have
\begin{align}
\log \left( \binom{K}{\ell}\binom{n-K}{K-\ell} \right) &\leq \left(K-\ell \right) \log \left(\frac{nKe^2}{(K-\ell)^2} \right) \notag \\
& \leq \left(1+o(1) \right) \left(K-\ell \right) \log \frac{n}{K}, \label{COMBI4}\end{align} where we have used for the last inequality that $\ell=\Theta(K)$.

Furthermore using (\ref{Dk}) and Lemma \ref{gamma} we have
\begin{align*}
\frac{2 \gamma -1}{1-\gamma} D_K(\ell,\delta) &\leq  O\left(\sqrt{ \frac{ \log n}{K}}\sqrt{\left( \binom{K}{2}-\binom{\ell}{2}\right)\left(\log \binom{K}{\ell}+\log K \right)}\right)\\
& \leq O\left(\sqrt{ \frac{ \log n}{K}} \sqrt{\left( \binom{K}{2}-\binom{\ell}{2}\right)\left((K-\ell) \log (\frac{Ke}{K-\ell})+\log K \right)}\right) \\
& \leq  O\left(\sqrt{ \frac{ \log n}{K}} (K-\ell) \sqrt{K} \right), \text{ using } K-\ell =\Theta(K) \text{ and }  \binom{K}{2}-\binom{\ell}{2} \leq K(K-\ell)\\
&= o\left((K-\ell)\log n \right).
\end{align*}
Combining it with (\ref{COMBI4}) we have that $F(\ell)$ is at most
\begin{align} &\left(1+o(1)\right)\left[ (K-\ell) \log (\frac{n}{K})+o((K-\ell) \log n)-\frac{2\left(\binom{K}{2}-\binom{\ell}{2}\right)}{K} (\log \frac{n}{K})\right]  \notag\\
 &\leq^{(*)} \left(1+o(1)\right)(K-\ell)\left[ \log (\frac{n}{K})+o( \log n)-2\left(\frac{1+\delta_1}{2}\right)(\log \frac{n}{K})\right] \notag \\
& \leq \left(K-\ell \right)\log n \left(  1-C-2\left(\frac{1+\delta_1}{2}\right)\left(1-C\right)+o(1)\right),\text{ using } K=\Theta(n^c)  \notag \\
& = (K-\ell)\log n \left(-\delta_1\left(1-C\right)+o(1) \right),  \label{binomOGPeq2}
\end{align}where to derive (*) we use that for $\ell \geq \delta_1K$, $$ \binom{K}{2}-\binom{\ell}{2} \leq (\frac{1+\delta_1+o(1)}{2})K(K-\ell).$$ Since $\delta_1\left(1-C\right)>0$ we conclude from (\ref{binomOGPeq2}) that for all $\ell$ with $\delta_1 K \leq \ell \leq (1-\delta')K$ and large enough $n$,$$F(\ell) \leq -\Theta\left( K\log n \right)$$Hence,
\begin{equation}\label{F3}\sum_{\ell=\ceil*{\delta_1K}}^{\floor*{(1-\delta') K}} \exp\left[F(\ell)\right]   \leq O\left(K\exp\left[(\log n-\Theta( K \log n)\right)\right) \leq O\left(\exp\left( \log n-\Theta( K \log n)\right)\right) =o\left(1\right).\end{equation}

Combining (\ref{F1}), (\ref{F2}) and (\ref{F3}) we conclude the proof of (\ref{main2}). This completes the proof of the Proposition and of the Theorem.
\end{proof}
\section{Proofs for the First Moment Curve Bounds}
\label{25}

\subsection{Proof of first part of Proposition \ref{unionkkk}}
\begin{proof}[ Proof of first part of Proposition \ref{unionkkk}] If $z=\bar{k}=k$ then trivially $$d_{k,k}(G)(k)=|E[\mathcal{PC}]|=\binom{k}{2}=\Gamma_{k,k}(k)$$ almost surely.

Otherwise, we fix some $z \in \{\floor{\frac{k\bar{k}}{n}},\floor{\frac{k\bar{k}}{n}}+1,\ldots,k\}$. Since $z=\bar{k}=k$ does not hold, we have $z<\bar{k}$.
For $\gamma \in (0,1)$ we consider the counting random variable $$Z_{\gamma,z}:=|\{A \subseteq V(G) : |A|=\bar{k}, |A \cap \mathcal{PC}|=z, |E[A]| \geq \binom{z}{2}+\gamma_z \left( \binom{\bar{k}}{2}-\binom{z}{2} \right) \}|.$$ 
By Markov's inequality, $\mathbb{P}\left[ Z_{\gamma,z} \geq 1 \right] \leq \mathbb{E}\left[Z_{\gamma,z} \right]$. In particular, if for some $\gamma_z>0$ it holds \begin{equation} \label{goal?} \sum_{z=\floor{\frac{\bar{k}k}{n}}}^{k} \mathbb{E}\left[Z_{\gamma_z,z} \right]=o(1)\end{equation} we conclude using a union bound that for all $z$, $Z_{\gamma_z,z}=0$ w.h.p. as $n \rightarrow + \infty$ and in particular for all $z$, $d_{\bar{k},k}(G)(z) \leq  \binom{z}{2}+\gamma_z \left( \binom{\bar{k}}{2}-\binom{z}{2} \right)$, w.h.p. as $n \rightarrow + \infty$.
Therefore it suffices to show that for $\gamma_z :=h^{-1} \left(\log 2-\frac{ \log \left(\binom{k}{z} \binom{n-k}{\bar{k}-z} \right) }{\binom{\bar{k}}{2}-\binom{z}{2}} \right)$ (\ref{goal?}) holds. Notice that $\gamma_z$ is well-defined exactly because $z=\bar{k}=k$ does not hold. We fix this choice of $\gamma_z$ from now on.

Let us fix $z$. We start with bounding the expectation for arbitrary $\gamma>0$. By linearity of expectation we have 
\begin{align}
\mathbb{E}\left[Z_{\gamma_z,z} \right] &= \binom{k}{z} \binom{n-k}{\bar{k}-z} \mathbb{P}\left[ |\mathrm{E}[A]| \geq  \binom{z}{2}+\gamma_z \left( \binom{\bar{k}}{2}-\binom{z}{2} \right) \right],\text{for some } |A|=\bar{k}, |A \cap \mathcal{PC}|=z \notag\\
&= \binom{k}{z} \binom{n-k}{\bar{k}-z} \mathbb{P}\left[ \binom{z}{2}+\mathrm{Bin}\left(\binom{\bar{k}}{2}-\binom{z}{2} \right) \geq  \binom{z}{2}+\gamma_z \left( \binom{\bar{k}}{2}-\binom{z}{2} \right) \right] \notag\\
&= \binom{k}{z} \binom{n-k}{\bar{k}-z} \mathbb{P}\left[ \mathrm{Bin}\left(\binom{\bar{k}}{2}-\binom{z}{2} \right) \geq  \gamma_z \left( \binom{\bar{k}}{2}-\binom{z}{2} \right) \right] \label{Step1FM}
\end{align}

Using the elementary inequalities $$\binom{k}{z} \leq k^{k-z} \leq n^{\bar{k}-z}$$ and $$\binom{n-k}{\bar{k}-z} \leq n^{\bar{k}-z},$$ we conclude \begin{equation}  \label{simplee} \frac{ \log \left(\binom{k}{z} \binom{n-k}{\bar{k}-z} \right) }{\binom{\bar{k}}{2}-\binom{z}{2}}=O\left(\frac{\log n}{\bar{k}+z}\right)=o(1)\end{equation} by our assumption $\omega( \log n)=\bar{k}$.

By Lemma \ref{EntTaylor} and (\ref{simplee}) we have, $$\frac{1}{2}+\Omega \left( \sqrt{ \frac{ \log \left(\binom{k}{z} \binom{n-k}{\bar{k}-z} \right) }{\binom{\bar{k}}{2}-\binom{z}{2}} }\right) \leq \gamma_z \leq \frac{1}{2}+o(1).$$ Therefore $\lim_n \gamma_z = \frac{1}{2}$ and the elementary $ \binom{n-k}{\bar{k}-z} \geq  \left((n-k)/(\bar{k}-z)\right)^{\bar{k}-z}$ since $z<\bar{k}$, $$ \left(\gamma_z-\frac{1}{2}\right) \sqrt{\binom{\bar{k}}{2}-\binom{z}{2}}=\Omega\left( \sqrt{\log \left(\binom{k}{z} \binom{n-k}{\bar{k}-z} \right)} \right)=\Omega \left(\sqrt{\left(\bar{k}-z\right) \log \frac{n}{\bar{k}-z}} \right). $$ Hence both assumption of Lemma \ref{Bin} are satisfied  (notice that the Binomial distribution of interest is defined on population size $\binom{\bar{k}}{2}-\binom{z}{2}$) and hence (\ref{Step1FM}) implies
\begin{align}
\mathbb{E}\left[Z_{\gamma_z,z} \right] \leq \binom{k}{z} \binom{n-k}{\bar{k}-z} O\left(\exp\left(-\left(\binom{\bar{k}}{2}-\binom{z}{2} \right) r(\gamma,\frac{1}{2}) -\Omega \left(\sqrt{\left(\bar{k}-z\right) \log \frac{n}{\bar{k}-z}} \right)\right)\right) \label{FMpf1}
\end{align}
Now notice that for our choice of $\gamma_z$, $$r(\gamma,\frac{1}{2})=\log 2-h(\gamma)=\frac{ \log \left(\binom{k}{z} \binom{n-k}{\bar{k}-z} \right) }{\binom{\bar{k}}{2}-\binom{z}{2}}.$$ In particular using (\ref{FMpf1}) we conclude that for any $z$ 
\begin{align}
\mathbb{E}\left[Z_{\gamma_z,z} \right] = \exp\left(- \Omega \left(\sqrt{\left(\bar{k}-z\right) \log \frac{n}{\bar{k}-z}} \right)\right).
\end{align}Hence, 
\begin{align*}
&\sum_{z=\floor{\frac{\bar{k}k}{n}}}^{k} \mathbb{E}\left[Z_{\gamma_z,z} \right] \\
=& \sum_{z=\floor{\frac{\bar{k}k}{n}}}^{k} \exp\left(- \Omega \left(\sqrt{\left(\bar{k}-z\right) \log \frac{n}{\bar{k}-z}} \right)\right) \\
=&\text{ }\sum_{z=\min\{k,\bar{k}-\left( \left( \log n\right)^2 \right)\}}^{k}\exp\left(- \Omega \left(\sqrt{\left(\bar{k}-z\right) \log \frac{n}{\bar{k}-z}} \right)\right)\\
+&\sum_{z=\floor{ \frac{\bar{k}k}{n}}}^{\min\{k,\bar{k}-\left( \left( \log n\right)^2 \right)\}}\exp\left(- \Omega \left(\sqrt{\left(\bar{k}-z\right) \log \frac{n}{\bar{k}-z}} \right)\right)\\
 \leq &\text{ } (\log n )^2 \exp\left(- \Omega \left(\sqrt{ \log n }\right)\right)+ k \exp\left(- \Omega \left(\sqrt{\left( \log n\right)^{\frac{3}{2}}}\right) \right)\\
\leq & \text{ }\exp\left(- \Omega \left(\sqrt{ \log n }\right)\right)+ k \exp\left(- \Omega \left(\sqrt{\left( \log n\right)^{\frac{3}{2}}}\right) \right)\\
=& \text{ } o\left(1\right),
\end{align*}which is (\ref{goal?}) as we wanted.
\end{proof}
\subsection{Proof of second part of Proposition \ref{unionkkk}}
\begin{proof}[Proof of second part of Proposition \ref{unionkkk}]
The result follows from Theorem \ref{denseeee} by observing that $d_{\bar{k},k}(G)(0)$ corresponds to the number of edges of the $\bar{k}$-densest subgraph of a vanilla $G(n-k,\frac{1}{2})$ random graph.
\end{proof}

\section{Proofs for the First Moment Curve Monotonicity results}
\label{4710}

\subsection{Key lemmas}
\begin{lemma}\label{AFun} Suppose $1 \leq k \leq \bar{k} \leq n$ with $n \rightarrow +\infty$, $ \bar{k}=o\left( n \right)$ and $\epsilon \in (0,1)$ arbitrarily small constant. For $z \in [0,\left(1-\epsilon \right)k] \cap \mathbb{Z}$ let \begin{equation} \label{Adfn} A(z):=\log \left(\binom{k}{z} \binom{n-k}{\bar{k}-z} \right). \end{equation} 

Then for any $z \in [0,\left(1-\epsilon \right)k] \cap \mathbb{Z}$, 
 \begin{equation} \label{Fun2}A(z)=\Theta \left(\bar{k} \log \left(\frac{n}{\bar{k}}\right) \right).\end{equation} 
and
\begin{equation} \label{Fun3}A(z+1)-A(z)=\log \left(\frac{k\bar{k}}{\left(z+1\right)n}\right)-O\left(1 \right).\end{equation}
\end{lemma}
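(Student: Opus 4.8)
The plan is to derive the two estimates $(\ref{Fun2})$ and $(\ref{Fun3})$ from elementary bounds on binomial coefficients; both are essentially direct computations once the relevant regime facts are recorded, so I do not expect a serious obstacle.

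First I would record what the hypotheses $k\le\bar k=o(n)$ and $z\le(1-\epsilon)k$ give: (i) $n/\bar k\to\infty$, hence $\log(n/\bar k)\to\infty$ and in particular $\log(n/\bar k)\ge1$ for all large $n$; (ii) $n-k=(1-o(1))n=\Theta(n)$; (iii) $1\le k-z\le k$ with $k-z\ge\epsilon k$, so $k-z=\Theta(k)$; and (iv) $\epsilon\bar k\le\bar k-z\le\bar k$ (using $z\le(1-\epsilon)k\le(1-\epsilon)\bar k$), so $\bar k-z=\Theta(\bar k)$. For $(\ref{Fun2})$ I would then split $A(z)=\log\binom kz+\log\binom{n-k}{\bar k-z}$ as in $(\ref{Adfn})$. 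The first term is nonnegative and at most $k\log2\le\bar k\log2=O(\bar k\log(n/\bar k))$ by (i). For the second term, the standard inequalities $(m/t)^t\le\binom mt\le(em/t)^t$ applied with $m=n-k$, $t=\bar k-z$ give $\log\binom{n-k}{\bar k-z}=\Theta\!\big((\bar k-z)\log\tfrac{n-k}{\bar k-z}\big)$, which is legitimate because $(n-k)/(\bar k-z)\ge(n-k)/\bar k\to\infty$; invoking (iv) together with $\log\tfrac{n-k}{\bar k-z}=\Theta(\log\tfrac n{\bar k})$ — which holds since $\tfrac{n-k}{\bar k-z}=\Theta(\tfrac n{\bar k})$ and $\tfrac n{\bar k}\to\infty$ — this equals $\Theta(\bar k\log(n/\bar k))$. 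Adding the two contributions yields $(\ref{Fun2})$.

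For $(\ref{Fun3})$ I would compute the one-step difference exactly. Using the elementary ratios $\binom k{z+1}/\binom kz=(k-z)/(z+1)$ and $\binom{n-k}{\bar k-z-1}/\binom{n-k}{\bar k-z}=(\bar k-z)/(n-k-\bar k+z+1)$, one obtains
\[
A(z+1)-A(z)=\log\frac{(k-z)(\bar k-z)}{(z+1)(n-k-\bar k+z+1)}.
\]
It then remains to observe that $n-k-\bar k+z+1=\Theta(n)$ (it lies between $n-2\bar k=(1-o(1))n$ and $n+1$), so that, combined with (iii) and (iv), the fraction inside the logarithm equals $\Theta\!\big(\tfrac{k\bar k}{(z+1)n}\big)$; taking logarithms and absorbing the bounded multiplicative factor into an $O(1)$ error gives $(\ref{Fun3})$. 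The one point that needs attention throughout is uniformity over the entire interval $z\in[0,(1-\epsilon)k]$: one must verify that each quantity appearing inside a logarithm is a positive integer that stays of the claimed order up to the endpoint, and this is exactly what the $\epsilon$-shrinkage of the interval secures, since it keeps $k-z$ comparable to $k$.
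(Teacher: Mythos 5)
Your proposal is correct and follows essentially the same route as the paper: the same split $A(z)=\log\binom{k}{z}+\log\binom{n-k}{\bar k-z}$ with the first term crudely bounded by $O(k)=O(\bar k)$ and the second term shown to be $\Theta(\bar k\log(n/\bar k))$ via elementary binomial estimates (the paper uses monotonicity plus Stirling where you use the $(m/t)^t\le\binom{m}{t}\le(em/t)^t$ sandwich, which is an equivalent elementary bound), and the identical exact ratio computation for $A(z+1)-A(z)$ with the multiplicative correction factors absorbed into $O(1)$. No gaps.
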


\begin{proof} First $$ A(z)=\log \left(\binom{k}{z}\right)+\log \left( \binom{n-k}{\bar{k}-z} \right).$$ Since, $\binom{k}{z}\leq 2^k$ we have $\log \left(\binom{k}{z}\right)=O\left(k\right)$. Hence,
\begin{equation}\label{Az} A(z)=\log \left( \binom{n-k}{\bar{k}-z} \right)+O\left(k\right).\end{equation}For any  $z \in [0,\left(1-\epsilon \right)k]$ since $k \leq \bar{k}$ we have $$\epsilon \bar{k} \leq \bar{k}-z \leq \bar{k}.$$ Hence, since for large $n$ we have $\bar{k}<\frac{n}{2}$, by standard monotonicity arguments on the binomial coefficients we have$$ \log \left( \binom{n-k}{\epsilon \bar{k}} \right) \leq \log \left( \binom{n-k}{\bar{k}-z} \right) \leq \log \left( \binom{n-k}{\bar{k}} \right)$$which using Stirling's approximation since $k \leq \bar{k} =o\left(n\right)$ and $\epsilon$ is a positive constant yields $$\log \left( \binom{n-k}{\bar{k}-z} \right)=\Theta \left(\bar{k} \log \left(\frac{n}{\bar{k}}\right) \right).$$ Combining this with (\ref{Az}) and $k \leq \bar{k} =o\left(n\right)$ we conclude (\ref{Fun2}).

For the final part, simple algebra and that $\frac{k-z}{k}=\Omega\left(1\right)$, $\frac{\bar{k}-z}{\bar{k}}=\Omega \left(1\right)$ for the $z$ of interest yields, \begin{align*} A\left(z+1\right)-A\left(z\right)&=\log \left( \frac{ \left(k-z\right)\left(\bar{k}-z\right)}{\left(z+1\right)\left(n-k-\bar{k}+z+1\right)}\right) \\
&=\log \left(\frac{k\bar{k}}{\left(z+1\right)n}\right)+\log\left(\frac{k-z}{k}\right)+\log\left(\frac{\bar{k}-z}{\bar{k}}\right)+\log\left(\frac{n}{n-k-\bar{k}+z+1}\right)\\
&=\log \left(\frac{k\bar{k}}{\left(z+1\right)n}\right)-O\left(1\right) -O\left(\frac{\bar{k}}{n}\right)\\
&=\log \left(\frac{k\bar{k}}{\left(z+1\right)n}\right)-O\left(1\right),\end{align*}which is (\ref{Fun3}).

\end{proof}

\begin{lemma}\label{PhiLemma}

Suppose $k \leq \bar{k} \leq n$ with $\left(\log n\right)^5\ \leq \bar{k}$. Then for any $z \in \mathbb{Z}_{>0}$ with $\floor{\frac{\bar{k}k}{n}} \leq z \leq k$,
\begin{align} \label{Tayl} |\Gamma_{\bar{k},k}(z)-\Phi_{\bar{k}}(z)|=O\left(1\right),\end{align} for
\begin{align} \label{FuncPhi}
\Phi_{\bar{k}}(z):=\frac{1}{2}\left( \binom{\bar{k}}{2}+\binom{z}{2} \right)&+\frac{1}{\sqrt{2}}\sqrt{  A(z)  \left( \binom{\bar{k}}{2}-\binom{z}{2} \right)} -\frac{1}{6\sqrt{2}}\sqrt{ \frac{  A(z)^{3} }{\binom{\bar{k}}{2}-\binom{z}{2} }}.
\end{align}
and $A(z)$ is defined in (\ref{Adfn}).

\end{lemma}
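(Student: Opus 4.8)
The plan is to write $\Gamma_{\bar{k},k}(z)$ directly in terms of $h^{-1}$ and then Taylor-expand to third order. Set $N(z):=\binom{\bar{k}}{2}-\binom{z}{2}$ and $t(z):=A(z)/N(z)$, so that by Definition \ref{GammaDfn} — the argument of $h^{-1}$ there is exactly $\log 2-t(z)$ — one has $\Gamma_{\bar{k},k}(z)=\binom{z}{2}+h^{-1}\!\big(\log 2-t(z)\big)\,N(z)$ whenever $z<\bar{k}$. The only excluded case $z=\bar{k}=k$ is immediate: there $\Gamma_{\bar{k},k}(k)=\binom{k}{2}$ while $A(k)=\log\big(\binom{k}{k}\binom{n-k}{0}\big)=0$ forces $\Phi_{\bar{k}}(k)=\tfrac12\big(\binom{k}{2}+\binom{k}{2}\big)=\binom{k}{2}$, so the difference is $0$.

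First I would record two elementary estimates, uniform over $0\le z\le k$ with $z<\bar{k}$. For the denominator, the identity $N(z)=\tfrac12(\bar{k}-z)(\bar{k}+z-1)$ together with $\bar{k}\le \bar{k}+z-1<2\bar{k}$ when $z\ge 1$ (and the trivial bound $N(0)\ge \tfrac14\bar{k}^2$) gives $N(z)=\Theta\big((\bar{k}-z)\,\bar{k}\big)$. For the numerator $A(z)=\log\binom{k}{z}+\log\binom{n-k}{\bar{k}-z}$, bound $\log\binom{n-k}{\bar{k}-z}\le(\bar{k}-z)\log\frac{e(n-k)}{\bar{k}-z}=O\big((\bar{k}-z)\log n\big)$, and for $\log\binom{k}{z}$ split on whether $z\le k/2$ or $z>k/2$: if $z\le k/2$ then $\log\binom{k}{z}\le z\log(ek/z)=z\cdot O(\log n)$ and $z\le \bar{k}-z$, while if $z>k/2$ then $\log\binom{k}{z}=\log\binom{k}{k-z}\le (k-z)\log(ek/(k-z))=(k-z)\cdot O(\log n)$ and $k-z\le \bar{k}-z$. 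In either case $A(z)=O\big((\bar{k}-z)\log n\big)$. Combining, $t(z)=A(z)/N(z)=O(\log n/\bar{k})=o(1)$ by the hypothesis $(\log n)^5\le \bar{k}$.

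Next I would invoke the third-order Taylor expansion of $h^{-1}$ about $\log 2$ (the refinement of Lemma \ref{EntTaylor} coming from smoothness of $h^{-1}$ at $\log 2$), namely $h^{-1}(\log 2-t)=\tfrac12+\tfrac{1}{\sqrt2}t^{1/2}-\tfrac{1}{6\sqrt2}t^{3/2}+O(t^{5/2})$ for $t$ in a fixed neighbourhood of $0$; this applies since $t(z)=o(1)$. Multiplying by $N(z)$ and adding $\binom{z}{2}$, and using the bookkeeping identities $\binom{z}{2}+\tfrac12N(z)=\tfrac12\big(\binom{\bar{k}}{2}+\binom{z}{2}\big)$, $\;t(z)^{1/2}N(z)=\sqrt{A(z)N(z)}$, and $\;t(z)^{3/2}N(z)=\sqrt{A(z)^3/N(z)}$, one obtains exactly $\Gamma_{\bar{k},k}(z)=\Phi_{\bar{k}}(z)+O\big(t(z)^{5/2}N(z)\big)$. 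Finally I would bound the error term: $t(z)^{5/2}N(z)=A(z)^{5/2}/N(z)^{3/2}=O\!\Big(\tfrac{((\bar{k}-z)\log n)^{5/2}}{((\bar{k}-z)\bar{k})^{3/2}}\Big)=O\!\Big(\tfrac{(\bar{k}-z)(\log n)^{5/2}}{\bar{k}^{3/2}}\Big)=O\!\Big(\tfrac{(\log n)^{5/2}}{\sqrt{\bar{k}}}\Big)=O(1)$, again by $\bar{k}\ge(\log n)^5$. This yields (\ref{Tayl}).

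The main obstacle is the uniform numerator bound $A(z)=O\big((\bar{k}-z)\log n\big)$: the naive estimate $\log\binom{k}{z}\le k\log 2$ is far too weak when $\bar{k}-z$ is much smaller than $k$ (for instance near $z=k$ when $\bar{k}$ is only slightly larger than $k$), so the case split on $z$ versus $k/2$ and the resulting control of $\log\binom{k}{z}$ in terms of $\bar{k}-z$ rather than $k$ is exactly what makes both $t(z)=o(1)$ and the $O(1)$ error estimate go through. A secondary point requiring care is that the error bound is tight at the stated threshold — the exponent $5$ in $(\log n)^5\le\bar{k}$ is precisely what forces $t(z)^{5/2}N(z)=O(1)$ — so no power of $\log n$ may be wasted, and one must be sure the \emph{third}-order (not merely first-order) term of the $h^{-1}$ expansion is genuinely available.
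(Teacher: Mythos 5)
Your proposal is correct and follows essentially the same route as the paper: write $\Gamma_{\bar{k},k}(z)=\binom{z}{2}+h^{-1}(\log 2-t(z))N(z)$, show $t(z)=O(\log n/\bar{k})=o(1)$ via the elementary binomial bounds (the paper uses the one-liner $\binom{k}{z}=\binom{k}{k-z}\le k^{k-z}\le n^{\bar{k}-z}$ in place of your case split on $z\lessgtr k/2$), apply the Taylor expansion, and bound the error $t(z)^{5/2}N(z)=O(\sqrt{(\log n)^5/\bar{k}})=O(1)$. The third-order term you worry about is exactly what Lemma \ref{EntTaylor} already provides, so no refinement is needed.
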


\begin{proof}
Let $$\epsilon:=\frac{ \log \left(\binom{k}{z} \binom{n-k}{\bar{k}-z} \right) }{\binom{\bar{k}}{2}-\binom{z}{2}}.$$ Combining the elementary inequalities $$\binom{k}{z}=\binom{k}{k-z}  \leq k^{k-z} \leq n^{\bar{k}-z}$$ and $$\binom{n-k}{\bar{k}-z}  \leq n^{\bar{k}-z}$$ with $$\bar{k} \geq \left( \log n\right)^5=\omega \left( \log n\right)$$ we conclude $$\epsilon =O \left(  \frac{ \left(\bar{k}-z\right) \log n }{ \left(\bar{k}-z\right)\left(\bar{k}+z-1\right)}\right) = O \left( \frac{ \log n}{\bar{k}}\right)=o\left(1\right).$$For our choice of $\epsilon$, $\Gamma_{\bar{k},k}$ can be simply expressed as $$\Gamma_{\bar{k},k}(z)=\binom{z}{2}+h^{-1}\left(\log 2-\epsilon\right)\left(  \binom{\bar{k}}{2}-\binom{z}{2}\right).$$ Since $\epsilon=o\left(1\right)$, Lemma \ref{EntTaylor} implies \begin{align} \label{Tayl} |\Gamma_{\bar{k},k}(z)-\Phi_{\bar{k}}(z)|=O\left(\sqrt{ \frac{  A(z)^{5} }{\left( \binom{\bar{k}}{2}-\binom{z}{2} \right)^3}}\right),\end{align} where $\Phi_{\bar{k}}(z)$ and $A(z)$ are defined in (\ref{FuncPhi}) and (\ref{Adfn}) respectively.

Using $\binom{\bar{k}}{z}\binom{n-k}{\bar{k}-z}  \leq n^{2\left(\bar{k}-z\right)}$ we have  $$A(z) \leq 2\left(\bar{k}-z\right) \log n.$$ Furthermore $\binom{\bar{k}}{2}-\binom{z}{2} \geq \frac{\bar{k}\left(\bar{k}-z\right)}{2}$. Hence, combining the last two inequalities, (\ref{Tayl}) can be simplified to 
\begin{align} \label{Tayl2} |\Gamma_{\bar{k},k}(z)-\Phi_{\bar{k}}(z)|=O\left(\sqrt{ \frac{ \left(\log n \right)^5 }{\bar{k} }}\right)=O \left(1\right),\end{align}where the last step is due to $\left(\log n \right)^5 \leq \bar{k}$. This concludes the proof of the Lemma.

\end{proof}
\begin{lemma}\label{DiscGap}
Suppose $k \leq \bar{k} \leq n$ with $\left(\log n\right)^5 \leq \bar{k}$ and $\epsilon>0$.
Then for some sufficiently large constant $C_0=C_0(\epsilon)>0$, if $\floor{C_0\frac{\bar{k}k}{n}} \leq z \leq \left(1-\epsilon \right) k$, \begin{align}\label{finalPhi10}
\Gamma_{\bar{k},k}(z+1)-\Gamma_{\bar{k},k}(z)=z\left(\frac{1}{2}-o\left(1\right)\right)-\Theta \left[  \sqrt{\frac{\bar{k}}{\log (\frac{n}{\bar{k}})}}\log \left(\frac{\left(z+1\right)n}{k\bar{k}} \right) \right]+O\left(1\right).
\end{align}

\end{lemma}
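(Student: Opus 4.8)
The plan is to replace $\Gamma_{\bar{k},k}$ by the explicit surrogate $\Phi_{\bar{k}}$ of Lemma~\ref{PhiLemma}, compute the one‑step difference $\Phi_{\bar{k}}(z+1)-\Phi_{\bar{k}}(z)$ term by term using the estimates of Lemma~\ref{AFun}, and read off $(\ref{finalPhi10})$. Since for $n$ large both $z$ and $z+1$ lie in $[\floor{\bar{k}k/n},k]$, Lemma~\ref{PhiLemma} applies at both points and gives
\[
\Gamma_{\bar{k},k}(z+1)-\Gamma_{\bar{k},k}(z)=\Phi_{\bar{k}}(z+1)-\Phi_{\bar{k}}(z)+O(1);
\]
likewise Lemma~\ref{AFun} applies at $z$ and $z+1$ (replacing $\epsilon$ by $\epsilon/2$ if needed). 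So it suffices to analyze $\Phi_{\bar{k}}(z+1)-\Phi_{\bar{k}}(z)$.

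Write $B(z):=\binom{\bar{k}}{2}-\binom{z}{2}$, so that $B(z+1)-B(z)=-z$. Because $z\le(1-\epsilon)k\le(1-\epsilon)\bar{k}$ we have $B(z)=\Theta(\bar{k}^2)$, with constants depending on the fixed $\epsilon$; meanwhile Lemma~\ref{AFun} gives $A(z)=\Theta(\bar{k}\log(n/\bar{k}))$, $A(z+1)-A(z)=\log\big(\tfrac{k\bar{k}}{(z+1)n}\big)-O(1)$, and $|A(z+1)-A(z)|=O(\log(n/\bar{k}))$. Combined with $\bar{k}\ge(\log n)^5$, these imply $A(z+1)=(1+o(1))A(z)$, $B(z+1)=(1+o(1))B(z)$, $\sqrt{A(z)/B(z)}=\Theta\big(\sqrt{\log(n/\bar{k})/\bar{k}}\big)=o(1)$, and $\sqrt{B(z)/A(z)}=\Theta\big(\sqrt{\bar{k}/\log(n/\bar{k})}\big)$.

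Now take differences term by term in $(\ref{FuncPhi})$. The quadratic part contributes $\tfrac12\big(\binom{z+1}{2}-\binom{z}{2}\big)=\tfrac z2$. For the leading square‑root term, writing $f:=AB$,
\[
\sqrt{f(z+1)}-\sqrt{f(z)}=\frac{-z\,A(z+1)+B(z)\big(A(z+1)-A(z)\big)}{\sqrt{f(z+1)}+\sqrt{f(z)}},\qquad \sqrt{f(z+1)}+\sqrt{f(z)}=(2+o(1))\sqrt{A(z)B(z)}.
\]
The $-zA(z+1)$ piece equals $-\tfrac{z}{2\sqrt2}(1+o(1))\sqrt{A(z)/B(z)}=-z\cdot o(1)$, which together with the $\tfrac z2$ above produces $z(\tfrac12-o(1))$. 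The $B(z)\big(A(z+1)-A(z)\big)$ piece equals $\tfrac{1}{2\sqrt2}(1+o(1))\big(A(z+1)-A(z)\big)\sqrt{B(z)/A(z)}$; choosing the constant $C_0=C_0(\epsilon)$ large enough that $\log C_0$ dominates the $O(1)$ in $A(z+1)-A(z)$, we have $-(A(z+1)-A(z))=\Theta\big(\log\tfrac{(z+1)n}{k\bar{k}}\big)>0$ for every $z$ in range, so this piece is $-\Theta\big[\sqrt{\bar{k}/\log(n/\bar{k})}\,\log\tfrac{(z+1)n}{k\bar{k}}\big]$. Finally, a direct one‑step‑difference estimate of the third ($A^{3}/B$) term of $(\ref{FuncPhi})$, using $|A(z+1)-A(z)|=O(\log(n/\bar{k}))$, $|B(z+1)-B(z)|=z\le\bar{k}$, and $(\log(n/\bar{k}))^3\le\bar{k}$, shows it changes by $O(1)$. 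Adding the three contributions and the $O(1)$ from the reduction step yields exactly $(\ref{finalPhi10})$.

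The crux is the $B(z)\big(A(z+1)-A(z)\big)$ piece: one must ensure the one‑step drift of $A$ is genuinely of order $-\log\tfrac{(z+1)n}{k\bar{k}}$ and not absorbed by the $O(1)$ error in Lemma~\ref{AFun}, which is exactly what forces the restriction $z\ge\floor{C_0\bar{k}k/n}$ with $C_0$ large depending on $\epsilon$, and is what makes the corresponding term a genuine $\Theta$ in $(\ref{finalPhi10})$ rather than merely $O$. Everything else is bookkeeping with the entropy Taylor expansion already packaged in Lemmas~\ref{PhiLemma} and~\ref{AFun}, together with the hypothesis $\bar{k}\ge(\log n)^5$ that makes all the $o(1)$'s uniform over $z$.
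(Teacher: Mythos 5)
Your proposal is correct and follows essentially the same route as the paper: reduce to $\Phi_{\bar k}$ via Lemma~\ref{PhiLemma}, split the one-step difference into the quadratic part (giving $z/2$), the leading $\sqrt{AB}$ part (whose two rationalized pieces give the $-z\cdot o(1)$ and the $-\Theta[\sqrt{\bar k/\log(n/\bar k)}\,\log\frac{(z+1)n}{k\bar k}]$ terms, with $C_0$ chosen so the drift of $A$ is not swallowed by the $O(1)$ in Lemma~\ref{AFun}), and the $A^3/B$ correction (controlled to $O(1)$). The only difference is cosmetic: you rationalize the product $\sqrt{AB}$ in one step via $f(z+1)-f(z)$, while the paper telescopes $\sqrt{A}\sqrt{B}$ into two factors first; the resulting two pieces are identical.
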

\begin{proof} First we choose $C_0>0$ large enough so that so that $\log \left(\frac{\left(z+1\right)n}{k\bar{k}} \right)$ dominates the constant additional factor in the right hand side of (\ref{Fun3}) and therefore for all $z$ of interest
\begin{equation}\label{BigC} A(z+1)-A(z)=\Theta\left(\log \left(\frac{k\bar{k}}{\left(z+1\right)n}\right)\right)=-\Theta\left(\log \left(\frac{\left(z+1\right)n}{k\bar{k}}\right)\right).
\end{equation}

%
%. Now as $\bar{k}=\omega \left( \log n \right)$, $\sqrt{\frac{\bar{k}}{\log (n/\bar{k})}}=\omega\left(1 \right)$. Therefore according to (\ref{Tayl2})  it suffices to establish the needed monotonicity behaviors (\ref{decr}), (\ref{incr}) for $\Phi_{\bar{k}}$ instead of $\Gamma_{\bar{k},k}$. Specifically notice that because of (\ref{Tayl2}) it suffices to show that there exists constants $d_1,d_2,c_1,C_1,C_2>0$ such that for sufficiently large $n$ the function $\Phi_{\bar{k}}(z)$ satisfies the following
%\begin{itemize}
%\item[(1)] When $C_1 \max\{1,k\bar{k}/n \} \leq  z  \leq c_1\sqrt{\frac{\bar{k}}{\log (n/\bar{k})}}$, $$\Phi_{\bar{k}}(z+1)-\Phi_{\bar{k}}(z) \leq -d_1\sqrt{\frac{\bar{k}}{\log (n/\bar{k})}}.$$
%\item[(2)] When $ C_2\sqrt{\frac{\bar{k}}{\log (n/\bar{k})}} \leq z \leq k$, $$\Phi_{\bar{k}}(z+1)-\Phi_{\bar{k}}(z) \geq d_2\sqrt{\frac{\bar{k}}{\log (n/\bar{k})}}.$$
%\end{itemize}

In light of Lemma \ref{PhiLemma} we can prove (\ref{finalPhi10}) with $\Phi_{\bar{k}}(z+1)-\Phi_{\bar{k}}(z)$ (defined in (\ref{FuncPhi})) instead of $\Gamma_{\bar{k},k}(z+1)-\Gamma_{\bar{k},k}(z)$ at the expense only of $O\left(1\right)$ terms on the right hand sides. We write the difference $\Phi_{\bar{k}}(z+1)-\Phi_{\bar{k}}(z)$ as a summation of three parts.
\begin{align*}
\Phi_{\bar{k}}(z+1)-\Phi_{\bar{k}}(z)&=\underbrace{\frac{1}{2}\left( \binom{\bar{k}}{2}+\binom{z+1}{2} \right)-\frac{1}{2}\left( \binom{\bar{k}}{2}+\binom{z}{2} \right)}_{\text{First Part}}\\
&+\frac{1}{\sqrt{2}}\left(\underbrace{\sqrt{  A(z+1)  \left( \binom{\bar{k}}{2}-\binom{z+1}{2} \right)}-\sqrt{  A(z)  \left( \binom{\bar{k}}{2}-\binom{z}{2} \right)}}_{\text{Second Part}}\right)\\
&-\frac{1}{6\sqrt{2}} \left(\underbrace{\sqrt{ \frac{  A(z+1)^{3} }{\left( \binom{\bar{k}}{2}-\binom{z+1}{2} \right)}}-\sqrt{ \frac{  A(z)^{3} }{\left( \binom{\bar{k}}{2}-\binom{z}{2} \right)}}}_{\text{Third Part}}\right).\end{align*}

The first part can be straightforwardly simplified to $\frac{z}{2}$.

We write the second part as follows,
\begin{align}
&\sqrt{  A(z+1)  \left( \binom{\bar{k}}{2}-\binom{z+1}{2} \right)}-\sqrt{  A(z)  \left( \binom{\bar{k}}{2}-\binom{z}{2} \right)} \notag\\
=&\left(\sqrt{  A(z+1)}-\sqrt{ A(z)}\right) \sqrt{ \binom{\bar{k}}{2}-\binom{z+1}{2} }+\sqrt{  A(z) }\left(\sqrt{   \binom{\bar{k}}{2}-\binom{z+1}{2} }-\sqrt{   \binom{\bar{k}}{2}-\binom{z}{2} }\right) \notag \\
=&\left(\frac{  A(z+1)- A(z)}{\sqrt{A(z+1)}+\sqrt{ A(z)}}\right) \sqrt{ \binom{\bar{k}}{2}-\binom{z+1}{2} }-\sqrt{  A(z) }\frac{\binom{z+1}{2}-\binom{z}{2}}{\sqrt{   \binom{\bar{k}}{2}-\binom{z+1}{2} }+\sqrt{   \binom{\bar{k}}{2}-\binom{z}{2} }} \notag \\
=&\left(\frac{  A(z+1)- A(z)}{\sqrt{A(z+1)}+\sqrt{ A(z)}}\right) \sqrt{ \binom{\bar{k}}{2}-\binom{z+1}{2} }-\sqrt{  A(z) }\frac{z}{\sqrt{   \binom{\bar{k}}{2}-\binom{z+1}{2} }+\sqrt{   \binom{\bar{k}}{2}-\binom{z}{2} }}. \notag \\
\end{align}Since $z \leq (1-\epsilon)k \leq (1-\epsilon)\bar{k} $ applying (\ref{Fun2}) from Lemma \ref{AFun}, the last quantity is of the order
\begin{align}
& \Theta \left[ \left(\frac{  A(z+1)- A(z)}{\sqrt{\bar{k} \log (\frac{n}{\bar{k}})}}\right) \bar{k} \right]-\Theta\left[\sqrt{ \bar{k}  \log (\frac{n}{\bar{k}}) }\frac{z}{\bar{k}}\right], \text{using } \binom{\bar{k}}{2}-\binom{z}{2}=\Theta\left(\left(\bar{k}\right)^2\right) \notag \\
=& \Theta \left[ \left(\frac{  \left(A(z+1)- A(z)\right) \sqrt{\bar{k}} }{\sqrt{\log (\frac{n}{\bar{k}})}}\right)  \right]-\Theta\left[\frac{\sqrt{   \log (\frac{n}{\bar{k}}) } z}{\sqrt{\bar{k}}}\right] \notag \\
=&-\Theta \left[ \sqrt{\frac{   \bar{k} }{\log (\frac{n}{\bar{k}})} } \log \left(\frac{\left(z+1\right)n}{k\bar{k}} \right)\right]-o\left(z\right). \label{Case1} 
\end{align}where for the last equality we used (\ref{BigC}) and $\bar{k}=\omega \left( \log n \right)$.

For the third part we write,
\begin{align*}
&\sqrt{ \frac{  A(z+1)^{3} }{\left( \binom{\bar{k}}{2}-\binom{z+1}{2} \right)}}-\sqrt{ \frac{  A(z)^{3} }{\binom{\bar{k}}{2}-\binom{z}{2} }}\\
= &\frac{  A(z+1)^{\frac{3}{2}}-A(z)^{\frac{3}{2}} }{\sqrt{ \binom{\bar{k}}{2}-\binom{z+1}{2} }}+A(z)^{\frac{3}{2}} \left(\frac{1}{\sqrt{\binom{\bar{k}}{2}-\binom{z+1}{2} }}-\frac{1}{\sqrt{\binom{\bar{k}}{2}-\binom{z}{2} }}\right)
\end{align*}
Using  $a^{\frac{3}{2}}-b^{\frac{3}{2}}=\left(a^3-b^3\right)/\left(a^{\frac{3}{2}}+b^{\frac{3}{2}}\right)$ and $a^3-b^3=\left(a-b\right)\left(a^2+b^2+ab\right)=O\left( \left(a-b\right) \left(a^2+b^2\right)\right)$ for $a,b \in \mathbb{R}, $ we have that the quantity is at most
\begin{align*}
&O\left[ \frac{\left(A(z+1)-A(z)\right)\left(A(z+1)^2+A(z)^2\right)}{(A(z+1)^{\frac{3}{2}}+A(z)^{\frac{3}{2}})\sqrt{ \binom{\bar{k}}{2}-\binom{z+1}{2} }} \right]
+O\left[A(z)^{\frac{3}{2}} \left(\frac{\sqrt{ \binom{\bar{k}}{2}-\binom{z}{2} }-\sqrt{\binom{\bar{k}}{2}-\binom{z+1}{2} }}{\sqrt{\left( \binom{\bar{k}}{2}-\binom{z+1}{2} \right)\left( \binom{\bar{k}}{2}-\binom{z}{2} \right)}}\right) \right]
\end{align*}
which by Lemma \ref{AFun} and (\ref{BigC}) is at most
\begin{align*}
&O \left[ \frac{\log \left(\frac{\left(z+1\right)n}{k\bar{k}}\right) \sqrt{\bar{k} \log \left( \frac{n}{\bar{k}}\right)}}{\sqrt{ \binom{\bar{k}}{2}-\binom{z+1}{2} }} \right]+O \left[ \left(\bar{k} \log \left( \frac{n}{\bar{k}}\right)\right)^{\frac{3}{2}}  \left(\frac{\sqrt{ \binom{\bar{k}}{2}-\binom{z}{2} }-\sqrt{ \binom{\bar{k}}{2}-\binom{z+1}{2} }}{\sqrt{\left( \binom{\bar{k}}{2}-\binom{z+1}{2} \right)\left( \binom{\bar{k}}{2}-\binom{z}{2} \right)}}\right) \right] \\
=&O \left[ \frac{ \left(\log \left(\frac{n}{\bar{k}}\right)\right)^{\frac{3}{2}}}{\sqrt{\bar{k}}}\right]+O \left[ \frac{\left( \binom{z+1}{2}-\binom{z}{2}\right) \left(\log \left(\frac{n}{k}\right)\right)^{\frac{3}{2}}}{\left(\bar{k}\right)^{\frac{3}{2}}} \right],
\end{align*} where for the last equality we used the elementary $\sqrt{a}-\sqrt{b}=\left( a-b \right)/\left(\sqrt{a}+\sqrt{b}\right)$ and that $z \leq (1-\epsilon)\bar{k}$. Finally, the last displayed quantity is at most
\begin{align*}
&O \left[ \frac{ \left(\log \left(\frac{n}{\bar{k}}\right)\right)^{\frac{3}{2}}}{\sqrt{\bar{k}}}\right]+O \left[ \frac{z \left(\log \left(\frac{n}{k}\right)\right)^{\frac{3}{2}}}{\left(\bar{k}\right)^{\frac{3}{2}}} \right]\\
=&O\left[ \frac{ \left(\log \left(\frac{n}{\bar{k}}\right)\right)^{\frac{3}{2}}}{\sqrt{\bar{k}}} \right], \text{using } z \leq \bar{k}\\
=& o \left(1 \right),
\end{align*}since  $\bar{k}=\omega\left( \log^3 n\right)$ by our assumptions.

Combining the three parts gives
\begin{align}\label{finalPhi1}
\Phi_{\bar{k}}(z+1)-\Phi_{\bar{k}}(z)=z\left(\frac{1}{2}-o(1)\right)-\Theta \left[   \sqrt{ \frac{\bar{k} }{\log (\frac{n}{\bar{k}})}} \log \left( \frac{\left(z+1\right)n}{k\bar{k}} \right) \right]+o\left(1\right).
\end{align} 
which based on Lemma \ref{PhiLemma} implies (\ref{finalPhi10}).

The proof of the Lemma is complete.
\end{proof}

\begin{lemma}\label{Tn} 
Suppose $k \leq \bar{k} \leq n$ with $\left(\log n\right)^5 \leq \bar{k}$ and $\epsilon>0$.
Let \begin{equation}\label{Tn2} T_n:=\sqrt{\frac{\bar{k}}{\log\left(\frac{n}{\bar{k}}\right)}}\log \left( \sqrt{\frac{\bar{k}}{\log\left(\frac{n}{\bar{k}}\right)}} \left(\frac{\bar{k}k}{n}\right)^{-1}\right).\end{equation}
For some sufficiently large constant $C_0=C_0(\epsilon)>0$ and sufficiently large enough values of $n$ the following monotonicity properties hold in the discretized interval $$\mathcal{I}=\mathcal{I}_{C_0}=[\floor{C_0 \frac{\bar{k}k}{n}}, (1-\epsilon) k] \cap \mathbb{Z}.$$
\begin{itemize}
\item[(1)] If $T_n=o\left(\frac{\bar{k}k}{n}\right)$ then $\Gamma_{\bar{k},k}$ is monotonically increasing on $\mathcal{I}$.
\item[(2)] If $T_n=\omega \left(k\right)$ then $\Gamma_{\bar{k},k}$ is monotonically decreasing on $\mathcal{I}$.
\item[(3)] If $\omega\left(\frac{\bar{k}k}{n}\right)=T_n=o\left(k\right)$ then $\Gamma_{\bar{k},k}$ is non-monotonous on $\mathcal{I}$ with the property that for some constants $0<D_1<D_2$, $u_1:=D_1  \ceil{\sqrt{\frac{\bar{k}}{\log \left(\frac{n}{\bar{k}}\right)}}}$ and $u_2:=D_2 \ceil{\sqrt{\frac{\bar{k}}{\log \left(\frac{n}{\bar{k}}\right)}}}$ the following are true.
\begin{itemize}
\item[(a)]  $\floor{ C_0\frac{\bar{k}k}{n}}<u_1<u_2<(1-\epsilon) k$ and 

\item[(b)]
\begin{align}\label{slack}
  \max_{z \in \mathcal{I} \cap \left[u_1,u_2\right]} \Gamma_{\bar{k},k}(z) +\Omega\left( \frac{\bar{k}}{ \log \left(\frac{n}{\bar{k}}\right)} \right) \leq \Gamma_{\bar{k},k}(\floor{C \frac{\bar{k}k}{n}}) \leq \Gamma_{\bar{k},k}\left((1-\epsilon) k\right).
\end{align}
\end{itemize}
\end{itemize}
\end{lemma}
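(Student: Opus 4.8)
\textbf{Proof proposal for Lemma \ref{Tn}.}

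The plan is to base everything on the discrete-difference formula from Lemma \ref{DiscGap}, namely that for $z$ in the interval $\mathcal{I}$,
\begin{equation}\label{diffformula-plan}
\Gamma_{\bar{k},k}(z+1)-\Gamma_{\bar{k},k}(z)=z\left(\tfrac12-o(1)\right)-\Theta\!\left[\sqrt{\tfrac{\bar{k}}{\log(n/\bar{k})}}\,\log\!\left(\tfrac{(z+1)n}{k\bar{k}}\right)\right]+O(1).
\end{equation}
The right-hand side is a competition between the linear-in-$z$ ``positive drift'' term $z/2$ and the ``negative drift'' term of magnitude roughly $\sqrt{\bar{k}/\log(n/\bar{k})}\cdot\log((z+1)n/(k\bar{k}))$. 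The key observation is that the logarithmic factor $\log((z+1)n/(k\bar{k}))$ is negative for $z$ near the left endpoint $\sim \bar{k}k/n$ (since then $(z+1)n/(k\bar{k})<1$), crosses zero around $z\approx k\bar{k}/n$, and is positive and slowly growing for larger $z$; meanwhile the positive term $z/2$ is monotone increasing in $z$. So the sign of the increment changes at most a controlled number of times. Setting $S_n:=\sqrt{\bar{k}/\log(n/\bar{k})}$, the increment is negative precisely when $z/2 \lesssim S_n\log((z+1)n/(k\bar{k}))$, and the scale at which this transition happens is governed by $T_n$ in \eqref{Tn2}: indeed, plugging $z\asymp S_n$ into the log gives $\log(S_n n/(k\bar{k}))$, so $z\asymp S_n$ balances the two terms exactly when $S_n \asymp T_n$, i.e.\ $T_n$ is (up to the log) the location of the sign change of the increment.

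The three cases then fall out as follows. \textbf{Case (1):} if $T_n=o(\bar{k}k/n)$, then already at the left endpoint $z_0=\floor{C_0\bar{k}k/n}$ the positive term $z_0/2=\Theta(\bar{k}k/n)$ dominates the negative term $\Theta(S_n\log(z_0 n/(k\bar{k})))$ — here I would check that $S_n\log$ of the relevant argument is $o(\bar{k}k/n)$ using $T_n=o(\bar{k}k/n)$ and that the log factor is at most polylogarithmic — and since the positive term only grows with $z$ while the log factor grows at most logarithmically, the increment stays positive throughout $\mathcal{I}$; hence $\Gamma_{\bar{k},k}$ is increasing. \textbf{Case (2):} if $T_n=\omega(k)$, then even at the right endpoint $z\approx(1-\epsilon)k$ the positive term $z/2=\Theta(k)$ is dominated by the negative term, because $T_n=\omega(k)$ forces $S_n\log((z+1)n/(k\bar{k}))=\omega(k)$ for all $z$ in the range (one has to verify the log argument stays bounded below away from something that would kill it — this is where $C_0$ large helps, as in Lemma \ref{DiscGap}); hence every increment is negative and $\Gamma_{\bar{k},k}$ is decreasing. \textbf{Case (3):} if $\omega(\bar{k}k/n)=T_n=o(k)$, the increment is negative for $z$ from the left endpoint up to some $z^\ast$ of order $\Theta(T_n)$ (more precisely $\Theta(S_n)$ once the log factor is absorbed — careful bookkeeping of the $\log$ is needed here), and positive for $z$ from $z^\ast$ up to the right endpoint; so $\Gamma_{\bar{k},k}$ decreases then increases on $\mathcal{I}$, giving non-monotonicity. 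For part (b) of Case (3) I would then sum the increments: take $u_1=D_1\ceil{S_n}$, $u_2=D_2\ceil{S_n}$ with $D_1<D_2$ constants chosen so that on $[u_1,u_2]$ the increments are $\Theta(S_n\log(n/\bar{k}))=\Theta(\sqrt{\bar{k}\log(n/\bar{k})})$ in absolute value with a definite sign pattern bracketing the minimum; telescoping \eqref{diffformula-plan} from $z_0=\floor{C_0\bar{k}k/n}$ down to the minimizing $z$ (which lies in $[u_1,u_2]$ for suitable $D_1,D_2$) accumulates a gap of order at least (number of steps)$\times$(per-step gap). One would arrange the constants so that over $\Omega(S_n)$ steps each contributing $\Omega(\sqrt{\bar{k}/\log(n/\bar{k})}\cdot \text{something})$ one gets a total of $\Omega(\bar{k}/\log(n/\bar{k}))$, matching \eqref{slack}; the inequality $\Gamma_{\bar{k},k}(z_0)\le\Gamma_{\bar{k},k}((1-\epsilon)k)$ follows because the right branch is increasing and the endpoint comparison reduces to summing positive increments from $z^\ast$ to $(1-\epsilon)k$ against the (smaller) negative contribution from $z_0$ to $z^\ast$. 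Also I'd verify $\floor{C_0\bar{k}k/n}<u_1<u_2<(1-\epsilon)k$, which is exactly where the hypotheses $T_n=\omega(\bar{k}k/n)$ (giving $S_n\gg\bar{k}k/n$, hence $u_1>z_0$ for $C_0$ not too large relative to $D_1$ — a mild tension to resolve) and $T_n=o(k)$ (giving $S_n\ll k$, hence $u_2<(1-\epsilon)k$) are used.

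The main obstacle I anticipate is the precise bookkeeping of the logarithmic factor $\log((z+1)n/(k\bar{k}))$ versus $\log(n/\bar{k})$ and $\log(S_n(\bar{k}k/n)^{-1})$ across the range of $z$: the definition of $T_n$ bakes in a specific log, and to pin down the sign-change location $z^\ast$ and to get the clean $\Omega(\bar{k}/\log(n/\bar{k}))$ depth one must track how $\log((z+1)n/(k\bar{k}))$ compares to these reference logs as $z$ ranges over $[\bar{k}k/n, k]$ — in particular that it is $\Theta(\log(n/\bar{k}))$ for $z$ in the bulk of the interval but degenerates near the sign change. A secondary technical point is choosing $C_0$, $D_1$, $D_2$ consistently: $C_0$ must be large enough for Lemma \ref{DiscGap} and \eqref{BigC} to apply, yet $u_1=D_1\ceil{S_n}$ must exceed $\floor{C_0\bar{k}k/n}$, so one leans on $T_n=\omega(\bar{k}k/n)$ to make $S_n$ asymptotically dominate $C_0\bar{k}k/n$ for any fixed $C_0$; since these are chosen in a fixed order ($\epsilon\to C_0\to D_1,D_2$) and $S_n/(\bar{k}k/n)\to\infty$, this is fine but needs to be stated carefully.
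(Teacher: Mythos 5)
Your plan matches the paper's proof in structure: all three cases are decided by the sign of the increment formula of Lemma \ref{DiscGap}, and the depth bound in case (3) is obtained by telescoping roughly $\Omega\bigl(\sqrt{\bar{k}/\log(n/\bar{k})}\bigr)$ negative increments, each of magnitude $\Omega\bigl(\sqrt{\bar{k}/\log(n/\bar{k})}\bigr)$, starting from the left endpoint. The one genuinely different sub-step is the right-hand inequality $\Gamma_{\bar{k},k}(\floor{C_0\bar{k}k/n})\le\Gamma_{\bar{k},k}((1-\epsilon)k)$: you propose to get it by summing increments across the whole interval and comparing the accumulated rise $\Theta(k^2)$ on the right branch against the accumulated drop, which is at most $O(\bar{k}\log(n/\bar{k}))=o(k^2)$ under the case-(3) hypothesis $T_n=o(k)$. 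The paper instead compares the two endpoint values directly from the explicit formula for $\Gamma_{\bar{k},k}$, Taylor-expanding $h^{-1}$ and reducing to $k^2=\omega(\sqrt{\bar{k}}\,k\log(n/\bar{k}))$. Both routes use the same hypothesis in the same way and your telescoping version is arguably more uniform with the rest of the argument, though it requires the uniform upper bound on the per-step drop that you only gesture at. Two small inaccuracies to fix when writing this up: on $[u_1,u_2]$ the negative term is not $\Theta\bigl(\sqrt{\bar{k}\log(n/\bar{k})}\bigr)$ in general --- the paper only uses $\log((z+1)n/(k\bar{k}))\ge 1$ (guaranteed by $C_0>e$) to get a per-step drop of order $\sqrt{\bar{k}/\log(n/\bar{k})}$, which is what produces the stated depth $\Omega(\bar{k}/\log(n/\bar{k}))$ rather than the stronger $\Omega(\bar{k})$ your estimate would suggest; and in case (2) the negative term need not be $\omega(k)$ uniformly in $z$ --- the correct mechanism is that $\tfrac{n(z+1)/(\bar{k}k)}{\log(n(z+1)/(\bar{k}k))}\le\tfrac{n/\bar{k}}{\log(n/\bar{k})}=o\bigl(\sqrt{\bar{k}/\log(n/\bar{k})}\cdot(\bar{k}k/n)^{-1}\bigr)$ by monotonicity of $x/\log x$, which makes every increment at most $-\Omega\bigl(\sqrt{\bar{k}/\log(n/\bar{k})}\bigr)$.
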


\begin{proof}
We start with the case $T_n=o\left(\frac{\bar{k}k}{n}\right)$ which can be equivalently written as $$\sqrt{\frac{\bar{k}}{\log\left(\frac{n}{\bar{k}}\right)}}\left(\frac{\bar{k}k}{n}\right)^{-1 }\log \left( \sqrt{\frac{\bar{k}}{\log\left(\frac{n}{\bar{k}}\right)}} \left(\frac{\bar{k}k}{n}\right)^{-1}\right)=o\left(1\right)$$ or using part (a) of Lemma \ref{Calc}, \begin{equation}\label{Case1Tn}
\sqrt{\frac{\bar{k}}{\log\left(\frac{n}{\bar{k}}\right)}}\left(\frac{\bar{k}k}{n}\right)^{-1 }=o\left(1\right).
\end{equation}

Using (\ref{finalPhi10}) from Lemma \ref{DiscGap} we have that for some universal constant $C_1>0$ and large enough $n$,\begin{align*}
\Gamma_{\bar{k},k}(z+1)- \Gamma_{\bar{k},k}(z) &\geq \frac{z}{4}-C_1\sqrt{ \frac{\bar{k}}{\log \left(\frac{n}{\bar{k}}\right)} }  \log \left(\frac{n\left(z+1\right)}{\bar{k}k}\right)-O\left(1\right)\\
&= \frac{ k\bar{k}}{4n}\log \left(\frac{n\left(z+1\right)}{\bar{k}k}\right) \left( \frac{\frac{nz}{\bar{k}k}}{\log \left(\frac{n\left(z+1\right)}{\bar{k}k}\right)} - 4C_1 \sqrt{\frac{\bar{k}}{\log\left(\frac{n}{\bar{k}}\right)}}\left(\frac{\bar{k}k}{n}\right)^{-1 } \right)-O\left(1\right).
 \end{align*} The second term in the parenthesis in the last displayed quantity is $o\left(1\right)$ from (\ref{Case1Tn}). Now notice that since $\bar{k}=\omega\left( \log n \right)$, from (\ref{Case1Tn}) we have \begin{align} \label{imp} \frac{\bar{k}k}{n}=\omega\left(1\right).\end{align}\ Therefore choosing $C_0>0$ large enough we have that $z \geq \floor{C_0\frac{\bar{k}k}{n}}$ implies that the first term in the parenthesis can be made to be at least $1$.  Finally, the multiplicative term outside the parenthesis satisfies $$\frac{k\bar{k}}{4n}\log \left(\frac{n\left(z+1\right)}{\bar{k}k}\right) \geq\frac{k\bar{k}}{4n}  \log e^4 =\frac{k\bar{k}}{n}$$  by choosing $z+1 \geq \floor{\frac{C_0k\bar{k}}{n}}$ for say $C_0>e^4$. Hence, indeed for some sufficiently large $C_0>0$ if $z \in \mathcal{I}_{C_0}$, $$\Gamma_{\bar{k},k}(z+1)- \Gamma_{\bar{k},k}(z) \geq  \frac{k\bar{k}}{n} \left(1-o\left(1\right) \right)-O\left(1\right)$$ which according to (\ref{imp}) implies that  for some sufficiently large $C_0>0$ for $n$ large enough if $z \in \mathcal{I}_{C_0}$, $$\Gamma_{\bar{k},k}(z+1) \geq \Gamma_{\bar{k},k}(z),$$ that is the curve is increasing.

We now turn to Part (2) where $T_n=\omega\left(k \right)$ which can be equivalently written as $$\sqrt{\frac{\bar{k}}{\log\left(\frac{n}{\bar{k}}\right)}}\left(\frac{\bar{k}k}{n}\right)^{-1 }\log \left( \sqrt{\frac{\bar{k}}{\log\left(\frac{n}{\bar{k}}\right)}} \left(\frac{\bar{k}k}{n}\right)^{-1}\right)=\omega\left(\frac{n}{\bar{k}}\right)$$ or using that $\bar{k}=o\left(n\right)$ and part (c) of Lemma \ref{Calc}, \begin{equation}\label{Case2Tn2}
\sqrt{\frac{\bar{k}}{\log\left(\frac{n}{\bar{k}}\right)}}\left(\frac{\bar{k}k}{n}\right)^{-1 }=\omega\left(\frac{\frac{n}{\bar{k}} }{\log \left(\frac{n}{\bar{k}} \right)} \right).
\end{equation}which simplifies to 
\begin{equation}\label{Case2Tn}
\sqrt{\bar{k}\log\left(\frac{n}{\bar{k}}\right)}=\omega\left(k  \right).
\end{equation}

Now using (\ref{finalPhi10}) from Lemma \ref{DiscGap} we have that for some universal constants $U_1>0$ and large enough $n$,

\begin{align}
\Gamma_{\bar{k},k}(z+1)- \Gamma_{\bar{k},k}(z) &\leq \frac{3\left(z+1\right)}{4}-U_1\sqrt{ \frac{\bar{k}}{\log \left(\frac{n}{\bar{k}}\right)} }  \log \left(\frac{n\left(z+1\right)}{\bar{k}k}\right)+O\left(1\right) \notag\\
&= \frac{ 3k\bar{k}}{4n}\log \left(\frac{n\left(z+1\right)}{\bar{k}k}\right) \left( \frac{\frac{n\left(z+1\right)}{\bar{k}k}}{\log \left(\frac{n\left(z+1\right)}{\bar{k}k}\right)} - \frac{4}{3}U_1 \sqrt{\frac{\bar{k}}{\log\left(\frac{n}{\bar{k}}\right)}}\left(\frac{\bar{k}k}{n}\right)^{-1 } \right)+O\left(1\right). \label{Case2Diff}
 \end{align} 
Recall that for $x>e$, $x/\log x$ is increasing from elementary reasoning. Therefore if $C_0>e$ using \begin{align} \label{Ce} \frac{n\left(z+1\right)}{\bar{k}k} \geq e \end{align} and the trivial $\frac{n\left(z+1\right)}{\bar{k}k} \leq \frac{n}{\bar{k}}$, we have $$\frac{\frac{n\left(z+1\right)}{\bar{k}k}}{\log \left(\frac{n\left(z+1\right)}{\bar{k}k}\right)} \leq  \frac{\frac{n}{\bar{k}}}{\log \left(\frac{n}{\bar{k}}\right)}.$$ Hence, by (\ref{Case2Tn2}) we conclude $$\frac{\frac{n\left(z+1\right)}{\bar{k}k}}{\log \left(\frac{n\left(z+1\right)}{\bar{k}k}\right)}= o\left(\sqrt{\frac{\bar{k}}{\log\left(\frac{n}{\bar{k}}\right)}}\left(\frac{\bar{k}k}{n}\right)^{-1 } \right).$$Therefore indeed the term inside the parenthesis in (\ref{Case2Diff}) is at most $-U_1 \sqrt{\frac{\bar{k}}{\log\left(\frac{n}{\bar{k}}\right)}}\left(\frac{\bar{k}k}{n}\right)^{-1 }$ for large enough $n$, which allows to conclude that  for large enough $n$  (\ref{Case2Diff}) implies for all $z \in \mathcal{I}_C$, \begin{align} \Gamma_{\bar{k},k}(z+1)- \Gamma_{\bar{k},k}(z) & \leq -\frac{3}{4}U_1\sqrt{ \frac{\bar{k}}{\log \left(\frac{n}{\bar{k}}\right)} }  \log \left(\frac{n\left(z+1\right)}{\bar{k}k}\right)+O\left(1\right) \notag\\
& \leq -\frac{3}{4}U_1\sqrt{ \frac{\bar{k}}{\log \left(\frac{n}{\bar{k}}\right)} }  +O\left(1\right) \label{IneqCe}\end{align}where we have used $\log \left(\frac{n\left(z+1\right)}{\bar{k}k}\right) \geq 1$ according to (\ref{Ce}). Using now that $\bar{k}=\omega \left( \log n \right)$ we conclude based on (\ref{IneqCe}), that indeed for some sufficiently large $C_0>0$ and large enough $n$, if $z \in \mathcal{I}_{C_0}$, $\Gamma_{\bar{k},k}(z+1) \leq \Gamma_{\bar{k},k}(z)$, that is the curve is decreasing.

We finally turn to Part (3) where $\omega\left(\frac{\bar{k}k}{n}\right)=T_n=o\left(k \right).$ By similar arguments as for (\ref{Case1Tn}) and (\ref{Case2Tn}) we conclude that in this case
\begin{equation}\label{Case3Tn1}
\sqrt{\frac{\bar{k}}{\log\left(\frac{n}{\bar{k}}\right)}}=\omega\left(\frac{\bar{k}k}{n}\right)
\end{equation} and
\begin{equation}\label{Case3Tn2}
\sqrt{\bar{k}\log\left(\frac{n}{\bar{k}}\right)}=o\left(k  \right).
\end{equation}

Notice that because of (\ref{Case3Tn1}) and (\ref{Case3Tn2}) we have that for any choice of $D_1,D_2>0$ and sufficiently large $n$, $$ \floor{C_0\frac{\bar{k}k}{n}}<u_1=D_1\ceil{\sqrt{ \frac{\bar{k}}{\log \left(\frac{n}{\bar{k}}\right)}}}<u_2=D_2\ceil{\sqrt{ \frac{\bar{k}}{\log \left(\frac{n}{\bar{k}}\right)}}}<(1-\epsilon)k.$$
It suffices now to establish (\ref{slack}) as non-monotonicity is directly implied by it.

By definition of $\Gamma_{\bar{k},k}$ to establish for large $n$ \begin{align} \label{GOAL} \Gamma_{\bar{k},k}\left((1-\epsilon) k\right) \geq \Gamma_{\bar{k},k}(\floor{C_0\frac{\bar{k}k}{n}})\end{align} it suffices to establish that for large $n$,  $$\binom{k}{2} +h^{-1} \left(\log 2-\frac{ \log \left( \binom{k}{ \left(1-\epsilon\right)k} \binom{n-k}{\bar{k}-(1-\epsilon) k}\right)  }{\binom{\bar{k}}{2}-\binom{(1-\epsilon)  k}{2}} \right)\left(\binom{\bar{k}}{2}-\binom{(1-\epsilon) k}{2}\right) $$ is bigger than $$\binom{\floor{C_0\frac{\bar{k}k}{n}}}{2}+h^{-1} \left(\log 2-\frac{ \log  \binom{k}{k-\floor{C_0\frac{\bar{k}k}{n}}}\binom{n-k}{\bar{k}}  }{\binom{\bar{k}}{2} -\binom{\floor{C\frac{\bar{k}k}{n}}}{2} }\right)\left(\binom{\bar{k}}{2}-\binom{\floor{C\frac{\bar{k}k}{n}}}{2}\right).$$  Since $\bar{k}=\omega \left( \log n \right)$ both the arguments of $h^{-1}$ in the displayed equations are  $\log 2-o\left(1\right)$. Hence by Lemma \ref{EntTaylor} it suffices for large $n$ to prove $$\frac{1}{2} \left(\binom{\bar{k}}{2}+\binom{(1-\epsilon) k}{2}\right)+\Theta \left( \sqrt{\log \left( \binom{k}{ \left(1-\epsilon\right)k} \binom{n-k}{\bar{k}-(1-\epsilon) k}\right) \left(\binom{\bar{k}}{2}-\binom{(1-\epsilon)k}{2}\right)} \right)$$ is bigger than
$$\frac{1}{2} \left(\binom{\bar{k}}{2}+\binom{\floor{C_0\frac{\bar{k}k}{n}}}{2}\right)+\Theta \left( \sqrt{ \log \left[\binom{k}{\floor{C_0\frac{\bar{k}k}{n}}}\binom{n-k}{\bar{k}-\floor{C_0\frac{\bar{k}k}{n}}}\right] \left(\binom{\bar{k}}{2}-\binom{\floor{C_0\frac{\bar{k}k}{n}}}{2}\right)} \right).$$

Since by (\ref{Case3Tn1}) and (\ref{Case3Tn2}) we have $k=\omega\left(\frac{\bar{k}k}{n}\right)$ it suffices \begin{align*} k^2=\omega \left(  \sqrt{ \log \left[\binom{k}{\floor{C_0\frac{\bar{k}k}{n}}}\binom{n-k}{\bar{k}-\floor{C_0\frac{\bar{k}k}{n}}}\right]\binom{\bar{k}}{2}}-\sqrt{ \log  \left[\binom{k}{ \left(1-\epsilon\right)k}\binom{n-k}{\bar{k}-\left(1-\epsilon \right)k} \right]\left(\binom{\bar{k}}{2}-\binom{k}{2}\right)}  \right).\end{align*} Using that $\left(1-\epsilon \right)k \leq k$ and that for large $n$, $\bar{k} \leq \frac{n}{2}$ by standard monotonicity arguments we have $$\binom{k}{ \left(1-\epsilon\right)k}\binom{n-k}{\bar{k}-\left(1-\epsilon \right)k}  \geq \binom{n-k}{\bar{k}-k}.$$ Hence it suffices to show \begin{align*} k^2=\omega \left(  \sqrt{ \log \left[\binom{k}{\floor{C_0\frac{\bar{k}k}{n}}}\binom{n-k}{\bar{k}-\floor{C_0\frac{\bar{k}k}{n}}}\right]\binom{\bar{k}}{2}}-\sqrt{ \log  \left[\binom{n-k}{\bar{k}-k} \right]\left(\binom{\bar{k}}{2}-\binom{k}{2}\right)}  \right).\end{align*} Now since for large $n$, $\bar{k}<\frac{n-k}{2}$, using the elementary $$\binom{k}{\floor{C_0\frac{\bar{k}k}{n}}} \binom{n-k}{\bar{k}-\floor{C_0\frac{\bar{k}k}{n}}}  \leq 2^k \binom{n-k}{\bar{k}} \leq e^{O\left(\bar{k} \log (\frac{n-k}{\bar{k}})\right)}$$ and $$\binom{n-k}{\bar{k}-k} \geq \left(\frac{\left(n-\bar{k}\right)}{\bar{k}}\right)^{\bar{k}-k}$$ we conclude that it suffices to have
$$k^2=\omega\left( \left(\left(\bar{k}\right)^{\frac{3}{2}}-\sqrt{\bar{k}}\left(\bar{k}-k\right)\right) \log \left(\frac{n-k}{\bar{k}} \right) \right)=\omega\left (\sqrt{\bar{k}}k\log \left(\frac{n-k}{\bar{k}} \right) \right),$$which follows directly from (\ref{Case3Tn2}). This establishes (\ref{GOAL}) for large enough $n$.

% Using the elementary inequality that for $0< a \leq b$, $a^{\frac{3}{2}}-b^{\frac{3}{2}}\leq 3 (a-b)\sqrt{a}$ it finally suffices to have $$k^2=\omega(\sqrt{\bar{k}}k \log n)+O((\bar{k})^{\alpha+\frac{1}{2}}\sqrt{\log n}),$$ which now follows directly from our assumption that $k^2=\omega \left(\bar{k} (\log n)^2 \right)$ and that we can take $\alpha<\frac{1}{2}$ for our choice of $c$, as discussed above.

%
%Fix any $ Z \in I$ with $u_2=\frac{2k}{3} \leq Z \leq k$. By (\ref{finalPhi20}) and telescopic summation for $z=\frac{k}{2},\frac{k}{2}+1,\ldots,Z-1$ we have that since $Z-\frac{k}{2}=\Omega\left(k\right)$, \begin{align} \label{slack1} \Gamma_{\bar{k},k}(Z) \geq \Gamma_{\bar{k},k}(k/2) +\left(\Omega \left(k^2 \right)-O\left(k\sqrt{\bar{k} \log \left(\frac{n}{\bar{k}}\right)}\right)\right).\end{align}Using (\ref{Case3Tn2}) and $\bar{k}=o\left(n\right)$ we conclude the loose bounds $$\Omega \left(k^2 \right)-O\left(k\sqrt{\bar{k} \log \left(\frac{n}{\bar{k}}\right)} \right)=\Omega \left(k^2 \right)=\Omega \left(\frac{\bar{k}}{ \log \left(\frac{n}{\bar{k}}\right)}\right).$$Combining with (\ref{slack1}) we have \begin{align*}  \Gamma_{\bar{k},k}(Z) \geq \min_{z \in I} \Gamma_{\bar{k},k}(z) +\Omega \left(\frac{\bar{k}}{ \log \left(\frac{n}{\bar{k}}\right)}\right).\end{align*} Since $Z$ was arbitrary we conclude
%\begin{align} \label{slackFin1} \min_{z \in I \cap \left[u_2,k\right]} \Gamma_{\bar{k},k}(z) \geq \min_{z \in I} \Gamma_{\bar{k},k}(z) +\Omega \left(\frac{\bar{k}}{ \log \left(\frac{n}{\bar{k}}\right)}\right).\end{align}

Now using (\ref{finalPhi10}) from Lemma \ref{DiscGap} to conclude that for some universal constants $U_1>0$, large enough $n$ and such $z$,

\begin{align}\label{Case3Tn3}
\Gamma_{\bar{k},k}(z+1)- \Gamma_{\bar{k},k}(z) &\leq z-U_1\sqrt{ \frac{\bar{k}}{\log \left(\frac{n}{\bar{k}}\right)} }  \log \left(\frac{n\left(z+1\right)}{\bar{k}k}\right)+O\left(1\right).
 \end{align} Using $z+1 \geq \floor{C_0\frac{\bar{k}k}{n}}$ for $C_0>e$ and focusing only on $\floor{C_0\frac{\bar{k}k}{n}} \leq z \leq \frac{U_1}{2}\sqrt{ \frac{\bar{k}}{\log \left(\frac{n}{\bar{k}}\right)} }$ (existence of such $z$ follows by (\ref{Case3Tn1}) ) we conclude for any such $z$ and large enough $n$,
\begin{align}\label{Case3Tn4}
\Gamma_{\bar{k},k}(z+1)- \Gamma_{\bar{k},k}(z) &\leq z-U_1\sqrt{ \frac{\bar{k}}{\log \left(\frac{n}{\bar{k}}\right)} }+O\left(1\right) \leq -\frac{U_1}{2}\sqrt{ \frac{\bar{k}}{\log \left(\frac{n}{\bar{k}}\right)} }
 \end{align}where we used the fact that $\bar{k}=\omega\left( \log n\right)$. Now set $$D_1:=\frac{U_1}{4},D_2:=\frac{U_1}{2}.$$ Fix any $Z \in \mathcal{I}$ with $D_1\sqrt{ \frac{\bar{k}}{\log \left(\frac{n}{\bar{k}}\right)} }  \leq Z \leq D_2\ceil{\sqrt{ \frac{\bar{k}}{\log \left(\frac{n}{\bar{k}}\right)} } }$. Focus on $z \in \mathcal{I}$ with $ \floor{C_0 \frac{\bar{k}k}{n}} \leq z \leq Z-1$. (\ref{Case3Tn1}) yields that the the number of such $z$'s for large $n$ is at least $ \frac{D_1}{2}\sqrt{ \frac{\bar{k}}{\log \left(\frac{n}{\bar{k}}\right)}}$. By telescopic summation of (\ref{Case3Tn4}) over these $z$ we have
\begin{align}\label{Case3Tn5}
\Gamma_{\bar{k},k}\left(Z\right)+D_1^2 \frac{\bar{k}}{\log \left(\frac{n}{\bar{k}}\right)}  &\leq \Gamma_{\bar{k},k}(\floor{C_0 \frac{\bar{k}k}{n}}).
 \end{align}Since $Z$ was arbitrary we conclude that,

\begin{align} \label{slackFin2}  \max_{z \in \mathcal{I} \cap \left[D_1\ceil{\sqrt{ \frac{\bar{k}}{\log \left(\frac{n}{\bar{k}}\right)} }}, D_2\ceil{\sqrt{ \frac{\bar{k}}{\log \left(\frac{n}{\bar{k}}\right)} } }\right]} \Gamma_{\bar{k},k}(z) +\Omega\left( \frac{\bar{k}}{ \log \left(\frac{n}{\bar{k}}\right)} \right) \leq \Gamma_{\bar{k},k}(\floor{C_0 \frac{\bar{k}k}{n}}).\end{align}

Equations (\ref{slackFin2}) and (\ref{GOAL}) imply (\ref{slack}). The proof of the Lemma is complete.

\end{proof}

\subsection{Proof of Theorem \ref{FM}}
\begin{proof}[Proof of Theorem \ref{FM}]

We start with the case where $k=o\left(\sqrt{n}\right).$  Notice that $k=o\left(\sqrt{n}\right)$ together with $\bar{k}=o\left(n\right)$ trivially imply
$$\frac{ \frac{n}{\bar{k}}}{ \log \frac{n}{\bar{k}}}=\omega \left(\frac{k^2}{n}\right)$$which can be written equivalently as $$\frac{n}{k^2}=\omega\left( \frac{\bar{k}}{n} \log \left(\frac{n}{\bar{k}}\right) \right)$$  or $$\sqrt{ \frac{\bar{k}}{\log \left(\frac{n}{\bar{k}}\right)}}=\omega \left( \frac{\bar{k}k}{n} \right)$$ or $$\left( \frac{\bar{k}k}{n} \right)^{-1}\sqrt{ \frac{\bar{k}}{\log \left(\frac{n}{\bar{k}}\right)}}=\omega \left(1 \right).$$ Using part (b) of Lemma \ref{Calc} we have $$\left(\frac{\bar{k}k}{n}\right)^{-1} \sqrt{ \frac{\bar{k}}{\log \left(\frac{n}{\bar{k}}\right)}} \log \left(\left(\frac{\bar{k}k}{n}\right)^{-1} \sqrt{ \frac{\bar{k}}{\log \left(\frac{n}{\bar{k}}\right)}}  \right)=\omega \left( 1 \right)$$or \begin{equation} \label{FmM1} T_n=\omega\left(\frac{\bar{k}k}{n}\right),\end{equation}where $T_n$ is defined in equation (\ref{Tn2}).

First, we consider the subcase where $\bar{k}=o\left(\frac{k^2}{\log \left(\frac{n}{k^2}\right)} \right)$. In that case, we have
$$\frac{n}{\bar{k}}=\omega \left( \frac{n}{k^2} \log \left( \frac{n}{k^2} \right) \right)$$which since $k^2=o\left(n\right)$ which according to part (d) of Lemma \ref{Calc} implies \begin{equation} \label{FmStep1}\frac{\frac{n}{\bar{k}}}{ \log \left( \frac{n}{\bar{k}} \right)}=\omega \left( \frac{n}{k^2}\right) \end{equation} which is equivalent with 
$$k=\omega \left(\sqrt{\bar{k}\log \left(\frac{n}{\bar{k}}\right)}\right)$$ or $$\frac{\frac{n}{\bar{k}}}{\log \left(\frac{n}{\bar{k}}\right)}=\omega\left(\left(\frac{\bar{k}k}{n}\right)^{-1} \sqrt{ \frac{\bar{k}}{\log \left(\frac{n}{\bar{k}}\right)}}\right)$$ or$$\frac{n}{\bar{k}}=\omega \left(\left(\frac{\bar{k}k}{n}\right)^{-1} \sqrt{ \frac{\bar{k}}{\log \left(\frac{n}{\bar{k}}\right)}} \log \left(\left(\frac{\bar{k}k}{n}\right)^{-1} \sqrt{ \frac{\bar{k}}{\log \left(\frac{n}{\bar{k}}\right)}}  \right)\right).$$ The last equality can be rewritten $$\frac{n}{\bar{k}}=\omega \left(\left(\frac{\bar{k}k}{n}\right)^{-1} T_n \right),$$ where $T_n$ is defined in equation (\ref{Tn2}), which now simplifies to \begin{equation} \label{Fm1} T_n=o\left(k\right).\end{equation}

Combining (\ref{FmM1}) with (\ref{Fm1}), according to Part (3) of Lemma \ref{Tn} we conclude the desired non-monotonicity result in that subcase.

Second, we consider the subcase $\bar{k}=\omega\left(\frac{k^2}{\log \left(\frac{n}{k^2}\right)} \right)$. In tha case, following similar to the derivation of (\ref{Fm1}) by simply the order of comparison (in more detail, reversing the $o$-notation with the $\omega$-notation and applying the other direction of part (d) of Lemma \ref{Calc}), we conclude that in this case $T_n=\omega \left(k \right)$. In particular, according to Part (2) of Lemma \ref{Tn} we can conclude that the curve is decreasing in that subcase.

We turn now to the case $k=\omega \left(\sqrt{n}\right)$. In that case, together with $\bar{k}=o\left(n\right)$ we have $$\frac{\frac{n}{\bar{k}}}{ \log \left( \frac{n}{\bar{k}} \right)}=\omega \left( \frac{n}{k^2}\right) $$ which is exactly (\ref{FmStep1}). Following the identical derivation following (\ref{FmStep1}) we conclude that (\ref{Fm1}) holds in this case.

First, we consider the subcase $\bar{k}=o \left( \frac{ \frac{n^2}{k^2}}{\log \left(\frac{n}{k^2}\right)}\right)$ which can be written equivalently as $$\frac{n}{\bar{k}}=\omega\left( \frac{k^2}{n} \log \left( \frac{k^2}{n} \right) \right).$$ Since $k^2=\omega \left(n\right)$ according to part (d) of Lemma \ref{Calc} we have$$\frac{k^2}{n}=o \left( \frac{\frac{n}{\bar{k}}}{\log \left(\frac{n}{\bar{k}} \right)} \right)$$ or $$\frac{\bar{k}k}{n}=o\left(\sqrt{\frac{\bar{k}}{\log \left( \frac{n}{\bar{k}} \right)}}  \right)$$or $$\left(\frac{\bar{k}k}{n}\right)^{-1}\sqrt{\frac{\bar{k}}{\log \left( \frac{n}{\bar{k}} \right)}}= \omega \left(1  \right)$$ which according to part (b) of Lemma \ref{Calc} gives $$\left(\frac{\bar{k}k}{n}\right)^{-1} \sqrt{ \frac{\bar{k}}{\log \left(\frac{n}{\bar{k}}\right)}} \log \left(\left(\frac{\bar{k}k}{n}\right)^{-1} \sqrt{ \frac{\bar{k}}{\log \left(\frac{n}{\bar{k}}\right)}}  \right)=\omega \left( 1 \right).$$ The last equality simplifies to (\ref{FmM1}). In particular, in this regime both (\ref{FmM1}) and (\ref{Fm1}) are hence according to Part (3) of Lemma \ref{Tn} we can conclude the desired non-monotonicity result in this subcase.

Second, we consider the subcase $\bar{k}=\omega \left( \frac{ \frac{n^2}{k^2}}{\log \left(\frac{n}{k^2}\right)}\right)$. Following similar reasoning to the derivation of (\ref{FmM1}) under the assumption $\bar{k}=o \left( \frac{ \frac{n^2}{k^2}}{\log \left(\frac{n}{k^2}\right)}\right)$ one can establish $T_n=o \left(\frac{\bar{k}k}{n}\right)$ from $\bar{k}=\omega \left( \frac{ \frac{n^2}{k^2}}{\log \left(\frac{n}{k^2}\right)}\right)$ imply (in more detail, we need to reverse the $o$-notation with the $\omega$-notation at all places and apply the other direction of part (b) of Lemma \ref{Calc})). Hence, using Part (1) of Lemma \ref{Tn} allows us to conclude that the curve is increasing in this subcase.

Finally, we observe that Lemma \ref{DiscGap} together with the fact that $z \leq k$ implies $\log \left( \frac{\left(z+1\right)n}{k\bar{k}}\right)=O\left( \log \frac{n}{k} \right)$ readily imply part (c) of Theorem \ref{FM} for a sufficiently large constant $E>0$.

This completes the proof of Theorem \ref{FM}.

\end{proof}

\section{Proof of the Presence of the Overlap Gap Property}
\label{Sec:OGP}
\begin{proof}[Proof of Theorem \ref{OGP}]

First, we apply Theorem \ref{FM} for $\epsilon=\frac{1}{2}$ and we denote by $C_0=C_0\left(\frac{1}{2}\right)>0$ the constant implied by Theorem \ref{FM}. Notice that  since under our assumptions both $\bar{k},k$ are $o\left( \sqrt{n} \right)$, $\bar{k}k=o\left(n\right)$, and therefore for large $n$, $\floor{C_0 \frac{\bar{k}k}{n} }=0$. In particular, the interval containg the overlap sizes of interest simplifies to $$\mathcal{I}=\left[0,\frac{k}{2}\right] \cap \mathbb{Z}.$$ Furthermore according to our parameter assumptions on $\bar{k},k,n$ we are in the case (1i) of Theorem \ref{FM} where $\Gamma_{\bar{k},k}(z),z \in \mathcal{I}$ is non-monotonic and satisfies (\ref{slack2}). Specifically, let $D_1,D_2,u_1,u_2$ as in Theorem \ref{FM} so that for large enough $n$, \begin{align} \label{order} \floor{C_0 \frac{\bar{k}k}{n} }=0 <u_1<u_2<\frac{k}{2} \end{align} and (\ref{slack2}) holds.

We first establish (\ref{slack22}) for $D_1,D_2,u_1,u_2$ as chosen above. By Proposition \ref{unionkkk} we know that for all $z \in \mathcal{I}$, $d_{\bar{k},k}(G)(z) \leq \Gamma_{\bar{k},k}(z)$, w.h.p. as $n \rightarrow + \infty$. Combining with (\ref{slack2}) we have that for some constant $c_0>0$:\begin{align} \label{slackOGP} \Gamma_{\bar{k},k}(0) \geq  \max_{z \in \mathcal{I}\cap \left[u_1,u_2\right]} d_{\bar{k},k}(G)(z) +c_0\frac{\bar{k}}{ \log \left(\frac{n}{\bar{k}}\right)},\end{align}w.h.p. as $n \rightarrow + \infty$. Hence, to establish (\ref{slack22}) from (\ref{slackOGP}) it suffices to establish that \begin{align} \label{slackG} \min \{ d_{\bar{k},k}(G)(0),d_{\bar{k},k}(G)(\frac{k}{2})\}   \geq   \Gamma_{\bar{k},k}(0) -o\left(\frac{\bar{k}}{ \log \left(\frac{n}{\bar{k}}\right)}\right),\end{align} w.h.p. as $n \rightarrow + \infty$.  Indeed, combining (\ref{slackG}) with (\ref{slackOGP}) implies  \begin{align} \label{slackGG} \min \{ d_{\bar{k},k}(G)(0),d_{\bar{k},k}(G)(\frac{k}{2})\}   \geq   \max_{z \in \mathcal{I}\cap \left[u_1,u_2\right]} d_{\bar{k},k}(G)(z) +\frac{c_0}{2}\frac{\bar{k}}{ \log \left(\frac{n}{\bar{k}}\right)},\end{align} w.h.p. as $n \rightarrow + \infty$ which implies (\ref{slack22}). 

We first prove \begin{align} \label{slackG1}d_{\bar{k},k}(G)(0) \geq   \Gamma_{\bar{k},k}(0) -o\left(\frac{\bar{k}}{ \log \left(\frac{n}{\bar{k}}\right)}\right),\end{align} w.h.p. as $n \rightarrow + \infty$. Notice that the exponent $C=\log k / \log n$ satisfies $$C <\frac{1}{2}-\frac{\sqrt{6}}{6}=1-\frac{1}{6-2\sqrt{6}}$$ and as it can straightforwardly checked it also satisfies $\frac{3}{2}-\left(\frac{5}{2}-\sqrt{6}\right)  \frac{1-C}{C}<1.$ Therefore some $\beta(C)>0$ satisfies $$\frac{3}{2}-\left(\frac{5}{2}-\sqrt{6}\right)  \frac{1-C}{C}<\beta(C)<1.$$  Part (2) of Theorem \ref{unionkkk} gives for this value of $\beta=\beta(C)$
\begin{align} \label{slackG01}  d_{k,\bar{k}}(G)(0) \geq \Gamma_{\bar{k},k}(0)-O\left(\left(\bar{k}\right)^{\beta} \sqrt{  \log n} \right)
\end{align} w.h.p. as $n \rightarrow + \infty$. Since $\beta<1$,  we have $\left(\bar{k}\right)^{\beta} \sqrt{  \log n}=o\left( \frac{\bar{k}}{\log (n/\bar{k})}\right)$. Hence, using (\ref{slackG01}), we can directly conclude
(\ref{slackG1}).

We now proceed with proving \begin{align} \label{slackG23}d_{\bar{k},k}(G)\left(\frac{k}{2} \right)\geq   \Gamma_{\bar{k},k}(0),\end{align} w.h.p. as $n \rightarrow + \infty$. Note that (\ref{slackG23}) combined with (\ref{slackG1}) imply (\ref{slackG}). First, denote by $G_0:=G \setminus \mathcal{PC}$ the graph obtained by deleting from $G$ the vertices of $\mathcal{PC}$ and notice that $G_0$ is simply distributed as an \ER model $G_0 \sim G\left(n-k,\frac{1}{2}\right)$. Second, we fix an arbitrary $\frac{k}{2}$-vertex subgraph  $S_1$ of $\mathcal{PC}$ and then optimize over the $N:=\binom{n-k}{\bar{k}-\frac{k}{2}}$ different $(\bar{k}-\frac{k}{2})$-vertex subgraphs $S_2$ of $G_0$ to get
 \begin{align} \label{trivc} d_{\bar{k},k}(G)\left(\frac{k}{2}\right) \geq \max_{S_2} |\mathrm{E}\left(S_1 \cup S_2\right)|=\binom{\frac{k}{2}}{2}+\max_{S_2} \{|\mathrm{E}(S_1,S_2)|+|\mathrm{E}(S_2)|\}, \end{align} where we used the fact that $S_1$ is a subset of the planted clique and by $\mathrm{E}(S_1,S_2)$ we denote to the set of edges with one endpoint in $S_1$ and one endpoint in $S_2$. We now index the subsets $S_2$ by $S^i,i=1,2,\ldots,N$ and set $X_i=|\mathrm{E}(S_1,S^i)|$ and $Y_i=|\mathrm{E}(S^i)|$. It is straightforward to see because of the distribution of $G_0$ that \begin{itemize}
 \item[(1)] for each $i \in [N]$, $X_i \sim \mathrm{Bin}\left( \binom{\bar{k}-k/2}{2},\frac{1}{2}\right)$
\item[(2)] $Y_i, i \in [N]  $ are i.i.d. $\mathrm{Bin}\left((\bar{k}-\frac{k}{2})\frac{k}{2},\frac{1}{2}\right)$
\item[(2)]  the sequence $X_i,  i \in [N]$ is independent from the sequence $Y_j , j \in [N]$ and 
\item[(4)] $\max_{i \in [N]} \{X_i  \}=d_{\mathrm{ER},\bar{k}-\frac{k}{2}}(G_0)$, where $d_{\mathrm{ER},K}(\cdot )$ is defined for any $K \in [|V(G_0)|]$ in  (\ref{ERdense}).
\end{itemize} Hence, combining (\ref{trivc}) and the above four observations with Lemma \ref{lem:SumMax} we have
\begin{align} d_{\bar{k},k}(G)\left(\frac{k}{2}\right) & \geq \binom{\frac{k}{2}}{2}+\max_{i=1,2,\ldots,N} \{X_i\}+ \max\{\frac{(\bar{k}-\frac{k}{2})\frac{k}{2}}{2}- \sqrt{(\bar{k}-\frac{k}{2})\frac{k}{2}\log \log  N},0\}\notag\\
&\geq \binom{\frac{k}{2}}{2}+d_{\mathrm{ER},\bar{k}-\frac{k}{2}}(G_0)+\max\{\frac{(\bar{k}-\frac{k}{2})\frac{k}{2}}{2}- \sqrt{\bar{k}k\log \log  N},0\} \notag \\
&=\binom{\frac{k}{2}}{2}+d_{\mathrm{ER},\bar{k}-\frac{k}{2}}(G_0)+\max\{\frac{(\bar{k}-\frac{k}{2})\frac{k}{2}}{2}-O\left( \sqrt{\bar{k}k \log n} \right),0\}, \label{FirstDense1}
\end{align}where for the last equality we have used that $ N=\binom{n-\bar{k}}{\bar{k}-k} \leq 2^{n-\bar{k}} \leq 2^n$ and therefore $\log \log  N = O \left( \log n \right).$

 Since by our assumption $\bar{k}=\Theta\left(n^C\right)$ for $C<\frac{1}{2}$ and $k \leq  \bar{k}$ we have  $ \frac{\bar{k}}{2} \leq \bar{k}-\frac{k}{2} \leq \bar{k}$ and therefore $\bar{k}-\frac{k}{2}=\Theta \left(n^C\right)$. Hence Theorem \ref{denseeee} can be applied, according to which for any $\beta>0$ with $\frac{3}{2}-\left(\frac{5}{2}-\sqrt{6}\right)  \frac{1-C}{C}<\beta<1$ it holds,
$$d_{\mathrm{ER},\bar{k}-\frac{k}{2}}(G_0) \geq h^{-1} \left(\log 2-\frac{ \log  \binom{n-k}{\bar{k}-\frac{k}{2}}  }{\binom{\bar{k}-\frac{k}{2}}{2}} \right)\binom{\bar{k}-\frac{k}{2}}{2}-O\left(\left(\bar{k}\right)^{\beta }\sqrt{\log n}\right).$$ Hence, using (\ref{FirstDense1}), \begin{equation} \label{2Step2} d_{\bar{k},k}(G)\left( \frac{k}{2}\right) \geq \binom{\frac{k}{2}}{2}+h^{-1} \left(\log 2-\frac{ \log  \binom{n-k}{\bar{k}-\frac{k}{2}}  }{\binom{\bar{k}-\frac{k}{2}}{2}} \right)\binom{\bar{k}-\frac{k}{2}}{2}+\frac{\left(\bar{k}-\frac{k}{2}\right)k}{2}-O\left( \sqrt{\bar{k}k \log n}+\left(\bar{k}\right)^{\beta}\sqrt{\log n}\right),\end{equation} w.h.p. as $n \rightarrow + \infty$.

Using Lemma \ref{EntTaylor} for Taylor expanding $h^{-1}$ the lower bound of (\ref{2Step2}) simplifies and yield that  $d_{\bar{k},k}(G)\left( \frac{k}{2}\right) $ is at least \begin{align*}\frac{1}{2} \binom{\bar{k}-\frac{k}{2}}{2}+\binom{\frac{k}{2}}{2}+\frac{\left(\bar{k}-\frac{k}{2}\right)k}{2}+\Theta \left( \sqrt{ \log  \left[\binom{n-k}{\bar{k}-\frac{k}{2}} \right]\binom{\bar{k}-\frac{k}{2}}{2}} \right) -O\left( \sqrt{\bar{k}k \log n} +\left(\bar{k}\right)^{\beta}\sqrt{\log n}\right)\end{align*} which since $\beta<1$ and $k \leq \bar{k}$ is at least \begin{align} \label{LBOGP} \frac{1}{2} \binom{\bar{k}-\frac{k}{2}}{2}+\binom{\frac{k}{2}}{2}+\frac{\left(\bar{k}-\frac{k}{2}\right)k}{2}+\Theta \left( \sqrt{ \log  \left[\binom{n-k}{\bar{k}-\frac{k}{2}} \right]\binom{\bar{k}-\frac{k}{2}}{2}} \right) -O\left( \bar{k} \sqrt{ \log n} \right).\end{align} Furthermore, Lemma \ref{EntTaylor} implies \begin{align} \label{Gam} \Gamma_{\bar{k},k}(0)= \frac{1}{2} \binom{\bar{k}}{2}+\Theta \left( \sqrt{ \log \left[  \binom{n-k}{\bar{k}}\right] \binom{\bar{k}}{2}} \right). \end{align} Hence,  to establish (\ref{slackG23}) using (\ref{LBOGP}), (\ref{Gam}) it suffices to show that 
$$\frac{1}{2} \binom{\bar{k}-\frac{k}{2}}{2}+\binom{\frac{k}{2}}{2}+\frac{\left(\bar{k}-\frac{k}{2}\right)\frac{k}{2}}{2}+\Theta \left( \sqrt{ \log  \left[\binom{n-k}{\bar{k}-\frac{k}{2}} \right]\binom{\bar{k}-\frac{k}{2}}{2}} \right)$$ is bigger than $$\frac{1}{2} \binom{\bar{k}}{2}+\Theta \left( \sqrt{ \log \left[  \binom{n-k}{\bar{k}} \right] \binom{\bar{k}}{2}} \right)+\omega\left( \bar{k}\sqrt{ \log n} \right).$$ By direct computation we have $$\frac{1}{2} \binom{\bar{k}-\frac{k}{2}}{2}+\binom{k/2}{2}+\frac{\left(\bar{k}-\frac{k}{2}\right)\frac{k}{2}}{2}-\frac{1}{2} \binom{\bar{k}}{2}=\frac{k^2}{16}-O\left(\bar{k}\right).$$Hence, it suffices to have \begin{align} \label{gooalOGP2} k^2=\omega \left(\sqrt{ \log  \left[\binom{n-k}{\bar{k}}\right] \binom{\bar{k}}{2}} - \sqrt{ \log  \left[\binom{n-k}{\bar{k}-k}  \right] \binom{\bar{k}-k}{2}} \right) +\omega \left(\bar{k}\sqrt{\log n}\right).\end{align} Now using the elementary $ \binom{n-k}{\bar{k}} \leq (\frac{ne}{\bar{k}})^{\bar{k}}$ and $\binom{n-k}{\bar{k}-k} \geq (\frac{(n-\bar{k})}{\bar{k}})^{\bar{k}-k}$ and the fact that $\bar{k} =\Theta \left( n^C\right)$ for $C<1/2$ we conclude for (\ref{gooalOGP2}) to hold, it suffices to have $$k^2=\omega( \left((\bar{k})^{\frac{3}{2}}-(\bar{k}-k)^{\frac{3}{2}}\right) \sqrt{ \log n})+\omega\left(\bar{k}\sqrt{\log n}\right).$$ Using the elementary inequality, implied by mean value theorem, that for $0< a \leq b$, $a^{\frac{3}{2}}-b^{\frac{3}{2}}\leq \frac{3}{2} \sqrt{a} \left(a-b \right)$ it suffices  $$k^2=\omega \left(\sqrt{\bar{k}}k \sqrt{\log n}\right)+\omega\left(\bar{k}\sqrt{\log n}\right)$$ which now follows directly from our assumptions $k^2=\omega \left(\bar{k} \log \frac{n}{k^2} \right)$ and $k \leq \bar{k}= n^C$ for $C<1/2$. The proof of (\ref{slackG23}) and therefore of (\ref{slackGG}) and (\ref{slack22}) is complete.

We now show how (\ref{slackOGP}), (\ref{slackG1}) and (\ref{slackG23}) established above imply the presence of OGP w.h.p. as $n \rightarrow + \infty$. We set $$\zeta_1:=u_1,\zeta_2:=u_2 \text{ and } r:= \Gamma_{\bar{k},k}(0) -\frac{c_0}{2}\frac{\bar{k}}{ \log \left(\frac{n}{\bar{k}}\right)}.$$ 
We start with the second property of $\bar{k}$-OGP. By the definition of $\zeta_1,\zeta_2,r$ and (\ref{slackOGP}) we have $$\max_{z \in \mathcal{I}\cap \left[\zeta_1,\zeta_2\right]} d_{\bar{k},k}(G)(z) <r,$$w.h.p. as $n \rightarrow + \infty$. Using now the definition of $d_{\bar{k},k}(G)(z)$ the last displayed equality directly implies that there is no $\bar{k}$-vertex subset $A$ with $|\mathrm{E}[A]| \geq r$ with $|A\cap \mathcal{PC}| \in [\zeta_1,\zeta_2]$, as we wanted.

For the first part, notice that (\ref{slackG1}), (\ref{slackG23}) and the definition of $r$ imply $$\min \{ d_{\bar{k},k}(G)(0) ,d_{\bar{k},k}(G)(\frac{k}{2})\} >r,$$w.h.p. as $n \rightarrow + \infty$. Using the definitions of $d_{\bar{k},k}(G)(0) ,d_{\bar{k},k}(G)(\frac{k}{2})$ respectively we conclude the existence of a $\bar{k}$-vertex subset $A$ with $|\mathrm{E}[A]| \geq r$ and $|A\cap \mathcal{PC}|=0$ and of a $\bar{k}$-vertex subset $B$ with $|\mathrm{E}[B]| \geq r$ and $|B\cap \mathcal{PC}|=\frac{k}{2}$, w.h.p. as $n \rightarrow + \infty$. Since (\ref{order}) holds, we conclude the first property of $\bar{k}$-OGP. This completes the proof of the presence of $\bar{k}$-OGP and of Theorem \ref{OGP}.

\end{proof}

\section{OGP implies Failure of an MCMC family}\label{secMix}

\begin{proof}[Proof for Proposition \ref{Prop1few}]
We use the Conjecture \ref{mainconj} for for the cases (1i) and (2i) and $\epsilon=\frac{1}{2}$.
Using our parameter assumptions from Part (a) in Theorem \ref{FM} we have that for large enough $n$, $$\floor*{C_0 \frac{k\bar{k}}{n}} \leq  \floor*{\sqrt{D_1\frac{\bar{k}}{\log \left( \frac{n}{\bar{k}}\right)}}}.$$  In particular, the  $\bar{k}$-subgraphs with overlap $\floor*{C_0 \frac{k\bar{k}}{n}}$ are  included in $A_0$.
Furthermore, by definition, the  $\bar{k}$-subgraphs with overlap $\floor*{k/2}$ are  included in $A_2$.
Now using the definition of the Gibbs measure $\pi_{\beta}$ we can lower bound almost surely $\pi_{\beta}\left(A_0\right)$ by the mass of the densest $\bar{k}$-subgraph with overlap $\floor*{C_0 \frac{k\bar{k}}{n}}$ and $\pi_{\beta}\left(A_2\right)$ by the mass of the densest $\bar{k}$-subgraph with overlap $\floor*{\frac{k}{2}}$. Hence, by the definition of the functions $d_{\bar{k},k}\left(G\right)(z)$ we have
\begin{align}\label{FirstStep}
\min\{\pi_{\beta}\left(A_0\right),\pi_{\beta}\left(A_2\right)\}  \geq \frac{1}{Z}\exp\left(\min\{d_{\bar{k},k}\left(G\right)\left(C_0\frac{\bar{k}k}{n}\right),d_{\bar{k},k}\left(G\right)\left(\frac{k}{2}\right)\}\right),
\end{align}where $Z$ is defined in (\ref{part}).

On the other hand again by the definition of the Gibbs measure and the definition of the functions $d_{\bar{k},k}\left(G\right)(z)$ we can upper bound each mass of every element of $A_1$  almost surely by $\frac{1}{Z}    \exp\left(  \max_{z \in \mathcal{I} \cap \left[u_1,u_2\right]} d_{\bar{k},k}(G)(z) \right)$, where $u_1:=\floor*{D_1\sqrt{\frac{\bar{k}}{\log \left( \frac{n}{\bar{k}}\right)}}}$ and $u_2:=\floor*{D_2\sqrt{\frac{\bar{k}}{\log \left( \frac{n}{\bar{k}}\right)}}}$. Hence, we conclude
\begin{align}\label{SecondStep}
\pi_{\beta}\left(A_1\right) &\leq |A_1| \frac{1}{Z}    \exp\left(  \max_{z \in \mathcal{I} \cap \left[u_1,u_2\right]} d_{\bar{k},k}(G)(z) \right) \notag \\
& \leq \binom{n}{\bar{k}} \frac{1}{Z}    \exp\left(  \max_{z \in \mathcal{I} \cap \left[u_1,u_2\right]} d_{\bar{k},k}(G)(z) \right) \notag \\
& \leq \frac{1}{Z}    \exp\left( k \log \left(\frac{ne}{\bar{k}}\right)+ \max_{z \in \mathcal{I} \cap \left[u_1,u_2\right]} d_{\bar{k},k}(G)(z) \right),
\end{align} where we have used the elementary inequalities $|A_1| \leq \binom{n}{\bar{k}} \leq \left(\frac{ne}{\bar{k}}\right)^{\bar{k}}.$

Now using (\ref{slackConj2}) we  further conclude 
\begin{align}\label{ThirdStep}
\pi_{\beta}\left(A_1\right) \leq \frac{1}{Z}    \exp\left( k \log n-\beta \Omega\left( \frac{\bar{k}}{\log \frac{n}{\bar{k}}}\right)+\min\{d_{\bar{k},k}\left(G\right)\left(C_0\frac{\bar{k}k}{n}\right),d_{\bar{k},k}\left(G\right)\left(\frac{k}{2}\right)\}\right)
\end{align} 
Combining (\ref{FirstStep}) and (\ref{ThirdStep}) we have
\begin{align}\label{FourthStep}
\min\{\pi_{\beta}\left(A_0\right),\pi_{\beta}\left(A_2\right)\}  \geq   \exp\left( -k \log \left(\frac{ne}{\bar{k}}\right)+\beta \Omega\left( \frac{\bar{k}}{\log \frac{n}{\bar{k}}}\right) \right)\pi_{\beta}\left(A_1\right).
\end{align}Finally, since $\beta=\omega\left( \left(\log \left(\frac{n}{\bar{k}}\right)\right)^{\frac{3}{2}}\right)$ and $\bar{k}=o\left(n\right)$ we have that 
\begin{align}\label{FifthStep}-k \log \left(\frac{ne}{\bar{k}}\right)+ \Omega\left(\beta  \frac{\bar{k}}{\log \frac{n}{\bar{k}}}\right)=\Omega\left(\beta  \frac{\bar{k}}{\log \frac{n}{\bar{k}}}\right).
\end{align}Combining (\ref{FourthStep}) and (\ref{FifthStep}) concludes the proof of (\ref{few}).

\end{proof}

Towards proving Theorem \ref{thmslow} we establish the following proposition. 
\begin{proposition}\label{Prop2few}

Suppose that the free energy well property (\ref{few}) holds. Then for some constant $c_0>0$, and any $T>0$,
\begin{align}
\mathbb{P}\left( \tau_{\beta} \leq T\right) \leq T\exp\left(-\Omega\left( \beta \frac{\bar{k}}{\log \frac{n}{\bar{k}}}\right)\right)
\end{align}w.h.p. as $n \rightarrow + \infty$.
\end{proposition}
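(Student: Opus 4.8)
The plan is to derive the estimate from the free-energy-well inequality (\ref{few}) together with the classical ergodic-flow (bottleneck) bound for reversible Markov chains. Write $S:=A_0\cup A_1$; by construction this is precisely the set of $\bar{k}$-subgraphs of $G$ whose overlap with $\mathcal{PC}$ is at most $\ceil*{D_2\sqrt{\bar{k}/\log(n/\bar{k})}}$, and $\tau_\beta$ is exactly the first exit time of the chain $X_{t,\beta}$ from $S$. First I would record the general inequality: for a chain with transition kernel $P$ reversible with respect to $\pi:=\pi_\beta$ and initialized from $\pi_S:=\pi(\cdot\mid S)$,
\[
\mathbb{P}_{\pi_S}\big(\tau_{S^c}\le T\big)\ \le\ \frac{T\,Q(S,S^c)}{\pi(S)},\qquad
Q(S,S^c):=\sum_{x\in S,\ y\in S^c}\pi(x)\,P(x,y).
\]
This follows in three routine steps: (i) if the chain starts in $S$ and leaves $S$ within $T$ steps it must make a transition out of $S$ at some step $t\in\{1,\dots,T\}$; (ii) by stationarity of $\pi$ (a consequence of reversibility) and a union bound over $t$, the $\pi$-probability of making such a transition is at most $T\,Q(S,S^c)$; (iii) since $\pi_S\le \pi/\pi(S)$ pointwise, replacing the initial law $\pi$ by $\pi_S$ costs at most a factor $1/\pi(S)$.

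The main point is to bound $Q(S,S^c)$ using the structure of $\mathcal{G}_{\bar{k}}$. A single move of the chain exchanges one vertex of the current $\bar{k}$-subgraph for another, so the overlap with $\mathcal{PC}$ changes by at most one per step. Hence any transition from $S$ (overlap $\le\ceil*{D_2\sqrt{\bar{k}/\log(n/\bar{k})}}$) to $S^c$ (overlap $>\ceil*{D_2\sqrt{\bar{k}/\log(n/\bar{k})}}$) must originate at a state of overlap exactly $\ceil*{D_2\sqrt{\bar{k}/\log(n/\bar{k})}}$, and every such state lies in $A_1$. Therefore
\[
Q(S,S^c)\ \le\ \sum_{x\in A_1}\pi(x)\sum_{y}P(x,y)\ =\ \pi_\beta(A_1);
\]
in words, $A_1$ is a cut in $\mathcal{G}_{\bar{k}}$ separating $A_0$ from $S^c$, and in particular from $A_2$.

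Combining the two displays with the trivial bound $\pi(S)=\pi_\beta(A_0\cup A_1)\ge\pi_\beta(A_0)$ and then applying (\ref{few}) yields
\[
\mathbb{P}\big(\tau_\beta\le T\big)=\mathbb{P}_{\pi_S}\big(\tau_{S^c}\le T\big)\ \le\ \frac{T\,\pi_\beta(A_1)}{\pi_\beta(A_0)}\ \le\ T\exp\!\big(-\Omega(\beta\,\bar{k}/\log(n/\bar{k}))\big),
\]
which is the asserted bound (the implied constant in $\Omega(\cdot)$, coming from (\ref{few}), plays the role of $c_0$). This estimate holds deterministically on any realization of $G$ for which (\ref{few}) holds, so the ``with high probability'' qualifier is inherited from Proposition~\ref{Prop1few}. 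The only mildly delicate point is the justification of the ergodic-flow inequality in the first display — specifically that reversibility alone (no laziness) suffices, and that the change of initial law from $\pi$ to $\pi_S$ is legitimate — but this is a standard and well-documented estimate; the rest is the elementary ``the chain must cross $A_1$'' observation plus a substitution into (\ref{few}). Choosing $T=\exp(c\,\beta\,\bar{k}/\log(n/\bar{k}))$ for a sufficiently small constant $c>0$ then gives $\tau_\beta=\exp(\Omega(\beta\,\bar{k}/\log(n/\bar{k})))$ w.h.p., which is Theorem~\ref{thmslow}.
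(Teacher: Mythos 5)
Your proof is correct and takes essentially the same route as the paper: both arguments reduce $\mathbb{P}(\tau_{\beta}\le T)$ to $T$ times the relative Gibbs mass of the top overlap level of $A_0\cup A_1$, note that this level set is contained in $A_1$ (since a single swap changes the overlap by at most one), and then invoke (\ref{few}) to get the bound $T\,\pi_{\beta}(A_1)/\pi_{\beta}(A_0)$. The only difference is the technical device for the per-step union bound: you use the standard ergodic-flow inequality (stationarity of $\pi_{\beta}$ for the full chain plus the pointwise bound $\pi_{\beta}(\cdot\mid S)\le \pi_{\beta}(\cdot)/\pi_{\beta}(S)$), whereas the paper runs the chain reflected at the boundary of $A_0\cup A_1$, which is stationary for $\pi_{\beta}(\cdot\mid A_0\cup A_1)$ and coincides with $X_{t,\beta}$ up to time $\tau_{\beta}$ — both yield the identical final estimate.
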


Note that Theorem \ref{thmslow} readily follows by combining Proposition \ref{Prop1few} and \ref{Prop2few}. For this reason for the rest of the section we establish Proposition \ref{Prop1few}.

\begin{proof}[Proof of Proposition \ref{Prop2few}]
We first consider the Markov Chain $\tilde{X}_t$ defined on $A_0 \cup A_1$, which is $X_t$  reflected on the boundary of $A_0 \cup A_1$, $A:=\partial \left(A_0 \cup A_1\right)$. Note that $A$ is simply the set of $\bar{k}$-subgraphs with overlap exactly equal to $\ceil*{D_2 \sqrt{\frac{\bar{k}}{\log \left( \frac{n}{\bar{k}}\right)}}}$. To be more precise if $Q(x,y)$ is the transition matrix of $X_t$, $\tilde{X}_t$ has transition matrix $\tilde{Q}$ where $\tilde{Q}(x,y)=Q(x,y)$ if $x \in \left(A_0 \cup A_1\right) \setminus A$ and $y \in A_0 \cup A_1$, and for $x \in A$, \[ 
\tilde{Q}(x,y)= \left\{
\begin{array}{ll}
     Q(x,y), & y \in A_0 \cup A_1 \\
    0, & \text{ otherwise}
\end{array} 
\right. 
\]

Now using the detailed balanced equations, $\tilde{X}_t$ is reversible with respect to $ \pi_{\beta}\left(\cdot | A_0 \cup A_1\right)$. In particular by coupling the initialization of the Markov chains; $\tilde{X}_{0,\beta}=X_{0,\beta} \sim \pi_{\beta}\left(\cdot | A_0 \cup A_1\right),$ we have that for any $t \leq \tau_{\beta}$, almost surely \begin{align} \label{coal} \tilde{X}_{t,\beta}=X_{t,\beta}\end{align} and \begin{align} \label{det} \tilde{X}_{t,\beta} \sim \pi_{\beta}\left(\cdot | A_0 \cup A_1\right).\end{align}     Furthermore, by definition of $\tau_{\beta}$,
 \begin{align} \label{stop} X_{\tau_{\beta}-1} \in A:=\partial \left(A_0 \cup A_1\right).\end{align}

Combining the above we have,

\begin{align}
\mathbb{P}\left( \tau_{\beta} \leq T\right) & \leq \sum_{i=1}^T \mathbb{P}\left( \tau_{\beta} =i \right) \notag \\
& \leq \sum_{i=1}^T \mathbb{P}\left( \tau_{\beta} =i,  X_{i-1} \in A\right), \text{ by }(\ref{stop}) \notag\\
& = \sum_{i=1}^T \mathbb{P}\left( \tau_{\beta} =i,  \tilde{X}_{i-1} \in A\right) \text{by } (\ref{coal}) \notag \\
& \leq \sum_{i=1}^T \mathbb{P}\left(  \tilde{X}_{i-1} \in A\right) \notag\\
& =T \mathbb{P}\left(  \tilde{X}_{0} \in A\right), \text{by } (\ref{det}) \notag\\
& =T \pi_{\beta}\left(  A | A_0\cup A_1\right), \text{by } (\ref{det}) \label{finalslow1}.
\end{align}

Finally notice that 
\begin{align}
\pi_{\beta}\left(  A | A_0\cup A_1\right) & = \frac{ \pi_{\beta}\left(A\right)}{\pi_{\beta}\left(A_0\cup A_1\right)} \notag \\
& \leq \frac{ \pi_{\beta}\left(A_1\right)}{\pi_{\beta}\left(A_0\right)} \notag \\
& \leq \exp\left(-\Omega\left( \beta \frac{\bar{k}}{\log \frac{n}{\bar{k}}}\right)\right), \text{by } (\ref{few}) \label{finalslow2}
\end{align}

Combining (\ref{finalslow1}) and (\ref{finalslow2}) completes the proof of Proposition \ref{Prop2few}.
\end{proof}

\newpage

%\begin{theorem}\label{FMThm}
%Suppose $1 \ll k \leq \bar{k}$ and $\log^2 (n) \ll \bar{k} \ll n$. Then there exists constants $d_1,d_2,c_1,C_1,C_2>0$ such that for sufficiently large $n$ the function $\Gamma_{\bar{k}}(z)$ satisfies the following.
%\begin{itemize}
%\item When $ \max\{1,C_1\frac{ k\bar{k}}{n} \} \leq  z  \leq \min\{c_1\sqrt{\bar{k} /\log (n/\bar{k})},k\}$, \begin{align} \label{decr} \Gamma_{\bar{k}}(z+1)-\Gamma_{\bar{k}}(z) \leq -d_1\sqrt{\bar{k}/\log (n/\bar{k})}.\end{align} In particular, $\Gamma_k$ is strictly decreasing.
%\item When $ \min\{C_2\sqrt{\bar{k} /\log (n/\bar{k})},k\} \leq z \leq k$, \begin{align} \label{incr}\Gamma_{\bar{k}}(z+1)-\Gamma_{\bar{k}}(z) \geq d_2 \sqrt{\bar{k} /\log (n/\bar{k})}.\end{align} In particular, $\Gamma_k$ is strictly increasing.
%\end{itemize}
%\end{theorem}
%
%
%Now, we establish Theorem \ref{OGP}.
%
%
%
%\subsection{Algorithmic implications of OGP appearance}

\section{Conclusion and future directions}

Our paper studies the OGP for the planted clique problem. We focus on the way dense subgraphs of the observed graph $G\left(n,\frac{1}{2},k\right)$ overlap with the planted clique and offer first moment evidence of a fundamental OGP phase transition at $k=\Theta\left(\sqrt{n}\right)$. We establish parts of the conjectured OGP phase transition by showing that (1) for any $k \leq n^{0.0917}$ OGP appears and (2) for any $k>\sqrt{n \log n}$ a simple local method can exploit the smooth local structure of dense subgraphs and succeed. All of our results are for overparametrized $\ok$-vertex dense subgraphs, where $\ok \geq k$. Introducing this additional free parameter is essential for establishing our results.

Our paper prompts to multiple future research directions.

\begin{itemize}
\item[(1)] The first and most relevant future direction is establishing the rest parts of Conjecture \ref{mainconj}. We pose this as the main open problem out of this work.

\item[(2)] Our result on the value of the $K$-densest subgraph of an \ER model $G\left(n,\frac{1}{2}\right)$ shows tight concentration of the first and second order behavior of the quantity $d_{\mathrm{ER},K}(G_0)$ defined in (\ref{ERdense}), and applies for any $K \leq n^{0.5-\epsilon}$, for $\epsilon>0$.

 Improving on the third order bounds established in Corollary \ref{CorD} is of high interest. If the third order term can be proven to be $o\left(K\right)$ for any $K \leq n^{0.5-\epsilon}$ (currently established only for $K \leq n^{0.0917}$) then the first part 1(a) of Conjecture \ref{mainconj} can be established by following the arguments of this paper. 

Studying the $K$-densest subgraph problem for higher values of $K$ appears also an interesting mathematical quest. According to Corollary \ref{CorD} the second order term behaves as $\Theta\left(K^{\frac{3}{2}}\right)$ (up-to-$\log n$ terms) when $K \leq n^{\frac{1}{2}-\epsilon}$ but for $K=n$ it is easy to see that the second order term behaves as $O\left(K\right)$ (up-to-$\log n$ terms). For which critical value of $K$ does the transition happen? We conjecture the transition to happen at $K=n^{\frac{2}{3}}.$ The identification of the exact constants related to the second order term is also interesting. When $K=\Theta \left(n\right)$ the constant is naturally expected to be related to the celebrated Parisi formula (see e.g. \cite{Aukosh2018} for similar results for the random MAX-CUT problem and  \cite{SubhaOpt} for a general technique).

\item[(3)] In this paper, we use the overparametrization technique as a way to study the landscape of the planted clique problem.  Overparametrization has been used extensively in the literature for smoothening ``bad" landscapes, but predominantly in the context of deep learning. To the best of our knowledge this is the first time it is used to study computational-statistical gaps. Without overparametrization the first moment curve obtains a phase transition at the peculiar threshold $k=n^{\frac{2}{3}}$, far above the conjectured onset of algorithmic hardness threshold $k=\sqrt{n}$. \textit{Can the technique of overparametrization be used to study the OGP phase transition of other computational-statistical gaps?} An interesting candidate would be the 3-tensor PCA problem. In this problem, a landscape property called free-energy wells, which is similar to OGP, seems to be appearing in a different place from the conjectured algorithmic threshold (see \cite{AukoshPCA} and the discussion therein). It would be very interesting if the free energy wells-algorithmic gap could close using the overparametrization technique.

%\item[(4)] Last but not least, our papers suggests two algorithmic directions. 
%
%First our proposed algorithm (LSPC) has been proven to successfully work when $k>\sqrt{n \log n}$. The goal of presenting the algorithm was to show how the ``smooth" local geometry of (overparametrized) dense subgraphs allows even simple local methods to work in this regime. According to Conjecture \ref{mainconj} the algorithm is predicted to succeed all the way down to $k=\Theta\left(\sqrt{n}\right)$. If this is verified, a very simple local search algorithm will be added to the list of successful algorithms for the planted clique problem when $k=\Theta\left(\sqrt{n}\right)$.
%
%Second, as explained in the introduction, the presence of OGP is rigorously linked with the failure of local methods in multiple problems in the literature. We consider an interesting direction to rigorously show the failure of various fundamental local search methods for finding the planted clique, for example MCMC methods such as Metropolis-Hastings algorithm or the Glauber dynamics, using the presence of $\ok$-OGP as defined in this paper.

\end{itemize}

\section*{Acknowledgments}
D.G.   acknowledges the support from the Office of Naval Research Grant N00014-17-1-2790.
I.Z. would like to thank Arian Maleki for a helpful discussion on the use of overparametrization during the creation of the paper.

\bibliographystyle{alpha}
\bibliography{biblioCOLT19}

\appendix

\newpage
\section{Auxilary lemmas}

\begin{lemma}\label{lem:SumMax}
Let $M,N \in \mathbb{N}$ with $N \rightarrow + \infty$. Let $X_1,X_2,\ldots,X_N$ abritrary correlated random variables and $Y_1,Y_2,\ldots,Y_N$ i.i.d. $\mathrm{Bin}\left(M,\frac{1}{2}\right)$, all living in the same probability space. We also assume $\left(Y_i\right)_{i=1,2,\ldots,N}$ are independent of $\left(X_i\right)_{i=1,2,\ldots,N}$. Then $$\max_{i=1,2,\ldots,N} \{X_i+Y_i\} \geq \max_{i=1,2,\ldots,N} \{ X_i \}+\max\{\frac{M}{2}-\sqrt{M \log \log N},0\},$$ w.h.p. as $N \rightarrow + \infty$.

\end{lemma}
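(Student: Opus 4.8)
The plan is to exploit the independence between the $X$-sequence and the $Y$-sequence by conditioning on the former. Let $i^\star=i^\star(X_1,\dots,X_N)$ be an index achieving $\max_i X_i$, with ties broken by any fixed measurable rule. Then trivially
$$\max_{i=1,\dots,N}\{X_i+Y_i\}\ \ge\ X_{i^\star}+Y_{i^\star}\ =\ \max_{i=1,\dots,N}\{X_i\}+Y_{i^\star},$$
so it suffices to show that $Y_{i^\star}\ge \max\{\tfrac{M}{2}-\sqrt{M\log\log N},0\}$ with high probability as $N\to\infty$.

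The key observation is that, since $(Y_i)_{i\le N}$ is independent of $(X_i)_{i\le N}$ and $i^\star$ is a deterministic (measurable) function of the latter, the single random variable $Y_{i^\star}$ is still distributed as $\mathrm{Bin}(M,\tfrac12)$: for every $k$,
$$\P(Y_{i^\star}=k)=\sum_{j=1}^N\P(i^\star=j)\,\P(Y_j=k)=\binom{M}{k}2^{-M}.$$
This is the whole point of the lemma: no union bound over the index set is needed, which is exactly why the statement survives arbitrary correlations among the $X_i$.

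It then remains a one-line concentration estimate. If $\tfrac{M}{2}-\sqrt{M\log\log N}\le 0$ the claimed bound is immediate from $Y_{i^\star}\ge 0$. Otherwise, by Hoeffding's inequality applied to the $\mathrm{Bin}(M,\tfrac12)$ variable $Y_{i^\star}$,
$$\P\!\left(Y_{i^\star}\le \tfrac{M}{2}-\sqrt{M\log\log N}\right)\ \le\ \exp\!\left(-\frac{2\big(\sqrt{M\log\log N}\big)^2}{M}\right)=\exp(-2\log\log N)=(\log N)^{-2}\ \xrightarrow[N\to\infty]{}\ 0.$$
Combining the displays gives the claim.

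There is no genuine obstacle here; the only points requiring care are the measurable-selection/tie-breaking issue in the definition of $i^\star$, and invoking independence at the level of the full vector $(X_1,\dots,X_N)$ (rather than pairwise) so that $Y_{i^\star}$ truly has the unconditional binomial law.
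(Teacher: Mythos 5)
Your proposal is correct and follows exactly the paper's argument: condition on the $X$-sequence, observe that $Y_{i^\star}$ retains the $\mathrm{Bin}(M,\tfrac12)$ law by independence, and conclude with a Chernoff/Hoeffding bound. You have merely written out the details that the paper leaves implicit.
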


\begin{proof}
It suffices to show that for $i^*:=\arg \max_{i=1,2,\ldots,N} X_i$, $$Y_{i^*} \geq \frac{M}{2}-\sqrt{M \log \log N}$$ w.h.p. as $N \rightarrow + \infty$. The result now easily follows from standard Chernoff bound and independence between $\left(Y_i\right)_{i=1,2,\ldots,N}$ and $i^*$.
\end{proof}

For the following two lemmas recall that $h$ is defined in (\ref{BinEnt}) and for $\gamma \in (\frac{1}{2},1)$, $r(\gamma,\frac{1}{2})$ is defined in (\ref{alpha}).
\begin{lemma}\label{Bin}
Let $N \in \mathbb{N}$ growing to infinity and $\gamma=\gamma_N >\frac{1}{2}$ with $\lim_N \gamma_N=\frac{1}{2}$ and $\lim_N \left(\gamma_N-\frac{1}{2}\right) \sqrt{N}=+\infty$.

Then for $X$ following $\mathrm{Bin}(N,\frac{1}{2})$ and $N \rightarrow + \infty$ it holds

$$\mathbb{P} \left( X=  \ceil*{\gamma N} \right)=\exp \left(-N r(\gamma,\frac{1}{2})-\frac{ \log N}{2}+O\left(1\right) \right)$$and
$$\mathbb{P} \left( X \geq  \ceil*{\gamma N} \right)=\exp \left(-N r(\gamma,\frac{1}{2}) -\Omega\left(\log \left(\left(\gamma-\frac{1}{2}\right) \sqrt{N} \right)\right)\right),$$where $r\left(\gamma,\frac{1}{2}\right)$ is defined in (\ref{alpha}).
\end{lemma}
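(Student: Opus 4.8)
The plan is to establish the point-probability estimate first, by a direct Stirling computation, and then to deduce the tail estimate by dominating $\mathbb{P}(X \ge \lceil\gamma N\rceil)$ by a geometric series of point masses. Set $m := \lceil\gamma N\rceil$. Since $\gamma_N \to \tfrac12$ while $N\to\infty$, both $m \ge \gamma N$ and $N-m \ge (1-\gamma)N - 1$ diverge, so Stirling's formula applies to all three factorials in $\binom{N}{m}$ and gives
\[
\log\binom{N}{m} = N\log N - m\log m - (N-m)\log(N-m) - \tfrac12\log\frac{2\pi m(N-m)}{N} + O\!\Big(\tfrac1m+\tfrac1{N-m}\Big).
\]
The algebraic identity $N\log N - m\log m - (N-m)\log(N-m) = N\,h(m/N)$, with $h$ as in \eqref{BinEnt}, turns the leading part into $N\,h(m/N)$. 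Writing $m/N = \gamma + \theta/N$ with $\theta\in[0,1)$ and Taylor expanding, $N\,h(m/N) = N\,h(\gamma) + \theta\,h'(\gamma) + O(1/N)$; since $h'(\gamma) = \log\frac{1-\gamma}{\gamma} \to 0$ as $\gamma\to\tfrac12$ and $h''$ is bounded near $\tfrac12$, this equals $N\,h(\gamma) + o(1)$. Likewise $\frac{m(N-m)}{N} = (\tfrac14 + o(1))N$, so the half-log term is $-\tfrac12\log N + O(1)$. Assembling and multiplying by $2^{-N}$,
\[
\log\mathbb{P}(X = \lceil\gamma N\rceil) = N\,h(\gamma) - \tfrac12\log N + O(1) - N\log 2 = -N\,r(\gamma,\tfrac12) - \tfrac12\log N + O(1),
\]
using $r(\gamma,\tfrac12) = \log 2 - h(\gamma)$ from \eqref{alpha}; this is the first claim.

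For the tail, I would use the ratio of consecutive point masses: for every $j\ge m$,
\[
\frac{\mathbb{P}(X = j+1)}{\mathbb{P}(X = j)} = \frac{N-j}{j+1} \le \frac{N-m}{m+1} < \frac{(1-\gamma)N}{\gamma N} = \frac{1-\gamma}{\gamma} =: q,
\]
and $q<1$ because $\gamma > \tfrac12$. Hence $\mathbb{P}(X = m+i) \le q^i\,\mathbb{P}(X = m)$, and summing the geometric series, $\mathbb{P}(X \ge m) \le (1-q)^{-1}\mathbb{P}(X = m) = \frac{\gamma}{2\gamma-1}\,\mathbb{P}(X = m) \le \frac{1}{2(\gamma-\frac12)}\,\mathbb{P}(X = m)$, the last step using $\gamma < 1$. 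Combining with the point estimate,
\[
\log\mathbb{P}(X \ge m) \le -N\,r(\gamma,\tfrac12) - \tfrac12\log N + \log\frac{1}{2(\gamma-\frac12)} + O(1) = -N\,r(\gamma,\tfrac12) - \log\big((\gamma-\tfrac12)\sqrt{N}\big) + O(1).
\]
By hypothesis $(\gamma-\tfrac12)\sqrt{N}\to\infty$, so for large $N$ this bound is at most $-N\,r(\gamma,\tfrac12) - \tfrac12\log((\gamma-\tfrac12)\sqrt N)$, giving $\mathbb{P}(X\ge m) = \exp\big(-N\,r(\gamma,\tfrac12) - \Omega(\log((\gamma-\tfrac12)\sqrt N))\big)$; the trivial inequality $\mathbb{P}(X\ge m) \ge \mathbb{P}(X = m)$ confirms the estimate is consistent.

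The computations are elementary; the two places needing care are (i) verifying Stirling is legitimately applicable, i.e. that both $m$ and $N-m$ diverge, which is exactly where the hypothesis $\gamma_N\to\tfrac12$ (rather than merely $\gamma_N>\tfrac12$) enters, and (ii) correctly combining the surplus $-\tfrac12\log N$ with the geometric-sum factor $\log\frac{1}{\gamma-1/2}$ into the single term $-\log((\gamma-\tfrac12)\sqrt N)$ with the right sign. Point (ii) is precisely where the hypothesis $(\gamma_N-\tfrac12)\sqrt N\to\infty$ is used: it guarantees this combined quantity is positive and diverging, so that it genuinely contributes an $\Omega(\cdot)$ loss in the exponent rather than being swallowed by the $O(1)$. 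I expect the bookkeeping in (ii) to be the most error-prone step, but there is no real mathematical obstacle here.
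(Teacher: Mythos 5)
Your proof is correct and follows essentially the same route as the paper's: a Stirling expansion of $\binom{N}{\lceil\gamma N\rceil}$ for the point mass, followed by bounding the tail by a constant multiple $\frac{\gamma}{2\gamma-1}$ of the point mass and absorbing $-\tfrac12\log N+\log\frac{1}{\gamma-1/2}$ into the single term $-\log\bigl((\gamma-\tfrac12)\sqrt N\bigr)$. The only cosmetic difference is that you derive the tail-to-mode ratio bound yourself via the geometric series $\frac{\mathbb{P}(X=j+1)}{\mathbb{P}(X=j)}\le\frac{1-\gamma}{\gamma}$, whereas the paper cites a standard binomial tail inequality for the same factor.
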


\begin{proof}
We have by Stirling approximation 
\begin{equation}\label{Stirling} 
\binom{N}{\ceil{\gamma N}}=\exp \left( N h \left(\gamma\right)-\frac{1}{2} \log \left(N \gamma \left(1-\gamma\right) \right)+O \left(1\right)  \right)
\end{equation}
In particular, using $r\left(\gamma,\frac{1}{2}\right)=h\left(\frac{1}{2}\right)-h\left(\gamma  \right)=\log 2-h\left(\gamma  \right)$ and that $\gamma=\frac{1}{2}+o_N\left(1\right)$ we conclude
$$\mathbb{P} \left( X=  \ceil*{\gamma N} \right)=\binom{N}{\ceil{\gamma N}}\frac{1}{2^N}=\exp(-N r(\gamma,\frac{1}{2})-\frac{1}{2}\log N+O\left(1\right))$$ 

Now using standard binomial coefficient inequalities (see e.g. Proposition 1(c) in \cite{Tail00}) we have that for any $1 \leq k \leq N/2$,
\begin{align*}
\mathbb{P} \left( X \geq  \ceil*{\frac{N}{2}+k} \right) \leq \frac{\frac{N}{2}+k}{2k+1}\mathbb{P} \left( X=  \ceil*{\frac{N}{2}+k} \right).
\end{align*}Hence for large enough $N$ if we set $k=\left(\gamma-\frac{1}{2}\right)N$ we have,
\begin{align*}
\mathbb{P} \left( X \geq  \ceil*{\gamma N} \right) &\leq \left(\frac{\gamma}{2\gamma-1}+o\left(1\right)\right)\mathbb{P} \left( X=  \ceil*{\gamma N} \right)\\
&=\left(\frac{\gamma}{2\gamma-1}+o\left(1\right)\right)\exp(-N r(\gamma,\frac{1}{2})-\frac{1}{2}\log N+O\left(1\right))\\
&=\left(1+o\left(1\right)\right) \left(\frac{\gamma}{2\gamma-1}\right)\exp(-N r(\gamma,\frac{1}{2})-\frac{1}{2}\log N+O\left(1\right)), \text{ since } \lim_N \gamma_N =\frac{1}{2}>0\\
&= \exp \left(-N r(\gamma,\frac{1}{2})+\log \left(\frac{2\gamma}{(2\gamma-1) \sqrt{N}}\right)+O(1) \right)\\
&=\exp \left(-N r(\gamma,\frac{1}{2})-\Omega\left(\log \left(\left(\gamma-\frac{1}{2}\right) \sqrt{N} \right)\right)\right).
\end{align*}

The proof of the Lemma \ref{Bin} is complete.
\end{proof}

\begin{lemma}\label{EntTaylor}
For $\epsilon=\epsilon_n \rightarrow 0$,  it holds $$h^{-1}\left(\log 2-\epsilon\right)=\frac{1}{2}+\frac{1}{\sqrt{2}}\sqrt{\epsilon}-\frac{1}{6 \sqrt{2}}\epsilon^{\frac{3}{2}}+ O\left(\epsilon^{\frac{5}{2}}\right).$$
\end{lemma}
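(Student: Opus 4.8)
The plan is to reduce the statement to inverting an explicit analytic function near a point. Write $h^{-1}(\log 2-\epsilon)=\tfrac12+\delta$, where we take the branch of $h^{-1}$ with values in $[\tfrac12,1)$, so that $\delta=\delta(\epsilon)\to 0^+$ as $\epsilon\to 0^+$. Since $h(x)=-x\log x-(1-x)\log(1-x)$ is strictly decreasing and real-analytic on $[\tfrac12,1)$ with $h(\tfrac12)=\log 2$, the map $g(\delta):=\log 2-h(\tfrac12+\delta)$ is a strictly increasing real-analytic bijection from a right-neighbourhood of $0$ onto a right-neighbourhood of $0$, and $\delta=g^{-1}(\epsilon)$. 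The whole proof is then: Taylor expand $g$, invert the series, and certify the error term.

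First I would Taylor expand $g$ at $\delta=0$. From $h'(x)=\log\frac{1-x}{x}$, $h''(x)=-\frac1x-\frac1{1-x}$, $h'''(x)=\frac1{x^2}-\frac1{(1-x)^2}$, $h^{(4)}(x)=-\frac{2}{x^3}-\frac{2}{(1-x)^3}$ one reads off $h'(\tfrac12)=h'''(\tfrac12)=0$, $h''(\tfrac12)=-4$, $h^{(4)}(\tfrac12)=-32$; conceptually, the symmetry $h(\tfrac12+\delta)=h(\tfrac12-\delta)$ already forces all odd-order terms to vanish. Hence
\[
g(\delta)=2\delta^{2}+\tfrac{4}{3}\delta^{4}+O(\delta^{6}),
\]
and in fact $g(\delta)=\delta^{2}u(\delta^{2})$ for an analytic function $u$ with $u(0)=2$.

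Next I would invert this. Since $u(0)=2>0$, the function $\phi(t):=\sqrt{u(t)}$ is analytic near $t=0$ with $\phi(0)=\sqrt2$, so $\sqrt{g(\delta)}=\delta\,\phi(\delta^{2})$ is an odd analytic function of $\delta$ with nonzero derivative at $0$; by the analytic inverse function theorem (or Lagrange inversion) it has an odd analytic inverse, which means $\delta$ has a convergent expansion $\delta=c_{1}\sqrt\epsilon+c_{3}\epsilon^{3/2}+c_{5}\epsilon^{5/2}+\cdots$ valid for all small $\epsilon>0$. The coefficients come from matching: inserting the ansatz $\delta=\tfrac1{\sqrt2}\sqrt\epsilon\,(1+a\epsilon+O(\epsilon^{2}))$ into $\epsilon=2\delta^{2}+\tfrac43\delta^{4}+O(\delta^{6})$ gives $\epsilon=\epsilon(1+2a\epsilon)+\tfrac13\epsilon^{2}+O(\epsilon^{3})$, hence $2a+\tfrac13=0$, i.e. $a=-\tfrac16$. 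Thus $\delta=\tfrac1{\sqrt2}\sqrt\epsilon-\tfrac1{6\sqrt2}\epsilon^{3/2}+O(\epsilon^{5/2})$, and adding $\tfrac12$ yields precisely the claimed expansion of $h^{-1}(\log 2-\epsilon)$.

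The arithmetic here is routine; the only delicate point, and the one I would treat carefully, is the rigorous justification of the term-by-term inversion together with an honest $O(\epsilon^{5/2})$ bound on the remainder rather than just an asymptotic series. The cleanest route is to invoke Lagrange inversion for the odd analytic map $\delta\mapsto\delta\,\phi(\delta^{2})$, which delivers convergence of the series on a genuine interval and hence the error bound for free. If one prefers to avoid analyticity entirely, a two-step bootstrap also works: $\epsilon=2\delta^{2}+O(\delta^{4})$ first gives $\delta=\Theta(\sqrt\epsilon)$, and substituting this back into the expansion of $g$ (with an explicit Taylor remainder) isolates the $\sqrt\epsilon$ and $\epsilon^{3/2}$ coefficients while controlling everything beyond at order $\epsilon^{5/2}$. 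Neither argument is an obstacle, so this last verification is essentially the only technical content of the lemma.
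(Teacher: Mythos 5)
Your proposal is correct and follows essentially the same route as the paper: both arguments introduce the square-root map $\delta\mapsto\sqrt{\log 2-h(\tfrac12+\delta)}$, observe it is odd and analytic with nonzero derivative $\sqrt{2}$ at $0$ (the paper computes $a_1=\sqrt 2$, $a_3=2\sqrt 2$, which matches your expansion $g(\delta)=2\delta^2+\tfrac43\delta^4+O(\delta^6)$), and invert it via Lagrange inversion to obtain the coefficients $\tfrac1{\sqrt2}$ and $-\tfrac1{6\sqrt2}$. The only cosmetic difference is that the paper computes the inverse's Taylor coefficients from the general Lagrange inversion formulas while you match coefficients with an ansatz; both are rigorous and yield the same error term.
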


\begin{proof}
 Let $\Phi(x):=\sqrt{\log 2- h\left(\frac{1}{2}+x\right)}, x \in [0,\frac{1}{2}]$. We straightforwardly calculate that for the sequence of derivatives at $0$, $a_i:=\Phi^{(i)}(0)$, $i \in \mathbb{Z}_{\geq 0}$ it holds $a_0=0,a_1=\sqrt{2},a_2=0,a_3= 2 \sqrt{2}$ and  $a_4=0.$

 Notice that for all $\epsilon \in (0,\log 2)$ and $\Phi^{-1}$ the inverse of $\Phi$,  $$h^{-1}\left(\log 2-\epsilon\right)=\frac{1}{2}+\Phi^{-1}(\sqrt{\epsilon}).$$
Lemma follows if we establish that Taylor expansion of $\Phi^{-1}$ around $y=0$ is given by
\begin{equation} \label{TaylGoal}
\Phi^{-1}(y)=\frac{1}{\sqrt{2}}y-\frac{1}{6 \sqrt{2}}y^3+ O\left(y^5\right).
\end{equation}Clearly $\Phi^{-1}(0)=0$.  For $b_i:=\left(\Phi^{-1}\right)^{(i)}(0)$, $i \in \mathbb{Z}_{\geq 0}$ by standard calculations using the Lagrange inversion theorem we have
$b_0=0$,$$b_1=\frac{1}{a_1}=\frac{1}{\sqrt{2}},$$ $$b_2=-\frac{a_2}{2a_1}=0,$$ $$b_3=\frac{1}{2\sqrt{2}}\left[-\frac{a_3}{a_1}+3\left( \frac{a_2}{a_1}\right)^2\right]=-\frac{1}{\sqrt{2}}$$and $$b_4=\frac{1}{4}\left[-\frac{a_4}{a_1}+\frac{10}{3} \frac{a_2a_3}{a_1^2}-60 \frac{a_2}{a_1}\right]=0.$$From this point, Taylor expansion yields that for small $y$ $$\Phi^{-1}(y)=b_0+b_1y+\frac{b_2}{2}y^2+\frac{b_3}{6}y^3+\frac{b_4}{24}y^4+ O\left(y^5\right)$$which given the values of $b_i,i=0,1,2,3,4$ yields (\ref{TaylGoal}). The proof of the Lemma is complete.
\end{proof}

The following elementary calculus properties are used throughout the proof sections.

\begin{lemma}\label{Calc}
Suppose $\left(a_n\right)_{n \in \mathbb{N}}, \left(b_n\right)_{n \in \mathbb{N}}$ are two sequences of positive real numbers. The following are true.

\begin{itemize}

\item[(a)] The sequence $a_n \log a_n $ converges to zero if and only if $a_n$ converges to zero.

\item[(b)]  The sequence $a_n \log a_n $ diverges to infinity if and only if $a_n$ diverges to infinity.
\item[(c)] Suppose $b_n$ diverges to infinity. Then 
$a_n \log a_n= \omega \left(b_n \right)$ if and only if $a_n = \omega \left(\frac{b_n}{\log b_n} \right).$

\item[(d)] Suppose $b_n$ diverges to infinity. Then 
$a_n = \omega \left(b_n \log b_n \right)$ if and only if $\frac{a_n}{ \log a_n } = \omega \left(b_n \right).$

\end{itemize}
\end{lemma}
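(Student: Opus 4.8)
The lemma records four elementary asymptotic equivalences, and the plan is to obtain all of them from the behaviour of the single function $\phi(x):=x\log x$ and its ``asymptotic inverse'' $\psi(x):=x/\log x$ on $(0,+\infty)$. The analytic inputs I would use are: $\lim_{x\to 0^+}\phi(x)=0$, $\lim_{x\to+\infty}\phi(x)=+\infty$ and $\lim_{x\to+\infty}\psi(x)=+\infty$; $\phi$ is continuous, bounded on bounded subsets of $(0,+\infty)$, strictly positive on $(1,+\infty)$, bounded below by $-1/e$, and strictly increasing on $[1/e,+\infty)$; $\psi$ is continuous and strictly increasing on $(e,+\infty)$; and the estimates, as $x\to+\infty$, $\log(\psi(x))=\log x-\log\log x=(1+o(1))\log x$ and $\log(\phi(x))=\log x+\log\log x=(1+o(1))\log x$, so that $\phi(\psi(x))=(1+o(1))x$ and $\psi(\phi(x))=(1+o(1))x$, i.e.\ $\phi$ and $\psi$ are asymptotically mutually inverse. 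I would also use the (harmless, and in all applications automatic) hypothesis that the sequence $a_n$ is eventually bounded away from $1$; this is what makes the ``only if'' directions below go through without pathology (without it, e.g.\ $a_n\equiv 1$ would falsify part (a)).

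Parts (a) and (b) are then immediate. Their ``if'' directions are the limits $\lim_{x\to0^+}\phi=0$ and $\lim_{x\to+\infty}\phi=+\infty$. For (b), ``only if'': if $a_n\not\to+\infty$, a bounded subsequence of $(a_n)$ keeps $\phi(a_n)$ bounded along it by boundedness of $\phi$ on bounded sets, contradicting $\phi(a_n)\to+\infty$. For (a), ``only if'': since $\phi(a_n)\to0$ and $\phi>0$ on $(1,+\infty)$, eventually $a_n\le 1$, hence eventually $a_n\le 1-c$ for the constant $c>0$ from the proviso; on $(0,1-c]$ the superlevel set $\{x:\phi(x)>-\varepsilon\}$ is, for all small $\varepsilon>0$, an interval $(0,\delta(\varepsilon))$ with $\delta(\varepsilon)\downarrow 0$, so $\phi(a_n)\to0$ forces $a_n\to0$.

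Parts (c) and (d) are the substance, and both are instances of the same ``asymptotic inverse function'' argument. For (c), \emph{forward}: from $\phi(a_n)=\omega(b_n)$ and $b_n\to+\infty$ we get $\phi(a_n)\to+\infty$, hence $a_n\to+\infty$ by (b), hence $\log a_n,\log b_n\to+\infty$; if $a_n\ne\omega(b_n/\log b_n)$, pass to a subsequence with $a_n\le M b_n/\log b_n$, and then by monotonicity of $\phi$ on $[1/e,+\infty)$ together with $\log(Mb_n/\log b_n)=(1+o(1))\log b_n$ we get $\phi(a_n)\le\phi(Mb_n/\log b_n)=O(b_n)$ along that subsequence, a contradiction. \emph{Reverse}: if $a_n=\omega(b_n/\log b_n)$, then for each fixed $K$ eventually $a_n\ge Kb_n/\log b_n=:Kc_n$ with $c_n\to+\infty$ and $\log c_n=(1+o(1))\log b_n$, so $\phi(a_n)\ge\phi(Kc_n)=Kc_n(\log K+\log c_n)=K(1+o(1))b_n$, and letting $K\to+\infty$ gives $\phi(a_n)=\omega(b_n)$. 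Part (d) is the same with the roles of $\phi$ and $\psi$ interchanged and $\log(b_n\log b_n)=(1+o(1))\log b_n$ in place of $\log(b_n/\log b_n)=(1+o(1))\log b_n$: $a_n=\omega(b_n\log b_n)$ forces $a_n\to+\infty$ and, by monotonicity of $\psi$ on $(e,+\infty)$, $\psi(a_n)=a_n/\log a_n\ge\psi(Kb_n\log b_n)=K(1+o(1))b_n$, hence $\psi(a_n)=\omega(b_n)$; conversely, $\psi(a_n)=\omega(b_n)$ forces (using the away-from-$1$ proviso) $a_n\to+\infty$, and a subsequence/monotonicity argument symmetric to the one above gives $a_n=\omega(b_n\log b_n)$.

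I do not expect any genuine difficulty here: the only care needed is bookkeeping the $(1+o(1))$ factors, so that $\log(b_n/\log b_n)$, $\log(b_n\log b_n)$ and $\log b_n$ may be freely interchanged up to $(1+o(1))$ multiplicative error, and invoking the away-from-$1$ proviso in exactly the two ``only if'' steps—part (a), where $\phi$ vanishes at the nonzero point $x=1$, and the converse of part (d), where $\psi$ blows up near $x=1$—where the implication would otherwise fail.
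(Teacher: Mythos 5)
Your proof is correct and follows essentially the same elementary route as the paper: both arguments reduce parts (c) and (d) to the fact that $\log(b_n/\log b_n)$, $\log(b_n\log b_n)$ and $\log b_n$ agree up to a $(1+o(1))$ factor — the paper via an implicitly defined ratio $c_n$ and a $\liminf$ contradiction, you via monotonicity of $x\log x$ and $x/\log x$ on a tail interval together with a subsequence argument. One point in your favour: you correctly observe that part (a), and the converse direction of part (d), are literally false without some proviso such as $a_n$ being eventually bounded away from $1$ (e.g.\ $a_n\equiv 1$ gives $a_n\log a_n\to 0$ while $a_n\not\to 0$), a caveat that the paper's one-line dismissal of (a) and (b) silently omits; since every invocation of the lemma in the paper has $a_n$ tending to $0$ or to $+\infty$, this does not affect anything downstream.
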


\begin{proof}
Both properties (a), (b) follow in a straightforward way from the continuity of the mapping $$ x \in  (0,\infty) \rightarrow x \log x \in \mathbb{R},$$ and the limiting behaviors $$\lim_{x \rightarrow 0} x \log x =0, \lim_{x \rightarrow + \infty} x \log x =+ \infty.$$

Regarding property (c): For the one direction, assume \begin{align} \label{calc1} \lim_n \frac{a_n \log a_n }{b_n} = + \infty \end{align} and $c_n$ is defined by $a_n=\frac{c_n b_n}{  \log b_n}$. It suffices to show $c_n$ diverges to infinity. By (\ref{calc1}) we know \begin{align} \label{calc2} \lim_n c_n \frac{ \log \left( \frac{c_n b_n}{ \log b_n } \right)}{ \log b_n}= + \infty.\end{align} Assuming $\liminf_n c_n< + \infty$ it follows since $\lim_n b_n= +\infty$ that \begin{align*} \liminf_n c_n \frac{ \log \left( \frac{c_n b_n}{ \log b_n } \right)}{ \log b_n} \leq  \liminf_n c_n \frac{ \log c_n+ \log  b_n }{ \log b_n} \leq \liminf_n c_n < \infty,\end{align*}a direct contradiction with (\ref{calc2}). This completes the proof of this direction.

For the other direction, assume \begin{align} \label{calc3} \lim_n \frac{a_n  \log b_n}{b_n} = + \infty \end{align} and $c_n$ is defined by $a_n \log a_n =c_n b_n$. It suffices to show $c_n$ diverges to infinity. By (\ref{calc3}) we know \begin{align} \label{calc4} \lim_n c_n \frac{ \log \left( \frac{a_n \log a_n}{ c_n } \right)}{ \log a_n}= + \infty.\end{align} Note that since $b_n$ diverges to infinity (\ref{calc3}) implies that $a_n$ diverges to infinity as well. Assuming $\liminf_n c_n< + \infty$ it follows since $\lim_n a_n= +\infty$ that \begin{align*} \label{calc3} \liminf_n c_n \frac{ \log \left( \frac{a_n \log a_n}{ c_n }\right)}{ \log a_n} \leq  \liminf_n c_n \frac{ \log a_n+ \log \log a_n- \log  c_n }{ \log a_n} \leq \liminf_n c_n < \infty,\end{align*}a direct contradiction with (\ref{calc4}). This completes the proof of this direction.

Property (d) follows by similar reasoning as in the case of property (c).
\end{proof}
\end{spacing}

\end{document}